\definecolor{shadecolor}{rgb}{0.93, 0.79, 0.69}
\definecolor{columbiablue}{rgb}{0.61, 0.87, 1.0}
\definecolor{mycolor}{rgb}{.1, .7, 1}
\newcommand{\La}{\Lambda}
\newcommand{\eps}{\epsilon}
\newtheorem*{acknowledgements}{Acknowledgements}
\newcommand{\und}{\underline}
\newcommand{\PP}{\mathbf P}
\DeclareMathAlphabet{\mathpzc}{OT1}{pzc}{m}{it}
\newtheorem{proposition}{Proposition}[section]
\newtheorem{theorem}[proposition]{Theorem}
\newtheorem{lemma}[proposition]{Lemma}
\newtheorem{definition}[proposition]{Definition}
\newtheorem{corollary}[proposition]{Corollary}
\newtheorem{cla}[proposition]{Claim}
\newtheorem{remark}[proposition]{Remark}
\numberwithin{equation}{section}
\definecolor{verdescuro}{RGB}{0,120,0}
\title{Non-equilibrium fluctuations for the \\stirring process with births and deaths}
\author{P. Birmpa \thanks{Department of Actuarial Mathematics \& Statistics, Heriot-Watt University and the Maxwell Institute for Mathematical Sciences, Mayfield Rd, Edinburgh EH9 3FD, UK, Email {\tt P.Birmpa@hw.ac.uk }} \and P. Gon\c calves\thanks{Instituto Superior T\'ecnico, Department of Mathematics, Av. Rovisco Pais 1, 1049-001, Lisbon. E-mail: {\tt pgoncalves@tecnico.ulisboa.pt}}
\and D. Tsagkarogiannis\thanks{Universit\`a dell'Aquila, DISIM, 67100, L'Aquila.  E-mail: {\tt dimitrios.tsagkarogiannis@univaq.it}}}
\begin{document}
\maketitle
\abstract{We consider the one-dimensional stirring process on the segment 
$\{-N,\ldots,N\}$, coupled to boundary dynamics that inject particles from the
right reservoir and remove particles from the left reservoir, each acting on a window
of size $K$. We investigate the non-equilibrium fluctuations of the system, starting
from a product measure associated with a smooth initial profile. Given our initial state, the fluctuations are given by an Ornstein-Uhlenbeck process whose characteristic operators are the Laplacian and gradient operators. The domains of these operators include functions with boundary conditions that depend on the hydrodynamic profile. A central ingredient
in our analysis is the derivation of sharp bounds on the space and space--time
$v$-functions of arbitrary degree for the centered occupation variables. In particular,
we prove that the $v$-functions of degree $2$ and $3$ are of order $N^{-1}$, while those
of degree at least $4$ are of order $N^{-1-\zeta}$ for some $\zeta > 0$.}

\section{Introduction}
The derivation of macroscopic equations of classical fluid mechanics from a large-scale description of conserved quantities in Newtonian particle systems is a long-standing problem in mathematical physics. In order to simplify this
challenge, a common approach is to consider random models from statistical mechanics, such as interacting
particle systems (IPS). In the spirit of Boltzmann's program, one first identifies the stationary states of the system, then characterizes them through the relevant
thermodynamic quantities, and only afterwards analyzes the behaviour of the system out of equilibrium.

IPS were introduced into the mathematics community by
Spitzer \cite{Spi}, although they had already been widely used in physics and
biophysics as microscopic stochastic models whose dynamics conserve certain quantities.
The basic assumption is that each particle behaves as a continuous-time random walk on a discretization of the macroscopic space. This viewpoint enables a probabilistic analysis of the underlying discrete system. For a detailed and formal definition of IPS, we refer the reader to  \cite{liggettinteracting}. The evolution of the particles is Markovian: the future dynamics, conditioned on the past, depends only on the current configuration. The macroscopic space is discretized using a scaling parameter $0<\epsilon:=N^{-1}$, which will eventually be sent to zero to connect microscopic and macroscopic descriptions. Each site of the discrete lattice can host a certain number of particles, and these particles evolve according to local, prescribed rules that determine the random dynamics of the system.

One of the most studied IPS is the exclusion process, a microscopic random system in which each site can be occupied by at most one particle. Its dynamics can be described as follows. On each bond of the discrete space, say $\{x, x+1\}$, we attach an independent Poisson process with intensity $N^{2}$. Whenever a mark occurs on the Poisson process associated with the bond $\{x, x+1\}$, the particles located at $x$ and $x+1$ attempt to exchange their positions according to a prescribed transition probability $p(x,y)$. The exclusion rule enforces that jumps resulting in two particles occupying the same site are suppressed.

\bigskip

{\textbf{The model under investigation.}} In this article we focus on the exclusion process also called stirring process and we consider  $p(x,x+1)=p(x+1,x)=N^2$, so that the system is symmetric. Under this dynamics, the particles move in the system, by exchanging their positions with their neighbors, after a mark of the Poisson process. We note that the number of particles is conserved under this dynamics.  We supplement this exchange dynamics with a Glauber dynamics that injects and removes particles in the system. The choice of the macroscopic space in this article is the one dimensional interval $[-1,1]$ and the corresponding microscopic 
space is the space $\{-N,-N+1,\dots, N-1,N\}$ . The Glauber dynamics acts on two possible set of sites : those at distance less than or equal $K\geq 2$ to the points $-N,N$. The injection and removal rates are defined in the following way: from the right of the system a particle can be injected in the system at a  site $y$ in ${N-K+1,\dots, N}$ if the site $y$ is the first empty site on the window ${N-K,\dots, N}$ counting it from right to left. A particle in the left window of size $K$, namely in the set of sites $\{-N,\dots, -N+K-1\}$  can be removed from the site $z\in \{-N,\dots, -N+K-1\}$ if $z$ is the first occupied site in that window counting it from left to right. Note that the superposition of this dynamics with the exchange dynamics described previously, destroys the conservation law of the system, though the number of particles is the quantity that we want to analyze. 
We also note  that this dynamics was first introduced in \cite{de2011current} but the boundary dynamics was slowed down with respect to  the exchange dynamics by a factor of $\eps$ and later the same dynamics slowed by a factor $\eps^\theta$, with $\theta>0$ 
was also explored in \cite{erignoux2020hydrodynamics,erignoux2019hydrodynamics}.
\bigskip
 
{\textbf{The hydrodynamic limit.}} 
The first challenge concerns the  description of the space-time evolution of the number of particles in the system. To address this, we consider the empirical measure of the density defined in \eqref{MedEmp}, which consists of a random measure that associates the weight $\eps$ to every occupied site of the system and zero to the others.  The goal is to deduce the partial differential equation (PDE), \textit{the hydrodynamic equation}, that governs the space-time evolution of the density of particles, from the underlying random dynamics. 
One way to guess for the possible nature of the PDE is to consider the discrete profile, that accounts for the average of the occupation variables in the system. If we denote by $\eta_t(x)$ the quantity of particles at site $x$ and time $t\eps^{-2}$ and if the system starts from a measure $\mu^\epsilon$ we denote 
the discrete profile by $\rho^\epsilon_t(x)=\mathbb{E}^{\epsilon}_{\mu^\epsilon}[\eta_{t}(x)]$. From Kolmogorov's forward equation, we get  $\partial_t \rho^\epsilon_t(x)=\mathbb E_{\mu^\epsilon}^\epsilon[L_\epsilon(\eta_{t}(x))]$, where $L_\eps$ is the generator of our Markov process.  
We observe that the boundary dynamics defined above increases the degree of polynomials in $\eta$ at the points in the boundary windows. The reason is that the Glauber dynamics injects and removes particles at a site $y$ with a rate that depends on the configuration $\eta$ at the points between the right or left boundary reservoir and $y$. Therefore, the previous discrete PDE for points in the boundary window involves the expectation of terms given by products of $\eta$'s. This is the source of the difficulty when dealing with this model.  In order to close the previous equation in terms of $\rho_t^\eps(x)$ we have to consider an auxiliary function that solves the same evolution equation as $\rho_t^\eps(x)$ but at the points in the boundary window we assume that the variables involved in the product are uncorrelated. This simple observation allows factorizing the expectation of those products into products of the corresponding $\rho$'s. As a consequence we obtain the exact PDE solved by the auxiliary function that we denote by $\rho_\epsilon (x,t)$.  The advantage in considering the latter function $\rho_\epsilon (x,t)$ with respect to $\rho_t^\eps(x)$ is that the former solves a closed evolution equation while the latter depends on higher-order
moments. 
 
 In \cite{de2011current} the authors assumed that the function $\rho_{\epsilon}(\cdot,0)$ converges weakly as $\epsilon\to 0$ to a macroscopic profile  $u_0(\cdot )\in L^{\infty}([-1,1],[0,1])$ and deduced the hydrodynamic limit by showing that  $\rho_{\epsilon}(x,t)$ and $\rho_t$ are close in the sup norm, where the function $\rho_t$ is the solution of the heat equation with some boundary conditions, see Theorem 
 \ref{hydrodynamiclimit} for a precise statement. We note that  some years later, the same model but taking the boundary dynamics slowed down with respect to the exchange dynamics with a factor $\eps^\theta$, with $\theta>0$ was studied in \cite{erignoux2020hydrodynamics,erignoux2019hydrodynamics}. There, the authors derived the hydrodynamic limit when $\theta\geq 1$ using the entropy method developed by \cite{guo1988nonlinear} and when $\theta<1$ using the method of propagation of chaos, the same method used by \cite{de2011current} to establish the result in the case $\theta=1$. 
 In the same article, the authors were also able to show that $\rho_\eps(x,t)$ and $\rho_t^\eps(x)$ are close with respect to the sup-norm. This last result was obtained as a consequence of a much stronger result which was related to correlation estimates, which we now explain in detail. 
 \bigskip
 
{\textbf{The $v$-function.}} 
 Given an initial measure $\mu^\eps$, the correlation of particles at sites $x,y$ is defined as the expectation with respect to that given measure of the centered variables $\eta_t(x)-\rho_t^\eps(x)$ and  $\eta_t(y)-\rho_t^\eps(y)$. As mentioned above, the evolution equation for $\rho_t^\epsilon$ involves higher–order moments, which makes it more convenient to consider instead the correlations of the centered variables {$\eta_t(x)-\rho_\epsilon(x,t)$ and $\eta_t(y)-\rho_\epsilon(y,t)$}. These correlations are referred to as the $v$-functions.
The definition above concerns only two spatial variables, but it extends in a straightforward manner to any number of variables. For that reason, the $v$-functions will be denoted precisely by {$v^\eps_n(\underline{x},t)$}, where $n$ represents the number of spatial variables and the cardinality of $\underline{x}$ is $n$. From Kolmogorov's identity one can get an evolution equation for the $v$-functions 
 which allows to apply Duhamel's formula and give a probabilistic interpretation of the solution in terms of the stirring process. As the 
Markov generator increases the degree of the $v$-function, the probabilistic interpretation uses a branching process that keeps track of the action of the Glauber dynamics, in the sense that it takes into account how many particles are created/destroyed at the boundary windows. 
The estimate for the $v$-functions obtained in \cite{de2012truncated}, following the strategy developed in \cite{demasi2006mathematical}, keeps track of the number $n$ of spatial variables and is different depending on whether we consider short or long times. 

We recall this estimate here: for a fixed  integer number $K$  such that $K\geq 2$ as given in  Section \ref{sec:model}.
\begin{equation*}
{|v_n^\eps(\und x,t)|}\leq
\begin{cases}
c_n(\epsilon^{-2}t)^{-c^* n}, & \text{for } t\leq\epsilon^{\beta^*}\\
c_n\epsilon^{(2-\beta^*)c^* n}, & \text{for } \epsilon^{\beta^*}\leq t\leq\tau\log\epsilon^{-1}
\end{cases}
\end{equation*}
where {$\beta^*<2$}, $\tau>0$ and $c^*<\frac{1}{4(K+2)}$. We observe that when $n$ is sufficiently large and $\eps<1$, the estimate can become of order $\epsilon$ or even smaller. However, for small $n$, the estimate is not so good since the exponent of $\eps$, $c^* n$ is small.  
In this work,  whose purpose is to derive the equation for the fluctuation field (which scales with $\sqrt{\eps}$ rather that $\epsilon$) we need to obtain better estimates for the case of small $n$, giving up the more delicate $n$-dependent estimate. However, the latter will still be used. Hence, we show that for $n=1,2$ the bound is very close to $\epsilon$, for $3 \leq n \leq K$ it is exactly of order $\epsilon$, and for $n \geq K+1$ it is of order $\epsilon^{1+\zeta}$ with $\zeta > 0$ for times $0<t\leq T$, for some $T>0$. Furthermore, we provide a bound which holds for a fixed time horizon $T$ in contrast to the aforementioned result in \cite{de2012truncated}. The extension to times $T\log\epsilon^{-1}$ would require more work.
Our approach proceeds by deriving the discrete evolution equation for $v_n(\cdot,t)$ and obtaining its integral form, which contains all possible scenarios of how particles move in the system. The proof then relies on a direct and detailed estimation of these scenarios, primarily using Gaussian kernel bounds. The time integration in the integral form of the discrete equation for $v$-functions, poses additional challenges, which we overcome through several techniques.

\bigskip

{\textbf{The space-time $v$-function.}} Given an initial measure $\mu^\epsilon$, the one--point space--time correlation of
particles at sites $y$ and $x$ involves two times $s<r$, respectively: particles are located
at site $y$ at time $s$, and at site $x$ at time $r$. It is defined as the
expectation, with respect to $\mu^\epsilon$, of the centered occupation variables
$\eta_s(y)-\rho_\epsilon(s,y)$ and
$\eta_r(x)-\rho_\epsilon(r,x)$. This definition
extends in a straightforward way to any number of spatial variables at the two
times $s$ and $r$. 

As in the case of the space $v$-functions, we derive a discrete evolution
equation for the space--time $v$-functions by fixing the locations of the
particles at time~$s$ and analyzing the evolution of the positions at the
later time~$r$. The integral form of the discrete evolution equation includes all possible scenarios of particle's motion within the system between
times $s$ and $r$. In contrast to the purely spatial case, the integral
form of the discrete equation now contains an initial term at time~$s$, whose estimation
requires bounding the expectation, with respect to the stirring process, of
the $v$-function at time $s$ evaluated at the (random) configuration occupied at
that time. Controlling this term introduces additional technical difficulties. We show that this expectation is of order $\epsilon$ for $n=2,3$, and of order
$\epsilon^{1+\zeta}$ for $n\ge 4$ for some $\zeta>0$. This yields an additional
improvement of the overall estimates and, in turn, strengthens the final bounds
on the space--time $v$-functions.

\bigskip
{\textbf{Non-equilibrium fluctuations.}} 
 Our goal in this article is to understand the behavior of fluctuations around the hydrodynamical profile $\rho_t$, solution of the hydrodynamic equation given in \eqref{eq:Robin_equation}. To that end we consider the density fluctuation field which is nothing but the integral of a test function $H$ with respect to the empirical measure properly centered and rescaled, see \eqref{density field} for the exact definition. We show that when the system starts from a product measure whose marginals are given in terms of a smooth profile, see {\textbf{Assumption 1}}, the sequence of density fields converges to the solution of an Ornstein-Uhlenbeck process given in \eqref{O.U.}.  This choice of initial condition is essential because it allows us to invoke the estimates from \cite{de2012truncated}, which are applicable only when the system starts from a product measure. If we had similar apriori estimates for the space of $v$-functions starting from general initial conditions, our results could also be less restrictive.
 The strategy of the proof is based on associating the Dynkin's martingales to the density fluctuation fields. Those will be close to the martingales appearing in the definition of the corresponding Ornstein-Uhlenbeck process, see \eqref{eq:martingale_problem}. Showing that these martingales are close is one of the biggest challenges of this work and it is based on refined bounds on space $v$-functions but also on space-time $v$-functions, as we have explained before, together with a proper choice of the space of test functions, that we explain now.
 
 \bigskip

{\textbf{The choice of the space of test functions.}}  Our density fluctuation field acts on functions that are $C^1$ in time and $C^2$ in space. The regularity in time is needed so that we can apply Dynkin's formula to obtain a sequence of martingales associated to the density field and the regularity in space is needed since the hydrodynamic equation is the heat equation and we expect the limit field to satisfy the Ornstein-Uhlenbeck equation whose characteristic operators are the Laplacian and the gradient. Moreover, due to the action of the boundary dynamics on the conserved quantity, we obtain a contribution given in terms of products of occupation variables, see the last two lines of \eqref{intpartofmart_2}. The exchange dynamics also contributes with boundary terms at the end points $N$ and $-N$, and those correspond to the second line of \eqref{intpartofmart_2}. After centering these terms coming from the two contributions, we split them into degree one and higher-order degree. The degree one terms  are shown not to contribute in the limit under  a specific choice of the boundary conditions for the test functions, see \eqref{eq:space_kill}. The remaining terms, which have a higher order degree as functions of $\eta$, are shown not to contribute in the limit as a consequence of our refined estimates on the space and space-time $v$-functions. 

\bigskip

{\textbf{Future problems.}} We conclude by collecting a few interesting problems, closely related to the ones investigated
in this article, that remain open for further study.
A first natural question concerns the derivation of the non-equilibrium fluctuations for more general initial conditions as, for example, the non-equilibrium stationary state (NESS). It is  easy to check that the NESS is not of product form, and given the assumptions that we impose on our initial measure our result does not include the fluctuations from the NESS. A second direction is to analyze our results in the case the system is put in contact with slow/fast boundary dynamics and to explore what is the dependence on the strength of the dynamics in the $v$-functions estimates. A third line of investigation concerns the extension of our methods to other boundary
mechanisms. For instance, it would be interesting to treat boundary dynamics that drive the system to the heat equation with non-linear Robin boundary conditions, such as in the model recently  studied in \cite{landimmangisalvador}.

\begin{acknowledgements}
\small{The research of P.G. is partially funded by Fundação para a Ciência e Tecnologia (FCT), Portugal, through grant No. UID/4459/2025, as well as the ERC/FCT SAUL project. D.T. acknowledges financial support from the Italian Research Funding Agency
(MIUR) through the PRIN project “Emergence of condensation-like phenomena in interacting particle systems: kinetic and lattice models”, grant n. 202277WX43, as well as from Istituto Nazionale di Alta Matematica (INDAM).
The authors thank the warm hospitality of the University of Minho and Instituto Superior Técnico in Portugal, GSSI in Italy and the Maxwell institute for Mathematical Sciences, Edinburgh, UK, where parts of this work were developed. The authors are also indebted to Anna De Masi and Errico Presutti for many insightful and clarifying discussions.}
\end{acknowledgements}


\section{Statement of results}
\label{sec:1}

\subsection{The model}
\label{sec:model}

Let $N \in \mathbb{N}$ represent the scaling parameter, which we will consider in the limit  $N \to \infty$. Fix an arbitrary integer $1 \leq K \leq N$. We examine the stirring process on the discrete subspace $\Lambda_N := [-N, N] \subset \mathbb{Z}$. Particle configurations are represented by elements $\eta \in \Omega_N := \{0,1\}^{\Lambda_N}$, where $\eta(x) = 1$ indicates the presence of a particle at site $x$, and $\eta(x) = 0$ otherwise. The stirring process, incorporating births and deaths, is a Markov process $\{\eta_{t}\}_{t \geq 0}$ fully characterized by its infinitesimal generator given by 
      \begin{equation}
        \label{eq:gen_total}
L_\epsilon:=\epsilon^{-2}\big(L_0+\epsilon L_{b,-}+\epsilon L_{b,+}\big), \quad \epsilon\equiv \frac 1N,
     \end{equation}
where $L_0$ is the generator of the stirring process in $\Lambda_N$, namely
      \begin{equation}
        \label{eq:gen_stir}
 L_0 f(\eta):=\frac 12\sum_{x=-N}^{N-1} [f(\eta^{(x,x+1)})-f(\eta)]
     \end{equation}
and 
      \begin{equation}
        \label{eq:gen_bound}
L_{b,\pm} f(\eta):= \frac{j}{2}
\sum_{x\in I_\pm}D_{\pm}\eta(x) [f(\eta^{(x)})-f(\eta)\Big],
\end{equation}
with $j>0$, $I_+=[N-K+1,N]$, $I_-=[-N,-N+K-1]$,
\begin{equation}\label{eq:boundary_operators}
\begin{split}
& D_+u (x)= [1-u(x)]u(x+1)u(x+2)\cdots u(N), \; x\in I_+,
       \\
& D_-u (x)=  u(x)[1-u(x-1)][1-u(x-2)]\cdots[1-u(-N)], \;  x\in I_-\,.
\end{split}
     \end{equation}
      Note that the generator includes a factor of $\epsilon^{-2}$, indicating that the process is considered on the diffusive time scale. In \eqref{eq:gen_bound}, $\eta^{x,x+1}$, with $-N \leq x \leq N-1$, represents the configuration obtained from $\eta$ by exchanging the occupation variables $\eta(x)$ and $\eta(x+1)$:
\[\eta^{(x,x+1)} (y) =
\begin{cases}
\eta(x+1),& y=x\;,\\
\eta(x), & y=x+1\;,\\
\eta(y), & y\neq x,x+1\;,\\
\end{cases}
\]
while $\eta^{(x)}$, for $x\in I_\pm $, is  the configuration obtained by flipping the
occupation variable $\eta(x)$ :
\[
\eta^{(x)}(y) =
\begin{cases}
\eta(x),& y\neq x\;, \\
1-\eta(y), & y=x\,.\\
\end{cases}
\]

The operator $N^2L_0$ is the generator of the stirring process, where neighboring particles exchange positions at rate $N^2/2$. The operator $NL_{b,\pm}$ represents the generator of a birth (resp. death) process: at rate $jN/2$, a particle is created (resp. destroyed) at the first empty site starting from $N$ (resp. $-N$) in $I_+$ (resp. $I_-$). If no site is empty (resp. occupied), the birth (resp. death) process is aborted, see the figure below.

\begin{figure}[htb]
\begin{center}
\begin{tikzpicture}[thick, scale=0.85][h!]
\draw[latex-] (-6.5,0) -- (6.5,0) ;
\draw[-latex] (-6.5,0) -- (6.5,0) ;
\foreach \x in  {-6,-5,-4,-3,-2,-1,0,1,2,3,4,5,6}
\draw[shift={(\x,0)},color=black] (0pt,0pt) -- (0pt,-3pt) node[below] 
{};

       \node[ball color=violet, shape=circle, minimum size=0.7cm] (U) at (-6.,1.2) {};
       \node[ball color=violet, shape=circle, minimum size=0.7cm]  at (-6.,0.4) {};
       \node[ball color=violet, shape=circle, minimum size=0.7cm] (R) at (6.,0.4) {};
         \node[ball color=violet, shape=circle, minimum size=0.7cm] (S) at (6.,1.2) {};
         \node[ball color=violet, shape=circle, minimum size=0.7cm] (T) at (6.,2.0) {};
        
        \node[shape=circle,minimum size=0.7cm] (Q) at (-6.,2) {};
        \node[shape=circle,minimum size=0.7cm] (P) at (-5.,1.2) {};
           \node[shape=circle,minimum size=0.7cm] (PP) at (-5.,0.4) {};
        \node[shape=circle,minimum size=0.7cm] (M) at (-6.,1.2) {};
        \node[shape=circle,minimum size=0.7cm] (N) at (-2.,1.2) {};
        \node[shape=circle,minimum size=0.7cm] (J) at (2.,0.2) {};
            \node[shape=circle,minimum size=0.7cm] (JJ) at (4,0.4) {};
    
\node[ball color=black!30!, shape=circle, minimum size=0.7cm] (C) at (-5,0.4) {};

\node[ball color=black!30!, shape=circle, minimum size=0.7cm] (C) at (-3,0.4) {};    
    
\node[ball color=black!30!, shape=circle, minimum size=0.7cm] (C) at (0,0.4) {};

\node[ball color=black!30!, shape=circle, minimum size=0.7cm] (D) at (1,0.4) {};

\node[ball color=black!30!, shape=circle, minimum size=0.7cm] (L) at (3.,0.4) {};

\node[ball color=black!30!, shape=circle, minimum size=0.7cm] (H) at (5.,0.4) {};

\draw[] (-5,-0.2) node[below] {\footnotesize{$-N$}};
\draw[] (-4,-0.2) node[below] {\footnotesize{$\cdots$}};
\draw[] (-3,-0.2) node[below] {\footnotesize{$x-1$}};
\draw[] (-2,-0.2) node[below] {\footnotesize{$x$}};

\draw[] (5,-0.2) node[below] {\footnotesize{$N$}};
\draw[] (4,-0.2) node[below] {\footnotesize{$N-1$}};
\draw[] (3,-0.2) node[below] {\footnotesize{$N-2$}};

\node[shape=circle,minimum size=0.7cm] (Q) at (-6.,2.0) {};
\node[shape=circle,minimum size=0.7cm] (W) at (3,2.0) {};

setinhas
\path [->] (PP) edge[bend right=60] node[above] {$\tfrac{jN}{2}$} (U);
\path [->] (T) edge[bend right=60] node[above] {$\tfrac{jN}{2}$} (JJ);
\path [->] (C) edge[bend left =60] node[] {$\times$} (D);

\path [->] (D) edge[bend left =60] node[above] {$\tfrac{N^2}{2}$} (J);
              
\node[shape=circle,minimum size=0.7cm] (K) at (0,0.4) {};
\node[shape=circle,minimum size=0.7cm] (G) at (2,0.4) {};
\node[shape=circle,minimum size=0.7cm] (E) at (-1.,0.5) {};
\node[shape=circle,minimum size=0.7cm] (EE) at (-3.,0.5) {};  
\end{tikzpicture}
\end{center}
\end{figure}
\subsection{Hydrodynamic limit}
\label{subsec:2}
In this section, we review the hydrodynamic limit of our model, as developed in \cite{de2011current,erignoux2020hydrodynamics}. The corresponding hydrodynamic equation is the linear heat equation on the ``macroscopic'' interval $(-1,1)$, subject to {Robin} boundary conditions at $\pm 1$. Before presenting the result, we introduce several key definitions. The hydrodynamic limit will also be used in deriving various estimates needed to establish the non-equilibrium fluctuations of the model.

We denote by  $\| \cdot\|_{L^2  }$ the $\mathbb L^2$-norm  with respect to the Lebesgue measure in $[-1,1]$ and by $\langle \cdot,\cdot\rangle$ the corresponding inner-product.   For some fixed time horizon  $T>0$, we denote by $C^{m,n}([0, T] \times [-1,1])$ the set of functions defined on $[0, T] \times [-1,1] $ that are $m$ times differentiable on the first variable and $n$ times differentiable  on the second variable, {with} continuous derivatives. Now we define the Sobolev space $\mathcal H^1$ on $[-1,1]$. For that purpose, we define the semi inner-product $\langle \cdot, \cdot \rangle_{1}$ on the set $C^{\infty} ([-1,1])$ by $\langle G, H \rangle_{1} =\langle \partial_u  G \,, \partial_u  H \rangle$ 
and  the corresponding semi-norm is denoted by $\| \cdot \|_{1}$. 

\begin{definition}
\label{Def. Sobolev space}
The Sobolev space $\mathcal{H}^{1}$ on $[-1,1]$ is the Hilbert space defined as the completion of $C^\infty ([-1,1])$ for the norm 
$\| \cdot\|_{{\mathcal H}^1}^2 :=  \| \cdot \|_{L^2}^2  +  \| \cdot \|^2_{1}.$
Its elements elements coincide a.e. with continuous functions. The space $L^{2}(0,T;\mathcal{H}^{1})$ is the set of measurable functions $f:[0,T]\rightarrow  \mathcal{H}^{1}$ such that 
$\int^{T}_{0} \Vert f_{s} \Vert^{2}_{\mathcal{H}^{1}}ds< \infty. $
\end{definition}
\begin{definition}
	\label{def:weak_sol_ Robin}
	{Let $\rho_0:[-1,1]\rightarrow [0,1]$ be a measurable function.} We say that  $\rho:[0,T]\times[-1,1] \to [-1,1]$ is a weak solution of the heat equation with Robin boundary conditions 
	\begin{equation}\label{eq:Robin_equation}
	\begin{cases}
	&\partial_{t}\rho(u,t)= \frac{1}{2} \partial^2_u\, {\rho}(u,t), \quad (t,u) \in [0,T]\times(0,1),\\
	&\partial_u\rho(t,-1)=\widetilde D_{-}(\rho(-1,t))
,\quad t \in (0,T]\\ &\partial_u \rho(t,1)=\widetilde D_+(\rho(1,t))
	,\quad t \in (0,T] \\
	&\rho(0,\cdot)= \rho_0(\cdot),
	\end{cases}
	\end{equation}
where 
	\begin{equation}\label{tildeD}
	\widetilde  D_-(\rho)= j (1-(1-\rho)^K)\quad \textrm{and}\quad  \widetilde  D_+(\rho)= j(1-\rho^K)\end{equation}
	if the following two conditions hold: 
	\begin{enumerate}
		\item $\rho \in L^{2}(0,T;\mathcal{H}^{1})$, 
		
		\item $\rho$ satisfies the weak formulation:
		\begin{equation*}\label{eq:Robin_integral}
		\begin{split}
		&\langle \rho_{t},  f_{t}\rangle  -\langle \rho_0 , f_{0} \rangle - \int_0^t\langle \rho_{s},\Big(\frac 12 \partial^2_u + \partial_s\Big) f_{s} \, \rangle {ds}  
		+ \frac 12\int^{t}_{0}  \rho_{s}(1) \partial_u f_{s}(1)-\rho_{s}(-1)  \partial_u f_{s}(-1)  \, ds\\&
		 - \int^{t}_{0}f_{s}(1)\widetilde D_+(\rho(1,s))
		ds
		+ \int_0^t f_{s}(-1) \widetilde D_-(\rho(-1,s))ds=0,
		\end{split}   
		\end{equation*}
	for all $t\in [0,T]$, any function $f \in C^{1,2} ([0,T]\times[-1,1])$. 
	\end{enumerate}
	
	\end{definition}

	\begin{remark}
	We warn the reader not to confuse the functions  $\widetilde D_\pm$ defined just above with the operators defined in \eqref{eq:boundary_operators}. 
	\end{remark}

    \begin{remark}
     We observe that the weak solution of the PDE given above is unique. We refer the reader to the proof in \cite{erignoux2020hydrodynamics}.
    \end{remark}

Observe that  the above equation coincides with the case $K=1$ of \cite{franco2019non} by taking there  $\alpha=0$ and $\beta=1$ and $j=1$ and in that case the Robin boundary conditions are linear.

Now we state the hydrodynamic limit of the process $\{\eta_{{t}}\}_{t\geq{0}}$. For that purpose,  let ${\mathcal  M}^+$ be the space of positive measures on $[-1,1]$ with total mass bounded by $1$ equipped with the weak topology. For any configuration  $\eta \in \Omega_{N}$ we define the empirical measure ${\pi^{N}(\eta,\cdot)\in{\mathcal  M}^+}$ on $[-1,1]$ as
\begin{equation}\label{MedEmp}
\pi^{\epsilon}(\eta, du)=\epsilon\sum _{x\in \Lambda_{N}}\eta(x)\delta_{\epsilon x}\left( du\right),
 \end{equation}
where $\delta_{a}$ is a Dirac mass on $a \in [-1,1]${. Given the trajectory $\{\eta_{{t}}\}_{t\geq{0}}$ of the \textit{accelerated} process, we further introduce 
$\pi^{\epsilon}_{t}(\eta,du):=\pi^{\epsilon}(\eta_{t}, du)$ the empirical measure at the macroscopic time $t$.
We denote by $\mathbb P _{\mu^\epsilon}$ the probability measure in the Skorohod space $\mathcal D([0,T], \Omega_N)$ induced by the  Markov process $\{\eta_{{{t}}}\}_{t\geq{0}}$ and the initial probability measure $\mu^\epsilon$ and  $\mathbb E^\epsilon _{\mu^\epsilon}$ denotes the expectation w.r.t. $\mathbb P_{\mu^\epsilon}$.  

\begin{theorem}
\label{th:hyd_ssep}
Let $\rho_0:[-1,1]\rightarrow[0,1]$ be a measurable function and let $\lbrace\mu^\epsilon\rbrace_{\epsilon}$ be a sequence of probability measures in $\Omega_{N}$ such that  for any continuous function $f:[-1,1]\rightarrow \mathbb{R}$  and every $\delta > 0$ 
\begin{equation}\label{eq:mea_associated}
  \lim _{\epsilon\to 0 } \mu^\epsilon \Big( \eta \in \Omega_{N} : \Big|\int_{-1}^1 f(u){\pi^\epsilon}(\eta,du) - \int_{-1}^1 f(u)\rho_{0}(u)du\Big|    > \delta \Big)= 0.\end{equation} Then, for any $t\in[0,T]$,
\begin{equation*}\label{limHidreform}
 \lim _{N\to\infty } \mathbb P_{\mu^{\epsilon}}\Big( \Big|\int_{-1}^1 f(u) \pi_t^\epsilon(\eta, du) - \int_{-1}^1 f(u)\rho(u,t)du\Big|   > \delta \Big)= 0,
\end{equation*}
where  $\rho(\cdot,t)$ is the unique weak solution  of \eqref{eq:Robin_equation}.
\end{theorem}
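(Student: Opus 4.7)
The plan is to follow the classical three-step hydrodynamic program: (i) establish tightness of the empirical measures in $D([0,T],\mathcal{M}^+)$, (ii) characterize all limit points as concentrated on weak solutions of \eqref{eq:Robin_equation}, and (iii) invoke uniqueness of the weak solution together with the assumed convergence of initial profiles to upgrade subsequential convergence to convergence in probability.

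For tightness, since $\mathcal{M}^+$ consists of measures of mass at most $2$ on a compact interval, it is enough to verify Aldous's criterion for each real-valued process $t\mapsto \langle \pi^\epsilon_t,f\rangle$ with $f\in C^2([-1,1])$. For each such $f$, Dynkin's formula yields a martingale
\begin{equation*}
M^{\epsilon,f}_t = \langle \pi^\epsilon_t,f\rangle - \langle \pi^\epsilon_0,f\rangle - \int_0^t L_\epsilon \langle \pi^\epsilon_s,f\rangle\,ds
\end{equation*}
whose predictable quadratic variation is of order $\epsilon$, hence vanishes in $L^2$. A direct computation using summation by parts shows that $L_\epsilon\langle \pi^\epsilon,f\rangle$ is uniformly bounded in $\eta$ and $\epsilon$: the bulk part produces a quantity close to $\tfrac{1}{2}\int \partial_u^2 f\, d\pi^\epsilon$, while the boundary contributions from $L_0$ at $\pm N$ and from $L_{b,\pm}$ on $I_\pm$ are each bounded by a constant depending only on $\|f\|_{C^2}$, $j$ and $K$. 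Aldous's criterion follows.

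To characterize limit points, fix $f\in C^{1,2}([0,T]\times[-1,1])$. Dynkin's formula applied to $\langle \pi^\epsilon_t, f_t\rangle$ produces, up to a martingale vanishing in $L^2$, the bulk term $\int_0^t\langle \pi^\epsilon_s,(\tfrac12\partial_u^2+\partial_s)f_s\rangle\,ds$ after two discrete summations by parts, plus two boundary traces at $\pm N$ from $L_0$ that match the Robin derivative terms in the weak formulation of \eqref{eq:Robin_equation}, plus the Glauber contribution $\tfrac{j}{2}\sum_{x\in I_\pm}f_s(\epsilon x) D_\pm\eta_s(x)$. Using the telescoping identity $\sum_{x\in I_+}D_+\eta(x) = 1-\prod_{y\in I_+}\eta(y)$ and its mirror at the left, together with $f_s(\epsilon x) = f_s(\pm 1)+O(\epsilon)$ on $I_\pm$, the Glauber contribution reduces to the boundary traces $\tfrac{j}{2}f_s(1)(1-\prod_{y\in I_+}\eta_s(y))$ and $\tfrac{j}{2}f_s(-1)(1-\prod_{y\in I_-}(1-\eta_s(y)))$.

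The main obstacle is the boundary closure: replacing, in a time-averaged sense, $\prod_{y\in I_+}\eta_s(y)$ by $\rho(1,s)^K$ and $\prod_{y\in I_-}(1-\eta_s(y))$ by $(1-\rho(-1,s))^K$, which is precisely what introduces the nonlinear functions $\widetilde D_\pm$ of \eqref{tildeD}. This is a multi-point replacement lemma that rests on propagation-of-chaos estimates showing asymptotic independence of occupation variables within the boundary windows; these were established in \cite{de2011current} via the correlation bounds discussed in the introduction, and by the entropy method combined with $v$-function estimates in \cite{erignoux2020hydrodynamics}, so they may be invoked here. Once this closure is obtained, a standard energy estimate based on the martingale associated with the gradient of the test function places the limiting density in $L^2(0,T;\mathcal H^1)$. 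The initial-condition hypothesis \eqref{eq:mea_associated} fixes $\rho(\cdot,0)=\rho_0(\cdot)$, and the uniqueness remark following Definition \ref{def:weak_sol_ Robin} identifies the limit as the unique weak solution of \eqref{eq:Robin_equation}, thereby upgrading subsequential distributional convergence to the full convergence in probability stated in the theorem.
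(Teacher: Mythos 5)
Theorem~\ref{th:hyd_ssep} is not proved in the present paper: the authors state it as background, citing \cite{de2011current,erignoux2020hydrodynamics}, and the uniqueness remark after Definition~\ref{def:weak_sol_ Robin} explicitly defers to the latter reference. So there is no proof in this paper to compare your argument against; what you have produced is an outline of the standard route that the cited works (in particular \cite{erignoux2020hydrodynamics} for $\theta\ge 1$) actually take.

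As an outline, your sketch is sound: tightness of $\langle \pi^\epsilon_\cdot,f\rangle$ via Aldous and the $O(\epsilon)$ quadratic variation of the Dynkin martingale; two discrete summations by parts converting $\epsilon^{-2}L_0$ into $\tfrac12\Delta_\epsilon f$ plus boundary residuals proportional to $\epsilon^{-1}\nabla^\pm f(\pm1)\eta(\pm N)$, which are $O(1)$ because $\nabla^\pm f\sim\epsilon\,\partial_u f$; the telescoping identities $\sum_{x\in I_+}D_+\eta=1-\prod_{I_+}\eta$ and its analogue at $-N$ (which are indeed \eqref{eq:boundary_terms_new}--\eqref{eq:boundary_terms_new_1}); and finally the closure step replacing $\prod_{I_+}\eta$ and $\prod_{I_-}(1-\eta)$ by $\rho(1,\cdot)^K$ and $(1-\rho(-1,\cdot))^K$, followed by an energy estimate and uniqueness. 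Two cautions. First, you describe the boundary closure as a propagation-of-chaos step resting on $v$-function estimates; that is the mechanism of \cite{de2011current} and of the $\theta<1$ case in \cite{erignoux2020hydrodynamics}, whereas the entropy-method proof you are otherwise outlining (which is what \cite{erignoux2020hydrodynamics} uses for $\theta\ge 1$, i.e.\ precisely the scaling $\epsilon^{-2}(L_0+\epsilon L_b)$ of this paper) establishes the boundary replacement via entropy production and one-block/two-block type lemmas, not via correlation bounds. Mixing the two vocabularies is harmless here because the theorem is being cited rather than re-proved, but it does blur what the actual mechanism is. Second, the boundary replacement is a genuine multi-site statement: you must control $\prod_{y\in I_+}\eta_s(y)$ directly, not some power of a block average of a single occupation variable, and the indicator structure of $D_\pm$ matters only through the telescoping identity; this is exactly why the paper singles out the $K$-point products as the source of difficulty, and why the $v$-function machinery is developed at all in the sequel.
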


The next theorem, derived in  \cite{de2011current}, also provides the hydrodynamic limit for the stirring process with births and deaths. It is analogous to the previous one, but written in a different fashion. We decided to include both since some of these results will be needed in our proofs. For $x\in\Lambda_N$  and $t\geq 0$, let us denote 
\begin{equation}\label{eq:exp_eta}
\rho^\epsilon_t(x)=\mathbb{E}^{\epsilon}_{\mu^\epsilon}[\eta_{t}(x)].\end{equation}
We observe that the function $\rho^\epsilon_t(x)$, from Kolmogorov's forward equation, satisfies an evolution equation given by $\partial_t \rho^\epsilon_t(x)=\mathbb E_{\mu^\epsilon}^\epsilon[L_\epsilon(\eta_{t}(x))]$, see \eqref{eq:kolm_disc_prof}.  Since the boundary generators increase the degree of polynomial functions of $\eta$ we do not get a closed equation for $\partial_t \rho^\epsilon_t(x)$. Therefore we introduce the function $\rho_\epsilon (x,t)$ as the
solution of
\begin{equation}\label{eq:linearized}
	\begin{cases}
	&\partial_t\rho_\epsilon(x,t)= \frac 12 \Delta_\epsilon\rho_\epsilon(x,t)+
\epsilon^{-1} \frac j2\Big(\mathbf 1_{x\in I_+} D_+\rho_\epsilon(x,t)
-\mathbf 1_{x\in I_-}  D_-\rho_\epsilon(x,t)\Big)\\
	&{\rho_\epsilon(x,0)=u_0(\epsilon x)},
	\end{cases}
	\end{equation}
where $\Delta_\epsilon=\epsilon^{-2}\Delta$, $\Delta$
the discrete Laplacian in $\Lambda_
N$ with reflecting boundary conditions:
    \begin{eqnarray}
&&\Delta u(x)=   u(x+1)+ u(x-1)-2u(x),\qquad |x|<\epsilon^{-1}
\\&&\Delta u(\pm N)= u(\pm(N-1),t)-u(\pm N,t).
         \label{def_laplacian}
        \end{eqnarray}

\begin{theorem}\cite [Theorem 1] {de2011current}\label{hydrodynamiclimit}
Suppose that the initial datum $\rho_{\epsilon}(\cdot,0)$ defined on $\Lambda_{N}$, and taking values in $[0,1]$ converges weakly as $\epsilon\to 0$ to $u_0(\cdot)\in L^{\infty}([-1,1],[0,1])$ in the sense
\begin{equation}\label{initial}
\lim_{\epsilon\to 0}\epsilon\sum_{x\in\Lambda_{N}}\rho_{\epsilon}(\cdot,0)\phi(\epsilon x)=\int_{[-1,1]}u_0(r)\phi(r)dr,\quad\text{for every}\;\phi\in L^{\infty}([-1,1],\mathbb{R}).
\end{equation}
Then, there is $\rho(x,t)$, $x\in[-1,1]$, $t>0$ so that for any $t_1>t_0>0$
\begin{equation}\label{eq:hydro_conv}
\lim_{\epsilon\to 0}\sup_{x\in\Lambda_N}\sup_{t_0\leq t\leq t_1}|\rho_{\epsilon}(x,t)-\rho(\epsilon x,t)|=0,
\end{equation}
where the function $\rho(x,t)$  is the unique solution of the integral equation
\begin{eqnarray}\label{integralformofequation}
\rho(x,t)&=&\int_{[-1,1]}P_{t}(x,r')u_0(r')dr'+\frac{j}{2}\int_0^t P_{s}(x,1)(1-\rho(x,t-s)^{K}) \nonumber\\
&&-P_{s}(x,-1)(1-(1-\rho(-1,t-s))^{K})ds
\end{eqnarray}
and $P_{t}(x,r')$ is the density kernel of the semigroup (also denoted as $P_t$) with generator
$\Delta/2$, $\Delta$ the Laplacian in $[-1, 1]$ with reflecting, Neumann, boundary conditions.
\end{theorem}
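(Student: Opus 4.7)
The plan is to reformulate the discrete equation \eqref{eq:linearized} in Duhamel form and identify each term with the corresponding term in \eqref{integralformofequation}, then close via compactness and uniqueness. Writing $P^\epsilon_t(x,y)$ for the transition kernel on $\Lambda_N$ of the continuous-time random walk with generator $\tfrac{1}{2}\Delta_\epsilon$ and reflecting boundary conditions, the variation of constants formula applied to \eqref{eq:linearized} yields
\begin{equation*}
\rho_\epsilon(x,t) = \sum_{y\in\Lambda_N} P^\epsilon_t(x,y)\,\rho_\epsilon(y,0) + \frac{j}{2\epsilon}\int_0^t \sum_{y\in\Lambda_N} P^\epsilon_{t-s}(x,y)\bigl[\mathbf{1}_{y\in I_+}D_+\rho_\epsilon(y,s) - \mathbf{1}_{y\in I_-}D_-\rho_\epsilon(y,s)\bigr]\,ds.
\end{equation*}
Each of the three objects in this representation has a continuum counterpart in \eqref{integralformofequation}, and I would treat them one at a time.

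The linear term is handled by the classical local central limit theorem for reflected random walks. After rescaling $\epsilon x\to r\in[-1,1]$ one has $\epsilon^{-1}P^\epsilon_t(x,y)\to P_t(\epsilon x,\epsilon y)$ uniformly for $t\in[t_0,t_1]$ with $P_t$ the Neumann heat kernel on $[-1,1]$, and, in combination with hypothesis \eqref{initial}, this gives $\sum_y P^\epsilon_t(x,y)\rho_\epsilon(y,0)\to\int_{-1}^{1} P_t(\epsilon x,r')u_0(r')\,dr'$. The restriction $t_0>0$ in the statement is precisely what one needs to avoid the loss of uniformity of the local CLT at $t=0$.

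The nonlinear boundary contribution is analyzed via the elementary telescoping identity
\begin{equation*}
\sum_{y=N-K+1}^{N}[1-u(y)]\,u(y+1)\cdots u(N) \;=\; 1-\prod_{y=N-K+1}^{N} u(y),
\end{equation*}
and its mirror on $I_-$. If $\rho_\epsilon(\cdot,s)$ is uniformly close (in $\epsilon$) to the macroscopic value $\rho(1,s)$ on the window $I_+$, then $\sum_{y\in I_+}D_+\rho_\epsilon(y,s)\to 1-\rho(1,s)^K$, and analogously $\sum_{y\in I_-}D_-\rho_\epsilon(y,s)\to 1-(1-\rho(-1,s))^K$. Because $P^\epsilon_{t-s}(x,y)\sim\epsilon\,P_{t-s}(\epsilon x,\pm 1)$ for $y$ within $O(1)$ of $\pm N$, the singular prefactor $\epsilon^{-1}$ is exactly absorbed, and the boundary integral converges to the two nonlinear boundary integrals of \eqref{integralformofequation}.

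The argument is then closed by compactness and uniqueness. A maximum principle (or the probabilistic representation via a branching stirring process alluded to in the introduction) applied to \eqref{eq:linearized} gives the a priori bound $0\le\rho_\epsilon\le 1$, while the Duhamel formula together with the smoothing property of $P^\epsilon_t$ for $t\ge t_0$ yields uniform Hölder regularity of $(\epsilon x,t)\mapsto\rho_\epsilon(x,t)$ on $[-1,1]\times[t_0,t_1]$. Arzelà–Ascoli provides a subsequential uniform limit $\rho$, which satisfies \eqref{integralformofequation} by the previous two steps; uniqueness for \eqref{integralformofequation} follows from a standard Picard contraction, exploiting that $\rho\mapsto 1-\rho^K$ and $\rho\mapsto 1-(1-\rho)^K$ are Lipschitz on $[0,1]$, and upgrades subsequential to full uniform convergence. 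The main obstacle is the bootstrap character of the boundary reduction: the telescoping step requires equicontinuity of $\rho_\epsilon$ at spatial scale $O(\epsilon)$ near $\pm N$, but this equicontinuity must be extracted from the very equation whose boundary forcing it is intended to control. Resolving this requires uniform-in-$\epsilon$ discrete parabolic regularity estimates (Nash or De Giorgi type for the reflected discrete Laplacian with bounded forcing), and this is where the bulk of the technical work is concentrated.
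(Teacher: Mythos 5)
This theorem is imported from \cite{de2011current}; the present paper cites it without reproducing the proof, so there is no internal proof to compare against. Your outline nevertheless closely mirrors the strategy of the cited reference, and the Duhamel decomposition, the local CLT for the reflected kernel (this is exactly what \eqref{N4.4}--\eqref{N4.5} in Appendix \ref{secN6} are recording), the telescoping identity $\sum_{y\in I_+}D_+u(y)=1-\prod_{y\in I_+}u(y)$ (which is precisely \eqref{eq:boundary_terms_new}), and the Picard contraction for uniqueness of \eqref{integralformofequation} are all correct and are the ingredients actually used.

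The one place where you mislocate the difficulty is the ``bootstrap'' you worry about in the last paragraph. You do not need generic Nash/De Giorgi parabolic regularity, and there is no circularity: because $\rho_\epsilon\in[0,1]$ by the maximum principle, the boundary forcing $D_\pm\rho_\epsilon$ is uniformly bounded by $1$, so the Duhamel formula gives directly
\begin{equation*}
|\rho_\epsilon(x,t)-\rho_\epsilon(x+1,t)|\;\lesssim\; \big|\textstyle\sum_y(P^\epsilon_t(x,y)-P^\epsilon_t(x+1,y))\rho_\epsilon(y,0)\big| \;+\; \epsilon^{-1}\int_0^t\sum_{z\in I_\pm}\big|P^\epsilon_\lambda(x,z)-P^\epsilon_\lambda(x+1,z)\big|\,d\lambda,
\end{equation*}
and the explicit kernel gradient bound \eqref{N4.6 additional}, $|P^\epsilon_\lambda(x,z)-P^\epsilon_\lambda(x+1,z)|\lesssim(\sqrt{\epsilon^{-2}\lambda}+1)^{-1}G_{\epsilon^{-2}\lambda}(x,z)$, together with $|I_\pm|=K=O(1)$, exactly absorbs the $\epsilon^{-1}$ prefactor. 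This is the content of Proposition \ref{prop:disc_grad_rho_eps} (quoted from \cite{de2012truncated}) and of Lemma \ref{lem: difference of rhos} and Corollary \ref{corollary: difference of rhos}, which the present paper does prove in Appendix \ref{ap:useful_estimates}. The gradient estimate is therefore established \emph{before} any convergence of $\rho_\epsilon$ to $\rho$ is known, using only the a priori $L^\infty$ bound and kernel estimates; once it is in hand, the telescoping and limit-identification steps in your outline go through as stated, and Arzelà--Ascoli plus uniqueness then upgrades subsequential to full convergence exactly as you say.
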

\begin{remark}\label{rem:smoothness}
We note that the solution $\rho(x,t)$ of the previous equation is smooth. This has been observed in Remark 2.4 of \cite{de2011current}.
\end{remark}
Our goal in this article is to understand how is the behavior of the fluctuations around the hydrodynamical profile.

\subsection{Fluctuations}
In this subsection, we present our results regarding the non-equilibrium fluctuation of the stirring process with births and deaths, and  the convergence of the density fluctuation field given in Definition \ref{def:Density fluctuation field} to a mean zero  generalized Ornstein-Uhlenbeck process. These results are detailed in Theorem \ref{thm:non_eq_flu}. To begin, we recall from \cite{birmpa2017non} the definition of the space of test functions on which the functional associated with the system’s density fluctuations acts, as well as the semigroup associated with the hydrodynamic limit.
\subsubsection{The space of test functions}
\begin{definition}\label{def1} Fix a solution of the hydrodynamic equation $\rho(u,t)$ (which by Remark \ref{rem:smoothness} is a smooth function) and let $\mathbb S
$ denote the following subspace of functions in $C^{1,2}([0,T]\times[-1,1])$ that satisfy for all $ t\in(0,T]$ \begin{equation}\label{eq:bc_test_function}
\begin{split}
 &\partial_u H_t(-1)=\widetilde D'_-(\rho(-1,t))H_t(-1), \\&\partial_u H_t(1)=\widetilde D_+'(\rho(1,t))H_t(1),\end{split}
 \end{equation}
 where $\widetilde D_\pm$ were defined in \eqref{tildeD}.
\end{definition}
\begin{definition}
Let $\mathbb  S'$ be the topological dual of \,$\mathbb S$ with respect to the topology generated by the seminorms 
 \begin{equation}\label{semi-norm}
\|f\|_{k}=\sup_{u\in[-1,1]}|\partial_u^kf(u)|\,,
\end{equation}
where $k\in\mathbb{N}\cup \{0\}$.
This means that $\,\mathbb S'$ is the set of all linear functionals $f:\mathbb  S\to \mathbb R$ which are continuous with respect to all the seminorms $\Vert \cdot \Vert_k$.
\end{definition}
We now fix  a hydrodynamic profile $\rho(u,s)$ solution of \eqref{eq:Robin_equation}.  In what follows, we will need to consider  $T_s$ as  the semigroup associated to the hydrodynamic equation but satisfying a  linearization of the boundary conditions of \eqref{eq:Robin_equation}, that is, given an initial  condition $H$, $T_{s}H$ is the solution of
\begin{equation}\label{eq:Robin_equation_line}
	\begin{cases}
	&\partial_{s}T_{s}H (u)= \frac{1}{2} \partial^2_u\, T_{s}H(u), \quad (s,u) \in [0,t]\times(0,1),\\
	&\partial_u T_{s}H(-1)=j K(1-\rho(-1,s))^{K-1}T_{s}H(-1)
,\quad s \in [0,t]\\ &\partial_u T_{s}H(1)=-jK\rho(1,s)^{K-1}T_{s}H(1)
	,\quad s\in [0,t] \\
	&T_0H(\cdot)= H(\cdot),
	\end{cases}
	\end{equation}
	Observe that above the boundary conditions can be rewritten as
	\begin{equation*}
\partial_u T_{s}H(-1)=\widetilde D'_-(\rho(-1,s))T_{s}H(-1)
,\quad \partial_u T_{s}H(1)=\widetilde D'_+(\rho(1,s))T_{s}H(1).
	\end{equation*}

\subsubsection{The density fluctuation field}

We first introduce some notation. Recall  that we denote by $\mathbb P _{\mu^\epsilon}$ the probability measure in the Skorohod space denoted by $\mathcal D([0,T], \Lambda_N)$ induced by the  Markov process $\{\eta_{t}\}_{t\geq{0}}$ and the initial probability measure $\mu^\epsilon$ and  $\mathbb E^\epsilon _{\mu^\epsilon}$ denotes the expectation with respect to $\mathbb P_{\mu^\epsilon}$ and that $
\rho^\epsilon_t(x)=\mathbb{E}^{\epsilon}_{\mu^\epsilon}[\eta_{t}(x)]$.
\begin{definition}[Density fluctuation field]\label{def:Density fluctuation field}
We define the density fluctuation field $  Y_\cdot^\epsilon$ as the time-trajectory of linear functionals acting on functions $H\in\mathbb S$ as
\begin{equation}\label{density field}
Y^\epsilon _t(H)\;=\;\sqrt \epsilon\sum_{x\in\Lambda_N}H_t(\epsilon x)
\Big(\eta_t(x)-\rho^\epsilon_t(x)\Big).
\end{equation}
\end{definition}

\noindent
For each $\epsilon>0$, let $\mathbb{Q}^{\epsilon}_{\mu^{\epsilon}}$ be the probability measure on $\mathcal{D}([0,T],\mathbb{S}')$ induced by the
density fluctuation field $Y_{\cdot}^{\epsilon}$
and the measure $\mu^{\epsilon}$. 
For every $n$ positive integer,
\begin{equation}\label{eq:correlation}
C_t^{n,\epsilon}(x_{1},\dots,x_{n}):=\mathbb{E}^{\epsilon}_{\mu^\epsilon}\Big[\Big(\eta_t(x_1)-\rho^\epsilon_t(x_1)\Big)\dots\Big(\eta_t(x_n)-\rho^\epsilon_t(x_n)\Big)\Big]
\end{equation}

Moreover, we assume that   \begin{center}
{\bf Assumption 1.} The initial measure $\mu^{\epsilon}$ is a product measure that satisfies 
\begin{equation}\label{Ass1}
\rho^\epsilon_0(x)=\mathbb E^\eps_{\mu^{\epsilon}}[\eta(y)]=u_0(\epsilon y), \textrm{\;for all\;} y\in \Lambda_N.
\end{equation}
\end{center} 
Above  $u_0:[-1,1]\to[0,1]$ is the initial condition of  \eqref{eq:linearized} and we assume that it is a smooth profile. 
In contrast to the weaker assumption on the initial state given in \eqref{initial}, last assumption is essential for proving our results, i.e., Theorem \ref{thm:vestimate2} and \ref{thm:vestimate3}.
\begin{remark}\label{rem:v_1_null}
    From \textbf{Assumption 1}, we conclude that $\rho_0^\eps(y)=\rho_\eps(0,y)$ for all $y\in \Lambda_N.$
\end{remark}
One of the consequences of this assumption is that we are able to control  the discrete gradient of the solution of \eqref{eq:linearized} as stated in the next lemma, whose proof is postponed to Appendix \ref{ap:useful_estimates}. This is a more refined estimate with respect to  (3.8)-(3.9) in \cite{de2012truncated}.
To establish the  estimates for various space–time dependent quantities in this work, we must address two main difficulties with respect to previous works.  The first one is to show that those quantities are of order $\eps$ or $\eps^{1+\zeta}$, $\zeta>0$.  This is needed in order to have a closure of the equation and identify the limit.  The second is to retain a time exponent so that the resulting expressions remain integrable after integrating with respect to time. By imposing {\textbf{Assumption 1}}, the second difficulty can be resolved, providing a time exponent strictly greater than $-1$ and thus integrable close to 0. The same assumption is imposed in \cite{de1989weakly} and in Theorem 2 of \cite{de2011current} and in Theorem 2.1 in \cite{de2012non}.

    \begin{lemma}\label{lem: difference of rhos}
Let {$x\in \{-N,\dots, N-1\}$} and $0\leq t\leq T$. Under {\bf Assumption 1}, {for any $\eps<1$, } there exists a constant $C$ such that 
\begin{eqnarray}
|\rho_\epsilon(t,x)-\rho_\epsilon(t,x+1)|&\leq&{c_T\epsilon}+\epsilon^{-1}\frac{j}{2}\int_{0}^t d\lambda \frac{1}{\sqrt{\epsilon^{-2}\lambda}+1}\sum_{z\in I_\pm}G_{\epsilon^{-2}\lambda}(x,z).
\end{eqnarray}
\end{lemma}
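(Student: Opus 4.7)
I would base the proof on the Duhamel representation associated with the linear equation \eqref{eq:linearized}. Write $G^N_\tau$ for the transition kernel at microscopic time $\tau$ of the continuous-time random walk on $\Lambda_N$ with generator $\tfrac12\Delta$ (the discrete Laplacian with reflecting boundary conditions defined in \eqref{def_laplacian}). Then \eqref{eq:linearized} is equivalent to
\begin{equation*}
\rho_\epsilon(t,x) \;=\; \sum_{y\in\Lambda_N} G^N_{\epsilon^{-2}t}(x,y)\,u_0(\epsilon y)
\;+\;\epsilon^{-1}\tfrac{j}{2}\int_0^t\sum_{y\in\Lambda_N} G^N_{\epsilon^{-2}(t-s)}(x,y)\,B(y,s)\,ds,
\end{equation*}
where $B(y,s):=\mathbf{1}_{y\in I_+}D_+\rho_\epsilon(y,s) - \mathbf{1}_{y\in I_-}D_-\rho_\epsilon(y,s)$ is supported in $I_+\cup I_-$ and, because $\rho_\epsilon\in[0,1]$ and by the product form of $D_\pm$ in \eqref{eq:boundary_operators}, satisfies $|B(y,s)|\le 1$. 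Applying the discrete gradient $\nabla_x f(x):=f(x+1)-f(x)$ to both sides decomposes $\rho_\epsilon(t,x+1)-\rho_\epsilon(t,x)$ into an initial-data piece and a source piece which I estimate separately.

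\textbf{Initial-data piece.} For the first piece I would use a coupling argument. Let $(X^x_\tau,X^{x+1}_\tau)_{\tau\ge 0}$ be two continuous-time random walks on $\Lambda_N$ with reflecting boundary conditions, started at $x$ and $x+1$, coupled so that $|X^{x+1}_\tau-X^x_\tau|\le 1$ for every $\tau\ge 0$; such a coupling exists, e.g.~by driving both walks from a common Poisson clock and merging them at the first meeting time. Since by \textbf{Assumption 1} the macroscopic profile $u_0$ is smooth on $[-1,1]$,
\begin{equation*}
\bigl|\nabla_x\bigl(G^N_{\epsilon^{-2}t} u_0(\epsilon\cdot)\bigr)(x)\bigr|
\;=\;\bigl|\mathbb E\bigl[u_0(\epsilon X^{x+1}_{\epsilon^{-2}t})-u_0(\epsilon X^x_{\epsilon^{-2}t})\bigr]\bigr|
\;\le\;\epsilon\,\|u_0'\|_\infty,
\end{equation*}
uniformly in $t\in[0,T]$. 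This yields the contribution $c_T\epsilon$ in the statement (with $c_T=\|u_0'\|_\infty$ in fact independent of $T$).

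\textbf{Source piece and main obstacle.} For the Duhamel integral I would invoke the discrete gradient estimate on the reflecting kernel,
\begin{equation*}
\bigl|G^N_\tau(x+1,z)-G^N_\tau(x,z)\bigr|\;\le\;\frac{C}{\sqrt{\tau}+1}\,G^N_\tau(x,z),\qquad \tau\ge 0,\ x,z\in\Lambda_N.
\end{equation*}
Inserting $|B(y,s)|\le 1$, restricting the $y$-sum to $I_+\cup I_-$, and performing the change of variables $\lambda:=t-s$ produces exactly the second term in the claimed bound, up to absorbing $C$ into the prefactor. The main technical point is the gradient estimate for $G^N_\tau$ uniformly up to the boundaries $\pm N$; I would establish it by the method of images, writing $G^N_\tau(x,y)$ as a symmetric sum of the free $\mathbb Z$-kernel $G^{\mathbb Z}_\tau$ over the reflected images of $y$, and then applying the well-known free gradient bound $|G^{\mathbb Z}_\tau(x+1,z)-G^{\mathbb Z}_\tau(x,z)|\le C(\sqrt{\tau}+1)^{-1}G^{\mathbb Z}_\tau(x,z)$ term-by-term. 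Combining the two pieces completes the proof.
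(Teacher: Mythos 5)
Your proof is correct, but it takes a genuinely different route from the paper's. The paper introduces $h_\epsilon(x,t)=\rho_\epsilon(x,t)-\rho_t(\epsilon x)$, the difference with the \emph{continuous} hydrodynamic solution, uses smoothness of $\rho_t$ to bound $|\rho_t(\epsilon x)-\rho_t(\epsilon(x+1))|\le C\epsilon$, and then applies Duhamel to the equation for $h_\epsilon$, which has zero initial datum (this is where \textbf{Assumption 1} is used there) but acquires an extra forcing term $F_\epsilon=\tfrac12(\Delta_\epsilon-\Delta)\rho_t$ of sup-norm size $O(\epsilon^2)$; that forcing contributes the $\epsilon^2 t$ part of $c_T\epsilon$. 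You instead apply Duhamel directly to $\rho_\epsilon$ with its discrete initial datum $u_0(\epsilon\cdot)$ and control the gradient of the homogeneous term by a coupling of reflected walks started at neighbouring sites --- this sidesteps $F_\epsilon$ entirely and uses \textbf{Assumption 1} through the smoothness of $u_0$ rather than through the vanishing of $h_\epsilon(\cdot,0)$. This is important: one cannot simply apply the kernel gradient estimate \eqref{N4.6 additional} to the initial-data piece, since that would produce $(\sqrt{\epsilon^{-2}t}+1)^{-1}$, which is not $O(\epsilon)$ for small $t$; your coupling observation is exactly what fixes this, and you flagged it correctly. The treatment of the boundary source term coincides in both proofs (the discrete-gradient kernel bound \eqref{N4.6 additional}, which the paper records and which you re-derive by images). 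Net effect: your decomposition is slightly cleaner and even gives a $T$-independent constant for the $O(\epsilon)$ term, at the cost of having to justify the monotone coupling for the reflected walk, which is standard.
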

Above $G_{\epsilon^{-2}\lambda}(x,z)$ denotes the heat kernel given in \eqref{a3.6???}.
Let  $\mathcal  D([0,T],\mathbb S')$ (resp. $\mathcal  C([0,T], \mathbb  S')$) be the space of cadlag (continuous) trajectories taking values in $\mathbb  S'$. For each $\epsilon>0$, let  $\mathbb Q_\epsilon$ be the probability measure on $\mathcal  D([0,T],\mathbb  S')$  induced by the density fluctuation field $ Y^\epsilon$ and the measure $\mu^\epsilon$.

\begin{definition}\label{def:OU}
Let $\rho_t$ be the solution of the hydrodynamic equation \eqref{eq:Robin_equation}. We sat that a stochastic process ${Y}$ in the space of continuous distributions $\mathcal{C}([0, T], \mathbb{S}')$   is the  formal solution of  the equation:
\begin{equation} \label{O.U.}
\partial_t {Y}_t=\frac 12\partial_u^2{Y}_tdt+\sqrt{\chi(\rho_t)}\nabla W_t\,,
\end{equation}
where $ W_t$ is a space-time white noise of unit variance and $\chi(u)=u(1-u)$, if:
\begin{enumerate}[i)] 
    \item for every $H\in \mathbb{S}$, the stochastic processes $\{{M}_t(H), t\in[0, T]\}$ and $\{{N}_t(H), t\in[0, T]\}$ given by
    \begin{equation}\label{eq:martingale_problem}
        \begin{split}
        &{M}_t(H):={Y}_t(H)-{Y}_0(H)-\int_0^t{Y}_s\left(\Delta H\right)ds,
        \\&{N}_t(H):={M}_t(H)^2-t\left\|\nabla H\right\|_{L^{2} (\rho_t)}^2
        \end{split}
    \end{equation}
    are $\mathcal{F}_t$-martingales, where $\mathcal{F}_t:=\sigma\{{Y}_s(H): s\le t, H\in\mathbb{S}\}$,
    \item ${Y}_0$ is a mean-zero Gaussian field with covariance given, on $H, G\in\mathbb{S}$, by
    \begin{equation}\label{eq:Y_covariance}
        \mathbb E\left[{Y}_0(H){Y}_0(G)\right]=\sigma(H,G).
    \end{equation}
\end{enumerate}
Above, for $H,G\in\mathbb S$ we denote the norm  $\|\cdot\|_{L^2(\rho_t)}$ associated to the inner product defined by 
\begin{equation}\label{eq:norm}
\begin{split}
 \langle H_t,G_t\rangle_{\mathbb L^2(\rho_t)}=&\int_0^1\chi(\rho(u,t)) \partial_u H_t(u) \partial_uG_t(u)du \\
  +&\widetilde D_+(\rho(1,t))H_t(1)G_t(1)+ \widetilde D_-(\rho(-1,t))H_t(-1)G_t(-1),
\end{split}
\end{equation}
where $\rho_t(u)$ is the solution of the hydrodynamic equation.
\end{definition}
Now we comment about the uniqueness in law of the solution of the martingale problem given above. From the proof of Theorem \ref{thm:non_eq_flu}, which involves a compactness argument, we will prove the existence of a process that solves that martingale problem. In order to conclude the full convergence result we need to establish the uniqueness in law of such process. We do not present the complete proof of the uniqueness but we outline the strategy to be pursued, which follows similar steps to those of the proof of Proposition 5.1 of \cite{franco2019non}.  The idea is to frame the PDE \eqref{eq:Robin_equation_line} as a regular problem in the classical Sturm-Liouville theory. This is possible given the fact that the solution of the hydrodynamic equation is a continuous function of time, as discussed in Section  2.4, formula (12), in \cite{de2011current}. We can then obtain a similar result to that of Lemma  5.2 of \cite{franco2019non} which is a consequence of Proposition 3.1 and Corollaries 3.2 and 3.3 of \cite{franco2019non}. We leave the details to the reader.

\begin{theorem}[Non-equilibrium fluctuations: the Ornstein-Uhlenbeck limit]\label{thm:non_eq_flu}
\quad

The sequence $\{\mathbb Q_\epsilon\}_{\epsilon}$ converges, as $\epsilon\to 0$, to a generalized Ornstein-Uhlenbeck process  solution of \eqref{O.U.}. As a consequence, the covariance of the limit field ${Y}_t$ is given on $H,G\in{\mathbb  S}$ by
\begin{equation}\label{covariance non eq limit field}
 E\,[{Y}_t(H){Y}_s(G)]\;=\;\sigma(T_tH,T_sG)+\int_0^s\langle  T_{t-r} H,  T_{s-r}G\rangle_{\mathbb L^2(\rho_r)}dr,
\end{equation}
where $$\sigma(T_t F, T_s G):=\int_0^1\chi(u_0(u)) T_tf(u)T_sg(u) du,$$   $T_t$ is the solution of \eqref{eq:Robin_equation_line},  $\chi(u)=u(1-u)$ and 
 for $r>0$  and  $H,G\in\mathbb S$ the inner product $\langle H_s,G_s\rangle_{\mathbb L^2(\rho_r)}$ was defined above and  $\rho(u,r)$ is the solution of the hydrodynamic equation.
\end{theorem}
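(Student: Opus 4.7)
The proof follows the classical martingale method: I will (i) prove tightness of $\{\mathbb{Q}_\epsilon\}$ in $\mathcal{D}([0,T],\mathbb{S}')$, (ii) show every limit point solves the martingale problem \eqref{eq:martingale_problem}, and (iii) invoke the Sturm-Liouville based uniqueness outlined after Definition \ref{def:OU} to obtain full convergence of $\mathbb{Q}_\epsilon$ to the law of the Ornstein-Uhlenbeck process. Once this convergence is established, the covariance identity \eqref{covariance non eq limit field} follows from the standard Duhamel representation for the limit OU process together with It\^o's isometry, the initial covariance \eqref{eq:Y_covariance}, and a routine computation.

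For the identification of limit points, I apply Dynkin's formula to $t\mapsto Y^\epsilon_t(H_t)$ for $H\in\mathbb{S}$, producing a martingale $M^\epsilon_t(H)$ with integrand $(\partial_s+L_\epsilon)F(s,\eta_s)$, where $F(s,\eta):=\sqrt{\epsilon}\sum_{x} H_s(\epsilon x)(\eta(x)-\rho^\epsilon_s(x))$. The stirring part $L_0$ produces, after two summations by parts, the discrete Laplacian of $H_s$ acting on the centered variables, which converges via the hydrodynamic limit (Theorem \ref{hydrodynamiclimit}) and the $C^{1,2}$ regularity of $H$ to $\int_0^t Y_s(\tfrac12\partial_u^2 H_s)\,ds$; this summation by parts also leaves boundary remainders supported at $\pm N$ proportional to $\partial_u H_s(\pm 1)\,\eta_s(\pm N)$, which must be matched against the contribution of $L_{b,\pm}$.

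The boundary generators $L_{b,\pm}$, multiplied by the prefactor $\sqrt{\epsilon}\cdot\epsilon^{-1}$, produce sums over $x\in I_\pm$ of $H_s(\epsilon x)[D_\pm\eta_s(x)-\mathbb{E}^\epsilon_{\mu^\epsilon}D_\pm\eta_s(x)]$. Expanding $D_\pm\eta(x)$ in powers of the centered variables $\eta_s(y)-\rho_\epsilon(y,s)$ (which coincide with $\eta_s(y)-\rho^\epsilon_s(y)$ up to the small difference controlled by Lemma \ref{lem: difference of rhos}) and using a Taylor expansion of $\widetilde D_\pm$ around $\rho(\pm 1,s)$, the degree-one contribution yields a boundary term proportional to $\widetilde D'_\pm(\rho(\pm 1,s))\,H_s(\pm 1)\,(\eta_s(x)-\rho_\epsilon(x,s))$. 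Combined with the Laplacian boundary remainders from the previous paragraph, this term cancels exactly under the boundary conditions \eqref{eq:bc_test_function}; this is precisely the design principle of $\mathbb{S}$. The remaining contributions are of degree $\geq 2$ in the centered variables, i.e. space and space--time $v$-functions; the sharp estimates announced in the abstract (order $\epsilon$ for degree $2,3$ and order $\epsilon^{1+\zeta}$ for degree $\geq 4$), applied after multiplication by $\sqrt{\epsilon}$ and summation on $I_\pm\times[0,t]$, show that they vanish in $L^2(\mathbb{P}_{\mu^\epsilon})$. This is the main technical obstacle and is the central motivation for the refined $v$-function bounds developed in the paper.

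Tightness follows from Mitoma's criterion, which reduces to tightness of $\{Y^\epsilon_\cdot(H)\}_\epsilon$ for each fixed $H\in\mathbb{S}$; Aldous's criterion then applies thanks to the decomposition above and standard $L^2$ bounds on the quadratic variation $\langle M^\epsilon(H)\rangle_t$. Computing the latter via the carr\'e du champ, the bulk part converges, using the hydrodynamic limit and the $v_2$ estimate to replace nearest-neighbor products of $\eta$ by $\chi(\rho(u,s))$, to $\int_0^t\!\int_{-1}^1\chi(\rho(u,s))(\partial_u H_s(u))^2du\,ds$, while the boundary contributions produce the $\widetilde D_\pm(\rho(\pm 1,s))H_s(\pm 1)^2$ terms, together matching \eqref{eq:norm}. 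Convergence of the initial field $Y^\epsilon_0$ to a mean-zero Gaussian with covariance $\sigma$ is a standard CLT for independent sums, valid thanks to the product structure of $\mu^\epsilon$ in \textbf{Assumption 1}.
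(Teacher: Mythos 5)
Your proposal follows the same overall strategy as the paper: Dynkin's formula to produce the martingale decomposition, summation by parts to extract the discrete Laplacian plus boundary remainders, cancellation of the degree-one boundary contribution against those remainders via the boundary conditions \eqref{eq:bc_test_function} (after taking $\psi_s=T_{t-s}H$), vanishing of the higher-degree contributions through the refined space and space--time $v$-function bounds, and tightness via Mitoma's criterion. One correction worth flagging: the difference between the two centerings $\eta_s(y)-\rho^\epsilon_s(y)$ and $\eta_s(y)-\rho_\epsilon(y,s)$ equals $v_1^\epsilon(y,s)$ (cf.~\eqref{eq:important_1}) and is controlled by Theorem~\ref{thm:vestimate2} with $n=1$, not by Lemma~\ref{lem: difference of rhos}; that lemma instead bounds the discrete spatial gradient $|\rho_\epsilon(t,x)-\rho_\epsilon(t,x+1)|$, which the paper uses for the term \eqref{diff_of_rho_epsilon} of the boundary decomposition. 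Also minor: the paper establishes tightness of the integral terms via the Kolmogorov--Centsov criterion and treats the martingale piece separately through convergence à la Jacod--Shiryaev rather than via Aldous; both devices work, but you should be aware the decomposition into martingale + integral part is needed because Kolmogorov--Centsov applies to continuous processes.
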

\subsection{Estimates on $v$-functions}

The proof of Theorem \ref{thm:non_eq_flu} relies on the analysis of the auxiliary martingales defined in \eqref{martingaleM}--\eqref{quadratic}. In this context, the space  $v$-function defined below naturally arises as it controls  the difference between  the expectation of $\eta_t(x)$ and
$\rho_\eps(x,t)$. Obtaining appropriate estimates for this function is therefore essential. As we explain later in this section, determining suitable bounds for the $v$-function constitutes a crucial step in the overall analysis.
\begin{definition}\label{definitionvestimate}We fix an initial product measure $\mu^{\epsilon}$ on  $\Lambda_N$.
Suppose that the process $\{\eta_{t}\}_{t\geq 0}$ starts with $\mu_{\epsilon}$ and $\rho_{\epsilon}(t,x) $ is the solution of \eqref{eq:linearized}. We define the space  $v$-function as:
\begin{equation}\label{def: v function original}
v_{n}^{\epsilon}(\underline{x},t|\mu^{\epsilon}):=\mathbb{E}^{\epsilon}_{\mu^\epsilon}\left[\prod_{i=1}^{n}\left(\eta_{t}(x_i)-\rho_{\epsilon}(x_{i},t)\right)\right],\quad \underline{x}\in\Lambda_{N}^{n,\neq},n\geq1
\end{equation}
where $\Lambda_{N}^{n,\neq}$ is the set of all sequences $(x_1,\dots,x_n)\in \Lambda_{N}^{n}$ with $x_i\neq x_j$. For brevity, we
shall write $v_{n}^{\epsilon}(\underline{x},t)\equiv v_{n}^{\epsilon}(\underline{x},t|\mu^{\epsilon})$. When $\underline{x}=x$ then 
\begin{equation} \label{eq:important_1}
v_{1}^{\epsilon}(x,t):=\rho_t^{\epsilon}(x)-\rho_{\epsilon}(x,t).
\end{equation}
and when $\underline{x}=\und\emptyset$ is the empty configuration (i.e., no particles in the system), then 
\begin{equation}\label{v:emptyset}
v_0^{\eps}(\und \emptyset,t)=1.
\end{equation}
\end{definition}
We observe that as a consequence of  Remark \ref{rem:v_1_null} we have  $v^\eps_1(x,0)\equiv 0$. This will play an important role in what follows.

The notation $v_{n}^{\epsilon}(\underline{x},t)$ with $\und x \in
\La_N^{n,\neq}$ is adopted for convenience when working with the duality and the labeled process (Definition \ref{Def:labeled process} in Appendix \ref{app: derivation of II for space v}). However, $v_{n}^{\epsilon}(\underline{x},t)$ can equivalently be written as
$v_{n}^{\epsilon}(X,t)$, with $X$ a non-empty subset of $\La_N$ since it is symmetric with respect to permutations of the coordinates in $\underline x$. Throughout the paper, we will interchange between the two definitions whenever it simplifies the presentation.

A key result on the space correlation $v$-function, is the following estimate for $v_{n}^{\epsilon}(x,t)$  proved in \cite{de2011current} which depends on the number of particles $n$. 
\begin{theorem}\cite [Theorem 2.1] {de2012truncated}\label{vestimate1}
Let  $K$ be a fixed  integer number such that $K\geq 2$ as given in  Section \ref{sec:model}. Then, there exist $\tau>0$ such that for any $0<c^*<\frac{1}{4(K+2)}$ the following holds: For any $\beta^*>0$ and for any positive integer $n$, there is a constant $c_n<\infty$, so that for every $\epsilon>0$, any initial product measure $\mu^{\epsilon}$,
\begin{equation}\label{v}
\sup_{\underline{x}\in\Lambda_{N}^{n,\neq}}|v_{n}^{\epsilon}(\underline{x},t|\mu^{\epsilon})|\leq
\begin{cases}
c_n(\epsilon^{-2}t)^{-c^* n}, & \text{for } t\leq\epsilon^{\beta^*}\\
c_n\epsilon^{(2-\beta^*)c^* n}, & \text{for } \epsilon^{\beta^*}\leq t\leq\tau\log\epsilon^{-1}.
\end{cases}
\end{equation}
\end{theorem}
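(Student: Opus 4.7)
The plan is to derive a closed evolution hierarchy for the family $\{v_n^\eps\}_{n \geq 1}$, interpret it as a branching labeled stirring process, and estimate the resulting iterated Duhamel expansion by Gaussian heat kernel bounds. To produce the hierarchy, I would differentiate $v_n^\eps(\und x, t)$ in time using Kolmogorov's forward equation applied to $\prod_{i=1}^n (\eta_t(x_i) - \rho_\eps(x_i, t))$, together with the PDE \eqref{eq:linearized} satisfied by $\rho_\eps$. The stirring part of $L_\eps$ produces a labeled stirring action on $\und x$ (a nearest-neighbor random walk on $\La_N^{n,\neq}$ with exchange/reflection at coincidences), while the boundary generators $\eps^{-1} L_{b,\pm}$, whose rates are of degree $K$, contribute terms involving centered products over up to $n+K-1$ sites that meet $I_\pm$. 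After the contribution from the linearization in \eqref{eq:linearized} cancels the ``diagonal'' boundary term, one obtains a closed hierarchy of the schematic form
\begin{equation*}
\partial_t v_n^\eps(\und x, t) \;=\; \mathcal A^{(n)}_\eps v_n^\eps(\und x, t) + \mathcal B^{(n)}_\eps\big(v_n^\eps, v_{n+1}^\eps, \dots, v_{n+K-1}^\eps\big)(\und x, t),
\end{equation*}
where $\mathcal A^{(n)}_\eps$ generates the labeled stirring dynamics and $\mathcal B^{(n)}_\eps$ is a boundary operator of overall scale $\eps^{-1}$, supported on configurations meeting $I_\pm$. Crucially, the product initial condition forces $v_n^\eps(\und x, 0) = 0$ for every $n \geq 1$, so the entire signal is generated by boundary branching.

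Next, I would apply Duhamel's formula and iterate. Writing $P_t^{(n)}$ for the semigroup generated by $\mathcal A^{(n)}_\eps$ and unfolding the hierarchy, $v_n^\eps(\und x, t)$ admits a formal expansion as a sum over ``branching trees'': the root is $(\und x, t)$, each internal vertex corresponds to a boundary branching event, and the subtree issued at a child evolves a new (larger) collection of labels backward in time. Each branching vertex contributes a factor $\eps^{-1}$ from the scaled birth/death rate together with a transition density of the labeled stirring process evaluated at a boundary point; all leaves carry the vanishing initial value $v_m^\eps(\cdot, 0) = 0$, so only trees containing at least one branching actually survive.

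The third step is to estimate each tree using Gaussian kernel bounds. Via Harris-type correlation inequalities, the labeled stirring kernel $p_s^{(n)}(\und x, \und y)$ is dominated by a tensor product of one-particle reflected heat kernels on $\{-N,\dots,N\}$; consequently the probability that some labeled coordinate lies in $I_\pm$ at time $s$ is bounded by $CK(\sqrt{\eps^{-2}s}+1)^{-1}$. Combined with the $\eps^{-1}$ branching factor this yields, for $s$ not too small, an integrable $s^{-1/2}$-singularity. Careful time integration produces a factor of order $(\eps^{-2} t)^{-c^*}$ per branching event, for some $c^* = c^*(K) < 1/(4(K+2))$; the strict inequality is needed to absorb the entropy from summing over tree topologies. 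For $t \leq \eps^{\beta^*}$ one keeps the full time dependence and recovers the short-time bound $c_n(\eps^{-2}t)^{-c^* n}$, while for $\eps^{\beta^*} \leq t \leq \tau \log \eps^{-1}$ one saturates the per-branching factor at $t = \eps^{\beta^*}$, obtaining $c_n \eps^{(2-\beta^*)c^* n}$, with $\tau$ small enough that the geometric series in tree depth converges.

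The main obstacle is controlling the combinatorial growth of the tree expansion simultaneously with the growth of the label count: each branching adds up to $K-1$ labels, so a tree of depth $m$ involves $n + m(K-1)$ coordinates and the number of topologies grows factorially in $m$. Ensuring that the heat-kernel decay beats this entropy is what forces the threshold $c^* < 1/(4(K+2))$ and the logarithmic time horizon $\tau \log \eps^{-1}$. A further delicate point is the treatment of coincidences in the labeled stirring process, where exclusion introduces reflection corrections relative to the free walks; these are handled by comparison arguments at the price of constants $c_n$ growing with $n$, which must ultimately be dominated by the exponential-in-$n$ decay produced by the branching expansion.
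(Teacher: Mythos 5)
This theorem is cited from \cite{de2012truncated} rather than proved in the present paper, and your sketch is a faithful reconstruction of the strategy used there (following \cite{demasi2006mathematical}): derive a hierarchy for $\{v_n^\eps\}$ from Kolmogorov's forward equation and the linearized discrete PDE for $\rho_\eps$, observe that the product initial data forces $v_n^\eps(\cdot,0)=0$ so all signal comes from boundary branching, iterate Duhamel to get an expansion indexed by branching histories, and estimate each term using Liggett's inequality and one-particle reflected heat-kernel bounds. This is precisely the framework the present paper recalls and refines in Section~\ref{integral inequalities for space  v-function} and Appendices~\ref{app: derivation of II for space v}--\ref{secN6}.

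One point worth adjusting in your bookkeeping: you describe the exponent in $(\eps^{-2}t)^{-c^*n}$ as arising from a factor $(\eps^{-2}t)^{-c^*}$ per branching vertex, but the exponent tracks the fixed number of coordinates $n$, not the tree depth. The way \cite{de2012truncated} actually closes the argument (and the way this paper closes the refined bounds in Section~\ref{proof of main theorem}) is by positing the ansatz $|v_m^\eps|\lesssim c_m(\eps^{-2}t)^{-c^*m}$ simultaneously for all $m$ in the coupled Duhamel inequalities, then verifying self-consistency; the threshold $c^*<\tfrac{1}{4(K+2)}$ is what makes this self-consistent inequality contractive (balancing the $\eps^{-1}$ boundary rate, the Gaussian gain $(\eps^{-2}\lambda)^{-p/2}$ from $p$ labels hitting $I_\pm$, and the up-to-$K$ new labels), rather than a pure tree-entropy count. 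Your branching picture is the right intuition, but without the inductive-ansatz step the $n$-dependence of the exponent does not come out of a per-branching factor as stated. Also, the Gaussian domination of the labeled stirring kernel goes by Liggett's inequality (as in \eqref{eq:psi_n}) rather than Harris-type correlation inequalities, though the conclusion is the same.
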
 
The next corollary is direct consequence of Theorem \ref{vestimate1} and the proof is given in Appendix \ref{proof 13.1.111}.
\begin{corollary}\label{lem:v-estimate for small times}
Let $\beta^*<2$ and $c^*<\frac{1}{4(K+2)}$. There exists a positive constant $c$ such that the following estimate holds
\begin{equation}
\sup_{0<t\leq T}\sup_{\underline{x}\in\Lambda_N^{n,\neq}}|v^{\epsilon}_{n}(\underline{x},t|\mu^{\epsilon})|\leq c\epsilon^{(2-\beta^*)c^*n},\quad n\geq1.
\end{equation}
\end{corollary}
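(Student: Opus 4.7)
The plan is to split the interval $(0,T]$ at $t=\epsilon^{\beta^*}$: on the upper piece $[\epsilon^{\beta^*},T]$ I would apply Theorem \ref{vestimate1} directly, while on the lower piece $(0,\epsilon^{\beta^*})$ the first case of that theorem diverges as $t\to 0$ and is therefore useless, so I would instead exploit \textbf{Assumption 1} to propagate the desired bound from a vanishing initial condition.

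For $t\in[\epsilon^{\beta^*},T]$ I first note that, since $T$ is fixed and $\tau\log\epsilon^{-1}\to\infty$ as $\epsilon\to 0$, for all sufficiently small $\epsilon$ we have $T\leq\tau\log\epsilon^{-1}$ and hence $[\epsilon^{\beta^*},T]\subset[\epsilon^{\beta^*},\tau\log\epsilon^{-1}]$. The second case of Theorem \ref{vestimate1} then gives
\[
\sup_{\und x\in\Lambda_N^{n,\neq}}|v_n^\epsilon(\und x,t)|\leq c_n\,\epsilon^{(2-\beta^*)c^*n},
\]
which matches the target after absorbing $c_n$ into a constant $c$ allowed to depend on $n$ and $T$.

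For $t\in(0,\epsilon^{\beta^*})$ I would use \textbf{Assumption 1} as follows. Because $\mu^\epsilon$ is a product measure and, by Remark \ref{rem:v_1_null}, $\rho_\epsilon(x,0)=\rho_0^\epsilon(x)=u_0(\epsilon x)$, the centered variables $\eta_0(x_i)-\rho_\epsilon(x_i,0)$ are independent and mean-zero under $\mu^\epsilon$. Hence, for every $n\geq 1$ and every $\und x\in\Lambda_N^{n,\neq}$,
\[
v_n^\epsilon(\und x,0)=\prod_{i=1}^n\mathbb E_{\mu^\epsilon}\bigl[\eta_0(x_i)-\rho_\epsilon(x_i,0)\bigr]=0.
\]
I would then write $v_n^\epsilon(\und x,t)$ in the integral (Duhamel) form associated with its discrete evolution equation: the homogeneous part vanishes because of the zero initial data, so $v_n^\epsilon(\und x,t)$ reduces to a time integral over $[0,t]$ of a source term which couples $v_n^\epsilon$ to $v$-functions of equal or higher degree through the boundary generators, with coefficients controlled by Lemma \ref{lem: difference of rhos}. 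Bounding the integrand by the first-case estimate $(\epsilon^{-2}s)^{-c^*m}$ of Theorem \ref{vestimate1}, the explicit computation $\int_0^t(\epsilon^{-2}s)^{-c^*m}\,ds\sim \epsilon^{2c^*m}\,t^{1-c^*m}$ (integrable at $s=0$ since $c^*m<1$ for the relevant $m$) together with the short length $t\leq\epsilon^{\beta^*}$ of the integration interval supply exactly the missing powers of $\epsilon$ needed to reach $c\,\epsilon^{(2-\beta^*)c^*n}$.

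The main obstacle is precisely this short-time step: one must convert the vanishing of $v_n^\epsilon(\cdot,0)$ into a quantitative bound of the prescribed order, which requires the Duhamel integrand to be integrable near $s=0$ in spite of the $(\epsilon^{-2}s)^{-c^*m}$ divergence, and the resulting powers of $\epsilon$ to combine into exactly $(2-\beta^*)c^*n$. A finite induction on the degree $n$ appears to be the cleanest way to close the estimate, since the source produced by the boundary generators couples $v_n^\epsilon$ to $v$-functions of degree up to $n+K$, so higher-degree estimates from Theorem \ref{vestimate1} must be fed back into the Duhamel bound in a controlled manner.
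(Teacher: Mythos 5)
Your high-level decomposition matches the paper's: split $(0,T]$ at $t = \epsilon^{\beta^*}$; on $[\epsilon^{\beta^*},T]$ apply the second case of Theorem~\ref{vestimate1} directly (using that $T \leq \tau\log\epsilon^{-1}$ once $\epsilon$ is small enough); and on $(0,\epsilon^{\beta^*})$ exploit that $v_n^\epsilon(\cdot,0)=0$ through the Duhamel/integral form of the evolution equation. Where the paper differs is in how the short-time bound is actually closed: it takes the supremum over $t\in(0,\epsilon^{\beta^*}]$ on \emph{both} sides of the integral inequality (5.15)--(5.16) of \cite{de2012truncated} --- keeping $\sup_s|v_j^\epsilon(\cdot,s)|$ abstract inside the integrand --- and then solves the resulting closed-loop inequality by rerunning the iteration of Theorem~6.1 of \cite{de2012truncated} at the fixed end time $\epsilon^{\beta^*}$.

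Your proposal to plug the first-case bound $(\epsilon^{-2}s)^{-c^*m}$ into the integrand and close by induction on $n$ does not go through as written, for three reasons. First, the Duhamel integrand is $(C_\epsilon v_n^\epsilon)(\und x(\lambda),t-\lambda)$, which carries $\epsilon^{-1}$ and $\epsilon^{-2}$ prefactors from \eqref{12aa} and the kernel factors $\psi_p^\epsilon(\lambda)$ from \eqref{13}; the schematic computation $\int_0^t(\epsilon^{-2}s)^{-c^*m}ds\sim\epsilon^{2c^*m}t^{1-c^*m}$ accounts for none of these, so it does not by itself exhibit the exponent $(2-\beta^*)c^*n$. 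Second, the boundary generator contains a $(p,q)=(1,1)$ scenario (one particle dies at the boundary, one is born) which couples $v_n^\epsilon$ back to $v_n^\epsilon$, so an induction on $n$ is not well-founded; and inserting the capped bound $\min\{1,c_n(\epsilon^{-2}(t-\lambda))^{-c^*n}\}$ into this term against $\psi_1^\epsilon(\lambda)$ leaves an $O(\epsilon)$ remainder from the window $t-\lambda\lesssim\epsilon^2$ where the cap saturates, and $O(\epsilon)$ is \emph{not} $\lesssim\epsilon^{(2-\beta^*)c^*n}$ once $(2-\beta^*)c^*n>1$, which happens for large $n$ since the corollary is claimed for all $n\geq1$. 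Third, the $A^1$ part of $C_\epsilon$ is a \emph{difference} $v_{n-1}^\epsilon(\und x^{(i)},\cdot)-v_{n-1}^\epsilon(\und x^{(j)},\cdot)$: bounding it by $2\sup|v_{n-1}^\epsilon|$ destroys the cancellation controlled through the stopping time $\tau_{i,j}$ that is needed for integrability near $\lambda=t$. The supremum/fixed-point argument sidesteps all three issues --- in particular the self-coupling coefficient $\int_0^{\epsilon^{\beta^*}}\psi_1^\epsilon(\lambda)\,d\lambda\lesssim\epsilon^{\beta^*/2}$ is small and absorbable --- which is exactly why the paper defers to it rather than substituting the known bound.
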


The estimate given in Theorem \ref{vestimate1} proves insufficient for our purposes-specifically, for achieving closure of the auxiliary martingales presented in Section \ref{sec: Auxiliary martingale}. Therefore, refined estimates are provided in Theorems \ref{thm:vestimate2} and \ref{thm:vestimate3}, which form the core contributions of this work. Before we proceed, we note that when $K=1$ the result below is not necessary for the proof of non-equilibrium fluctuations as it was given in \cite{franco2019non,gonccalves2020non}. Nevertheless, when $2\leq K\leq N$ we need a much more refined analysis not only for the closure of the martingale but also to show tightness of the sequence of the fluctuation fields. 
The next corollary is a direct consequence of Corollary \ref{lem:v-estimate for small times} and the proof is omitted.
\begin{corollary}\label{cor:N*}
Let  $K\geq 2$ be a fixed  integer number,  $\zeta>0$ and $0<c^*<\frac{1}{4(K+2)}$. For $\beta^*<2$ and $\eps<1$, there exists $N^*$ positive integer (large enough) satisfying 
\begin{equation}\label{N*}
(2-\beta^*)c^*(N^*+1)>1+\zeta
\end{equation}
and thus
 \begin{equation}\label{est: v for large n}
 \sup_{0<t\leq T}\sup_{\underline{x}\in\Lambda_N^{n,\neq}}|v^{\epsilon}_{n}(\underline{x},t)|\leq c\epsilon^{1+\zeta},\quad n\geq N^*+1.
\end{equation} 
\end{corollary}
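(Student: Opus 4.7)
The plan is to combine the degree-dependent $v$-function bound from Corollary~\ref{lem:v-estimate for small times} with a careful choice of the threshold $N^*$, then use the hypothesis $\epsilon<1$ to convert a sufficiently large power of $\epsilon$ into the desired bound $\epsilon^{1+\zeta}$. The whole argument is arithmetic; no new probabilistic input is needed beyond what Corollary~\ref{lem:v-estimate for small times} already provides.

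First I would observe that under the standing assumptions $\beta^*<2$ and $c^*>0$ the quantity $(2-\beta^*)c^*$ is strictly positive, so the real number $\frac{1+\zeta}{(2-\beta^*)c^*}-1$ is finite. I then define $N^*$ to be any positive integer strictly larger than this value; such an $N^*$ exists and by construction satisfies \eqref{N*}, i.e.\ $(2-\beta^*)c^*(N^*+1)>1+\zeta$. Next, for any $n\ge N^*+1$, I invoke Corollary~\ref{lem:v-estimate for small times} to get
$$\sup_{0<t\leq T}\sup_{\underline{x}\in\Lambda_N^{n,\neq}}|v^{\epsilon}_{n}(\underline{x},t)|\leq c\,\epsilon^{(2-\beta^*)c^*n}.$$
Because $(2-\beta^*)c^*>0$, the map $n\mapsto(2-\beta^*)c^*n$ is increasing, and since $\epsilon<1$ the function $n\mapsto\epsilon^{(2-\beta^*)c^*n}$ is therefore decreasing. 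Combining this monotonicity with \eqref{N*} gives, for every $n\ge N^*+1$,
$$\epsilon^{(2-\beta^*)c^*n}\;\le\;\epsilon^{(2-\beta^*)c^*(N^*+1)}\;\le\;\epsilon^{1+\zeta},$$
and inserting this into the previous display yields \eqref{est: v for large n}.

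There is essentially no obstacle here: all the analytic work has already been absorbed into the degree-dependent estimate of Corollary~\ref{lem:v-estimate for small times}, and the present corollary only repackages that bound in a form more convenient for later use in the fluctuation proof. The one point worth emphasising is the role of $\epsilon<1$: it is precisely this hypothesis that allows a larger exponent on $\epsilon$ to be replaced by the smaller exponent $1+\zeta$, and without it the argument would fail.
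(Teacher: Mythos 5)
Your proof is correct and is exactly the elementary argument the paper has in mind when it says the corollary is a ``direct consequence'' of Corollary~\ref{lem:v-estimate for small times} and omits the proof: choose $N^*$ large enough that the exponent $(2-\beta^*)c^*(N^*+1)$ exceeds $1+\zeta$, then use monotonicity of $n\mapsto\epsilon^{(2-\beta^*)c^*n}$ for $\epsilon<1$ to conclude. One small point worth being aware of, though it does not undermine the argument: the constant $c$ in Corollary~\ref{lem:v-estimate for small times} ultimately traces back to the $n$-dependent constant $c_n$ in Theorem~\ref{vestimate1}, so strictly speaking the resulting constant in \eqref{est: v for large n} may depend on $n$; this is harmless in the paper because the bound is only ever invoked for finitely many degrees $n$ at a time, but if you wanted a genuinely $n$-uniform constant you would need to say something about the growth of $c_n$.
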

Before we proceed we note that the estimates for the space $v$-function given in the next theorem, continue to be insufficient  when $n=1,2$ as we do not get a bound of order $\epsilon$ contrarily  to the case $K=1$, where in  Proposition 8.1 of  \cite{franco2019non} it was derived a bound of order $\epsilon$ for the two point correlation function. That bound was sufficient to derive the non-equilibrium fluctuations of the model in the case $K=1$. Consequently, the next result and all results that follow, are stated for $K\geq 2$. The first result on space $v$-functions can be stated as follows.  
\begin{theorem} [Space correlation $v$-estimate]
	\label{thm:vestimate2}
Let $K\geq 2$ be a fixed  integer number, and $\zeta>0$. For any initial product measure $\mu^{\epsilon}$ satisfying \eqref{Ass1},  and any time  $0<t\leq T$
{\begin{equation}\label{newvestimate}
\sup_{\underline{x}\in\Lambda_{N}^{n,\neq}}|v_{n}^{\epsilon}(\underline{x},t|\mu^{\epsilon})|\lesssim  \mathbf{1}_{n=1,2}\epsilon^{1-2\zeta}+\mathbf{1}_{3\leq n\leq K}\epsilon+\mathbf{1}_{n\geq K+1}\epsilon^{1+\zeta}.\end{equation}
}

\end{theorem}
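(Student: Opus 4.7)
The proof follows the strategy outlined in the introduction: derive a closed discrete evolution equation for $v_n^\eps$, pass to its Duhamel integral form, and perform a downward induction on $n$ anchored at large $n$ by Corollary \ref{cor:N*}. Setting $\xi_t(x) := \eta_t(x)-\rho_\eps(x,t)$ and applying Kolmogorov's forward equation to $v_n^\eps(\und x,t) = \mathbb{E}_{\mu^\eps}^\eps[\prod_{i=1}^{n}\xi_t(x_i)]$, the fact that $\rho_\eps$ solves \eqref{eq:linearized} produces, after some algebra, an equation of the form
\[
\partial_t v_n^\eps(\und x,t) \;=\; \tfrac12\Delta_\eps^{(n)}v_n^\eps(\und x,t) \;+\; \mathcal F_n^\eps(\und x,t),
\]
where $\Delta_\eps^{(n)}$ is the accelerated generator of $n$ labelled stirring walks in $\Lambda_N$ (with reflecting endpoints) and $\mathcal F_n^\eps$ is a forcing term of two types: (i) a boundary coupling carrying a prefactor $\eps^{-1}$, supported on coordinates lying in $I_\pm$, and linear in $v_m^\eps$ for $m$ in a bounded window around $n$ (produced jointly by $L_{b,\pm}$ and by the partial cancellation between the $L_0$-action on the centred product and the PDE satisfied by $\rho_\eps$); and (ii) a collision correction from $L_0$ acting where two labelled particles are adjacent, each term proportional to $\rho_\eps(y+1,t)-\rho_\eps(y,t)$, which is controlled by Lemma \ref{lem: difference of rhos}. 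By Assumption 1 and Remark \ref{rem:v_1_null} one has $v_n^\eps(\und x,0)=0$ for every $n\geq 1$, so Duhamel's formula gives
\[
v_n^\eps(\und x,t) \;=\; \int_0^t \bigl(\mathcal P^n_{t-s}\mathcal F_n^\eps(\cdot,s)\bigr)(\und x)\,ds,
\]
where $\mathcal P^n$ is the semigroup of $\tfrac12\Delta_\eps^{(n)}$, with kernel bounded pointwise by products of one-dimensional discrete Gaussian heat kernels $G_{\eps^{-2}(t-s)}$ with reflecting endpoints.

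\textbf{Downward cascade.} With $N^*=N^*(\zeta)$ as in Corollary \ref{cor:N*}, the base case $n\geq N^*+1$ gives $|v_n^\eps|\lesssim\eps^{1+\zeta}$. For $K+1\leq n\leq N^*$, insert the inductive bound into the boundary part of $\mathcal F_n^\eps$ and use the standard estimate $\sum_{z\in I_\pm}G_{\eps^{-2}(t-s)}(x,z)\lesssim K\eps/\sqrt{t-s+\eps^2}$ followed by $\int_0^t (t-s+\eps^2)^{-1/2}\,ds\lesssim \sqrt t$ to obtain
\[
\Bigl|\int_0^t \bigl(\mathcal P^n_{t-s}[\text{bdry part}]\bigr)(\und x,s)\,ds\Bigr| \;\lesssim\; \eps^{-1}\cdot\eps^{1+\zeta}\int_0^t \frac{\eps}{\sqrt{t-s+\eps^2}}\,ds \;\lesssim\; \eps^{1+\zeta}.
\]
The collision correction is handled by Lemma \ref{lem: difference of rhos}: the bulk piece $c_T\eps$ contributes $O(\eps)$ after time integration, while the boundary tail $\eps^{-1}\int_0^s(\eps^{-2}\lambda)^{-1/2}\sum_{z\in I_\pm}G_{\eps^{-2}\lambda}\,d\lambda$ is absorbed by a second time integration. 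This closes the induction at rate $\eps^{1+\zeta}$ for $n\geq K+1$. For $3\leq n\leq K$ the boundary coupling can reach $v_m^\eps$ with $m\leq K$, for which only the rate $\eps$ is available, so the cascade produces the rate $\eps$.

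\textbf{The cases $n=1,2$ and the main obstacle.} Here the previous cascade yields by itself only a bound of order $\eps$, and a further refinement is needed to match the statement. The strategy is to iterate the Duhamel identity finitely many times, each iteration replacing a term involving some $v_m^\eps$ by one involving $v_{m'}^\eps$ with $m'>m$ and gaining an additional factor of $\sqrt \eps$ or $\eps$; after enough iterations the residual term is bounded using the coarse Corollary \ref{lem:v-estimate for small times} with $\beta^*$ close to but strictly below $2$ and $c^*$ close to but strictly below $1/(4(K+2))$, chosen so that $(2-\beta^*)c^*\, n = 1-2\zeta$ for $n=1,2$. This accounts for the loss of $\eps^{-2\zeta}$ compared with the naive rate $\eps$, yielding the bound $\eps^{1-2\zeta}$. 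The main obstacle throughout is the time integrability of the forcing near $s=0$, where the boundary heat kernel has a $(t-s)^{-1/2}$ singularity and the forcing carries the prefactor $\eps^{-1}$. This is where Assumption 1 is critical: it eliminates the initial layer $\mathcal P^n_t v_n^\eps(\cdot,0)$ which would otherwise interact destructively with the short-time singularity, and the refined Lemma \ref{lem: difference of rhos} replaces the naive bound on the gradient of $\rho_\eps$ at the boundary with the integrable estimate of order $\eps + \eps\cdot(\eps^{-2}\lambda)^{-1/2}$, providing exactly the short-time regularity needed for the cascade to converge. Balancing the parameters $\beta^*,c^*,\zeta$ throughout to recover the three-regime exponents $1+\zeta$, $1$, $1-2\zeta$ is the delicate bookkeeping that constitutes the technical core of the argument.
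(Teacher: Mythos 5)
Your proposal captures several correct ingredients: the evolution equation for $v_n^\eps$ and its Duhamel form; the fact that Assumption~1 makes $v_n^\eps(\und x,0)=0$ (so there is no initial layer); the role of Lemma~\ref{lem: difference of rhos} in controlling the collision forcing; and the anchoring at a large degree $N^*$ via Corollary~\ref{cor:N*}. However, the overall scheme has three genuine gaps and does not match the mechanism the paper actually uses.

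First, the ``downward cascade'' does not close. The operator $C_\eps$ couples $v_n^\eps$ both upward (through $B_\pm$, with $q$ newborn particles, producing $v_{n-p+q}^\eps$ with $n-p+q$ as large as $n-1+K$) and downward (through $A^1,A^2$ and $B_\pm$ with $p\ge 2$, $q=0$, producing $v_{n-1}^\eps$, $v_{n-2}^\eps$, \dots, $v_{n-K}^\eps$). A downward induction anchored at large $n$ therefore encounters the lower-degree quantities for which no bound is yet available. The paper resolves this by a simultaneous fixed-point argument: it collects $a^*_\eps(m,\cdot)$, $d^*_\eps(m,\cdot)$, $\beta^*_\eps(m,\cdot)$ for all $m\le N^*$ into one vector, writes a coupled system of weighted integral inequalities (Lemmas~\ref{lem: IIfor a for space_v_estimate}, \ref{lem: IIfor a}, \ref{lem: IIfor d}), subdivides $[0,T]$ into intervals of length $\Delta$, and chooses $\Delta$ small enough to make the associated matrix $\Lambda_{\eps,\Delta}$ a contraction. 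There is no inductive ordering in $n$; all degrees are controlled at once.

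Second, your proposal never invokes the stopping-time/antisymmetry mechanism, which is essential for the term $A^1v_n^\eps$ (two particles meet, one dies). This term carries the prefactor $\eps^{-2}\mathbf 1_{|x_i-x_j|=1}\,|\rho_\eps(x_i)-\rho_\eps(x_j)|$ multiplied by a \emph{difference} $v_{n-1}^\eps(\und x^{(i)})-v_{n-1}^\eps(\und x^{(j)})$, not by $v_{n-1}^\eps$ itself. Bounding this difference crudely by $2\sup|v_{n-1}^\eps|$ loses a crucial small factor and the exponent arithmetic fails. The paper exploits the antisymmetry of the difference together with Lemma~\ref{lem:DPTV_ejp 4.3} and the stopping time $\tau_{i,j}$ (Definition~\ref{def:stoptime}), which kills the contribution on $\{\tau_{i,j}\le s\}$ and leaves an event of probability $O((\eps^{-2}s)^{-1/2})$ (Theorem~\ref{3.01}). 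This is what produces the auxiliary quantity $\beta_\eps$ in \eqref{17.3.3} and the extra smallness captured in Lemmas~\ref{lem:gammas}--\ref{lem:widetilde_gammas}. Without it, the collision contribution cannot be closed at the claimed rates.

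Third, your mechanism for the rate $\eps^{1-2\zeta}$ when $n=1,2$ cannot work. You propose tuning $\beta^*<2$ and $c^*<\tfrac{1}{4(K+2)}$ so that $(2-\beta^*)c^*n=1-2\zeta$ and then invoking Corollary~\ref{lem:v-estimate for small times}. But that exponent is always bounded by $(2-\beta^*)c^*n < \tfrac{2n}{4(K+2)}\le \tfrac{n}{8}$ for $K\ge 2$, hence at most $\tfrac14$ for $n=2$ and $\tfrac18$ for $n=1$, far short of $1-2\zeta$. In the paper, the $\eps^{1-2\zeta}$ rate for $n=2$ arises from the ``dangerous scenario'' $A^2$ (Remark~\ref{rem: dangerous scenarios}, \eqref{dang_scen2}): two particles collide and both die, leaving $v_0^\eps(\emptyset,\cdot)=1$, and the forcing is $\eps^{-2}\mathbf 1_{|x_i-x_j|=1}|\rho_\eps(x_i)-\rho_\eps(x_j)|^2$. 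Its expectation is estimated in Lemma~\ref{lem:gammas} (see \eqref{est:gamma_2}) by $\phi_2^\eps(\lambda)\sim\eps(\lambda^{-1/2}+\lambda^{-3/4})$, whose time integral yields $\eps^{1-2\zeta}$ after the contraction step; the rate for $n=1$ is then inherited through the coupling. Corollary~\ref{lem:v-estimate for small times} plays no role in establishing the small-$n$ exponent — it is used only for $n\ge N^*+1$.
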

\begin{remark}\label{rem:ind_time_space_v}
Above and in the following, $\lesssim$ denotes an inequality that is correct up to a
multiplicative universal constant, that is, independent of $N, \epsilon, t$. This means that the result above is not time dependent. This will be crucial ahead. 
\end{remark}

\begin{remark}
A simple computation shows that the estimates obtained above for the space  $v$-function are also true for the correlation function $C_t^{n,\epsilon}(x_{1},\dots,x_{n}) $ defined in  \eqref{eq:correlation}.
\end{remark}
From the last result we can conclude to the next bounds that we state as a corollary for future convenience. We do not present the proof as it follows easily from the proof of the previous result.
\begin{corollary}\label{cor: thm for space v_initial}
Let  $K\geq 2$ be a fixed  integer number  and $\zeta>0$. For any initial product measure $\mu^{\epsilon}$ satisfying \eqref{Ass1},  and time  $0<t\leq T$
\begin{equation}\label{d_initial}
\sup_{\underline{x}\in\Lambda_{N}^{n,\neq}}|v^{\epsilon}_{n}( \underline{x},t|\mu^{\epsilon})|)-v^{\epsilon}_{n}( \underline{x}+\und e_1,t|\mu^{\epsilon})|)\lesssim
\begin{cases}
\epsilon,&\;1\leq n\leq K\\
 \epsilon^{1+\zeta}, &n\geq K+1,
\end{cases}
\end{equation}
where $\und e_1$ being the unit vector in the positive 1st-direction.
 \end{corollary}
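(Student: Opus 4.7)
The plan is to imitate the proof of Theorem \ref{thm:vestimate2}, applied now to the discrete space gradient $v^\eps_n(\und x, t) - v^\eps_n(\und x + \und e_1, t)$ rather than to $v^\eps_n(\und x, t)$ itself. The starting point is the Duhamel representation of $v^\eps_n(\und x, t)$ that comes from the discrete evolution equation derived in the proof of Theorem \ref{thm:vestimate2} via the branching interpretation. By \textbf{Assumption 1} together with Remark \ref{rem:v_1_null}, the initial datum of the $v$-functions vanishes, so the representation consists only of two kinds of terms: a time integral of the stirring semigroup $P_{\epsilon^{-2}s}$ acting on a bulk source, and a time integral of boundary contributions coupling to strictly higher-degree $v$-functions through the boundary operators $D_\pm$.

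For the case $n=1$, I would argue directly from Lemma \ref{lem: difference of rhos}: the difference $v_1^\eps(x,t)-v_1^\eps(x+1,t)$ equals $[\rho_t^\eps(x)-\rho_t^\eps(x+1)]-[\rho_\eps(x,t)-\rho_\eps(x+1,t)]$, the second bracket is of order $\eps$ by Lemma \ref{lem: difference of rhos}, and the first bracket is controlled in the same way through Kolmogorov's forward equation for $\rho_t^\eps$ (or, equivalently, by coupling the stirring processes started at $x$ and $x+1$).

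For $n\geq 2$, I would apply the discrete difference operator $D_{e_1}$ directly to the Duhamel representation. In every term the operator acts either on the discrete heat kernel of the stirring semigroup or on the space-dependent factor $\rho_\eps$ entering the boundary source. The key input is the kernel gradient bound
\begin{equation*}
|P_{\eps^{-2}s}(x+1, y) - P_{\eps^{-2}s}(x, y)| \;\lesssim\; \frac{\eps}{\sqrt{\eps^{-2}s}+1}\, G_{\eps^{-2}s}(x,y),
\end{equation*}
which supplies an extra factor of $\eps$ at the price of the integrable singularity $(\sqrt{\eps^{-2}s}+1)^{-1}$, exactly of the form already appearing in Lemma \ref{lem: difference of rhos}. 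Plugging in the bounds of Theorem \ref{thm:vestimate2} on the right-hand side and performing the time integration produces the desired improvement: for $3\leq n\leq K$ the factor $\eps$ stays of order $\eps$, and for $n\geq K+1$ the factor $\eps^{1+\zeta}$ stays of order $\eps^{1+\zeta}$ (the extra $\eps$ is absorbed by the higher-degree source coming from the boundary operators $D_\pm$, which itself falls in a strictly higher-$n$ regime).

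The main obstacle is the case $n=2$, where Theorem \ref{thm:vestimate2} only offers $\eps^{1-2\zeta}$ on the right-hand side while one must conclude with order $\eps$. Here the gain of $\eps^{2\zeta}$ has to be extracted genuinely from the discrete gradient of the kernel, without losing a factor $|\log \eps|$ from the time integration near $s=0$. This is precisely where \textbf{Assumption 1} enters: it guarantees $v_1^\eps(\cdot,0)\equiv 0$ and produces a time exponent strictly greater than $-1$ in the integral representation, as emphasized in the paragraph following Lemma \ref{lem: difference of rhos}. Once this step is controlled, the degree-raising boundary contribution from $D_\pm$ carries us into the regime $n+1\geq 3$ (or $n+1\geq K+1$), where Theorem \ref{thm:vestimate2} already supplies a bound of order $\eps$ (or $\eps^{1+\zeta}$), and the argument closes without any further work.
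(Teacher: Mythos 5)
You identify the right decomposition for $n=1$, but your claim that Lemma \ref{lem: difference of rhos} gives $|\rho_\eps(x,t)-\rho_\eps(x+1,t)|\lesssim\eps$ is incorrect: that lemma and its consequence, Corollary \ref{corollary: difference of rhos}, only yield $\eps^{1-2\zeta}$ (essentially $\eps\log\eps^{-1}$, coming from the $\eps^{-1}$-prefactored boundary source integrated against the heat kernel), so each bracket in $[\rho^\eps_t(x)-\rho^\eps_t(x+1)]-[\rho_\eps(x,t)-\rho_\eps(x+1,t)]$ is individually only of order $\eps^{1-2\zeta}$. Reaching $\eps$ requires exploiting cancellation between the two brackets, which you do not establish. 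Moreover, the kernel gradient bound you invoke is off by a factor of $\eps$: the correct estimate \eqref{N4.6 additional} is $|P_\lambda^{(\eps)}(x,y)-P_\lambda^{(\eps)}(x+1,y)|\lesssim (\sqrt{\eps^{-2}\lambda}+1)^{-1}G_{\eps^{-2}\lambda}(x,y)$, with no $\eps$ in the numerator (as is clear at small $\lambda$, where the gradient is of order $1$). This undercuts the ``supplies an extra factor of $\eps$'' step on which your $n\geq 2$ argument rests.

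More structurally, the mechanism the paper actually uses to gain the gradient improvement is not differentiation of the multi-particle stirring kernel but the antisymmetry/stopping-time argument of Lemma \ref{lem:DPTV_ejp 4.3}: the difference $v^\eps_{n}(\und x^{(i)},t)-v^\eps_n(\und x^{(j)},t)$ has zero expectation on the event $\{\tau_{i,j}\leq s\}$, so one only pays $\PP_\eps(\tau_{i,j}>s/2)\lesssim (\sqrt{\eps^{-2}s}+1)^{-1}$ via Theorem \ref{3.01}, leading to the modified factors $\widetilde\gamma_p^\eps$ of Definition \ref{def:tilde_gammas} and the integral inequality \eqref{split_time_integral for beta}. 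Crucially, the bound on $d_\eps$ is not a one-pass consequence of Theorem \ref{thm:vestimate2}: the quantities $a^*_\eps$, $d^*_\eps$ and $\beta^*_\eps$ (with normalizations $\widetilde\Gamma^\eps_a$, $\widetilde\Gamma^\eps_d$, $\widetilde\Gamma^\eps_\beta$) are shown to be uniformly bounded \emph{simultaneously} via the coupled matrix inequality \eqref{matrix inequality}, because the integral inequality \eqref{split_time_integral for main thm} for $a_\eps$ itself contains $\beta_\eps$ and hence $d_\eps$. Your sketch does not reproduce this coupled structure, and for the delicate $n=2$ case you flag the obstacle but offer only a reference to \textbf{Assumption 1} in place of a concrete mechanism, so the argument as written does not close.
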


 In our case, since the boundary dynamics increases the degree of polynomial functions of $\eta$ the analysis is much more complicated and we cannot get the bound $\epsilon$ as desired. To overcome that issue, we instead gain a better rate of convergence to $0$ by analyzing space-time $v$-functions  that we define as follows. \begin{definition}\label{def:space time correlations}
Let $\underline{x}=(x_1,\dots,x_n)\in \Lambda_{N}^{n,\neq}$ and $\underline{y}=(y_1,\dots,y_m)\in \Lambda_{N}^{m,\neq}$ with $n,m\geq 1$. For any initial product measure $\mu^{\epsilon}$ and times $0<s<r$, the space-time correlation $v$-function is given by 
\begin{equation}\label{def:space time corr fun}
v^{\epsilon,m,n}_{s,r}(\underline{y}, \underline{x}|\mu^{\epsilon}):=\mathbb{E}^{\epsilon}_{\mu^\epsilon}\left[\prod_{j=1}^{m}\bar\eta_s(y_j)\prod_{i=1}^{n}\bar\eta_r(x_i)\right]
\end{equation}
where  $\bar\eta_t(x):=\eta_t(x)-\rho_\eps(x,t)$, $t\geq 0$.
\end{definition}
 The subscript indicates the times $s,r$ involved in the space-time correlation $v$-function. The inputs $(\underline{y},\underline{x})$ are the positions of the $m$ particles at time $s$, denoted by  $\underline{y}=(y_1,\dots,y_m)$ and the positions of the $n$ particles at time $r$, denoted by  $\underline{x}=(x_1,\dots,x_m)$.
The superscript reflects the dependence on $\epsilon$, which may sometimes be omitted, and the number of particles at time $s$ and at time $r$, i.e., $m$ and $n$ respectively. 
In Theorem \ref{thm:vestimate3}, we establish a space-time correlation estimate. This theorem, in conjunction with Proposition \ref{prop:disc_grad_rho_eps}, demonstrates that the $\mathbb{L}^2$-norm of  \eqref{eq:diff_eta}–\eqref{degree_four} are bounded by quantities that vanish as $\epsilon\to0$. In Section \ref{sec: Auxiliary martingale}, we revisit the estimation of these terms, providing detailed calculations to complete the analysis.

\begin{theorem}[Space-time correlation $v$-estimate]
	\label{thm:vestimate3}
Let  $K\geq 2$ be a fixed  integer number, $n,m\geq 1$ and $\zeta>0$. For any initial product measure $\mu^{\epsilon}$ satisfying \eqref{Ass1}, and times  $0<s<r\leq T$, there exists $\tilde\Delta>0$ (independent of $\eps$), such that when $r-s\leq \tilde\Delta$
\begin{equation}\label{bound:space_time_v_estimate1}
\sup_{\underline{y}\in\Lambda_{N}^{m,\neq}}\sup_{\underline{x}\in\Lambda_{N}^{n,\neq}}|v^{\epsilon,m,n}_{s,r}(\underline{y}, \underline{x}|\mu^{\epsilon})|\lesssim 
\begin{cases}
 \epsilon(r-s)^{-1/2},&n+m=2,3\\
 \epsilon^{1+\zeta}(r-s)^{-1+\zeta},&4\leq n+m\leq K \\
 \epsilon^{1+\zeta},&n\geq K+1, m\geq 1.
\end{cases}
\end{equation}
When $r-s> \tilde\Delta$,   
\begin{equation}\label{bound:space_time_v_estimate2}
\sup_{\underline{y}\in\Lambda_{N}^{m,\neq}}\sup_{\underline{x}\in\Lambda_{N}^{n,\neq}}|v^{\epsilon,m,n}_{s,r}(\underline{y}, \underline{x}|\mu^{\epsilon})|\lesssim
\begin{cases}
 \epsilon,&n+m=2,3\\
 \epsilon^{1+\zeta/2},&4\leq n+m\leq K \\
 \epsilon^{1+\zeta},&n\geq K+1, m\geq 1.
\end{cases}
\end{equation}
\end{theorem}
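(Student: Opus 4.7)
Fixing $s$, we view $r\mapsto v^{\eps,m,n}_{s,r}(\und y,\und x)$ as the quantity to evolve on $[s,T]$, in close analogy with the proof of Theorem \ref{thm:vestimate2}. Differentiating in $r$ and combining Kolmogorov's forward equation for the stirring-plus-Glauber process with the closed PDE \eqref{eq:linearized} satisfied by $\rho_\eps$ produces an evolution equation whose leading part is the $n$-particle discrete Laplacian in the $\und x$ variables, plus boundary exchange terms near $\pm N$ and branching contributions from $L_{b,\pm}$ that can raise the degree in $\und x$ by up to $K$. Duhamel's formula then represents $v^{\eps,m,n}_{s,r}(\und y,\und x)$ as the $n$-particle stirring semigroup $P^\eps_{r-s}$ applied to the initial datum $v^{\eps,m,n}_{s,s}(\und y,\und x)$ at $r=s$, plus a time integral over $[s,r]$ of boundary and branching scenarios, each scenario either picking up a boundary factor $|\rho_\eps(x+1,\tau)-\rho_\eps(x,\tau)|$ to be controlled via Lemma \ref{lem: difference of rhos}, or spawning a space $v$-function of degree up to $m+n+K$ at an intermediate time.

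\textbf{Reduction to space $v$-functions.} Using the pointwise identity $\bar\eta^2=\chi(\rho_\eps)+(1-2\rho_\eps)\bar\eta$ to resolve coincidences in $\und y\cup\und x$, the initial datum $v^{\eps,m,n}_{s,s}$ rewrites as a finite signed combination of space $v$-functions of degree between $1$ and $m+n$ evaluated at time $s$, all of which are controlled by Theorem \ref{thm:vestimate2}. Similarly, every term produced by the Duhamel integral is bounded either by Theorem \ref{thm:vestimate2} (when the branching degree lies in $[2,N^*]$) or by Corollary \ref{cor:N*} (when successive boundary events push the degree above $N^*+1$, for which we already have the $\eps^{1+\zeta}$ time-uniform estimate).

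\textbf{Short-time regime, $r-s\le\tilde\Delta$.} Each scenario is bounded by combining the reductions above with standard Gaussian bounds for the stirring semigroup $P^\eps_{r-\tau}$. When $m+n=2,3$, the dominant contribution comes from boundary scenarios, and the announced factor $(r-s)^{-1/2}$ arises by integrating the discrete gradient of the heat kernel against the singular bound of Lemma \ref{lem: difference of rhos}, producing $\eps(r-s)^{-1/2}$. For $4\leq m+n\leq K$, Theorem \ref{thm:vestimate2} already supplies an $\eps$ bound on the initial term and each branching step yields an additional integrable singularity $(r-s)^{-1+\zeta}$ through Gaussian kernel integration, giving the target $\eps^{1+\zeta}(r-s)^{-1+\zeta}$. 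When $n\geq K+1$, the initial datum and all subsequent branching terms are of degree at least $K+1$; Theorem \ref{thm:vestimate2} combined with Corollary \ref{cor:N*} then yields the uniform bound $\eps^{1+\zeta}$.

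\textbf{Long-time regime and main obstacle.} For $r-s>\tilde\Delta$, we choose $s':=r-\tilde\Delta$, condition on $\eta_{s'}$ via the Markov property, and apply the short-time estimate to $v^{\eps,m,n}_{s',r}$ with $r-s'=\tilde\Delta$; the singular prefactor $(r-s')^{-1/2}$ becomes an $O(1)$ constant, yielding the bound $\eps$ when $m+n=2,3$, and $\eps^{1+\zeta/2}$ for $4\le m+n\le K$ (the loss from $\zeta$ to $\zeta/2$ compensates for absorbing the $(r-s')^{-1+\zeta}$ singularity at the fixed width $\tilde\Delta$). The case $n\ge K+1$ is immediate since the short-time bound is already time-uniform. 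The principal difficulty is that, unlike in the purely space case, the initial datum $v^{\eps,m,n}_{s,s}$ can be as large as $O(1)$ on the diagonal due to the square $\bar\eta^2$, so the full smallness in $\eps$ together with the correct time-singularity must be extracted entirely from the intermediate dynamics on $[s,r]$; this requires a delicate bookkeeping of how boundary branching iterates interact with the stirring semigroup and how Lemma \ref{lem: difference of rhos} converts $\rho_\eps$-gradients into the precise $\eps$-scale.
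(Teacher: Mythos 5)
Your overall architecture (Duhamel in $r$ with $s$ frozen, decomposition of the diagonal via $\bar\eta^2=\chi(\rho_\eps)+(1-2\rho_\eps)\bar\eta$, splitting into short and long time) matches the paper's, but there is a genuine gap at the step ``the initial datum rewrites as space $v$-functions, all of which are controlled by Theorem~\ref{thm:vestimate2}.'' For $n+m=2$ the relevant initial term is $\sum_{\und w}\mathbf P_\eps(\und x\overset{r-s}{\to}\und w)\,v_2^\eps(\und w\cup\und y,s)$, and Theorem~\ref{thm:vestimate2} only gives $\sup_{\und w}|v_2^\eps|\lesssim\eps^{1-2\zeta}$. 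Since $\eps^{1-2\zeta}\not\lesssim\eps(r-s)^{-1/2}$ once $r-s\gtrsim\eps^{4\zeta}$, a direct application of Theorem~\ref{thm:vestimate2} cannot reach the claimed bound. Similarly for $4\le n+m\le K$ you get only $\eps$ from Theorem~\ref{thm:vestimate2}, not $\eps^{1+\zeta}$, and the extra $\eps^{\zeta}$ in your statement ``each branching step yields an additional integrable singularity'' is unaccounted for: integrating a singularity gives back a factor in $(r-s)$, not in $\eps$. The paper closes this gap via Lemma~\ref{lem: improved v2}, whose point is precisely that the \emph{convolution} $\sum_{\und w}\prod_i P^{(\eps)}_{\tau}(x_i,w_i)\,|v_{n+m}^\eps(\und w\cup\und y,s)|$ is strictly better (by a factor $\eps^{2\zeta}$ or $\eps^{\zeta}$ depending on $n+m$) than $\sup|v^\eps_{n+m}|$ times a probability, and proving this requires an iterated-Duhamel series argument (Section 4.3 of the paper). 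Without this lemma the announced exponents are unreachable in the short-time regime.

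Your long-time argument has a second problem. Conditioning at $s'=r-\tilde\Delta$ and ``applying the short-time estimate to $v^{\eps,m,n}_{s',r}$'' is not available: $v^{\eps,m,n}_{s,r}(\und y,\und x)=\mathbb E^\eps_{\mu^\eps}[\prod_j\bar\eta_s(y_j)\prod_i\bar\eta_r(x_i)]$ keeps the correlation with time $s$ fixed, so the Markov property at $s'$ does not produce $v^{\eps,m,n}_{s',r}$ — it produces $\mathbb E^\eps_{\mu^\eps}[\prod_j\bar\eta_s(y_j)\,F(\eta_{s'})]$ for some functional $F$, which is not a space-time $v$-function and is not yet controlled. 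The paper instead keeps $s$ fixed, partitions $[s,T]$ into subintervals of width $\Delta\le\tilde\Delta$, and iterates a vector/matrix inequality for the time-weighted suprema $(a^*_{\eps,s},d^*_{\eps,s})$ over the subintervals, using the short-time estimate only to seed the recursion at $k=1$. You would need either to adopt this iteration or to show how the conditioned object can be re-expressed in terms of controlled quantities; as written, the reduction does not go through.
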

The proof of the theorem is given in Section \ref{sec:proof of space time}.
As we did for the space $v$-function we summarize in the next corollary some important bounds.
\begin{corollary}
	\label{cor:vestimate3_difference}
Let  $K\geq 2$ be a fixed  integer number, $n,m\geq 1$ and $\zeta>0$. For any initial product measure $\mu^{\epsilon}$ satisfying \eqref{Ass1}, and times  $0<s<r\leq T$, there exists $\tilde\Delta>0$, such that when $r-s\leq \tilde\Delta$
\begin{equation}\label{bound:space_time_difference_v_estimate1}
\sup_{\underline{y}\in\Lambda_{N}^{m,\neq}}\sup_{\underline{x}\in\Lambda_{N}^{n,\neq}}|v^{\epsilon,m,n}_{s,r}(\underline{y}, \underline{x}|\mu^{\epsilon})-v^{\epsilon,m,n}_{s,r}(\underline{y}, \underline{x}+\und e_1|\mu^{\epsilon})|\lesssim 
\begin{cases}
 \epsilon^{1+\zeta}(r-s)^{-1+\zeta},&n+m=2\\
 \epsilon^{1+\zeta}(r-s)^{-1/2},&n=1,m\geq 2 \\
 \epsilon^{1+\zeta}(r-s)^{-1+\zeta},& 2\leq n\leq K,m\geq 1 \\
 \epsilon^{1+\zeta},&n\geq K+1, m\geq 1
\end{cases}
\end{equation}
and when $r-s> \tilde\Delta$,   
\begin{equation}\label{bound:space_time_difference_v_estimate2}
\sup_{\underline{y}\in\Lambda_{N}^{m,\neq}}\sup_{\underline{x}\in\Lambda_{N}^{n,\neq}}|v^{\epsilon,m,n}_{s,r}(\underline{y}, \underline{x}|\mu^{\epsilon})-v^{\epsilon,m,n}_{s,r}(\underline{y}, \underline{x}+\und e_1|\mu^{\epsilon})|)\lesssim \eps^{1+\zeta}.
\end{equation}
where $\und e_1$ being the unit vector in the positive 1st-direction.
\end{corollary}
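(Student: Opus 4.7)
The plan is to follow the structure of the proof of Theorem \ref{thm:vestimate3}, applied directly to the differenced quantity
\[
v^{\eps,m,n}_{s,r}(\und y, \und x|\mu^{\eps}) - v^{\eps,m,n}_{s,r}(\und y, \und x+\und e_1|\mu^{\eps}),
\]
extracting from each term one extra discrete gradient in the variable $x_1$ that will pay off either as an additional power of $\eps$ or as better behaviour in the time singularity, in the same spirit in which Corollary \ref{cor: thm for space v_initial} is deduced from Theorem \ref{thm:vestimate2}.

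First I would write down the Duhamel representation for $v^{\eps,m,n}_{s,r}(\und y, \und x|\mu^{\eps})$ obtained by fixing the configuration at time $s$ and running the evolution equation between $s$ and $r$, exactly as in the proof of Theorem \ref{thm:vestimate3}. This representation expresses the space--time $v$-function as a sum of (a) a branching--stirring term which propagates the space $v$-function $v_n^{\eps}(\cdot,s)$ forward through the discrete heat kernels $G_{\eps^{-2}(r-s)}(x_i,\cdot)$, and (b) time-integral contributions generated by the Glauber boundary dynamics, which couple the $x_i$-variables to sites in $I_\pm$ through heat kernels.

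Applying the discrete gradient in $x_1$ then produces terms where either the heat kernel in $x_1$ is replaced by its discrete gradient, for which one uses the standard bound
\[
|\nabla_1 G_{\eps^{-2}\lambda}(x_1,z)| \lesssim \eps\, \lambda^{-1/2}\, G_{c\eps^{-2}\lambda}(x_1,z),
\]
or, after a summation by parts in the spatial variable, the datum $v^\eps_n(\und x, s)$ is replaced by its space-difference, which is controlled by Corollary \ref{cor: thm for space v_initial}. In either case an additional power of $\eps$ is produced. For $n\geq K+1$, Corollary \ref{cor:N*} applied to each of the two terms immediately delivers the uniform bound $\eps^{1+\zeta}$ in both \eqref{bound:space_time_difference_v_estimate1} and \eqref{bound:space_time_difference_v_estimate2}, without any use of the gradient structure. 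For $2\leq n\leq K$, $m\geq 1$, combining the gradient factor $\eps\lambda^{-1/2}$ with the iteration mechanism of Theorem \ref{thm:vestimate3} (which trades a small power of $\eps$ for a corresponding power of $\lambda$) produces the additional $\eps^\zeta$, and a careful splitting of the $\lambda$-integral into short and long time portions, together with Lemma \ref{lem: difference of rhos}, repackages the remaining time singularity as $(r-s)^{-1+\zeta}$. The case $n=1$, $m\geq 2$ is isolated because only a single heat-kernel gradient (or space-gradient of $v_n^{\eps}$) is available from the product, so only one factor of $\lambda^{-1/2}$ can be absorbed by the iteration, yielding the weaker $(r-s)^{-1/2}$ time factor. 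For $n+m=2$, the iteration is again fully available thanks to Remark \ref{rem:v_1_null} (which ensures $v_1^\eps(\cdot,0)\equiv 0$), and this is precisely the same mechanism that produced the sharper $(r-s)^{-1+\zeta}$ exponent for $4\leq n+m\leq K$ in Theorem \ref{thm:vestimate3}.

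The main obstacle is the bookkeeping of singular time factors in the short-time regime $r-s\leq \tilde\Delta$: the new $\lambda^{-1/2}$ factor created by the heat-kernel gradient has to be reconciled with the already singular time weights appearing in the proof of Theorem \ref{thm:vestimate3}, and integrability at $\lambda\downarrow 0$ is preserved only because the iteration step can trade a $\zeta$-fraction of $\eps$-gain for a $\zeta$-fraction of $\lambda$-regularity. In the long-time regime $r-s>\tilde\Delta$, both the heat kernel and its gradient are uniformly bounded, the singular $\lambda^{-1/2}$ disappears, and \eqref{bound:space_time_difference_v_estimate2} follows directly from Theorem \ref{thm:vestimate3} together with Corollary \ref{cor: thm for space v_initial} after a single differencing.
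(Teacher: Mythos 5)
Your high-level route -- differencing the Duhamel formula for the space--time $v$-function, applying discrete-gradient heat-kernel bounds of the type \eqref{N4.6 additional}, and then iterating -- is the same engine the paper uses to set up the integral inequality for the differenced quantity. However, the paper's actual proof of Corollary \ref{cor:vestimate3_difference} is essentially by inspection: the quantity $d_{\eps,s}(n,m,r)$ defined in \eqref{def:d_space-time} \emph{is} the object in the corollary, and the proof of Theorem \ref{thm:vestimate3} already bounds $d^{*}_{\eps,s}(n,m)=\sup_{\sigma}\tilde\Gamma_{d}(n,m,\sigma-s)\,d_{\eps,s}(n,m,\sigma)$ uniformly as one component of the coupled matrix inequality in the vector $\underline{x}^{(m)}$; the corollary is therefore just the statement that $d_{\eps,s}\lesssim 1/\tilde\Gamma_{d}$. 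Your proposal re-derives the bound from scratch, which is more work, but more importantly it mis-explains the case distinctions in a way that would lead you astray if you carried it through.

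Specifically, three points do not hold up. First, your explanation of the case $n=1,\,m\geq 2$ is backwards: the factor $(r-s)^{-1/2}$ is \emph{less} singular than $(r-s)^{-1+\zeta}$ (for $\zeta<1/2$), so this is the sharpest, not the weakest, case. The actual mechanism has nothing to do with how many heat-kernel gradients are available from the product; it is that when $n\neq m$ the diagonal coincidence $\und w=\und y$ in the initial-condition sum \eqref{eq:Duhamel formula second,n dif than m} is impossible, so the singular contribution from $v^{\eps,m,m}_{s,s}(\und y,\und y)$ in \eqref{eq:Duhamel formula second,n=m} never appears and the initial datum $C^{\eps}_{n,m}$ of \eqref{Cnm} only has a $(r-s)^{-1/2}$ singularity. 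Second, Remark \ref{rem:v_1_null} plays no role in the $n=m=1$ case; there the $(r-s)^{-1+\zeta}$ exponent comes from the diagonal term $w=y$ in the initial condition combined with the gradient bound $|P^{(\eps)}_{r-s}(x,y)-P^{(\eps)}_{r-s}(x+1,y)|\lesssim ([\eps^{-2}(r-s)]+1)^{-1}$, as spelled out in the proof of Lemma \ref{lem: IIfor space-time d}. Third, for $n\geq K+1$ the relevant tool is \eqref{important property**} (conditioning on $\mathcal F(\{\eta_{s'}:s'\leq s\})$ and applying Theorem \ref{thm:vestimate2}), not Corollary \ref{cor:N*}, which concerns $n\geq N^{*}+1$ for the purely spatial $v$-function. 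Finally, the closure you describe as ``the iteration mechanism of Theorem \ref{thm:vestimate3}'' cannot be invoked term-by-term: the integral inequality \eqref{dnm} couples $d_{\eps,s}(n,m,\cdot)$ to $a_{\eps,s}(j,m,\cdot)$ for several values of $j$, and the argument only closes because $a^{*}_{\eps,s}$ and $d^{*}_{\eps,s}$ are controlled simultaneously via the matrix fixed-point argument; a standalone Gr\"onwall-type absorption on $d$ alone would not terminate.
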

The  statement of last corollary is similar to Corollary \ref{cor: thm for space v} for the space $v$-function, and  it follows directly from the proof of Theorem \ref{thm:vestimate3}.

\section{Proof of Theorem \ref{thm:non_eq_flu}}

\subsection{Auxiliary martingales}\label{sec: Auxiliary martingale}
Fix $t\in[0,T]$ and let  $\psi$ be a function in $C^{1,2}([0,T]\times[-1,1])$. For now we do not assume anything else on this function, but below we will see that we will need to impose other conditions on $\psi$ in order to obtain  that the limit field satisfies 
 \begin{equation}\label{characterization}
{Y}_t(H)\;=\; {Y}_0(T_t H)+ W_t(H),
\end{equation}
for any $H\in\mathbb S$, where 
 $T_t$ is the solution of \eqref{eq:Robin_equation_line} and  $W_t(H)$ is a mean zero Gaussian variable with  variance  
\begin{equation}\label{eq212}
 \int_0^t\| T_{t-s}  H\|^2_{\mathbb L^2(\rho_s)}ds.
\end{equation}
From  \cite[p. 330]{kipnis2013scaling} we know  that
\begin{align}
M_t^\epsilon(\psi_t)& \;:=\;\ Y_t^\epsilon (\psi_t)- Y^\epsilon _0(\psi_0)-\int_{0}^t \Lambda_s^\epsilon(\psi_s)\,ds\,,\label{martingaleM} \\
N_t^\epsilon(\psi_t)& \;:=\;(M_t^\epsilon (\psi_t))^2-\int_{0}^t \Gamma^\epsilon_s(\psi_s)\,ds\label{quadratic}
\end{align}
are martingales with respect to  $\mathcal F_t$, where
\begin{equation*}
\begin{split}
& \Lambda_s^\epsilon(\psi_s)\;:=\; (\partial_s+L_\epsilon ) Y_s^\epsilon (\psi_s)\,,\\
& \Gamma^\epsilon(\psi_s)\;:=\; L_\epsilon  Y_s^\epsilon(\psi_s)^2-2 Y_s^N (\psi_s)L_\epsilon  Y_s^\epsilon(\psi_s)\,.
\end{split}
\end{equation*}
From the computations in Appendix \ref{ap:action_gen} we see that 
\begin{equation}\label{intpartofmart_1}
\begin{split}
\Lambda_s^\epsilon(\psi_s)\;&=\;  Y_s^\epsilon\Big(\frac 12\Delta_\epsilon \psi_s+\partial_s\psi_s\Big)\\
&+\frac{j}{2\sqrt \epsilon} \sum_{x\in I_+ } \psi_s(\epsilon x) \Big(D_+  \eta_{s} (x)-\mathbb E^\epsilon_{\mu^\eps}[D_+\eta_{s}(x)]\Big)\\
&-\frac{j}{2\sqrt \epsilon} \sum_{x\in I_- } \psi_s(\epsilon x) \Big(D_-  \eta_{s} (x)-\mathbb E^\epsilon_{\mu^\eps}[D_-\eta_{s}(x)]\Big).
\end{split}
\end{equation}
Now we isolate the boundary contribution from the first term on the right-hand side of the last display and obtain:
\begin{equation}\label{intpartofmart_2}
\begin{split}
\Lambda_s^\epsilon(\psi_s)\;&=\; \sqrt \epsilon \sum_{x=-(N-1)}^{N-1}\frac 12\Delta_\epsilon \psi_s (\epsilon x)\Big(\eta_{s}(x)-\rho^\epsilon_{s}(x)\Big)+Y_s^\epsilon(\partial_s\psi_s)\\
&+\frac{1}{2\sqrt \epsilon}\nabla_\epsilon^+ \psi_s(-1) \bar{ \eta}_{s} (-N)-\frac{1}{2\sqrt \epsilon}\nabla_\epsilon^- \psi_s(1) \bar{ \eta}_{s} (N)\\
&+\frac{j}{2\sqrt \epsilon} \sum_{x\in I_+ } \psi_s(\epsilon x) \Big(D_+  \eta_{s} (x)-\mathbb E^\epsilon_{\mu^\eps}[D_+\eta_{s}(x)]\Big)\\
&-\frac{j}{2\sqrt \epsilon}\sum_{x\in I_- } \psi_s(\epsilon x) \Big(D_-  \eta_{s} (x)-\mathbb E^\epsilon_{\mu^\eps}[D_-\eta_{s}(x)]\Big).
\end{split}
\end{equation}
Note that the first term on the right-hand side of the last display will give rise macroscopically, when sending $\epsilon \to 0$,  to the term $Y_s (\Delta \psi_s)$, since the discrete Laplacian will converge to the continuous one. 


Now we analyze the third line of the last display. The analysis of the fourth line is completely analogous.
Observe that by a Taylor expansion of $\psi_s$ around $1$, line of last display is equal to 
\begin{equation}\label{eq:boundary_term_final}
\frac{j}{2\sqrt \epsilon} \psi_s(1)\sum_{x\in I_+ }  \Big(D_+  \eta_{s} (x)-\mathbb E^\epsilon_{\mu^\eps}[D_+\eta_{s}(x)]\Big)+  O_{\psi,K}(\sqrt \epsilon).
\end{equation}

At this point, we specify $K=2$ to make the presentation simpler, but the reader is invited to follow the computations presented in the Appendix \ref{ap:boundary_terms}, and obtain our results for any value of $K$.
In this setting, note that  $I_+=\{N-1,N\}$ and  $D_+\eta(N)=(1-\eta(N))$ and $D_+\eta(N-1)=(1-\eta(N-1))\eta(N)$. Therefore the term on the left-hand side of \eqref{eq:boundary_term_final} writes as 
\begin{equation}\label{eq:dynkin_k=2}
\frac{j}{2\sqrt \epsilon} \psi_s(1)\Big(\mathbb E^\epsilon_{\mu^\eps}[\eta_s(N)\eta_s(N-1)]- \eta_s(N)\eta_s(N-1)\Big).
\end{equation}
At this point we observe that in order to continue we will rewrite the martingales in terms of the variables centered with respect to $\rho_\epsilon$ as they appear in the  $v$-functions estimates.  
To that end, note that
\begin{equation*}
\eta_s(N)\eta_s(N-1)=\bar \eta_s(N)\bar \eta_s(N-1)+\rho_\epsilon(s,N)\eta_s(N-1)+\rho_\epsilon (N-1,s)\eta_s(N)-\rho_\epsilon(N,s)\rho_\epsilon(N-1,s),
\end{equation*}
where $\bar\eta_s(x)=\eta_s(x)-\rho_\epsilon ( x,s)$. 
Recall that 
\begin{equation*}
\begin{split}
v_2^\epsilon(N-1,N,s|\mu^\epsilon)=\mathbb E^\epsilon_{\mu^\eps}[(\eta_s(N-1)-\rho_\epsilon(N-1,s))(\eta_s(N)-\rho_\epsilon(N,s))].
\end{split}
\end{equation*}
Therefore,  
\begin{equation*}
\begin{split}
\mathbb E^\epsilon_{\mu^\eps}[\eta_s(N)\eta_s(N-1)]- \eta_s(N)\eta_s(N-1)&=v_2^\epsilon(N-1,N,s|\mu^\epsilon)-  \bar\eta_s(N)\bar\eta_s(N-1)\\&+\rho_\epsilon(N,s)\Big(\rho^\epsilon_s(N-1)-\eta_s(N-1)\Big)\\&+\rho_\epsilon(N-1,s)(\rho^\epsilon_s(N)-\eta_s(N)\Big).
\end{split}
\end{equation*}
Simple computations show that the  last display is equal to 
\begin{equation*}
\begin{split}
&v_2^\epsilon(N-1,N,s|\mu^\epsilon)-  \bar\eta_s(N)\bar\eta_s(N-1)+\rho_\epsilon(N,s)v_1^\epsilon(N-1,s|\mu^\epsilon)+\rho_\epsilon(N-1,s)v_1^\epsilon(N,s|\mu^\epsilon)\\&- \rho_\epsilon(N,s)[\bar\eta_s(N-1)-\bar\eta_s(N)]
-(\rho_\epsilon(N-1,s)-\rho_\epsilon(N,s))\bar\eta_s(N)\\&+2\Big(\rho_s(1)-\rho_\epsilon(N,s)\Big)\bar\eta_s(N)-2\rho_s(1)\bar\eta_s(N).
\end{split}
\end{equation*}

From this we can write  the time  integral in \eqref{martingaleM} as 
\begin{align}
&\int_{0}^t  Y_s^\epsilon(\Delta\psi_s+\partial_s\psi_s)-\frac{1}{2\sqrt \epsilon}\nabla_\epsilon^- \psi_s(1) \bar{ \eta}_{s} (N)-2\frac{j}{2\sqrt \epsilon} \psi_s(1) \rho_s(1)\bar\eta_s(N)ds\label{eq:bc_dy}\\
+&\int_{0}^t \frac{j}{2\sqrt \epsilon} \psi_s(1) v_2^\epsilon(N-1,N,s|\mu^\epsilon)ds\label{eq:v_2_term}\\ 
+&\int_{0}^t \frac{j}{2\sqrt \epsilon} \psi_s(1) \Big\{\rho_\epsilon(N,s)v_1^\epsilon(N-1,s|\mu^\epsilon)+\rho_\epsilon(N-1,s)v_1^\epsilon(N,s|\mu^\epsilon)\Big\}ds \label{eq:v_1_term}\\
-&\int_{0}^t \frac{j}{2\sqrt \epsilon} \psi_s(1) \rho_\epsilon(N,s)[\bar\eta_s(N-1)-\bar\eta_s(N)] ds\label{eq:diff_eta}\\
-&\int_{0}^t \frac{j}{2\sqrt \epsilon} \psi_s(1)
(\rho_\epsilon(N-1,s)-\rho_\epsilon(N,s))\bar\eta_s(N)ds\label{diff_of_rho_epsilon}\\
+&\int_{0}^t \frac{j}{2\sqrt \epsilon} \psi_s(1)\Big\{2\Big(\rho_s(1)-\rho_\epsilon(N,s)\Big)\bar\eta_s(N)\Big\} ds\\
-&\int_{0}^t \frac{j}{2\sqrt \epsilon} \psi_s(1) \bar\eta_s(N)\bar\eta_s(N-1)ds\label{degree_four}
\end{align}
plus similar terms regarding the left boundary plus terms that vanish as $\eps\to 0$. Our goal is now to show that, as $\epsilon \to0$, all terms--except for the leftmost one in the first line of the last display--vanish in the $\mathbb L^2$-norm.  We note that our approach consists in taking the terms above that involve $v$-functions and estimating them directly with the Cauchy-Schwarz inequality. For the  remaining terms we use $\mathbb L^2$  bounds and estimates on space-time $v$-functions. Before that, note that  from a Taylor expansion of the function $\psi$ at $x=1$, we can write the two rightmost terms in \eqref{eq:bc_dy} as  
\begin{align}\begin{split}\label{eq:space_kill}
&\int_{0}^t  -\frac{1}{2\sqrt \epsilon}\nabla_\epsilon^- \psi_s(1) \bar{ \eta}_{s} (N)-2\frac{j}{2\sqrt \epsilon} \psi_s(1) \rho_s(1)\bar\eta_s(N)ds\\=&\int_{0}^t {-} \frac{1}{2\sqrt \epsilon}\Big\{ \psi'_s(1)+\epsilon \psi''(\xi)\Big\} \bar{ \eta}_{s} (N)-2\frac{j}{2\sqrt \epsilon} \psi_s(1) \rho_s(1)\bar\eta_s(N)ds
\end{split}\end{align}
where $\xi\in(1-\epsilon, 1)$. Since $\psi\in\mathbb S$, i.e., in particular, 
 $\partial_u \psi_s(1)=-2j\rho_s(1)\psi_s(1)$ for any $s\in[0,t]$, the last display is equal to 
\begin{align}\begin{split}
{-}\int_{0}^t  \frac{\sqrt \epsilon }{2} \psi''(\xi) \bar{ \eta}_{s} (N)ds.
\end{split}\end{align}
Since the occupation variables are bounded, 
the $\mathbb L^2$-norm of last display can be bounded by a term of order $O(\epsilon)$ and it vanishes as $\epsilon \to0$.

\subsubsection{ Estimation of \eqref{eq:v_2_term} and \eqref{eq:v_1_term}}

We estimate the $\mathbb{L}^2$-norm of the terms \eqref{eq:v_2_term} and \eqref{eq:v_1_term}  using Theorem \ref{thm:vestimate2}  along with the fact that $|\rho_\epsilon(\cdot, s)|\leq 1$.
Note that, from Theorem \ref{thm:vestimate2}, precisely from \eqref{newvestimate} with $n=2,$ we bound the term   \eqref{eq:v_2_term}  from above by
\begin{equation*}
\begin{split}
&\int_{0}^t \frac{j}{2\sqrt \epsilon} \psi_s(1) v_2^\epsilon(N-1,N,s|\mu^\epsilon)ds\lesssim \frac{\epsilon^{1-2\zeta}t}{\sqrt \epsilon} .
\end{split}
\end{equation*}
Similarly, we prove that \eqref{eq:v_1_term} is bounded as 
\begin{equation*}
\int_{0}^t \frac{j}{2\sqrt \epsilon} \psi_s(1) \Big\{\rho_\epsilon(N,s)v_1^\epsilon(N-1,s|\mu^\epsilon)+\rho_\epsilon(N-1,s)v_1^\epsilon(N,s|\mu^\epsilon)\Big\}ds\lesssim \epsilon^{1/2-2\zeta}t.
\end{equation*}
For $\zeta<1/4$, the $\mathbb L^2$- norm of the above terms vanish as $\eps\to0$.

\subsubsection{ Estimation of \eqref{eq:diff_eta}-\eqref{degree_four}} 
Unlike the terms in \eqref{eq:v_2_term} and \eqref{eq:v_1_term},  which are deterministic, the remaining terms do not involve space correlation functions $v_{n}^{\epsilon}$ but instead, they depend on products of  $\bar\eta$'s.  
Since these terms are more intricate than the previous ones,  we now provide a comprehensive analysis of estimating \eqref{eq:diff_eta}–\eqref{degree_four}.   Lemmas \ref{lem:diff_eta}--\ref{lem:estimation of v2} provide the precise estimates  which are proved by primarily  using Theorem \ref{thm:vestimate3} and  results from \cite{de2011current} highlighted in the subsequent analysis. Each estimate is presented in detail below, demonstrating that the terms  \eqref{eq:diff_eta}–\eqref{degree_four} vanish in the $\mathbb{L}^2$-norm as $\epsilon\to0$. This is provided under a sufficiently small  parameter $\zeta>0$ appearing  in Lemmas \ref{lem:diff_eta}--\ref{lem:estimation of v2}.

The estimate of the  $\mathbb{L}^2$-norm  of \eqref{eq:diff_eta} is given in the next lemma.
\begin{lemma}\label{lem:diff_eta}
Let $t\in(0,T]$. For any $x,y\in I_{+}$ (similarly for $x,y\in I_{-}$) with $|x-y|=1$and for any $\zeta>0$, it holds 
\begin{eqnarray*}
&&\hspace{-.7cm}\mathbb{E}^{\epsilon}_{\mu^\epsilon} \left[\left(\int_{0}^t \frac{j}{2\sqrt \epsilon} \psi_s(1) \rho_\epsilon(x,s)[\bar\eta_s(y)-\bar\eta_s(x)]ds\right)^2\right]\lesssim  \eps^{\zeta}\left(t^{1+\zeta }\mathbf 1_{t<1}+t^{2}\mathbf 1_{t\geq 1}\right).
\end{eqnarray*} 
\end{lemma}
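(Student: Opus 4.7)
The plan is to expand the square into a double time integral, identify the integrand as a second-order spatial difference of the space--time $v$-function of total degree two, and then apply Corollary \ref{cor:vestimate3_difference} followed by a straightforward time integration in the two regimes $r-s\le \tilde\Delta$ and $r-s>\tilde\Delta$.

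First, using $|\rho_\epsilon|\le 1$, $\|\psi\|_\infty <\infty$ and the symmetry of the integrand in $(s,r)$, I would write
\begin{equation*}
\mathbb{E}^\epsilon_{\mu^\epsilon}\!\left[\Big(\int_0^t \tfrac{j}{2\sqrt\epsilon}\psi_s(1)\rho_\epsilon(x,s)[\bar\eta_s(y)-\bar\eta_s(x)]\,ds\Big)^2\right]
\lesssim \frac{1}{\epsilon}\int_0^t\!\int_0^r \big|A_{s,r}(x,y)\big|\,ds\,dr,
\end{equation*}
where
\begin{equation*}
A_{s,r}(x,y):=\mathbb{E}^\epsilon_{\mu^\epsilon}\!\big[(\bar\eta_s(y)-\bar\eta_s(x))(\bar\eta_r(y)-\bar\eta_r(x))\big].
\end{equation*}
Since $|x-y|=1$, setting $y=x+1$ I can rewrite $A_{s,r}(x,y)$ in terms of four space--time $v$-functions of type $v^{\epsilon,1,1}_{s,r}$ (all meaningful since at a fixed time the two arguments are distinct when needed by $\Lambda_N^{1,\neq}$), and then group them telescopically as
\begin{equation*}
A_{s,r}(x,y)=\big[v^{\epsilon,1,1}_{s,r}(y,y)-v^{\epsilon,1,1}_{s,r}(y,x)\big]-\big[v^{\epsilon,1,1}_{s,r}(x,y)-v^{\epsilon,1,1}_{s,r}(x,x)\big].
\end{equation*}
Each bracket is precisely a spatial difference of the form appearing in Corollary \ref{cor:vestimate3_difference} with $n=m=1$ (so $n+m=2$), where the second (time-$r$) argument is shifted by $\und e_1$.

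Next, I apply Corollary \ref{cor:vestimate3_difference} in the case $n+m=2$: for $r-s\le \tilde\Delta$ each bracket is bounded by $\epsilon^{1+\zeta}(r-s)^{-1+\zeta}$, and for $r-s>\tilde\Delta$ each bracket is bounded by $\epsilon^{1+\zeta}$. Plugging these bounds into the double integral, the prefactor $1/\epsilon$ combines with $\epsilon^{1+\zeta}$ to leave $\epsilon^{\zeta}$. In the short-time regime I compute
\begin{equation*}
\int_0^t\!\int_{(r-\tilde\Delta)^+}^r (r-s)^{-1+\zeta}\,ds\,dr \;\lesssim\; \min\big(t^{1+\zeta},\, t\big),
\end{equation*}
using that the inner integral is at most $\min(r^\zeta,\tilde\Delta^\zeta)/\zeta$. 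In the long-time regime (only present when $t>\tilde\Delta$) the contribution is bounded by $t^2$. Putting the two regimes together yields the claimed estimate $\epsilon^\zeta\big(t^{1+\zeta}\mathbf 1_{t<1}+t^2\mathbf 1_{t\ge 1}\big)$.

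The only delicate point is that the short-time bound $(r-s)^{-1+\zeta}$ is a non-integrable singularity as $\zeta\to 0$; positivity of $\zeta$ is essential to absorb it into a factor of $t^\zeta$, and this is precisely where the refined corollary (as opposed to the crude $\epsilon^{1+\zeta}$ bound of \eqref{bound:space_time_difference_v_estimate2}) is indispensable. All other steps are routine applications of Fubini, uniform bounds on $\psi$ and $\rho_\epsilon$, and the telescoping identity above.
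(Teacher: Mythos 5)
Your proposal is correct and follows essentially the same route as the paper: Fubini to get the double time integral, identification of the integrand as the sum of four $v^{\epsilon,1,1}_{s,r}$ terms, bounding by the spatial difference of Corollary \ref{cor:vestimate3_difference} with $n+m=2$, and integrating separately over $r-s\le\tilde\Delta$ and $r-s>\tilde\Delta$. The only cosmetic difference is that you write the four-term sum as a telescoping pair of brackets before invoking the corollary, whereas the paper passes directly to the supremum of a single spatial difference; the estimate and the final computation are the same.
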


{\begin{remark}
We note that the previous lemma also holds when the points $x,y\notin I_{\pm}$, because it is a consequence of the bounds of Corollary \ref{cor:vestimate3_difference} which hold more generally. 
\end{remark}}
\begin{proof} Observe that by squaring the time integral and using Fubini's theorem, we can bound the expectation appearing in the statement as 
\begin{eqnarray}\label{first calculation for difference of etas}
&&\hspace{-2cm}\mathbb{E}^{\epsilon}_{\mu^\epsilon} \left[\left(\int_{0}^t \frac{j}{2\sqrt \epsilon} \psi_s(1) \rho_\epsilon(x,s)[\bar\eta_s(y)-\bar\eta_s(x)]ds\right)^2\right]\nonumber\\
&&=2\frac{1}{\epsilon}\int_0^t dr\int_0^rds\mathbb{E}^{\epsilon}_{\mu^\epsilon} \left[\left(\bar\eta_s(x)-\bar\eta_s(y)\right) \left(\bar\eta_r(x)-\bar\eta_r(y)\right)\right]\nonumber\\
&&=2\frac{1}{\epsilon}\int_0^t dr\int_0^rds \left(v^{\epsilon,1,1}_{s,r}(x,x)-v^{\epsilon,1}_{s,r}(x,y)-v^{\epsilon,1,1}_{s,r}(y,x)+v^{\epsilon,1,1}_{s,r}(y,y)\right)\nonumber\\
&&\lesssim \frac{1}{\epsilon}\int_0^t dr\int_0^rds\sup_{x,y\in\Lambda_N}|v^{\epsilon,1,1}_{s,r}(y, x|\mu^{\epsilon})-v^{\epsilon,1,1}_{s,r}(y, x+1|\mu^{\epsilon})|)\nonumber\\
&&\leq \frac{1}{ \epsilon}\int_{0}^t dr \int_0^r  ds\sup_{x,y\in\Lambda_N}|v^{\epsilon,1,1}_{s,r}(y, x|\mu^{\epsilon})-v^{\epsilon,1,1}_{s,r}(y, x+1|\mu^{\epsilon})|){\bf 1}_{r-s\leq \tilde\Delta}\nonumber\\
&&\hspace{1cm}+\frac{1}{ \epsilon}\int_{0}^t dr \int_0^r  ds\sup_{x,y\in\Lambda_N}|v^{\epsilon,1,1}_{s,r}(y, x|\mu^{\epsilon})-v^{\epsilon,1,1}_{s,r}(y, x+1|\mu^{\epsilon})|){\bf 1}_{r-s> \tilde\Delta}\nonumber\\ &&\lesssim\epsilon^{\zeta}t^{1+\zeta}+\epsilon^{\zeta}t^{2}\lesssim \eps^\zeta \left(t^{1+\zeta }\mathbf 1_{t<1}+t^{2}\mathbf 1_{t\geq 1}\right),
\end{eqnarray}
where $\tilde\Delta$ is given in Theorem \ref{thm:vestimate3}, and the last line follows from Corollary \ref{cor:vestimate3_difference}. 
\end{proof}
Now we head up to estimating the  $\mathbb{L}^2$-norm  of \eqref{diff_of_rho_epsilon} with the use of the next lemma. 
\begin{lemma}\label{lem:diff_of_rho_epsilon}
Let $t\in(0,T]$. For any $\zeta>0$, it holds
\begin{eqnarray}
&&\mathbb{E}^{\epsilon}_{\mu^\epsilon} \left[\left(\int_{0}^t \frac{j}{2\sqrt \epsilon} \psi_s(1)(\rho_\epsilon(N-1,s)-\rho_\epsilon(N,s))\bar\eta_s(N)ds\right)^2\right]\lesssim \epsilon^{1-4\zeta}t^{2}.
\end{eqnarray} 
\end{lemma}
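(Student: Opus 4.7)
The plan mirrors the proof of Lemma \ref{lem:diff_eta}. First I would square the time integral, apply Fubini, and pull out the deterministic factors $\psi_s(1)$, $\psi_r(1)$ and the $\rho_\epsilon$--differences from the expectation. Setting $\Delta\rho_\epsilon(u):=\rho_\epsilon(N-1,u)-\rho_\epsilon(N,u)$ and using $\|\psi\|_\infty<\infty$, the $\mathbb{L}^2$--norm to estimate equals
\begin{equation*}
\frac{j^2}{2\epsilon}\int_0^t dr\int_0^r ds\,\psi_s(1)\psi_r(1)\,\Delta\rho_\epsilon(s)\,\Delta\rho_\epsilon(r)\,\mathbb{E}^{\epsilon}_{\mu^\epsilon}\bigl[\bar\eta_s(N)\bar\eta_r(N)\bigr],
\end{equation*}
and the inner expectation is exactly the space--time $v$-function $v^{\epsilon,1,1}_{s,r}(N,N)$.

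\textbf{Two inputs.} I would then invoke two of the bounds already stated in the paper. For the deterministic factor I would apply Lemma \ref{lem: difference of rhos} with $x=N-1$: a short computation based on the standard one--dimensional heat--kernel estimate $G_{\epsilon^{-2}\lambda}(N-1,z)\lesssim \min\!\bigl(1,\epsilon/\sqrt{\lambda}\,\bigr)$ for $z\in I_+$ shows that the $\lambda$--integral in that lemma is of order $\epsilon^{2}\log(1/\epsilon)$, so that, uniformly in $u\in(0,T]$,
\begin{equation*}
|\Delta\rho_\epsilon(u)|\lesssim \epsilon+\epsilon\log(1/\epsilon)\lesssim \epsilon^{1-2\zeta},
\end{equation*}
the last step absorbing the logarithm into an arbitrarily small power $\epsilon^{-\zeta}$. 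For the stochastic factor I would apply Theorem \ref{thm:vestimate3} with $n=m=1$, which gives $|v^{\epsilon,1,1}_{s,r}(N,N)|\lesssim \epsilon(r-s)^{-1/2}$ when $r-s\leq\tilde\Delta$ and $\lesssim \epsilon$ when $r-s>\tilde\Delta$.

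\textbf{Assembly.} Plugging these two estimates into the double integral and separating the two regimes $r-s\leq\tilde\Delta$ and $r-s>\tilde\Delta$ yields
\begin{equation*}
\frac{1}{\epsilon}\cdot \epsilon^{2-4\zeta}\cdot\epsilon\left(\int_0^t dr\int_0^r (r-s)^{-1/2}\mathbf 1_{r-s\leq\tilde\Delta}\,ds+\int_0^t dr\int_0^r \mathbf 1_{r-s>\tilde\Delta}\,ds\right)\lesssim \epsilon^{2-4\zeta}\,t^{2},
\end{equation*}
since both inner integrals are bounded by $t$ on $[0,T]$. This is stronger than the claimed $\epsilon^{1-4\zeta}t^{2}$, which therefore holds.

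\textbf{Main obstacle.} The only non--routine step is turning Lemma \ref{lem: difference of rhos} into a clean power--of--$\epsilon$ bound at the boundary. The integrand $\frac{1}{\sqrt{\epsilon^{-2}\lambda}+1}G_{\epsilon^{-2}\lambda}(N-1,z)$ behaves like $1$ in the short--time window $\lambda\lesssim \epsilon^{2}$ and like $\epsilon^{2}/\lambda$ in the diffusive regime $\lambda\gtrsim \epsilon^{2}$, and it is the latter contribution that produces a $\log(1/\epsilon)$ after integration on $[0,T]$. Absorbing this logarithm into the arbitrarily small $\epsilon^{-\zeta}$ loss is the one delicate point; once this is handled, the rest of the argument is a direct Fubini and Cauchy--Schwarz computation combined with Theorem \ref{thm:vestimate3}.
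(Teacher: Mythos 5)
Your proposal is essentially correct and reaches the lemma's conclusion, but it departs from the paper's proof in one substantive way, and the final assembly contains a small arithmetic slip that you should repair.

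The paper's proof is more economical than yours on both inputs. For the deterministic factor it simply cites Corollary \ref{corollary: difference of rhos}, which already packages the $\log$-absorption you re-derive from Lemma \ref{lem: difference of rhos}; re-deriving it is fine but redundant. The substantive difference is in the stochastic factor: the paper bounds $|v^{\epsilon,1,1}_{s,r}(N,N)|\le 1$ trivially, which yields exactly $\frac{1}{\epsilon}\cdot\epsilon^{2-4\zeta}\cdot\frac{t^2}{2}=\epsilon^{1-4\zeta}t^2/2$. You instead invoke Theorem \ref{thm:vestimate3} to get $|v^{\epsilon,1,1}_{s,r}(N,N)|\lesssim \epsilon(r-s)^{-1/2}$ (resp.\ $\epsilon$). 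This trades an extra power of $\epsilon$ for an integrable time singularity, which is a perfectly legitimate and in many regimes sharper route; it is, however, overkill for this lemma, whose only role downstream is to show the term vanishes in $\mathbb L^2$ as $\epsilon\to 0$, and the trivial bound already does that while matching the stated $t^2$ precisely.

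The slip is in your closing line. You claim the two inner integrals are ``bounded by $t$'', but $\int_0^r (r-s)^{-1/2}\mathbf 1_{r-s\le \tilde\Delta}\,ds\le 2\sqrt{\tilde\Delta}$ is bounded by a constant, not by $t$, so the first double integral is $\lesssim t$ (more precisely $\lesssim t^{3/2}$ without the cutoff), not $t^2$. The correct conclusion of your approach is therefore $\lesssim \epsilon^{2-4\zeta}(t^{3/2}+t^2)\lesssim \epsilon^{2-4\zeta}\,t$ on $(0,T]$, not $\epsilon^{2-4\zeta}t^2$. For $t$ of order one this is indeed stronger than the claimed $\epsilon^{1-4\zeta}t^2$, but for $t\lesssim \epsilon$ the two are not comparable, so strictly speaking your argument does not reproduce the lemma as stated. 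If your aim is to match the paper's bound exactly, drop Theorem \ref{thm:vestimate3} here and just use $|v^{\epsilon,1,1}_{s,r}|\le 1$; if you prefer the sharper $\epsilon$-power, you should restate the lemma with $t$ in place of $t^2$.
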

\begin{proof}
The lemma follows directly from Corollary \ref{corollary: difference of rhos}, i.e., 
\begin{eqnarray}\label{calculation for difference of rhos}
&&\mathbb{E}^{\epsilon}_{\mu^\epsilon} \left[\left(\int_{0}^t \frac{j}{2\sqrt \epsilon} \psi_s(1)(\rho_\epsilon(s,N-1)-\rho_\epsilon(s,N))\bar\eta_s(N)ds\right)^2\right]\nonumber\\
&&\hspace{1cm}\lesssim\frac{1}{\epsilon}\mathbb{E}^{\epsilon}_{\mu^\epsilon} \left[\left(\int_0^t (\rho_\epsilon(N-1,s)-\rho_\epsilon(N,s))\bar\eta_s(N) ds \right)^2\right]\nonumber\\
&&\hspace{1cm}=2\frac{1}{\epsilon}\int_0^t dr\int_0^rds (\rho_\epsilon(N-1,s)-\rho_\epsilon(N,s))(\rho_\epsilon(N-1,r)-\rho_\epsilon(N,s))v^{\epsilon,1,1}_{s, r}(N,N)\nonumber\\
&&\hspace{1cm}\leq c\frac{1}{\epsilon}\int_0^t dr\int_0^rdsC_T^2\epsilon^{2-4\zeta}\lesssim \epsilon^{1-4\zeta}t^{2}
\end{eqnarray}
where we use that  $|v^{\epsilon,1,1}_{s, r}(N,N)|\leq 1$.
\end{proof}
Finally, we estimate the  $\mathbb{L}^2$-norm  of \eqref{degree_four} which is a consequence of the next lemma.
\begin{lemma}\label{lem:estimation of v2}
Let $t\in(0,T]$. Then, for any $\zeta>0$ there is a constant $c>0$ such that
\begin{eqnarray}
&&\mathbb{E}^{\epsilon}_{\mu^\epsilon} \left[\left(\int_0^t\frac{j}{2\sqrt \epsilon} \psi_s(1)\bar\eta_s(N)\bar\eta_s(N-1)\right)^2\right]\lesssim \epsilon^{\zeta}\left(t^{1+\zeta }\mathbf 1_{t<1}+t^{2}\mathbf 1_{t\geq 1}\right).
\end{eqnarray} 
\end{lemma}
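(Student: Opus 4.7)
The plan is to imitate the argument used for Lemma \ref{lem:diff_eta}: square the time integral, apply Fubini, identify the resulting four–point expectation as a space--time $v$-function, and then invoke Theorem \ref{thm:vestimate3}. The key difference from Lemma \ref{lem:diff_eta} is that now the integrand is already a product of two centered occupation variables, so after squaring we obtain four $\bar\eta$'s distributed over two times, that is, a space--time $v$-function with $m=n=2$.

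Concretely, expanding the square, using Fubini, and bounding $|\psi_s(1)\psi_r(1)|$ by its sup‐norm, one reaches
\begin{equation*}
\mathbb{E}^{\epsilon}_{\mu^\epsilon}\!\left[\!\left(\int_0^t\!\!\frac{j}{2\sqrt\epsilon}\psi_s(1)\bar\eta_s(N)\bar\eta_s(N-1)ds\right)^{\!2}\right]
\lesssim \frac{1}{\epsilon}\int_0^t\!\!dr\int_0^r\!\!ds\,\bigl|v^{\epsilon,2,2}_{s,r}\bigl((N\!-\!1,N),(N\!-\!1,N)\bigr)\bigr|.
\end{equation*}
This is where the structure of the present term is crucial: whereas Lemma \ref{lem:diff_eta} produced a difference of $v^{\epsilon,1,1}_{s,r}$'s and therefore invoked Corollary \ref{cor:vestimate3_difference}, here one has a full four‐point space--time $v$-function, for which the pointwise bound of Theorem \ref{thm:vestimate3} with $m+n=4$ is the right input.

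Next, I split the time integral into the regions $r-s\leq\tilde\Delta$ and $r-s>\tilde\Delta$ and apply the two bounds \eqref{bound:space_time_v_estimate1} and \eqref{bound:space_time_v_estimate2}. In the short-time region, the relevant bound contains the singular factor $(r-s)^{-1+\zeta}$ times $\epsilon^{1+\zeta}$ (or, if $K=2$ so that the range $4\leq n+m\leq K$ is empty, the uniform $\epsilon^{1+\zeta}$ bound from the third case of Theorem \ref{thm:vestimate3}, which is even better). After the $1/\epsilon$ prefactor, an elementary change of variables $u=r-s$ gives
\begin{equation*}
\frac{1}{\epsilon}\int_0^t dr \int_{(r-\tilde\Delta)_+}^r \epsilon^{1+\zeta}(r-s)^{-1+\zeta}\,ds
\;\lesssim\;\epsilon^{\zeta}\int_0^t\min(r,\tilde\Delta)^{\zeta}\,dr
\;\lesssim\;\epsilon^{\zeta}\bigl(t^{1+\zeta}\mathbf 1_{t<1}+t\,\mathbf 1_{t\geq 1}\bigr).
\end{equation*}
In the long-time region $r-s>\tilde\Delta$ (which is only nonempty for $t>\tilde\Delta$) the bound \eqref{bound:space_time_v_estimate2} is uniform in $(s,r)$ and at worst of order $\epsilon^{1+\zeta/2}$, so after dividing by $\epsilon$ one obtains a contribution bounded by $\epsilon^{\zeta/2}t^2$, hence by $\epsilon^{\zeta}t^{2}$ upon shrinking $\zeta$ (which is harmless, since $\zeta$ can be taken arbitrarily small throughout). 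Adding the two regimes yields the right-hand side of the statement.

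The main technical point I expect will require care is the short-time integration: the bound on $v^{\epsilon,2,2}_{s,r}$ contains the non-integrable-looking factor $(r-s)^{-1+\zeta}$, and one has to verify that the combination with the $\epsilon^{1+\zeta}$ prefactor, the $\epsilon^{-1}$ coming from $(\sqrt\epsilon)^{-2}$, and the extra time integration in $r$ conspires to give precisely $\epsilon^\zeta t^{1+\zeta}$ in the regime $t<1$ and $\epsilon^{\zeta}t^2$ in the regime $t\geq 1$. Once the correct form of Theorem \ref{thm:vestimate3} with $m=n=2$ is identified, the remaining calculations are routine and entirely parallel to those in the proof of Lemma \ref{lem:diff_eta}, with $v^{\epsilon,1,1}_{s,r}$ replaced by $v^{\epsilon,2,2}_{s,r}$ and Corollary \ref{cor:vestimate3_difference} replaced by Theorem \ref{thm:vestimate3}.
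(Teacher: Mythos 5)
Your proof is correct and follows essentially the same route as the paper: square, apply Fubini to identify the four‐point space--time $v$-function $v^{\epsilon,2,2}_{s,r}$, split the time domain at $r-s=\tilde\Delta$, and invoke Theorem \ref{thm:vestimate3} with $n+m=4$. One small inaccuracy in your parenthetical remark: for $K=2$ and $n=m=2$, the third case of Theorem \ref{thm:vestimate3} (which requires $n\geq K+1=3$) does not literally apply either; what the proof of that theorem actually establishes is that the short-time bound $\epsilon^{1+\zeta}(r-s)^{-1+\zeta}$ holds for all $n+m\geq 4$ regardless of $K$, which is precisely what your main computation uses, so the argument is unaffected.
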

\begin{proof}
The estimate is a direct result of Theorem \ref{thm:vestimate3} for $n=m=2$, i.e., 
\begin{eqnarray*}
&&\mathbb{E}^{\epsilon}_{\mu^\epsilon} \left[\left(\int_0^t\frac{j}{2\sqrt \epsilon} \psi_s(1)\bar\eta_s(N)\bar\eta_s(N-1)\right)^2\right]\nonumber\\
&&\hspace{1cm}\lesssim\frac{1}{ \epsilon}\int_{0}^t dr \int_0^r  ds |v_{s, r}^{\epsilon,2,2}(N-1, N, N-1, N)|\nonumber\\
&&\hspace{1cm}\leq \frac{1}{ \epsilon}\int_{0}^t dr \int_0^r  ds\sup_{\underline{x}\in\Lambda_N^{2\neq}}|v_{s, r}^{\epsilon,2,2}(\underline{x}, N-1, N)|\nonumber\\
&&\hspace{1cm}\leq \frac{1}{ \epsilon}\int_{0}^t dr \int_0^r  ds\sup_{\underline{x}\in\Lambda_N^{2\neq}}|v_{s, r}^{\epsilon,2,2}(\underline{x}, N-1, N)|{\bf 1}_{r-s\leq \tilde\Delta}\nonumber\\
&&\hspace{1.5cm}+\frac{1}{ \epsilon}\int_{0}^t dr \int_0^r  ds\sup_{\underline{x}\in\Lambda_N^{2\neq}}|v_{s, r}^{\epsilon,2,2}(\underline{x}, N-1, N)|{\bf 1}_{r-s> \tilde\Delta}\nonumber\\
&&\hspace{1cm}\lesssim\epsilon^{1+\zeta}t^{1+\zeta}+\epsilon^{1+\zeta}t^{2}
\lesssim \epsilon^{\zeta}\left(t^{1+\zeta }\mathbf 1_{t<1}+t^{2}\mathbf 1_{t\geq 1}\right).
\end{eqnarray*}
\end{proof}
With this section, we complete the closure of the martingale decomposition for $K=2$ given in \eqref{intpartofmart_1} i.e.
\begin{align}
M_t^\epsilon(\psi_t)& \;:=\;\ Y_t^\epsilon (\psi_t)- Y^\epsilon _0(\psi_0)-\int_{0}^t  Y_s^\epsilon\Big(\frac 12\Delta\psi_s+\partial_s\psi_s\Big)ds.\end{align}
We observe that the last display can also be generalized to any $K$ using the results of Appendix \ref{ap:action_gen}, together with the results of  Theorem \ref{thm:vestimate2} and \ref{thm:vestimate3}.
Now we discuss the conditions that we have to impose on the function $\psi_s$ in order to recover \eqref{characterization}. 
Observe that taking $\psi_s=T_{t-s}H$ where $T_t$ is solution of  \eqref{eq:Robin_equation_line} and $H$ is a $C^2$ function, then  from the first equality in  \eqref{eq:Robin_equation_line}, the rightmost term on the first line of last display vanishes, since $\partial_s\psi_s=-\partial_sT_{t-s}H=-\frac{1}{2}\partial_u^2T_{t-s}H$. Above we have already used the boundary conditions  in \eqref{eq:Robin_equation_line} to control the contribution from the boundary generator.  Up to here we have that 
\begin{align}
M_t^\epsilon(H)=\ Y_t^\epsilon (H)- Y^\epsilon _0(T_tH)
\end{align}
 plus terms that vanish as $\epsilon\to 0$. Finally, note that 
from Corollary 
\ref{cor:conv_mart}, the sequence of martingales $\{M_t^\epsilon(H);t\in [0,T]\}_{\epsilon}$ converges in the topology of $\mathcal  D([0,T], \mathbb R)$, as $\epsilon\to 0$, towards a mean-zero Gaussian process $ W_t(H)$ with  quadratic variation given by 
\begin{equation*}
\int_0^t \|\ T_{t-s}H\|_{L^2(\rho_s)}^2\,ds,
\end{equation*}
where $\rho(u,s)$  is the solution of the hydrodynamic equation \eqref{eq:Robin_equation} and the $\mathbb L^2$-norm is defined in \eqref{eq:norm}. Moreover, if we assume also that the sequence $\{Y_t^\epsilon\}_\epsilon$  is tight, which will be proved later in Section \ref{sec:tightness}, then taking the limit through a subsequence $\{Y_t^\epsilon\}_{\epsilon_k}$, we see that the limit points satisfy \eqref{characterization} 
  as desired.

\section{Estimates on space $v$-functions}\label{integral inequalities for space  v-function}

This section focuses on the derivation of integral inequalities for the space $v$-functions that form a critical component in establishing our main results, namely Theorem~\ref{thm:vestimate2} and \ref{thm:vestimate3}. While the integral inequalities for space  $v$-functions were previously established in Section 5 of \cite{de2012truncated}, we revisit them here, as well as in Appendix \ref{app: derivation of II for space v}, since we need to improve the bounds for the case of small $n$ (as was already discussed in the introduction) and to lay the groundwork for deriving the corresponding inequalities for space-time $v$-functions to be  presented in Section \ref{sec:space_time_corr}. Special emphasis is placed on the key aspects necessary for this extension. We observe that the core idea of the proof of Theorem \ref{thm:vestimate2}  was originally developed in \cite{de1989weakly}, and it  suffices to prove the boundedness of 
\[
\Big(\mathbf{1}_{n=1,2}\epsilon^{-1+2\zeta}+\mathbf{1}_{3\leq n\leq K}\epsilon^{-1}+\mathbf{1}_{n\geq K+1}\epsilon^{-1-\zeta}\Big)\sup_{0 \leq t \leq T} \sup_{\underline{x}\in\Lambda_N^{n,\neq}}|v_n^{\epsilon}(\underline{x},t)|.
\] 
To this end, our starting point is the integral solution of the evolution equation for the space $v$-functions, given in \eqref{formula: equation for v}. 
This integral solution essentially describes all possible ways in which particles can move within the system, that is, all possible scenarios that may occur, which are summarized as follows:
\begin{itemize}
    \item Two particles meet (i.e., occupy neighboring lattice sites), and one of them dies;
    \item Two particles meet, and both die;
    \item One or more particles reach the boundaries $I_{\pm}$, and either no new particles are created or new ones are born.
\end{itemize}
At the level of estimation, such events are mainly controlled using tools developed in \cite{de2011current} and \cite{de2012truncated}. 
These methods rely on Gaussian kernel approximations for the aforementioned events, in particular by bounding the transition probability of the stirring process from a labeled configuration $\underline{x} = (x_1, \dots, x_n)$ to $\underline{y} = (y_1, \dots, y_n)$ by the product of the corresponding probabilities for $n$ independent random walks, each estimated via Gaussian kernels. 
Since the Gaussian kernels are further bounded by $\frac{1}{[\epsilon^{-2} t]^{1/2} + 1}$ after diffusive scaling, that is,
\begin{equation}\label{eq:psi_n}
\mathbf{P}_{\epsilon}(\underline{x} \xrightarrow{\lambda} \underline{y}) 
\leq 
\prod_{i=1}^{n} \sum_{j=1}^{n} P^{(\epsilon)}_{\lambda}(x_i, y_j)
\lesssim 
\frac{1}{[\epsilon^{-2}\lambda]^{n/2} + 1} 
\qquad \text{(Liggett's inequality)},
\end{equation}
the product of more than one such kernels is not necessarily integrable over $[0, T]$. 
This presents a major challenge in proving that the space $v$-function is at least of order $\epsilon$ for $n \geq 3$, and, as we shall see later, at least of order $\epsilon$ for the space--time $v$-function as well. 
To overcome this difficulty, we estimate the possible scenarios in a different manner.
Depending on the number of particles that die and/or are born, the resulting integral inequality involves a $v$-function whose degree corresponds to the number of remaining particles. Therefore, the evolution equation for the space $v_{n}^{\epsilon}$- function includes terms involving $v_{j}^{\epsilon}$, where, according to the scenarios described above, $j = n-2, n-1$, or $j = n, n+1, \dots, n+K-1$. For this reason, to fully estimate all terms appearing in the integral solution of \eqref{formula: equation for v}, we need to consider the integral solutions of all $v_{j}^{\epsilon}$. In this way, all $v_{j}^{\epsilon},\;j\geq 1$ are estimated simultaneously, showing that as the degree of $v_{j}^{\epsilon}$ increases, the estimates are bounded by the order $\eps^{1+\zeta}$.
The structure of the proof goes as follows: we start with the analysis of the evolution equation of 
$v$-functions, then proceed with the estimation of the scenarios mentioned above and conclude with simultaneous estimation of the vector of all $v_j^{\eps}$'s.
\medskip

\subsubsection{An evolution equation for the space $v$-function}
We first recall, for example,  from Lemma 5.1 of \cite{de2012truncated}, the evolution equation for  space  $v$-functions, for $X$ such that $|X|=n$, given by \begin{equation}\label{formula: equation for v}
\frac{d}{dt} v_n^\eps(X,t)=\mathbb E^\eps_{\mu^\eps}\left[\;L_\eps
\prod_{x\in X}\left\{
\eta_t(x)-\rho_\eps(t,x)\right\} \right]+ \mathbb E^\eps_{\mu^\eps}\left[\;\frac{\partial}{\partial t}
\prod_{x\in X}\left\{
\eta_t(x)-\rho_\eps(t,x)\right\}\right].
\end{equation}
We calculate the right hand side of \eqref{formula: equation for v}, obtaining the discrete equation for  $v_n^\eps(X,t)$ given by
\begin{equation}\label{10}
\frac{d}{dt} v_n^\eps(X,t)=\eps^{-2} (L_0 v_n^\eps)(X,t)+(C_{\eps}v_n^\eps)(X,t)
\end{equation}
where
$L_0v_n^\eps$ denotes the action of $L_0$  on  $ v_n^\eps(\cdot,t)$  as a  function of  $X$  with $X$ regarded as a particle configuration. Also, $\epsilon^{-2}L_0$ represents the generator of the symmetric stirring process that we represent by $(X(\lambda))_{\lambda\geq 0}$ that is reflected at the boundary points $x\in\{-N,N\}$.
The operator    $(C_{\eps}v_n^{\eps})$ is the result of a linear operator acting on $v_n^{\eps}$, given as

 \begin{equation}
       \label{12aa}
 (C_{\eps}v_n^\eps)(X,t):=
\eps^{-2}(Av_n^\eps)(X,t)+\eps^{-1}(Bv_n^\eps)(X,t),
       \end{equation}
with $(Av_n^\eps)(X,t)=0$ if $|X|<2$, while for $|X|\ge 2$
\begin{eqnarray}\label{def:operator A}
(Av_n^\eps)(X,t) & = &
(A^1v_n^\eps)(X,t) +(A^2v_n^\eps)(X,t) 
 \nonumber\\
&:=&\sum_{x,y\in X}\mathbf 1_{|x-y|=1} 
\left[\rho_\eps(t,x)-\rho_\eps(t,y)\right] \left[v_{n-1}^\eps(X\setminus x,t)-v_{n-1}^\eps(X\setminus y,t)\right]
\nonumber\\
&&+\sum_{x,y\in X}\mathbf 1_{|x-y|=1} \left( -\frac 12 \left[ \rho_\eps(t,x)-\rho_\eps(t,y)
\right]^2 v_{n-2}^\eps(X\setminus(x\cup y),t)\right)
\label{11}
       \end{eqnarray}
and $Bv_n^\eps=B_+v_n^\eps+B_-v_n^\eps$, with $(B_{\pm} v_n^\eps)(X,t)$ that can be written as
     \begin{eqnarray}
       \label{12}
\hspace{-.2cm}(B_{\pm}v_n^\eps)(X,t)&:=&\hspace{-.5cm}\sum_{p=1}^{\min\{{|X|},K\}}\sum_{q=0}^K\sum_{ Z'_q\subset I_{\pm}}\mathbf{1}_{\substack{|X\cap I_{\pm}|=p,\\ |Z'_q|=q}}\,
b_\eps\Big([X\cap I_{\pm}],Z'_q,t\Big) v_{n-p+q}^{\eps}\Big(X\setminus [X\cap I_{\pm}]\cup Z'_q,t\Big)\nonumber\\
&&\hspace{3cm}+{\sum_{l=1}^K\mathbf{1}_{X= I_{\pm}^l}b_\eps\Big({I_{\pm}^l},\emptyset,t\Big)}
       \end{eqnarray}
      
       with 
\begin{equation}\label{In}
I^l_{+}=\{N-(l-1),\dots, N\}\;\textrm{ and } \;I^l_{-}=\{-N,\dots, -N+(l-1)\},\quad 1\leq  l\leq K,
\end{equation}
being the last $l$ sites in the right/left boundary, and 
the coefficients $b_\eps(Z,Z',t)$,  satisfy the following properties:
       \begin{eqnarray}
       \label{12ab}
 && b_\eps(\emptyset, Z',t)=0,\;\;\; b_\eps(Z,\emptyset,t)=0\,\; {\rm if }\, |Z|=1
 \\&& \text{for any integer $M$:}\,\, \sup_{t,|Z|\leq M, |Z'|\leq M}|b_\eps(Z,Z',t)| <\infty.\nonumber
       \end{eqnarray}
We observe that the last term in \eqref{12}, despite showing a sum, it is in fact a unique non zero term when the set $X$ matches one of the $I_\pm^l$ for some $l\in{\{1,\dots, K\}}$. We display this term separately from the others since it will be treated with a different strategy. Observe that in \eqref{12},  $p=|X\cap I_{\pm}|\in\{1,\dots, n\}$,  represents the number of particles that go to the boundary and die,  and $q=|Z'_q|\in\{0,\dots, K\}$, represents the number of particles that are born at the boundary.
      
\subsubsection{Estimating via Duhamel's formula }

Given the evolution equation for the space $v$-function we use the stirring process defined in Section \ref{app: derivation of II for space v}. For $X,Y\subset \La_N$ with $|X|=|Y|$ we use the notation $\mathbf{P}_\epsilon(X\overset{\lambda}\rightarrow Y)=\mathbf{P}_\epsilon(X(\lambda)=Y|X(0)=X)$, where $\lambda\geq 0$ is a fixed time. In particular, if $X=\{x\}$, $Y=\{y\}$ then  $\PP_\eps(X\overset{\lambda}\rightarrow Y)=P^{(\eps)}_\lambda(x,y)$ as in \eqref{a3.6}. While in \cite{de2012truncated} the integral form of the solution  of \eqref{10} is given by the initial condition starting at time 0, i.e.,$v_n^\eps(X,0)=0$, here we present the integral form for any starting time $0\leq t^*<t$.  We denote by $v_n^\epsilon(X, t; t^*)$ the space $v$-function starting from time $t^*$, 
and by $v_n^\epsilon(X, t)$ the one starting from $t = 0$. We order arbitrarily the sites of $X$ as $\und x=(x_1,\dots,x_n)$ (which is a labeled configuration now, see definition in Appendix \ref{app: derivation of II for space v}) and set $v_n^{\epsilon}(\und x,t) := v_n^{\epsilon}(X,t)$ or $v_n^{\epsilon}(\und x,t;t^*) := v_n^{\epsilon}(X,t, t^*)$. In turn, ${\bf E}_{\epsilon, X}$ is written as ${\bf E}_{\epsilon, \underline{x}}$. By the evolution equation \eqref{10} and Duhamel's formula, we get
  \begin{equation}
         \label{14*}
v_n^\eps(\und x,t;t^*)={\bf E}_{\epsilon,\und x}\left[v_n^\eps(\und x(t-t^*),t^*)\right]+\int_{t^*}^t\,d\lambda
{\bf E}_{\epsilon,\und x}\left[(C_{\eps}v_n^\eps)(\und x(\lambda),t-\lambda)\right]
         \end{equation}
         which, by taking into account all the possible states for the stirring process, is the same as 
         \begin{equation}
         \label{14}
v_n^\eps(\und x,t;t^*)=\sum_{\und y\subset \Lambda_N^{\neq, n}}\PP_\eps(\und x\overset{t-t^*}\rightarrow \und y)v_n^\eps(\und y,t^*)+\int_{t^*}^t\,d\lambda
\sum_{\und y\subset \Lambda_N^{\neq,n}}\PP_\eps(\und x\overset{\lambda}\rightarrow \und y)(C_{\eps}v_n^\eps)(\und y,t-\lambda).
         \end{equation}
Recall that $\Lambda_{N}^{\neq,n}$ denotes the set of all sequences $(x_1, \dots, x_n) \in \Lambda_N^n$ with $x_i \neq x_j$ for all $i \neq j$, i.e., all coordinates of $\underline{y}$ are distinct. For simplicity of notation, in all subsequent summations we will write $\sum_{\underline{y}}$, which is to be understood as $\sum_{\underline{y} \subset \Lambda_N^{\neq,n}}$.  We also note that the dependence on $\underline{x}$ in ${\bf E}_{\epsilon, \underline{x}}$ will be omitted whenever it is clear from the context. Similarly, we omit $t^*$ from the notation $v_n^\epsilon(\und x, t; t^*)$ and write $v_n^\epsilon(\und x, t)$ whenever no confusion arises. 
      
  \begin{remark}\label{rem:In}
After computing \eqref{formula: equation for v}, we observe that when $n\leq K$ and $X$ coincide with $\{N-(l-1),\dots, N\}$ (or $\{-N,\dots, -N+(l-1)\}$), i.e., when all particles reach  the last $l$ sites to the right in the boundary and die, one of the possible scenarios is that no new particles are born. This occurs when $l=n$ and  $X=I^l_\pm=I^n_\pm$ (and thus $X\setminus [X\cap I_{\pm}]=\emptyset$) and $Z_q'=\emptyset$ (and thus $q=0$  in which case the term simplifies to $b\Big(I^l_\pm,\emptyset,t\Big)$ since $v_{l-p+q}^{\eps}\Big(X\setminus I^l_\pm\cup \emptyset,t\Big)=v_{l-l+0}^{\eps}\Big(\emptyset,t\Big)=1$ by \eqref{v:emptyset}. This scenario is represented by $\mathbf{1}_{X= I_{\pm}^l}b_\eps\Big(I_{\pm}^l,\emptyset,t\Big)$
  and is exactly the second term in \eqref{12}. 
  \end{remark}

           \begin{remark}[\bf Interpretation of the operator $C_{\eps}v_n^\eps$]\label{rem:interpretation} The operator $(C_{\epsilon}v_n^\eps)$ describes how particles move within the system. Specifically, $Av_n^\eps$ represents two different scenarios described in the operators $A^1v_n^\eps$ and $A^2v_n^\eps$: either two particles meet - this is indicated by the indicator function $\mathbf 1_{|x-y|=1}$  - and either $x$ or $y$ dies,  or  both $\{x,y\}$ die,  and this is  represented by $v^{\epsilon}_{n-1}(X\setminus x,t)$ or $v^{\epsilon}_{n-1}(X\setminus y,t)$ or $v^{\epsilon}_{n-2}(X\setminus(x\cup y),t)$, respectively. The operator $B_{\pm}v_n^\eps$ accounts for cases where one or more particles reach the boundary  $I_{\pm}$, i.e., the particles in $X\cap I_{\pm}=Z_p$, die, and possibly result in the birth of 0 or more particles, i.e., the particles in $Z_q'$. This is described by $v_{n-p+q}\left(X\setminus [X\cap I_{\pm}]\cup Z_q',t\right)$ subject to the constraints \eqref{12ab}. 
        \end{remark}
            \begin{remark}[\bf Delicate terms/scenarios in $C_{\eps}v_n^\eps$] \label{rem: dangerous scenarios} There are three particularly delicate terms in $C_{\epsilon}v_n^\epsilon$. 
Two of them involve $v_0^{\epsilon}(\underline{\emptyset},t)$. 
The first such term is discussed in Remark~\ref{rem:In} and it is given by
\begin{equation}\label{dang_scen1}
\mathbf{1}_{X=I_{\pm}^n} b_{\epsilon}\big(I_{\pm}^n,\emptyset,t\big),\quad {n\geq 2}.
\end{equation}
The second appears as the second term in~\eqref{11} when $n=2$, which simplifies to
\begin{equation}\label{dang_scen2}
(A^{2}v_2^\epsilon)(x,y,t)
=-\tfrac{1}{2}\mathbf{1}_{|x-y|=1}\,\big[\rho_{\epsilon}(t,x)-\rho_{\epsilon}(t,y)\big]^2
\end{equation}
and corresponds to the event where two particles meet, both die, and leave no remaining particles in the system, i.e., 
$v_{2-2}^\epsilon(X\setminus(x\cup y),t)=v^\epsilon(\emptyset,t)=1$. Finally, the third delicate term is
\begin{equation}\label{dang_scen3}
(A^{1}v_n^\epsilon)(X,t)
=\sum_{x,y\in X}\mathbf{1}_{|x-y|=1}
\big[\rho_{\epsilon}(t,x)-\rho_{\epsilon}(t,y)\big]
\big[v_{n-1}^\epsilon(X\setminus x,t)-v_{n-1}^\epsilon(X\setminus y,t)\big],
\quad n\geq 2.
\end{equation}
These terms pose challenges that must be carefully addressed in order to establish the main results 
stated in Theorem~\ref{thm:vestimate2}  (and Theorem ~\ref{thm:vestimate3} later). 
In fact, the estimates used in~\cite{de2012truncated} (see Section 5, \cite{de2012truncated}) and also provided in Appendix \ref{app: derivation of II for space v}, are not sufficient to obtain the space and space--time estimates in Theorems~\ref{thm:vestimate2} and~\ref{thm:vestimate3}. 
For this reason, we distinguish these three terms in the formulas from the outset 
and estimate them differently from~\cite{de2011current}, as shown in Lemma~\ref{lem:gammas}. On top of this, the term given in \eqref{dang_scen1} is handled differently in the context of the space--time $v$ estimation, see Lemma \ref{eq: improved v2!} .
\end{remark}

We recall \eqref{12aa} and we bound the rightmost term in \eqref{14} by the sum of the next two terms
\begin{equation}
         \label{14***}
\int_{t^*}^t\,d\lambda
 \sum_{\underline{y}\subset\La_N}\mathbf{P}_{\epsilon}(\underline{x}\overset{\lambda}\rightarrow \underline{y}) \epsilon^{-1}|(B_{\pm}v_n^\epsilon)(\underline{y},s-\lambda)|
         \end{equation}
and 
\begin{equation}
         \label{14+}
\int_{t^*}^t\,d\lambda
 \sum_{\underline{y}\subset\La_N}\mathbf{P}_{\epsilon}(\underline{x}\overset{\lambda}\rightarrow \underline{y}) \epsilon^{-2}\left(|(A^1 v^\epsilon_n)(\underline{y},s-\lambda)|+|(A^2 v^\epsilon_n)(\underline{y},s-\lambda)|\right).
         \end{equation}

         \subsubsection{Estimation of the right-hand side of \eqref{14}: the boundary contribution.} 
 Now, we estimate the contribution from the boundary generator $B_{\pm}$. By using  Lemma 5.2 in \cite{de2012truncated}, it is straightforward to show that: For any $n$  there is a constant $c$ so that for any $\underline{x}\subset\La_N$, $|\underline{x}|=n$, and any $\lambda<t \leq \log \epsilon^{-1}$
              \begin{eqnarray}
              \label{13}
&& \sum_{\underline{y}\subset\La_N}\mathbf{P}_{\epsilon}(\underline{x}\overset{\lambda}\rightarrow \underline{y}) \epsilon^{-1}|(B_{\pm}v_n^\epsilon)(\underline{y},t-\lambda)|
\leq
\sum_{u=\pm}\sum_{\underline{z}'\subset I_{\pm}}
\sum_{\substack{J\subset \{1,\dots,n\},\\1\leq p\leq \min\{n,K\}}} \big(1-\mathbf 1_{q=0,p=1}\big) \nonumber\\&&\hskip0.2cm \times
{\psi_p^\eps(\lambda)}
\mathbf{E}_\epsilon \Big[ \mathbf{1}_{\emptyset\neq\underline{x}^{(J)}(\lambda)\subset I_u^c} |v_{n-p+q}^\epsilon\big(\underline{x}^{(J)}(\lambda)\cup\underline{z}',t-\lambda\big)|\Big]+\epsilon^{-1}\mathbf{E}_\epsilon \Big[{\bf 1}_{\und x(\lambda)=I_\pm^n}\Big]
      \end{eqnarray}
  where $\underline{x}^{(J)}$  is the configuration obtained by erasing from $\und x$ all $x_j$, $j\in J$ and  $p=|J|$ and \begin{equation}\label{def_psi_n}
  \psi_p^\eps(\lambda)= 
\frac{\eps^{-1}}{[\epsilon^{-2}\lambda]^{p/2} + 1}.\end{equation} Above, we estimate \eqref{14***} in the same way as in Lemma 5.2 of \cite{de2012truncated}, except for the term $\epsilon^{-1}\mathbf{E}_\epsilon \Big[{\bf 1}_{\und x(\lambda)=I_\pm^n}\Big]$. This term requires special consideration, as discussed in Remark \ref{rem: dangerous scenarios}: in some occasions, it is estimated exactly as in Lemma 5.2 of \cite{de2012truncated}, i.e., for $n\leq K$
  \begin{equation}\label{est:dangerous_boundary_term}
 \epsilon^{-1}\mathbf{E}_\epsilon \Big[{\bf 1}_{\und x(\lambda)=I^n_+}\Big]\lesssim \psi_n^\eps(\lambda),
  \end{equation}
  while in other cases a different treatment is necessary. For this reason, we keep the formulation in its most general form. 
The integral inequality \eqref{14} can be estimated as 
  \begin{eqnarray}
              \label{13.1.111_p=1}
&&\hspace{-.5cm} |v_n^{\epsilon}(\underline{x},t)|\lesssim \sum_{\und y}\PP_\eps(\und x\overset{t-t^*}\rightarrow \und y)|v_n^\eps(\und y,t^*)|\nonumber\\
&&\hspace{1cm}+\int_{t^*}^t d\lambda \sum_{q=0}^{K}\,\sum_{\substack{\underline{z}'\subset I_{+},\\ |\underline{z}'|=q}}
\sum_{\substack{ J\subset \{1,\dots,n\},\\2\leq p\leq \min\{n,K\}}} {\psi_n^\eps(\lambda)}
\mathbf{E}_\epsilon \Big[ |v_{n-p+q}^\epsilon\big(\underline{x}^{(J)}(\lambda)\cup \underline{z}',t-\lambda\big)|\Big]\nonumber\\
&&\hspace{1cm}+\int_{t^*}^t d\lambda \;\eps^{-1}\sum_{j=1}^{n}\sum_{\substack{\underline{z}'\subset I_{+},\\q\geq 1}}
 \mathbf{E}_\epsilon \left[\mathbf{1}_{x_j(\lambda)\in I_{+}} |{v_{n-1+q}^\epsilon\big(\underline{x}^{(j)}(\lambda)\cup \underline{z}',t-\lambda\big)|\Big]}\right]\nonumber\\
 &&\hspace{1cm}+\int_{t^*}^t d\lambda \;
\eps^{-1}\mathbf{E}_\epsilon \left[ {\bf 1}_{\und{x}(\lambda)=I^n_+}\right]\nonumber\\
&&\hspace{1cm}+\mathbf 1_{n\geq 2}\int_{t^*}^t d\lambda\;\bigg(\mathbf{E}_\epsilon\Big[\epsilon^{-2}\left\{|(A^1 v^{\epsilon}_{n})(\underline{x}(\lambda),t-\lambda)|+|(A^2 v^{\epsilon}_{n})(\underline{x}(\lambda),t-\lambda)|\right\} \Big].
  \end{eqnarray}
 Here $A^1$ and $A^2$ are defined in \eqref{def:operator A}, $p=|J|$ and $q=|\und z'|$. It is worth noting that a similar line of analysis is employed in the proof of the space-time $v$-estimate in Theorem~\ref{thm:vestimate3}, presented in Section~\ref{sec:space_time_corr}, where additionally the initial condition must be carefully treated.

\begin{remark}[{\bf A few comments on \eqref{13.1.111_p=1}}]\label{A few comments} We discuss a few important points regarding \eqref{13.1.111_p=1}:
 \begin{itemize}
\item  The indicators $\mathbf{1}_{\emptyset\neq\underline{x}^{(J)}(\lambda)\subset I_\pm^c}$ appearing in \eqref{13}   do not appear anymore in the second line of  \eqref{13.1.111_p=1} because  they are  simply bounded by 1.

\item We have distinguished the scenarios that one particle  in $\und x$ reaches the boundary, dies and a new particle is born from all the other scenarios in $B_+$. The former scenarios have been retained in their original form as defined in \eqref{12} while the later ones have been estimated as in \eqref{13}.
\item This decomposition is not essential for proving Theorem~\ref{thm:vestimate2}. 
As shown in Section~\ref{proof of main theorem}, the last three terms in~\eqref{13.1.111_p=1} 
are estimated using Liggett’s inequality together with Gaussian kernel bounds 
(see Lemma~\ref{lem: bound T1} and the estimates in Appendix~\ref{secN6}). 
Specifically, these terms yield an estimate of the form  {$\psi_p^\eps(\lambda)$} for $ p \ge 1$.
\end{itemize}
 \end{remark} 

\begin{remark}\label{rem:comment_integration}
For $p\geq2$, the factors {$\psi_p^\eps(\lambda)$} in \eqref{13.1.111_p=1} are not integrable over the time interval $[0,t]$, i.e., when $t^*=0$. In general, throughout this work, whenever the time exponent in the denominator is equal or exceeds 1-- e.g. $\frac{\epsilon^{-1}}{[\epsilon^{-2}\lambda]^{k}+1}$ or $\frac{\epsilon^{-1}}{[\epsilon^{-2}(t-\lambda)]^{k}+1}$ with $k\geq 1$-- we apply the same technique used in the proof of Lemma \ref{lem: bound T1} to reduce the exponent to a value strictly less than 1 and make these factors integrable over the time interval $[0,t]$. We will refer to the next estimation at several points later in the paper:
\begin{eqnarray}\label{important*m=3}
\int_{0}^{t}d\lambda\frac{\epsilon^{-1}}{[\epsilon^{-2}\lambda]^{k}+1}\leq\epsilon+\int_{\epsilon^2}^{t}d\lambda\frac{\epsilon^{-1}}{[\epsilon^{-2}\lambda]^{1+\zeta}}
&=&\epsilon+\epsilon^{1+2\zeta}t^{-\zeta}+ \epsilon^{1+2\zeta} \epsilon^{-2\zeta}\nonumber\\
&\leq& c (\epsilon+\epsilon^{1+2\zeta}t^{-\zeta})\leq c \epsilon.
\end{eqnarray}
\end{remark}

\subsubsection{Estimation of the right-hand side of \eqref{14}: two particles meet and only one dies.} 

We now turn our attention to the scenario that two particles meet and only one dies. This is translated in the operator $A^1 v_n^\eps$, whose  definition is  given in  \eqref{def:operator A}.
 Note that $A^1$  depends on the discrete gradient of a $v$-function with one particle less, i.e., 
$$v_{n-1}^\eps(\underline{x}^{(i)},t)-v_{n-1}^\eps(\underline{x}^{(j)},t),$$
where $\underline{x}^{(i)}$ and $\underline{x}^{(j)}$ are the configuration obtained by erasing from $\und x$ the $i$-th and the $j$-th particles, respectively. Recalling the formula  \eqref{14*} for the difference of the  $v$-functions, and by enlarging the space so that we can take the same expectation for the two $v$-functions, we write 
   \begin{equation}\label{DFD}
   \begin{split}
v_{n-1}^\eps(\underline{x}^{(i)},t)-v_{n-1}^\eps(\underline{x}^{(j)},t)&={\bf E}_{\epsilon}\left[v_{n-1}^\eps(\underline{x}^{(i)}(t-t^*),t^*)-v_{n-1}^\eps(\underline{x}^{(j)}(t-t^*),t^*)\right]\\&+\int_{t^*}^t\,d\lambda \,
{\bf E}_{\epsilon}\left[(C_{\eps}v_{n-1}^\eps)(\underline{x}^{(i)}(\lambda),t-\lambda)-(C_{\eps}v_{n-1}^\eps)(\underline{x}^{(j)}(\lambda),t-\lambda)\right].
         \end{split}\end{equation}
By introducing a proper stopping time $\tau_{i,j}$   given in Definition \ref{def:stoptime} (further details can be found in Appendix \ref{app: derivation of II for space v} and Section 5 in \cite{de2012truncated}), and noting that the expectations above are acting on antisymmetric functions, from Lemma 4.3 of \cite{de2012truncated}, we can get the bound: for $n\geq 2$\begin{eqnarray}
                     \label{17.3.3}
&&\hskip-1cm  |v_{n-1}^{\epsilon}(\underline{x}^{(i)},t)-v_{n-1}^{\epsilon}(\underline{x}^{(j)},t)| \lesssim
\PP_\eps\left(\tau_{i,j}>\frac{t-t^*}{2}\right)\sup_{\und x\in\Lambda_N}|v_{n-1}^\eps(\und x,t^*)|{\bf 1}_{\{t^*>0\}}\nonumber\\
&&+ \mathbf 1_{n\geq 3}\int_{t^*}^t d\lambda\;\mathbf{E}_\epsilon\Big[ \mathbf 1_{\{\tau_{i,j}>\frac \lambda2\}}\epsilon^{-2}\left\{|(A v^{\epsilon}_{n-1})(\underline{x}^{(i)}(\lambda),t-\lambda)|+|(A v^{\epsilon}_{n-1})(\underline{x}^{(j)}(\lambda),t-\lambda)|\right\} \Big]\nonumber\\
&&+\int_{t^*}^t d\lambda\;\sum_{\underline{z}'\subset I_{\pm}}
\sum_{\substack{J\subset \{1,\dots,n-1\},\\1\leq p\leq \min\{n-1,K\}}}\big(1-\mathbf 1_{q=0,p=1}\big){\psi_p^\eps(\lambda)} \nonumber\\&&
\hskip 2cm \times
\mathbf{E}_\epsilon \Big[ \mathbf 1_{\{\tau_{i,j}>\frac \lambda2\}}\mathbf{1}_{\emptyset\neq\underline{x}^{(J)}(\lambda)\subset I_+^c} |v_{n-p+q}^\epsilon\big(\underline{x}^{(J)}(\lambda)\cup \underline{z}',t-\lambda\big)|\Big]\nonumber\\
&&+  \mathbf 1_{n\geq 3}\int_{t^*}^t d\lambda\;\epsilon^{-1}\sum_{\und y=\underline{x}^{(i)},\underline{x}^{(j)}}\mathbf{E}_{\epsilon,\und y} \Big[\mathbf 1_{\{\tau_{i,j}>\frac \lambda2\}}{\bf 1}_{\und x(\lambda)=I^{n-1}_+}\Big]\nonumber\\
&=:&\beta^{\eps}_{n-1}(\underline{x}^{(i)},\underline{x}^{(j)},t;t^*)+ \mathbf 1_{n\geq 3}\int_{t^*}^t d\lambda\;\epsilon^{-1}\sum_{\und y=\underline{x}^{(i)},\underline{x}^{(j)}}\mathbf{E}_{\epsilon,\und y} \Big[\mathbf 1_{\{\tau_{i,j}>\frac \lambda2\}}{\bf 1}_{\und x(\lambda)=I^{n-1}_+}\Big],
                   \end{eqnarray}
where $\beta^{\eps}_{n-1}(\underline{x}^{(i)},\underline{x}^{(j)},t;t^*)$ is defined by the first four lines. Due to the indicator present in the second term of the right-hand side of \eqref{17.3.3}, when $\und x=(x_1,x_2)$, the following inequality holds
    \begin{equation}\label{17.3.3,n=1}
    |v_1^{\epsilon}(\underline{x}^{(1)},t)-v_1^{\epsilon}(\underline{x}^{(2)},t)| \lesssim \beta^{\eps}_1(\underline{x}^{(1)},\underline{x}^{(2)},t;t^*).
    \end{equation}

 Now,  we use \eqref{17.3.3} and we are able to bound the expectation of $A^1$ appearing in \eqref{13.1.111_p=1}, namely $\mathbf{E}_\epsilon\left[|(A^1 v^{\epsilon}_{n})(\underline{x}(\lambda),t-\lambda)|\right]$, by:
\begin{eqnarray}\label{modification_A1}
\mathbf{E}_\epsilon\big[\gamma_0^\eps (i,j,\lambda,t-\lambda)\beta^{\eps}_{n-1}(\underline{x}^{(i)},\underline{x}^{(j)},t-\lambda;t^*)|\big]+\mathbf{E}_\epsilon\Big[\gamma_1^{\eps,n-1}(i,j,\lambda,t-\lambda)\Big],
\end{eqnarray}
where for fixed  $t\in[0,T]$, $n\geq 2$, $i,j\subset \{1,\dots,n\}$ and $0\leq \lambda<t$ we used the notation\begin{eqnarray}
  \label{19.1}
              \gamma_{0}^{\eps}(i,j,\lambda, t-\lambda)&:=&
    \mathbf{1}_{\{|  x_i(\lambda)- x_j(\lambda)|=1\}}\eps^{-2}\left|\rho_\eps(t-\lambda,x_i(\lambda))-\rho_\eps(t-\lambda,x_j(\lambda))\right|
             \end{eqnarray} and 
              \begin{eqnarray}
              \label{19.1*}
\gamma_{1}^{\eps,n}(i,j,\lambda, t-\lambda):=
 \eps^{-1}\gamma_{0}^{\eps}(i,j,\lambda, t-\lambda)\int_{0}^{t-\lambda}d\tau\sum_{\und y=\underline{x}^{(i)}(\lambda),\,\underline{x}^{(j)}(\lambda)}{\bf E}_{\epsilon,\underline{y}}\left[\mathbf 1_{\{\tau_{i,j}>\frac \tau2\}}{\bf 1}_{\underline{x}(\tau)=I_+^n}\right].
             \end{eqnarray} Moreover, in the following we will also introduce
               \begin{eqnarray}
              \label{19.2}
   \gamma_{2}^{\eps}(i,j,\lambda, t-\lambda):= \gamma_0^\epsilon(i,j,\lambda, t-\lambda)\left|\rho_\eps(t-\lambda,x_i(\lambda))-\rho_\eps(t-\lambda,x_j(\lambda))\right|.
             \end{eqnarray}

\subsubsection{Wrapping up.}

We finish this subsection by introducing all key quantities, characterizing different scenarios in \eqref{13.1.111_p=1}--\eqref{17.3.3} which will be estimated in Section \ref{sec:gammas}.

The integral inequality \eqref{13.1.111_p=1} along with \eqref{modification_A1} and  the notations introduced in \eqref{19.1}-\eqref{19.2}, takes the form: 
 \begin{eqnarray}
              \label{13.1.111 with gammas1}
&&\hspace{-.5cm} |v_n^{\epsilon}(\underline{x},t)|\lesssim \sum_{\und y}\PP_\eps(\und x\overset{t-t^*}\rightarrow \und y)|v_n^\eps(\und y,t^*)|\nonumber\\
&&\hspace{1.5cm}+\mathbf 1_{n\geq 2}\int_{t^*}^t d\lambda\;\sum_{\emptyset\neq\underline{z}'\subset I_{\pm}}\;\;\;
\sum_{\substack{J\subset \{1,\dots,n\},\\2\leq p\leq \min\{n,K\}}}\,\psi^\eps_{p}(\lambda)\,
\mathbf{E}_\epsilon \Big[ v_{n-p+q}^\epsilon\big(\underline{x}^{(J)}(\lambda)\cup \underline{z}',t-\lambda\big)|\Big]\nonumber\\
&&\hspace{1.5cm}+{\int_{t^*}^t d\lambda \;\eps^{-1}\sum_{j=1}^{n}\sum_{z'\in I_{+}}
 \mathbf{E}_\epsilon \left[\mathbf{1}_{x_j(\lambda)\in I_{+}} |v_{n}^\epsilon(\und x^{(j)}\cup z',t-\lambda)|\right]}\nonumber\\
 &&\hspace{1.5cm}+\int_{t^*}^t d\lambda \;\eps^{-1}\sum_{j=1}^{n}\sum_{\substack{\underline{z}'\subset I_{+}\\ q\geq2}}{\psi_{1}^\eps}(\lambda)
 \mathbf{E}_\epsilon \left[ |v_{n-1+q}^\epsilon\big(\underline{x}^{(j)}(\lambda)\cup \underline{z}',t-\lambda\big)|\right]\nonumber\\
&&\hspace{1.5cm}+\mathbf{1}_{2\leq n\leq K}\int_{t^*}^t d\lambda \;
\eps^{-1}\mathbf{E}_\epsilon \left[ {\bf 1}_{\und{x}(\lambda)=I^n_+}\right]\nonumber\\
&&\hspace{1.5cm}+\mathbf{1}_{n\geq 2}\int_{t^*}^t d\lambda\;\mathbf{E}_\epsilon\Big[ \gamma_{0}^\eps(i,j,\lambda, t-\lambda)\beta_{n-1}^{\eps}(\underline{x}^{(i)}(\lambda),\underline{x}^{(j)}(\lambda),t-\lambda,{t^*})\Big]\nonumber\\
&&\hspace{1.5cm}+\mathbf{1}_{n\geq 3}\int_{t^*}^t d\lambda
    \mathbf{E}_\epsilon\left[\gamma_{1}^{\eps, n-1}(i,j,\lambda, t-\lambda)\right]
\nonumber\\
&&\hspace{1.5cm}+\mathbf{1}_{n\geq 2}\int_{t^*}^t d\lambda\;\mathbf{E}_\epsilon\Big[ \gamma_{2}^\eps(i,j,\lambda, t-\lambda)|v^{\epsilon}_{n-2}(\underline{x}^{(i,j)}(\lambda),t-\lambda)|\Big].
  \end{eqnarray}
The terms in second-fifth line correspond to all terms included in $B_{+}$ grouped as 
\begin{itemize}
\item[$(a)$] Second line: scenarios with more than two particles reaching the boundary and one or more new particles are born
\item[$(b)$] Third line: scenarios with exactly one particle reaches the boundary and only one new particle is born (thus overall the number of particles in the system remains the same)
\item[$(c)$] Fourth line: scenarios where exactly one particle reaches the boundary and at least two new particles are born
\item[$(d)$] Fifth line:  all particles reach the last $n$ sites in the boundary, no newborns.
\end{itemize}
\begin{definition}\label{def:tilde_gammas}
{Fix $t\in[0,T]$. Let  $n\geq 2$, $i,j\subset \{1,\dots,n\}$ and $0<\lambda<t$.}  We define
\begin{eqnarray}
              \label{19.1**}              \hspace{-1cm}\widetilde\gamma_{0}^{\eps}(i,j,\lambda, t-\lambda)&=&
    \mathbf 1_{\{\tau_{i,j}>\frac \lambda2\}}\gamma_{0}^{\eps}(i,j,\lambda, t-\lambda)
             \end{eqnarray} 
             \begin{eqnarray}
              \label{19.2*}
  \widetilde\gamma_{2}^{\eps}(i,j,\lambda, t-\lambda)&=&
    \mathbf 1_{\{\tau_{i,j}>\frac \lambda2\}}\gamma_{2}^{\eps}(i,j,\lambda, t-\lambda)
             \end{eqnarray}
            
             and for  $1\leq p\leq \min\{n,K\}$
               \begin{equation}
              \label{19.3*}
   \widetilde\gamma_{p}^{\eps}(i,j,\lambda)=\mathbf 1_{\{\tau_{i,j}>\frac \lambda2\}}\psi_{p}^{\eps}(\lambda).
             \end{equation}

             \end{definition}
\noindent The integral inequality \eqref{17.3.3} for $n$ particles (instead of $n-1$) takes the form: for $\und x=(x_1,\dots,x_{n+1})$
 \begin{eqnarray}
                     \label{17.3.3 with tildegammas*}
 |v_n^{\epsilon}(\underline{x}^{(i)},t;t^*)-v_n^{\epsilon}(\underline{x}^{(j)},t,t^*)| &\lesssim&
\beta_n^\eps(\underline{x}^{(i)},\underline{x}^{(j)},t;t^*)\nonumber\\
&&+\int_{t^*}^t d\lambda\;\epsilon^{-1}\sum_{\und y=\underline{x}^{(i)},\underline{x}^{(j)}}\mathbf{E}_{\epsilon,\und y} \Big[\mathbf 1_{\{\tau_{i,j}>\frac \lambda2\}}{\bf 1}_{\und x(\lambda)=I^n_+}\Big],
    \end{eqnarray}
    where the integral inequality for $\beta_n^\eps(\underline{x}^{(i)},\underline{x}^{(j)},t;t^*)$ can be bounded by
\begin{eqnarray}
                     \label{17.3.3 with tildegammas}
&&\beta_n^\eps(\underline{x}^{(i)},\underline{x}^{(j)},t;t^*) \lesssim
\PP_\eps\left(\tau_{i,j}>\frac{t-t^*}{2}\right)\sup_{\und x\in\Lambda^{n,\neq}_N}|v_n^\eps(\und x,t^*)|{\bf 1}_{\{t^*>0\}}\nonumber\\
&&+\sum_{i',j'\neq i}\int_{t^*}^t d\lambda\;\mathbf{E}_\epsilon\Big[\widetilde\gamma_{0}^{\eps}(i',j',\lambda, t-\lambda) \left|v_{n-1}^\eps(\und x^{(i,i')}(\lambda),t-\lambda)-v_{n-1}^\eps(\und x^{(i,j')}(\lambda),t-\lambda)\right|\Big]\nonumber\\
&&+\sum_{i',j'\neq j}\int_{t^*}^t d\lambda\;\mathbf{E}_\epsilon\Big[\widetilde\gamma_{0}^{\eps}(i',j',\lambda, t-\lambda) \left|v_{n-1}^\eps(\und x^{(j,i')}(\lambda),t-\lambda)-v_{n-1}^\eps(\und x^{(j,j')}(\lambda),t-\lambda)\right|\Big]\nonumber\\
&&+\sum_{i',j'\neq i}\int_{t^*}^t d\lambda\;\mathbf{E}_\epsilon\Big[\widetilde\gamma_{2}(i',j',\lambda, t-\lambda) |v_{n-2}^\eps(\und x^{(i,i',j')}(\lambda),t-\lambda)|\Big]\nonumber\\
&&+\sum_{i',j'\neq i}\int_{t^*}^t d\lambda\;\mathbf{E}_\epsilon\Big[\widetilde\gamma_{2}(i',j',\lambda, t-\lambda) |v_{n-2}^\eps(\und x^{(j,i',j')}(\lambda),t-\lambda)|\Big]\nonumber\\
&&+\int_{t^*}^t \!\!\!\!\! d\lambda\!\!\!\!\!\;\sum_{\underline{z}'\subset I_{\pm}}
\!\!\!\!\!\!\!\!\sum_{\substack{J\subset \{1,\dots,n\},\\1\leq p\leq \min\{n,K\}}}\!\!\!\!\!\!\!\! \big(1-\mathbf 1_{q=0,p=1}\big) \widetilde\gamma_{p}^\eps(\lambda)
\,\mathbf{E}_\epsilon \Big[\mathbf 1_{\underline{x}^{(J)}(\lambda)\neq I^{n}_{+}}|v_n^\epsilon\big(\underline{x}^{(J)}(\lambda)\cup\underline{z}',t-\lambda\big)|\Big].
\end{eqnarray}
\subsubsection{Useful estimates }\label{sec:gammas}
In this section, we estimate the expectation of $\gamma^\eps_{0}$, $\gamma_{1}^{\eps,n}$ and $\gamma_{2}^{\eps}$ which play a key role in deriving the integral inequalities for the space $v$-function.
\begin{lemma}\label{lem:gammas}
Let  $i,j\subset \{1,\dots,n\}$, $\zeta>0$,  $\eps<1$ and $\und x=(x_1,\dots,x_n)$. Under {\bf Assumption 1} given in \eqref{Ass1}, for any {$0\leq \lambda\leq t$}, the following estimates hold:

\begin{equation}\label{est:gamma_0}
\mathbf{E}_{\epsilon,\und x} [\gamma^{{\epsilon}}_{0}(i,j,\lambda, t-\lambda)]\lesssim {\phi_0(\lambda)}:=\frac{1}{\lambda^{1/2}}+\frac{1}{\lambda^{1/2+\zeta}},
\end{equation} 
\begin{equation}\label{est:gamma_2}
\mathbf{E}_{\epsilon,\und x} [\gamma_{2}^{{\epsilon}}(i,j,\lambda, t-\lambda)]\lesssim \,{\phi^\epsilon_2(\lambda):=} \eps\left( \frac{1}{\lambda^{1/2}}+\frac{1}{\lambda^{3/4}}\right).
\end{equation}

 Moreover, for $n\geq 3$,
\begin{equation}\label{est:gamma_1}
\mathbf{E}_{\epsilon,\und x} [\gamma^{{\epsilon},n-1}_1(i,j,\lambda, t-\lambda)]\lesssim {\phi^{\epsilon,n-1}_1(\lambda)}:=\eps
\phi_0(\lambda) \mathbf 1_{n= 3}+\frac{1}{[\eps^{-2}\lambda]+1}\epsilon^{-4\zeta}\mathbf 1_{n\geq 4},
\end{equation}
where $\gamma^\epsilon_0$, $\gamma_{1}^{\epsilon,n-1}$ and $\gamma^\epsilon_{2}$ are defined in \eqref{19.1}--\eqref{19.2}.
\end{lemma}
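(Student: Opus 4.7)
The plan is to prove the three bounds in sequence, each time starting from the definitions \eqref{19.1}--\eqref{19.2} and inserting the gradient estimate from Lemma \ref{lem: difference of rhos}. Throughout, the adjacency event $\{|x_i(\lambda)-x_j(\lambda)|=1\}$ is estimated via Liggett-type bounds for two independent stirring particles (see \eqref{eq:psi_n}), and every time integral that carries a logarithmic divergence is tamed with the $(1+\zeta)$-trick from Remark \ref{rem:comment_integration}.

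For the first estimate, the Lemma \ref{lem: difference of rhos} bound splits $|\rho_\epsilon(t-\lambda,x_i(\lambda))-\rho_\epsilon(t-\lambda,x_j(\lambda))|$ into a deterministic part of size $c_T\epsilon$ and an $R$-part built from $\sum_{z\in I_\pm}G_{\epsilon^{-2}\lambda'}(x,z)$. The first part, multiplied by $\epsilon^{-2}$ and the adjacency indicator, contributes $c_T\epsilon^{-1}\PP_\epsilon(|x_i(\lambda)-x_j(\lambda)|=1)\lesssim \lambda^{-1/2}$ and accounts for the first summand of $\phi_0$. For the $R$-part, after exchanging expectation with the $\lambda'$-integral, the joint density is bounded by $P^{(\epsilon)}_\lambda(x_i,y_i)P^{(\epsilon)}_\lambda(x_j,y_j)$ with $y_j=y_i\pm 1$; summing first in $y_j$ yields $\frac{1}{\sqrt{\epsilon^{-2}\lambda}+1}$, and the remaining convolution $P^{(\epsilon)}_\lambda\ast G_{\epsilon^{-2}\lambda'}(x_i,z)$ is $\lesssim \frac{1}{\sqrt{\epsilon^{-2}(\lambda+\lambda')}+1}$. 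After a change of variable $u=\lambda+\lambda'$ and the substitution $(\sqrt{\epsilon^{-2}u}+1)^{-2}\leq(\sqrt{\epsilon^{-2}u}+1)^{-1-\zeta}$, the $u$-integral contributes the second summand $\lambda^{-1/2-\zeta}$ of $\phi_0$. The estimate for $\gamma_2^\epsilon$ follows the same pattern after writing $\gamma_2^\epsilon=\mathbf{1}_{|x_i-x_j|=1}\epsilon^{-2}|\rho_\epsilon-\rho_\epsilon|^2$ and squaring Lemma \ref{lem: difference of rhos}. The deterministic square $c_T^2\epsilon^2$ produces $\epsilon\lambda^{-1/2}$ directly; the cross and $R^2$ terms are handled by controlling one factor of $R$ by the uniform bound $R\lesssim \epsilon^{1-2\zeta}$ (which uses $\sum_{z\in I_\pm}G_{\epsilon^{-2}\lambda'}\lesssim K/(\sqrt{\epsilon^{-2}\lambda'}+1)$ and Remark \ref{rem:comment_integration}) and treating the remaining factor exactly as in the $\gamma_0^\epsilon$ computation; the exponent $3/4$ in $\phi_2^\epsilon$ arises from balancing these two contributions.

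For $\gamma_1^{\epsilon,n-1}$, I exchange expectation and the $\tau$-integral and apply the strong Markov property at time $\lambda$, so the inner expectation becomes $\PP_{\underline{y}}(\underline{x}(\tau)=I_+^{n-1})$, bounded uniformly in $\underline{y}$ by $\frac{1}{[\epsilon^{-2}\tau]^{(n-1)/2}+1}$ via Liggett. For $n=3$ the exponent equals $1$, and the $\tau$-integral is $\lesssim\epsilon^{2-2\zeta}$ after the $(1+\zeta)$-trick; combined with the prefactor $\epsilon^{-1}$ and $\mathbf{E}[\gamma_0]\lesssim\phi_0(\lambda)$ already proved, this gives the claimed $\epsilon\phi_0(\lambda)$ (with the $\epsilon^{-2\zeta}$ losses absorbed into $\phi_0$'s $\lambda^{-1/2-\zeta}$ term). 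For $n\geq 4$ the exponent $(n-1)/2>1$ makes the $\tau$-integral $\lesssim\epsilon^2$ with no logarithmic loss, and the joint outer estimate $\PP(|x_i(\lambda)-x_j(\lambda)|=1)\cdot\sup_{\underline{y}}\PP_{\underline{y}}(\underline{x}(\tau)=I_+^{n-1})$ together with the $\epsilon^{-4\zeta}$ factor inherited from Remark \ref{rem:comment_integration} produces the stated bound $\frac{\epsilon^{-4\zeta}}{[\epsilon^{-2}\lambda]+1}$.

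The hardest part will be the joint probability analysis for $\gamma_1^{\epsilon,n-1}$: naive estimates that separate the adjacency indicator from the forward event destroy the $\frac{1}{[\epsilon^{-2}\lambda]+1}$ scaling and give only $\frac{1}{\sqrt{\epsilon^{-2}\lambda}+1}$, which is insufficient to match the right-hand side in \eqref{est:gamma_1}. The fix is to combine strong Markov at time $\lambda$ with the Liggett bound for the full $n$-particle process at time $\lambda+\tau$, then marginalize over the position of $x_i$ at $\lambda+\tau$. Similar care is required throughout to place the $(1+\zeta)$-trick at the right step so that the resulting $\epsilon^{-\zeta}$ losses are either absorbed into the $\lambda^{-\zeta}$ factor of $\phi_0$ or promoted into the explicit $\epsilon^{-4\zeta}$ prefactor in $\phi_1^{\epsilon,n-1}$.
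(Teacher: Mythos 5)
Your treatment of $\gamma_0^\epsilon$ matches the paper's computation in spirit (gradient lemma, Liggett for adjacency, convolution, then an $\epsilon^{-\zeta}$ redistribution to make the $\tau$-integral finite), though the displayed inequality ``$(\sqrt{\epsilon^{-2}u}+1)^{-2}\leq(\sqrt{\epsilon^{-2}u}+1)^{-1-\zeta}$'' doesn't arise naturally since the two kernel factors in the convolution have different arguments ($u$ and $u-\lambda$, with $u-\lambda<u$, so they cannot both be bounded by $(\sqrt{\epsilon^{-2}u}+1)^{-1}$). The paper instead writes $[\epsilon^{-2}(\lambda+\tau)]^{-1/2}\leq[\epsilon^{-2}\tau]^{-(1/2-\zeta)}[\epsilon^{-2}\lambda]^{-\zeta}$, which cleanly separates the $\lambda$-dependence.

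There are two genuine gaps. First, for $\gamma_2^\epsilon$, controlling one factor of $R$ by the uniform bound $R\lesssim\epsilon^{1-2\zeta}$ reduces $\mathbf{E}[\gamma_2^\epsilon]$ to $\epsilon^{1-2\zeta}\mathbf{E}[\gamma_0^\epsilon]\lesssim\epsilon^{1-2\zeta}\phi_0(\lambda)$. This is \emph{not} $\lesssim\epsilon(\lambda^{-1/2}+\lambda^{-3/4})$: the ratio $\epsilon^{-2\zeta}\lambda^{1/4-\zeta}$ blows up as $\epsilon\to 0$ for fixed $\lambda$. The paper obtains the sharp $\epsilon/\lambda^{3/4}$ only by expanding $R^2$ into a genuine double time integral in $(\tau_1,\tau_2)$, convolving one heat kernel with $P_\lambda^{(\epsilon)}$, bounding the other two factors pointwise, and then carefully splitting $[\epsilon^{-2}(\lambda+\tau_1)]^{-1/2}\leq[\epsilon^{-2}\lambda]^{-1/4}[\epsilon^{-2}\tau_1]^{-1/4}$ and $[\epsilon^{-2}\tau_2]^{-1}\leq[\epsilon^{-2}\tau_2]^{-(1-\zeta)}[\epsilon^{-2}\tau_1]^{-\zeta}$ so that both $\tau$-integrals close without $\epsilon^{-\zeta}$ leakage. ``Balancing the two contributions'' doesn't recover this; your single-factor uniform bound destroys the joint structure that produces the $3/4$ exponent.

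Second, for $\gamma_1^{\epsilon,2}$ (the $n=3$ case) you drop the indicator $\mathbf{1}_{\{\tau_{i,j}>\tau/2\}}$ and bound the inner expectation by the two-particle Liggett kernel $[\epsilon^{-2}\tau]^{-1}$ alone, which is the borderline $k=1$ case: the $\tau$-integral then costs a factor $\epsilon^{-2\zeta}$ (your $\epsilon^{2-2\zeta}$ vs. the needed $\epsilon^2$), leaving $\epsilon^{1-2\zeta}\phi_0(\lambda)$ rather than the claimed $\epsilon\,\phi_0(\lambda)$. This $\epsilon^{-2\zeta}$ is precisely what the paper avoids by retaining the stopping-time indicator: conditioning on $\mathcal{F}(\tau/2)$ gives $[\epsilon^{-2}\tau/2]^{-1}$ for the forward event and then Theorem~\ref{3.01} gives an \emph{additional} $[\epsilon^{-2}\tau/2]^{-1/2}$ from $\mathbf{1}_{\{\tau_{i,j}>\tau/2\}}$, so the combined exponent is $3/2>1$ and the $\tau$-integral is $\lesssim\epsilon^2$ exactly. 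Omitting this step means the Lemma's assertion for $n=3$ is not established. Your treatment of $n\geq 4$ at least identifies the right issue (the naive factorization only gives $[\epsilon^{-2}\lambda]^{-1/2}$ rather than $[\epsilon^{-2}\lambda]^{-1}$); the paper resolves it by summing over the intermediate labeled positions at time $\lambda$ and factorizing through Kolmogorov's identity, harvesting one half-power of $\lambda$ from the adjacency constraint and another half-power from one of the $P^{(\epsilon)}_{\lambda+\tau}$ kernels, which your sketch doesn't quite spell out.
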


\begin{proof}
First, we estimate \eqref{est:gamma_0}. By Lemma \ref{lem: difference of rhos} and Liggett's inequality \eqref{Liggett_new}, we bound $\mathbf{E}_\epsilon [\gamma_{0}^\eps(i,j,\lambda, t-\lambda)]$ from above by a constant times:
\begin{eqnarray*}
&&{\bf E}_{\epsilon,\und x}\left[\mathbf{1}_{\{|x_i(\lambda)-x_j(\lambda)|=1\}}\left(\eps+\int_0^{t-\lambda}d\tau\frac{\eps^{-1}}{[\eps^{-2}\tau]^{1/2}+1}\sum_{z\in I_{\pm}}G_{\eps^{-2}\tau}(x_i(\lambda),z)\right)\right]\nonumber\\
&&{\lesssim \sum_{\pi\in S_n}\sum_{w}P^{(\epsilon)}_{\lambda}(x_{\pi(i)},w)P^{(\epsilon)}_{\lambda}(x_{\pi(j)},w+1)\left(\eps+\int_{0}^{t-\lambda}d\tau\frac{\eps^{-1}}{[\eps^{-2}\tau]^{1/2}+1}\sum_{z\in I_{\pm}}G_{\eps^{-2}\tau}(w,z)\right)}\nonumber\\
&&\lesssim \frac{1}{[\eps^{-2}\lambda]^{1/2}+1} \sum_{\pi\in S_n}\sum_{w}P^{(\epsilon)}_{\lambda}(x_{\pi(i)},w)\left(\eps+\int_{0}^{t-\lambda}d\tau\frac{\eps^{-1}}{[\eps^{-2}\tau]^{1/2}+1}\sum_{z\in I_{\pm}}G_{\eps^{-2}\tau}(w,z)\right)\nonumber\\
&&\lesssim \frac{1}{[\eps^{-2}\lambda]^{1/2}+1}\eps+ \frac{1}{[\eps^{-2}\lambda]^{1/2}+1}\int_{0}^{t-\lambda}d\tau\frac{\eps^{-1}}{[\eps^{-2}\tau]^{1/2}+1}\sum_{\pi\in S_n}\sum_{z\in I_{\pm}}G_{\eps^{-2}(\lambda+\tau)}(x_{\pi(i)},z)\nonumber\\
&&\lesssim \frac{1}{[\eps^{-2}\lambda]^{1/2}+1}\eps+ \frac{1}{[\eps^{-2}\lambda]^{1/2}+1}\int_{0}^{t-\lambda}d\tau\frac{\eps^{-1}}{[\eps^{-2}\tau]^{1/2}+1}\frac{1}{[\eps^{-2}(\lambda+\tau)]^{1/2}+1}\nonumber\\
&&\lesssim \frac{1}{[\eps^{-2}\lambda]^{1/2}+1}\eps+ \frac{1}{[\eps^{-2}\lambda]^{1/2+\zeta}}\int_{0}^{t-\lambda}d\tau\frac{\eps^{-1}}{[\eps^{-2}\tau]^{1/2}+1}\frac{1}{[\eps^{-2}\tau]^{1/2-\zeta}}, \nonumber
\end{eqnarray*}
where $S_n$ is the set of all permutations of $\{1,\dots,n\}$. To obtain $G_{\eps^{-2}(\lambda+\tau)}(x_{\pi(i)},z)$, we use \eqref{N4.5}, i.e., 
$$\sum_w P^{(\eps)}_{\lambda}(x_{\pi(i)},w)G_{\eps^{-2}\tau}(w,z)\lesssim \sum_w G_{\eps^{-2}\tau}(x_{\pi(i)},w)G_{\eps^{-2}\tau}(w,z)=G_{\eps^{-2}(\lambda+\tau)}(x_{\pi(i)},z),$$
followed by Kolmogorov's identity. Furthermore, in the last inequality, we use that $\lambda+\tau> \max\{\tau,\lambda\}$  which implies that 
$$\frac{1}{(\lambda+\tau)^{1/2}}=\frac{1}{(\lambda+\tau)^{1/2-\zeta}}\frac{1}{(\lambda+\tau)^\zeta}<\frac{1}{\tau^{1/2-\zeta}}\frac{1}{\lambda^\zeta}.$$
Then the estimation follows directly. 
\medskip
Now we proceed estimating \eqref{est:gamma_2}. We can bound  $\mathbf{E}_\epsilon [\gamma^\eps_{2}(i,j,\lambda, t-\lambda)]$ by a constant times 
\begin{eqnarray}
\label{est:indicator_diff_rhos_squared}
&&{\bf E}_{\epsilon,\und x}\left[\mathbf{1}_{\{|x_i(\lambda)-x_j(\lambda)|=1\}}\left(\eps+\int_{0}^{t-\lambda}d\tau\frac{\eps^{-1}}{[\eps^{-2}\tau]^{1/2}+1}\sum_{z\in I_{\pm}}G_{\eps^{-2}\tau}(x_i(\lambda),z)\right)^2\right]\nonumber\\
&&\lesssim{\bf E}_{\epsilon,\und x}\left[\mathbf{1}_{\{|x_i(\lambda)-x_j(\lambda)|=1\}}\left(\eps^2+\left(\int_{0}^{t-\lambda}d\tau\frac{\eps^{-1}}{[\eps^{-2}\tau]^{1/2}+1}\sum_{z\in I_{\pm}}G_{\eps^{-2}\tau}(x_i(\lambda),z)\right)^2\right)\right]\nonumber\\
&&\lesssim \sum_{\pi\in S_n}\sum_{w}P^{(\epsilon)}_{\lambda}(x_{\pi(i)},w)P^{(\epsilon)}_{\lambda}(x_{\pi(j)},{w+1})\bigg(\eps^2\nonumber\\
&&+\int_{0}^{t-\lambda}d\tau_1\int_{\tau_1}^{t-\lambda}d\tau_2\frac{\eps^{-1}}{[\eps^{-2}\tau_1]^{1/2}+1}\frac{\eps^{-1}}{[\eps^{-2}\tau_2]^{1/2}+1}{\sum_{z_1\in I_{\pm}}G_{\eps^{-2}\tau_1}(w,z_1)\sum_{z_2\in I_{\pm}}G_{\eps^{-2}\tau_2}(w,z_2)\bigg)}.\nonumber
\end{eqnarray}
We note now that the contribution of the transition probabilities can be combined with the heat kernels in the following fashion: 
\begin{eqnarray*}
&&\sum_{w}P^{(\epsilon)}_{\lambda}(x_{\pi(i)},w)P^{(\epsilon)}_{\lambda}(x_{\pi(j)},{w+1})\sum_{z_1\in I_{\pm}}G_{\eps^{-2}\tau_1}(w,z_1)\sum_{z_2\in I_{\pm}}G_{\eps^{-2}\tau_2}(w,z_2)\nonumber\\
&&\lesssim\sum_{z_1\in I_{\pm}}\sum_{z_2\in I_{\pm}}\underbrace{\sum_{w}P^{(\epsilon)}_{\lambda}(x_{\pi(i)},w)G_{\eps^{-2}\tau_1}(w,z_1)}_{\lesssim G_{\eps^{-2}(\tau_1+\lambda)}({x_{\pi(i)}},z_1)}\underbrace{P^{(\epsilon)}_{\lambda}(x_{\pi(j)},{w+1})}_{\lesssim\frac{1}{[\eps^{-2}\lambda]^{1/2}+1}}\underbrace{G_{\eps^{-2}\tau_2}(w,z_2)}_{\lesssim\frac{1}{[\eps^{-2}\tau_2]^{1/2}+1}},\nonumber
\end{eqnarray*}
from where it follows that the last display is bounded from above by a constant times 
\begin{eqnarray}
 \frac{\eps^2}{[\eps^{-2}\lambda]^{1/2}}+\frac{1}{[\eps^{-2}\lambda]^{1/2}}\int_{0}^{t-\lambda}d\tau_1\int_{\tau_1}^{t-\lambda}d\tau_2\frac{\eps^{-1}}{[\eps^{-2}\tau_1]^{1/2}+1}\frac{\eps^{-1}}{[\eps^{-2}\tau_2]+1}{\sum_{\pi\in S_n}}\sum_{z_1\in I_{\pm}}G_{\eps^{-2}(\tau_1+\lambda)}({x_{\pi(i)}},z_1)\nonumber.
\end{eqnarray} 
We now estimate the rightmost term in the last line of the last display: we first bound the heat kernel as $G_{\eps^{-2}(\tau_1+\lambda)}(x_{\pi(i)},z)\lesssim \frac{1}{[\eps^{-2}(\lambda+\tau_1)]^{1/2}}$, then use that 
$$\frac{1}{[\eps^{-2}(\lambda+\tau_1)]^{1/2}}\leq \frac{1}{[\eps^{-2}\lambda]^{1/4}}\frac{1}{[\eps^{-2}\tau_1]^{1/4}},$$ because $\lambda+\tau_1>\tau_1$, $\lambda+\tau_1>\lambda$ and $\tau_2>\tau_1$, and 
$$\frac{1}{[\eps^{-2}\tau_2]}= \frac{1}{[\eps^{-2}\tau_2]^{1-\zeta}}\frac{1}{[\eps^{-2}\tau_2]^{\zeta}}\leq \frac{1}{[\eps^{-2}\tau_2]^{1-\zeta}}\frac{1}{[\eps^{-2}\tau_1]^{\zeta}}.$$
Note that the exponents of $(\lambda+\tau_1)^{-1/2}$ and $\tau_2^{-1/2}$ can be distributed in different ways to ensure that the integrand in the double integral remains integrable. The choices presented above constitute one such example that satisfies this condition.  Overall, we have:
\begin{eqnarray*}
\frac{1}{[\eps^{-2}\lambda]^{1/2}}\int_0^{t-\lambda}d\tau_1\int_{\tau_1}^{t-\lambda}d\tau_2\frac{\eps^{-1}}{[\eps^{-2}\tau_1]^{1/2}+1}\frac{\eps^{-1}}{[\eps^{-2}\tau_2]+1}\frac{1}{[\eps^{-2}(\lambda+\tau_1)]^{1/2}}\lesssim \frac{\eps^3}{\lambda^{3/4}}.
\end{eqnarray*}
\medskip

We now proceed with estimating \eqref{est:gamma_1}. 
             First, it is straightforward  to check that when $n=3$ the integral in time appearing in the  expectation  of $\gamma_{1}^{n-1}$ takes the form $$\int_{0}^{t-\lambda}d\tau\,{\bf E}_{\epsilon,y}\left[\mathbf 1_{\tau_{i,j}>\frac{\tau}{2}}{\bf 1}_{\und x(\tau)=(N-1,N)}\right]\lesssim \eps^2$$ because we first condition on $\mathcal{F}(\frac{\tau}{2})$, being the $\sigma$-algebra generated by the active/passive marks in the interval $[0,\frac{\tau}{2})$, then estimate the expectation ${\bf 1}_{\und x(\tau)=(N-1, N)}$ and finally  estimate $\mathbf 1_{\tau_{i,j}>\frac{\tau}{2}}$ using \eqref{important*m=3}; see a similar calculation in the proof of Lemma \ref{lem:widetilde_gammas}.  
             To be more precise, we start with the internal integral in \eqref{19.1*} when $n=3$:
             \begin{eqnarray*}
             &&\int_{0}^{t-\lambda}d\tau\,{\bf E}_{\epsilon,y}\left[\mathbf 1_{\tau_{i,j}>\frac{\tau}{2}}{\bf 1}_{\und x(\tau)=(N-1,N)}\right]=\int_{0}^{t-\lambda}{\bf E}_{\epsilon,y}\left[\mathbf 1_{\tau_{i,j}>\frac{\tau}{2}}{\bf E}_{\epsilon}\left[{\bf 1}_{\und x(\tau)=(N-1,N)}|\mathcal{F}\left(\tfrac{\tau}{2}\right)\right]\right]\nonumber\\
             &&\hspace{2cm}\lesssim \int_{0}^{t-\lambda}d\tau\, {\bf E}_{\epsilon,y}\left[\mathbf 1_{\tau_{i,j}>\frac{\tau}{2}}\frac{1}{[\eps^{-2}(\tau/2)]+1}\right]\lesssim \int_{0}^{t-\lambda}d\tau\, \frac{1}{[\eps^{-2}(\tau/2)]^{3/2}+1}\lesssim\eps^{2},
             \end{eqnarray*}
            where the last line follows from Remark \ref{rem:comment_integration}. Now, the remaining follows from the estimates of $\gamma_0^{\eps}$. Putting everything together we get that for $n=3$ 
            $$\mathbf{E}_\epsilon [\gamma_{1}^{\eps, n-1}(i,j,\lambda)]
    \overset{n=3}=\mathbf{E}_\epsilon [\gamma_{1}^{2,\eps}(i,j,\lambda)]\lesssim\eps\phi_0(\lambda).$$
For $n\geq 4$ and $i,j\subset \{1,\dots,n\}$, 
we need to control
\begin{eqnarray*}
&\int_{0}^{t-\lambda}\,d\tau
\mathbf{E}_\epsilon\Big[ 
 \eps^{-1} \mathbf{1}_{\{|  x_i(\lambda)- x_j(\lambda)|=1\}}\eps^{-2}\left|\rho_\eps(t-\lambda,x_i(\lambda))-\rho_\eps(t-\lambda,x_j(\lambda))\right|\\&\times \int_{0}^{t-\lambda}d\tau\sum_{\und y=\underline{x}^{(i)}(\lambda),\,\underline{x}^{(j)}(\lambda)}{\bf E}_{\epsilon,\underline{y}}\left[\mathbf 1_{\{\tau_{i,j}>\frac \tau2\}}{\bf 1}_{\underline{x}(\tau)=I_+^3}\right]\Big].
  \end{eqnarray*}
             From Corollary \ref{corollary: difference of rhos}  we can bound the previous display by a constant times 
             \begin{eqnarray*}
\eps^{-2-2\zeta}\int_{0}^{t-\lambda}\,d\tau
\mathbf{E}_\epsilon\left[ 
 \mathbf{1}_{\{|  x_i(\lambda)- x_j(\lambda)|=1\}} \int_{0}^{t-\lambda}d\tau\sum_{\und y=\underline{x}^{(i)}(\lambda),\,\underline{x}^{(j)}(\lambda)}{\bf E}_{\epsilon,\underline{y}}\left[\mathbf 1_{\{\tau_{i,j}>\frac \tau2\}}{\bf 1}_{\underline{x}(\tau)=I_+^3}\right]\right]
 \end{eqnarray*}
 and now we control the time integral as follows:
  \begin{eqnarray*}\label{sum_diff_v2}           
&&{\int_{0}^{t-\lambda}\,d\tau
\mathbf{E}_\epsilon\left[ \mathbf{1}_{\{|x_i(\lambda)-x_j(\lambda)|=1\}}{\bf E}_{\epsilon,\underline{x}^{(i)}(\lambda)}\left[\mathbf 1_{\tau_{i,j}>\frac{\tau}{2}}{\bf 1}_{\underline{x}(\tau)=\{N-(n-2),\dots, N\}}\right]\right]}\nonumber\\
&&{\lesssim \int_{0}^{t-\lambda}\,d\tau
\mathbf{E}_\epsilon\left[ \mathbf{1}_{\{|x_i(\lambda)-x_j(\lambda)|=1\}}\mathbf{P}_{\epsilon}(\underline{x}^{(i)}(\lambda)\overset{\tau}\longrightarrow (N-(n-2),\dots, N)\right]}\nonumber\\
&&= {\int_{0}^{t-\lambda}\,d\tau \sum_{\und w} \mathbf{P}_{\epsilon}(\underline{x}\overset{\lambda}\rightarrow \underline{w})\mathbf{1}_{\{|w_i-w_j|=1\}}\mathbf{P}_{\epsilon}(\underline{w}^{(i)}\overset{\tau}\longrightarrow (N-(n-2),\dots, N)}\nonumber\\
&&= \int_{0}^{t-\lambda}\,d\tau \sum_{w_1,\dots,w_n} \mathbf{P}_{\epsilon}(\underline{x}\overset{\lambda}\rightarrow w_1,\dots, w_j,w_j\pm1,\dots,w_n)\mathbf{P}_{\epsilon}(\underline{w}^{(i)}\overset{\tau}\longrightarrow N-(n-2),\dots, N)\nonumber\\
&&\leq\int_{0}^{t-\lambda}\,d\tau  \sum_{\pi\in S_n}\prod_{k\neq i,j}P^{(\eps)}_{\lambda+\tau}(x_{\pi(k)}, N-(n-k))\nonumber\\
&&\hspace{1cm}\times \sum_{w_j}P^{(\eps)}_{\lambda}(x_{\pi(j)}, w_j)P^{(\eps)}_{\lambda}(w_j, N-(n-j))P^{(\eps)}_{\lambda}(w_j\pm1, N-(n-i))\nonumber\\
&&\lesssim \frac{1}{[\eps^{-2}\lambda]^{1/2}}\int_{0}^{t-\lambda}\,d\tau \sum_{\pi\in S_n}\prod_{k\neq i}P^{(\eps)}_{\lambda+\tau}(x_{\pi(k)}, N-(n-k))\frac{1}{[\eps^{-2}\tau]^{1/2}}\nonumber\\
&&\lesssim \frac{1}{[\eps^{-2}\lambda]^{1/2}+1}\int_{0}^{t-\lambda}\,d\tau \frac{1}{[\eps^{-2}(\tau+\lambda)]^{(n-2)/2}+1}\frac{1}{[\eps^{-2}\tau]^{1/2}}\nonumber\\
&&\lesssim \frac{1}{[\eps^{-2}\lambda]+1}\int_{0}^{t-\lambda}\,d\tau \frac{1}{[\eps^{-2}(\tau+\lambda)]^{(n-3)/2}+1}\frac{1}{[\eps^{-2}\tau]^{1/2}}\nonumber\\
&&\lesssim  \frac{1}{[\eps^{-2}\lambda]+1}\epsilon^{2-2\zeta}(t-\lambda)^\zeta.
\end{eqnarray*}
In the above estimation,  we bound $\mathbf 1_{\tau_{i,j}>\frac{\tau}{2}}$ by 1,  we obtain the fourth line of the right-hand side by Liggett's inequality given in \eqref{Liggett},  the fifth line  by Kolmogorov's identity and the sixth-seventh line by also using that  $\lambda+\tau>\max\{\tau, \lambda\}$. Thus, $$\frac{1}{[\eps^{-2}(\tau+\lambda)]^{(n-2)/2}+1}\leq \frac{1}{[\eps^{-2}\lambda)]^{1/2}+1} \frac{1}{[\eps^{-2}\tau]^{(n-3)/2}+1}\leq \frac{1}{[\eps^{-2}\lambda)]^{1/2}+1} \frac{1}{[\eps^{-2}\tau]^{1/2-\zeta}}.$$
For the difference of $\rho_\eps$'s in  $\mathbf{E}_\epsilon [\gamma_{1}^{\eps, n-1}(i,j,\lambda,t-\lambda)]$, $n\geq 4$, we apply Corollary \ref{corollary: difference of rhos}.
\end{proof}
\begin{lemma}\label{lem:widetilde_gammas}
Let  $i,j\subset \{1,\dots,n\}$, $\zeta>0$ and $\eps<1$. Under {\bf Assumption 1} given in \eqref{Ass1}, we have: 
\begin{equation}
\mathbf{E}_\epsilon [\widetilde\gamma_{0}^{\eps}(i,j,\lambda, t-\lambda)]\lesssim \frac{\epsilon^{1-3\zeta}}{\lambda^{1-\zeta}}
\end{equation}
and
\begin{equation}
\mathbf{E}_\epsilon [\widetilde\gamma^\eps_{2}(i,j,\lambda, t-\lambda)]\lesssim \frac{\epsilon^{3/2-2\zeta}}{\lambda^{1-\zeta}}.
\end{equation}
For any $J\subset\{1,\dots,n\}$ and $1\leq p\leq n$,
               \begin{equation}
  \mathbf{E}_\epsilon [ \widetilde\gamma^\eps_{p}(\lambda)]\lesssim \psi_{p+1}^\eps(\lambda).
  \end{equation}
\end{lemma}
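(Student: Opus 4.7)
The plan is to exploit that the indicator $\mathbf{1}_{\{\tau_{i,j}>\lambda/2\}}$ is measurable with respect to $\mathcal{F}(\lambda/2)$, and decompose each expectation using the Markov property at the intermediate time $\lambda/2$. The key external input is a tail bound for the stopping time of the form
\[
\PP_\epsilon(\tau_{i,j}>\lambda/2) \;\lesssim\; \frac{1}{[\epsilon^{-2}\lambda]^{1/2}+1},
\]
which follows from the stopping-time construction in Appendix \ref{app: derivation of II for space v} together with a Liggett-type comparison on the diffusive scale (the probability that two particles initially at microscopic separation $\geq 1$ have not yet met by time $\lambda/2$ is of order $\epsilon/\sqrt{\lambda}$ on $\lambda\gg\epsilon^{2}$).

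I would begin with $\widetilde\gamma_p^\epsilon$, which is the easiest case: since $\psi_p^\epsilon(\lambda)$ is a deterministic function of $\lambda$ alone, the indicator factors out and
\[
\mathbf{E}_\epsilon[\widetilde\gamma_p^\epsilon(i,j,\lambda)]
\;=\; \psi_p^\epsilon(\lambda)\,\PP_\epsilon(\tau_{i,j}>\lambda/2)
\;\lesssim\; \frac{\psi_p^\epsilon(\lambda)}{[\epsilon^{-2}\lambda]^{1/2}+1}
\;=\; \psi_{p+1}^\epsilon(\lambda),
\]
by definition \eqref{def_psi_n}. This is the cleanest argument and it fixes the tail bound we must use for $\tau_{i,j}$ consistently with Definition \ref{def:tilde_gammas}.

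For $\widetilde\gamma_0^\epsilon$, I would condition on $\mathcal{F}(\lambda/2)$ and use the Markov property to pull the configuration at time $\lambda/2$ out as an initial condition for the remaining $\lambda/2$-evolution:
\[
\mathbf{E}_\epsilon[\widetilde\gamma_0^\epsilon(i,j,\lambda,t-\lambda)]
\;=\; \mathbf{E}_\epsilon\!\left[\mathbf{1}_{\{\tau_{i,j}>\lambda/2\}}\,\mathbf{E}_{\epsilon,\underline{x}(\lambda/2)}\!\left[\gamma_0^\epsilon(i,j,\lambda/2,\,t-\lambda)\right]\right].
\]
By Lemma \ref{lem:gammas} applied to the inner expectation with elapsed time $\lambda/2$, we get $\phi_0(\lambda/2)\lesssim \lambda^{-1/2}+\lambda^{-1/2-\zeta}$. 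Multiplying by the tail bound on $\tau_{i,j}$ and redistributing $\zeta$-exponents as in Remark \ref{rem:comment_integration} to absorb the ``$+1$'' terms in the Gaussian denominators produces the stated $\epsilon^{1-3\zeta}/\lambda^{1-\zeta}$. The argument for $\widetilde\gamma_2^\epsilon$ is structurally identical: condition on $\mathcal{F}(\lambda/2)$, apply the Markov property, invoke the $\phi_2^\epsilon(\lambda/2)\lesssim \epsilon\lambda^{-3/4}$ bound from Lemma \ref{lem:gammas} for the inner expectation, and multiply by $\PP_\epsilon(\tau_{i,j}>\lambda/2)$. The difference from $\widetilde\gamma_0^\epsilon$ is an additional factor $\epsilon^{1/2-\zeta}$ coming from $\phi_2^\epsilon$ versus $\phi_0$, which gives the claimed $\epsilon^{3/2-2\zeta}/\lambda^{1-\zeta}$.

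The main obstacle is bookkeeping rather than conceptual: one has to distribute the $\zeta$-adjusted exponents carefully so that (after multiplying the bounds from Lemma \ref{lem:gammas} by the stopping-time tail and absorbing constants via Remark \ref{rem:comment_integration}) the resulting exponent of $\lambda$ stays strictly larger than $-1$, which is crucial for time-integrability when these $\widetilde\gamma$-bounds are plugged into the integral inequalities \eqref{13.1.111 with gammas1} and \eqref{17.3.3 with tildegammas*}. A second subtle point is to confirm that the tail bound on $\tau_{i,j}$ indeed has the scaling $([\epsilon^{-2}\lambda]^{1/2}+1)^{-1}$ uniformly in the initial configuration $\underline x$, since our estimates must be configuration-independent; this is ensured by the uniform Liggett-type comparison used in the construction of $\tau_{i,j}$.
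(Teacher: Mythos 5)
Your proposal is correct and follows essentially the same strategy as the paper's proof: conditioning on $\mathcal{F}(\lambda/2)$, applying the Markov property together with Lemma \ref{lem:gammas} for the inner expectation, and multiplying by the stopping-time tail bound \eqref{19.e1} from Theorem \ref{3.01}. Your observation that for $\widetilde\gamma_p^\eps$ the deterministic factor $\psi_p^\eps(\lambda)$ lets the indicator factor out directly is a small streamlining, but the argument is otherwise the same as the one in the paper.
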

\begin{proof}
 The proof follows the same steps as the proof of  Lemma \ref{lem:gammas}. More precisely, for $m\in\{0,2\}$, let us denote $\tilde\gamma^\eps_m(i,j,\lambda, t-\lambda)=\mathbf 1_{\{\tau_{i,j}>\frac{\lambda}{2}\}}\gamma_m^\eps(i,j,\lambda, t-\lambda)$ and 
  $\tilde\gamma^\eps_p(i,j,\lambda)=\mathbf 1_{\{\tau_{i,j}>\frac{\lambda}{2}\}}\psi_p^\eps(\lambda)$.
Then the estimate goes as follows: let $\mathcal{F}(\frac{\lambda}{2})$ be the $\sigma$-algebra generated by the active/passive marks in the interval $[0,\frac{\lambda}{2})$. Then for $m\in\{0,2\}$
  \begin{eqnarray*}
  \mathbf{E}_\epsilon \left[\widetilde{\gamma}_m^\eps(\und x(\lambda)) \mathbf 1_{\{\tau_{i,j}>\frac{\lambda}{2}\}}\right]&=& \mathbf{E}_\epsilon \left[ \mathbf 1_{\tau_{i,j}>\frac{\lambda}{2}}\mathbf{E}_\epsilon \left[ \gamma(\und x(\lambda)) |\mathcal{F}\left(\tfrac{\lambda}{2}\right)\right]\right]\nonumber\\
  &\leq&\phi_m(\lambda)\mathbf{E}_\epsilon \left[ \mathbf 1_{\tau_{i,j}>\frac{\lambda}{2}}\right]\lesssim\phi_m(\lambda)\frac{1}{\sqrt{\epsilon^{-2}\frac{\lambda}{2}}+1},
 \end{eqnarray*}
 where we use \eqref{19.e1}. Finally, we adjust the power of $\lambda$ such that the time factors are integrable, e.g., for $\tilde\gamma_1$, we work as follows:
 \begin{eqnarray*}
\left( \frac{1}{\lambda^{1/2}}+\frac{1}{\lambda^{1/2+\zeta}}\right)\frac{1}{[\epsilon^{-2}\frac{\lambda}{2}]^{1/2}+1}
&\lesssim&\frac{\eps^{-1}}{[\epsilon^{-2}\lambda]+1}+\frac{\eps^{-1-\zeta}}{[\epsilon^{-2}\lambda]^{1+\zeta}+1}
\lesssim\epsilon^{1-3\zeta}\frac{1}{\lambda^{1-\zeta}}.
 \end{eqnarray*} The same estimate holds for $\tilde\gamma^\eps_p(i,j,\lambda)$.
 \end{proof}

\subsubsection{Integral inequalities for space  $v$-functions}

Equations \eqref{13.1.111_p=1} and \eqref{17.3.3} become more tractable when taking the supremum over all configurations with the same number of particles. Therefore, for every $0\leq t\leq T$, we define
\begin{equation}\label{def:a}
a_{\epsilon}(n,t):=\sup_{\underline{x}\in\Lambda_N^{n,\neq}}|v_n^{\epsilon}(\underline{x},t)|\quad \textrm{with}\quad a_{\epsilon}(0,t)=1
\end{equation} 
and for $\und e_1$ being the unit vector in the positive 1st-direction, we  define
\begin{equation}\label{def:d}
d_{\epsilon}(n,t):=\sup_{\underline{x}\in\Lambda_N^{n,\neq}}|v_n^{\epsilon}(\underline{x},t)-v_n^{\epsilon}(\underline{x}+\und e_1,t)|\;\textrm{and\;} d_{\epsilon}(0,t)=0.
\end{equation} 
Similarly, we define
\begin{equation}\label{def:beta}
    \beta_\epsilon(n,t, {t^*}):=\sup_{\underline{x}\in\Lambda_N^{n,\neq}}\beta^{\eps}_n(\underline{x},\underline{x}+\und e_1,t, {t^*}).
\end{equation}
\begin{remark}
We remark that in the definition of $d$ given above, it is irrelevant whether the increment is taken in the first coordinate or not: since we start from the set representation, e.g.\ $X=\{x,y,z\}$ and $\tilde X=\{x,y,z+1\}$, we may always choose an ordering such as $\underline{x}=(x_1,x_2,x_3)=(z,x,y)$ so that $\underline{x}+\underline{e}_1=(x_1+1,x_2,x_3)=(z+1,x,y)$. 
Of course, one could equally consider $\underline{x}+\underline{e}_2$ or $\underline{x}+\underline{e}_3$, but the argument works in exactly the same way for any coordinate. 
\end{remark}

The first main result of this subsection is the next lemma.  
\begin{lemma}
\label{lem: IIfor a for space_v_estimate} Let  $K\geq 2$ be a fixed  integer number, $0\leq t^*<t\leq T$ and $\zeta>0$. Under Assumption 1 given in \eqref{Ass1}  the following integral inequality for $a_{\epsilon}(n,t)$ holds: for $n\geq 1$ 
\begin{eqnarray}\label{split_time_integral for main thm}
a_{\epsilon}(n,t)&\lesssim&a_{\epsilon}(n,t^*)\nonumber\\
&&+\mathbf 1_{n\geq 2}\int_{t^*}^t d\lambda\;\frac{\eps^{-1}}{[\eps^{-2}\lambda]^{1-\zeta}}\sum_{q=1}^{K}\;\;\;
\sum_{p=2}^{\min\{n,K\}}\,
a_{\eps}(n-p+q,t-\lambda)\nonumber\\
&&+\mathbf 1_{n\geq 2}\int_{t^*}^t d\lambda\;\frac{\eps^{-1}}{[\eps^{-2}\lambda]^{1-\zeta}}
\sum_{p=2}^{\min\{n,K\}}\mathbf 1_{p\neq n}\,
a_{\eps}(n-p,t-\lambda)\nonumber\\
&&+\int_{t^*}^t d\lambda \;\frac{\eps^{-1}}{[\eps^{-2}\lambda]^{1/2}+1}a_{\eps}(n,t-\lambda)\nonumber\\
 &&+\int_{t^*}^t d\lambda \;\frac{\eps^{-1}}{[\eps^{-2}\lambda]^{1/2}+1}\sum_{q=2}^K
 a_\eps(n-1+q,t-\lambda)\nonumber\\
 &&+\mathbf{1}_{n=2}\epsilon^{1-2\zeta}(t-t^*)^{\zeta}+\mathbf{1}_{3\leq n\leq K} \epsilon\nonumber\\
&&+\mathbf 1_{n= 3}\epsilon\left((t-t^*)^{1/2}+(t-t^*)^{1/2+\zeta}\right) +\mathbf 1_{n\geq 4}\epsilon^{2-4\zeta}(t-t^*)^{\zeta}\nonumber\\
&&+{\bf 1}_{n\geq 2}\int_{t^*}^td\lambda\phi_0(\lambda)\beta_\epsilon(n-1,t-\lambda,{t^*})\nonumber\\
&&+  \mathbf{1}_{n\geq2}\int_{t^*}^t d\lambda\,
\phi_2^\eps(\lambda)a_\epsilon(n-2,t-\lambda)\,.
\end{eqnarray}
\end{lemma}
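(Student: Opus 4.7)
The plan is to take the supremum over $\und x\in\La_N^{n,\neq}$ of each summand on the right-hand side of the pointwise inequality \eqref{13.1.111 with gammas1}, and to absorb each stirring-expectation into a deterministic $a_\eps(m,t-\lambda)$. For the initial contribution $\sum_{\und y}\PP_\eps(\und x\overset{t-t^*}{\rightarrow}\und y)|v_n^\eps(\und y,t^*)|$, the stirring transition probabilities sum to one and $|v_n^\eps(\und y,t^*)|\le a_\eps(n,t^*)$, producing the first term of the bound. For every boundary-type contribution of the form $\psi_p^\eps(\lambda)\,\mathbf{E}_\eps[|v_m^\eps(\cdot,t-\lambda)|]$ appearing in lines two--four of \eqref{13.1.111 with gammas1}, I would replace $|v_m^\eps(\cdot,t-\lambda)|$ by the deterministic $a_\eps(m,t-\lambda)$, after which the expectation collapses to one.

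Since $\psi_p^\eps(\lambda)=\eps^{-1}/([\eps^{-2}\lambda]^{p/2}+1)$ is not integrable near zero when $p\ge 2$, I would invoke Remark \ref{rem:comment_integration} to lower its exponent to $1-\zeta$, paying a factor $\eps^{2\zeta}$ and arriving at the prefactor $\eps^{-1}/[\eps^{-2}\lambda]^{1-\zeta}$ that shows up in the second and third lines of the stated bound. These correspond respectively to the boundary scenarios with at least two particles dying and $q\ge 1$ new births, and to those with $q=0$ but $p\ne n$. The single-particle-at-boundary scenarios carry the already-integrable prefactor $\psi_1^\eps(\lambda)$ and furnish the fourth ($q=1$) and fifth ($q\ge 2$) lines of the statement. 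The delicate term $\eps^{-1}\mathbf{E}_\eps[\mathbf{1}_{\und x(\lambda)=I_+^n}]$, active only for $2\le n\le K$, is bounded by Liggett's inequality \eqref{est:dangerous_boundary_term} to give $\psi_n^\eps(\lambda)$; straightforward integration combined once more with Remark \ref{rem:comment_integration} yields $\eps^{1-2\zeta}(t-t^*)^\zeta$ when $n=2$ and $\eps$ when $3\le n\le K$, matching the sixth line of the bound.

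For the three $\gamma$-terms arising from $A^1v_n^\eps$ and $A^2v_n^\eps$, Lemma \ref{lem:gammas} supplies the required expectation bounds. The main subtlety concerns the $\gamma_0\beta_{n-1}^\eps$ term: the indicator $\mathbf{1}_{|x_i(\lambda)-x_j(\lambda)|=1}$ embedded in $\gamma_0$ forces the two labelled configurations $\und x^{(i)}(\lambda)$ and $\und x^{(j)}(\lambda)$ to differ by a unit vector, so by \eqref{def:beta} one has the deterministic domination $\beta_{n-1}^\eps(\und x^{(i)}(\lambda),\und x^{(j)}(\lambda),t-\lambda,t^*)\le\beta_\eps(n-1,t-\lambda,t^*)$. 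Pulling this out of the expectation and using $\mathbf{E}_\eps[\gamma_0]\lesssim\phi_0(\lambda)$ yields the penultimate line of the bound. The $\gamma_2|v_{n-2}^\eps|$ term is handled analogously via $\mathbf{E}_\eps[\gamma_2]\lesssim\phi_2^\eps(\lambda)$ and $|v_{n-2}^\eps|\le a_\eps(n-2,t-\lambda)$, producing the last line. The $\gamma_1^{\eps,n-1}$ contribution becomes deterministic after Lemma \ref{lem:gammas}: for $n=3$ it integrates to $\eps((t-t^*)^{1/2}+(t-t^*)^{1/2+\zeta})$, and for $n\ge 4$, after lowering the exponent via Remark \ref{rem:comment_integration}, to $\eps^{2-4\zeta}(t-t^*)^\zeta$, giving the seventh line. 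The main obstacle throughout is the careful bookkeeping of which time exponents can or must be lowered in order to remain integrable on $[t^*,t]$ without sacrificing the powers of $\eps$ necessary for the iteration scheme in Section \ref{proof of main theorem}.
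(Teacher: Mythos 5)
Your proposal reproduces the paper's argument accurately, and indeed spells out several points the paper's own terse proof leaves implicit: you correctly read off from \eqref{13.1.111 with gammas1} the split of the boundary scenarios into $p\ge 2$ with $q\ge 1$, $p\ge 2$ with $q=0$ and $p\neq n$, the $p=1$ sub-cases $q=1$ and $q\ge 2$, and the isolated $p=n,\,q=0$ scenario; you correctly observe that the indicator $\mathbf{1}_{|x_i(\lambda)-x_j(\lambda)|=1}$ inside $\gamma_0^\eps$ forces the configurations $\und x^{(i)}(\lambda)$ and $\und x^{(j)}(\lambda)$ to be unit-vector translates of one another, which is exactly what justifies the deterministic domination $\beta_{n-1}^\eps(\cdot,\cdot,t-\lambda,t^*)\le\beta_\eps(n-1,t-\lambda,t^*)$ via \eqref{def:beta}; and you correctly trace the sixth and seventh lines of the statement to the remaining two deterministic $\gamma$-integrals through Lemma \ref{lem:gammas} and Remark \ref{rem:comment_integration}. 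One small imprecision: bounding $\psi_p^\eps(\lambda)\le\eps^{-1}/[\eps^{-2}\lambda]^{1-\zeta}$ for $p\ge 2$ does not actually cost a factor $\eps^{2\zeta}$ — the inequality $1/([\eps^{-2}\lambda]^{p/2}+1)\le 1/[\eps^{-2}\lambda]^{1-\zeta}$ holds as a pointwise bound for all $\lambda>0$ with no power of $\eps$ lost — but this does not affect the final conclusion, and the rest of the argument is the one the paper uses.
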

\begin{proof}
The derivation of \eqref{split_time_integral for main thm} is mainly based on  Lemma \ref{lem:gammas}.  We first bound each $v_n^\eps$ appearing on the right-hand side of \eqref{13.1.111 with gammas1} with its  supremum with respect to all configurations with the same number of  particles. We bound all $\psi_p(\lambda)$ with $2\leq p\leq \min\{n,K\}$ in \eqref{13.1.111 with gammas1} by $\frac{1}{[\eps^{-2}\lambda]^{1-\zeta}}$ using Remark \ref{rem:comment_integration} and estimate $\mathbf{E}_\epsilon \left[ {\bf 1}_{\und{x}(\lambda)=I^n_+}\right]$ as $\frac{1}{[\eps^{-2}\lambda]^{n/2}+1}\leq \frac{1}{[\eps^{-2}\lambda]^{1-\zeta}}$, $n\leq K$. Note that the third term in the right-hand side of \eqref{split_time_integral for main thm} includes the indicator function of $p\neq n$ whereas the case $p = n$ appears separately in the sixth term.
\end{proof}
Note that Lemma \ref{lem: IIfor a for space_v_estimate} is used in the proof of Theorem~\ref{thm:vestimate2}; see Section~\ref{proof of main theorem}. However, this lemma is not enough for our purposes, hence we derive another integral inequality, which is stated in the next lemma. The idea behind this lemma is as follows: the right-hand side of \eqref{13.1.111 with gammas1} contains terms that cannot be further improved using the configuration dependence of $v_j^{\epsilon}$. Consequently, $v_j^{\epsilon}$ is bounded by the supremum over all configurations with the same number of particles. In contrast, for the remaining terms, the configuration dependence plays a role and leads to sharper estimates; hence, these terms are not bounded by the supremum over all configurations with the same number of particles.

\begin{lemma}\label{lem: IIfor a} Let  $K\geq 2$ be a fixed  integer number, $0\leq t^*<t\leq T$ and $\zeta>0$. Under {\bf{Assumption 1}} given in \eqref{Ass1},  the following integral inequality for $v^{\epsilon}_n(\und x,t)$ holds: for $n\geq 1$ and any labeled configuration $\und x\in\Lambda_N^{\neq, n}$
\begin{eqnarray}\label{split_time_integral}
|v_n^{\epsilon}(\underline{x},t)|&\lesssim&a_{\epsilon}(n,t^*)\nonumber\\
&&+\mathbf{1}_{n\geq2}\int_{t^*}^t d\lambda\;\frac{\eps^{-1}}{[\eps^{-2}\lambda]^{1-\zeta}}\sum_{q=1}^{K}\;\;\;
\sum_{p=2}^{ \min\{n,K\}}\,
a_{\eps}(n-p+q,t-\lambda)\nonumber\\
&&+\mathbf 1_{n\geq 2}\int_{t^*}^t d\lambda\;\frac{\eps^{-1}}{[\eps^{-2}\lambda]^{1-\zeta}}
\sum_{p=2}^{\min\{n,K\}}\mathbf 1_{p\neq n}\,
a_{\eps}(n-p,t-\lambda)\nonumber\\
&&+\int_{t^*}^t d\lambda \;\eps^{-1}\sum_{j=1}^{n}\sum_{z'\in I_{+}}
 \mathbf{E}_\epsilon \left[\mathbf{1}_{x_j(\lambda)\in I_{+}} |v_{n}^\epsilon(\und x^{(j)}{(\lambda)}\cup z',t-\lambda)|\Big]\right]\nonumber\\
 &&+\int_{t^*}^t d\lambda \;\frac{\eps^{-1}}{[\eps^{-2}\lambda]^{1/2}+1}\sum_{j=1}^{n}\sum_{\substack{\underline{z}'\subset I_{+},\\q\geq2}}
 \mathbf{E}_\epsilon \left[ |v_{n-1+q}^\epsilon\big(\underline{x}^{(j)}(\lambda)\cup \underline{z}',t-\lambda\big)|\Big]\right]\nonumber\\
&&+\mathbf{1}_{2\leq n\leq K}\int_{t^*}^t d\lambda \;
\eps^{-1}\mathbf{E}_\epsilon \left[ {\bf 1}_{\und{x}(\lambda)=I^n_+}\right]\nonumber\\
&&+\epsilon\left((t-t^*)^{1/2}+(t-t^*)^{1/2+\zeta}\right) \mathbf 1_{n= 3}+(t-t^*)^{\zeta}\epsilon^{2-4\zeta}\mathbf 1_{n\geq 4}\nonumber\\
&&+{\bf 1}_{n\geq 2}\int_{t^*}^td\lambda{\phi_0(\lambda)}\beta_\epsilon(n-1,t-\lambda,{t^*})\nonumber\\
&&+  \mathbf{1}_{n\geq2}\int_{t^*}^t d\lambda\,
{\phi_2^\epsilon(\lambda)} a_\epsilon(n-2,t-\lambda)\,.
\end{eqnarray}
\end{lemma}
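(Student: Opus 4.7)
The plan is to specialize the pointwise bound \eqref{13.1.111 with gammas1} (or, equivalently, \eqref{13.1.111_p=1} together with \eqref{modification_A1}) line by line, and thereby produce the nine summands appearing on the right-hand side of \eqref{split_time_integral}. The derivation parallels that of Lemma \ref{lem: IIfor a for space_v_estimate}, with one crucial difference: the single-particle boundary contributions in the third and fourth lines of \eqref{13.1.111 with gammas1} are kept in expectation form rather than being bounded by $a_\epsilon$. This is what yields the fourth and fifth lines of \eqref{split_time_integral}, and is essential because in later sections we will exploit the small probability that one particle reaches the boundary window $I_+$.

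First, I would bound $\sum_{\underline{y}}\mathbf{P}_\epsilon(\underline{x}\overset{t-t^*}{\to}\underline{y})\,|v_n^\epsilon(\underline{y},t^*)|$ by $a_\epsilon(n,t^*)$ using that transition probabilities sum to $1$. For the boundary contributions with $p\geq 2$ particles reaching $I_\pm$, I apply Remark \ref{rem:comment_integration} to bound $\psi_p^\epsilon(\lambda)$ by $\frac{\epsilon^{-1}}{[\epsilon^{-2}\lambda]^{1-\zeta}}$, and replace each $|v_{n-p+q}^\epsilon|$ by $a_\epsilon(n-p+q,t-\lambda)$. Splitting according to whether $q\geq 1$ or $q=0$ (with $p\neq n$ in the latter case, since $p=n,q=0$ is accounted for separately by the $\mathbf{1}_{\underline{x}(\lambda)=I_+^n}$ term) produces the second and third lines of \eqref{split_time_integral}, while the ``all particles reach the last $n$ sites of the boundary'' term passes through unchanged as the sixth line.

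The $\gamma$-type contributions from the last three lines of \eqref{13.1.111 with gammas1} are then controlled via Lemma \ref{lem:gammas}. For the $\gamma_0^\epsilon\beta_{n-1}^\epsilon$ term I first note that on the event $\{|x_i(\lambda)-x_j(\lambda)|=1\}$ imposed by $\gamma_0^\epsilon$, the two configurations $\underline{x}^{(i)}(\lambda)$ and $\underline{x}^{(j)}(\lambda)$ differ only by a unit shift in one coordinate, so $\beta_{n-1}^\epsilon(\underline{x}^{(i)}(\lambda),\underline{x}^{(j)}(\lambda),t-\lambda,t^*)$ can be bounded by its supremum $\beta_\epsilon(n-1,t-\lambda,t^*)$; then \eqref{est:gamma_0} delivers the eighth line of \eqref{split_time_integral}. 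The $\gamma_2^\epsilon|v_{n-2}^\epsilon|$ term is treated analogously via \eqref{est:gamma_2}, producing $\phi_2^\epsilon(\lambda)\,a_\epsilon(n-2,t-\lambda)$ and hence the ninth line.

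The only delicate piece is the $\gamma_1^{\epsilon,n-1}$ term from the seventh line of \eqref{13.1.111 with gammas1}, where I expect the main bookkeeping effort to sit. Applying \eqref{est:gamma_1} gives $\mathbf{E}_\epsilon[\gamma_1^{\epsilon,n-1}]\lesssim\phi_1^{\epsilon,n-1}(\lambda)$, which bifurcates depending on whether $n=3$ or $n\geq 4$. For $n=3$, direct $\lambda$-integration of $\epsilon\phi_0(\lambda)=\epsilon(\lambda^{-1/2}+\lambda^{-1/2-\zeta})$ over $[t^*,t]$ produces the $\epsilon$-powers of $(t-t^*)$ displayed on the seventh line. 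For $n\geq 4$, the resulting integral $\int_{t^*}^t\tfrac{\epsilon^{-4\zeta}}{[\epsilon^{-2}\lambda]+1}\,d\lambda$ is treated exactly as in \eqref{important*m=3} of Remark \ref{rem:comment_integration}, yielding $\epsilon^{2-4\zeta}(t-t^*)^{\zeta}$. The recurring subtlety throughout is to ensure that every $\lambda$-integrand has exponent strictly greater than $-1$, so that the $\lambda$-integrals are finite; this is precisely what taking $\zeta>0$ sufficiently small and redistributing fractional powers as in Remark \ref{rem:comment_integration} accomplishes, and is the only source of genuine technical care in the argument.
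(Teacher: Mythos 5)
Your proposal is correct and follows exactly the path the paper intends: the paper omits the proof because it is the same as that of Lemma \ref{lem: IIfor a for space_v_estimate}, namely specializing \eqref{13.1.111 with gammas1} together with Lemma \ref{lem:gammas} and Remark \ref{rem:comment_integration}, and you correctly isolate the single genuine difference, that the $p=1$ boundary contributions and the $\underline{x}(\lambda)=I_+^n$ term are retained in expectation form rather than being collapsed to $a_\epsilon$ and $\psi_p^\eps$. Your observation that the event $\{|x_i(\lambda)-x_j(\lambda)|=1\}$ forces $\underline{x}^{(i)}(\lambda)$ and $\underline{x}^{(j)}(\lambda)$ to differ by a unit shift, so that $\beta_{n-1}^\eps$ can be dominated by the supremum $\beta_\eps(n-1,\cdot,t^*)$ of \eqref{def:beta}, is precisely the step needed to pass from \eqref{modification_A1} to the eighth line of \eqref{split_time_integral}.
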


The proof is omitted being similar to the proof of Lemma \ref{lem: IIfor a for space_v_estimate}. 
\begin{lemma}\label{lem: IIfor d} Let  $K\geq 2$ be a fixed integer, $0\leq t^*<t\leq T$ and $\zeta>0$.
For $n\geq 1$, it holds: 
\begin{eqnarray}\label{split_time_integral for d}
d_{\epsilon}(n,t)&\lesssim &\mathbf{1}_{2\leq n\leq K}\eps+ \beta_{\epsilon}(n,t, {t^*}),
\end{eqnarray}
where 
\begin{eqnarray}\label{split_time_integral for beta}
\beta_{\epsilon}(n,t, t^*)&\lesssim &\epsilon(t-t^*)^{-1/2} a_{\epsilon}(n,t^*)\nonumber\\
&&+\int_{t^*}^t d\lambda\;\frac{\eps^{-1}}{[\eps^{-2}\lambda]^{1-\zeta}}\sum_{q=1}^{K}\;\;\;
\sum_{p=2}^{\min\{n,K\}}\,
a_{\eps}(n-p+q,t-\lambda)\nonumber\\
&&+\mathbf 1_{n\geq 2}\int_{t^*}^t d\lambda\;\frac{\eps^{-1}}{[\eps^{-2}\lambda]^{1-\zeta}}
\sum_{p=2}^{\min\{n,K\}}\mathbf 1_{p\neq n}\,
a_{\eps}(n-p,t-\lambda)\nonumber\\
&&+\int_{t^*}^t d\lambda \;\frac{\eps^{-1}}{[\eps^{-2}\lambda]^{1-\zeta}}a_{\eps}(n,t-\lambda)\nonumber\\
 &&+\int_{t^*}^t d\lambda \;\frac{\eps^{-1}}{[\eps^{-2}\lambda]^{1-\zeta}}\sum_{q=2}^K
 a_\eps(n-1+q,t-\lambda)\nonumber\\
 &&+\mathbf{1}_{n\geq2}\int_{t^*}^{t}d\lambda
\frac{\eps^{3/2-2\zeta}}{\lambda^{1-\zeta}}a_{\epsilon}(n-2,t-\lambda)\nonumber\\
&&+\mathbf{1}_{n\geq2}\int_{t^*}^{t}d\lambda
\frac{\epsilon^{1-3\zeta}}{\lambda^{1-\zeta}} d_{\epsilon}(n-1,t-\lambda).
\end{eqnarray}
\end{lemma}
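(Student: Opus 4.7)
The plan is to derive both inequalities directly from the inequalities \eqref{17.3.3 with tildegammas*} and \eqref{17.3.3 with tildegammas} established earlier in this section, using as input the stopping-time estimates of Lemma \ref{lem:widetilde_gammas}, the Gaussian/Liggett bound \eqref{eq:psi_n}, and the time-integral trick of Remark \ref{rem:comment_integration}. The first observation is that with the usual identification of the set $X$ with a labeled configuration, one can always order the coordinates so that $\underline{x}^{(i)}$ and $\underline{x}^{(j)}$ in the labeled formalism of Appendix \ref{app: derivation of II for space v} differ by a unit shift in the first coordinate; this is precisely the pair that enters the definition of $d_\epsilon(n,t)$ and $\beta_\epsilon(n,t,t^*)$.

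For the bound on $d_\epsilon(n,t)$, I would plug this choice into \eqref{17.3.3 with tildegammas*} and take the supremum over $\underline{x}\in\Lambda_N^{n,\neq}$. The first term on the right-hand side is, by definition, $\beta_\epsilon(n,t,t^*)$. For the leftover boundary contribution $\epsilon^{-1}\int_{t^*}^t d\lambda\,\mathbf{E}_{\epsilon,\und y}[\mathbf 1_{\tau_{i,j}>\lambda/2}\mathbf 1_{\und x(\lambda)=I_+^n}]$, I would simply drop the stopping-time indicator, apply Liggett's inequality \eqref{eq:psi_n} to estimate $\PP_\epsilon(\und y\overset{\lambda}\to I_+^n)\lesssim \tfrac{1}{[\epsilon^{-2}\lambda]^{n/2}+1}$, and then use \eqref{important*m=3} to integrate in $\lambda$, producing a bound of order $\epsilon$ whenever $2\leq n\leq K$. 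For $n>K$, the set $I_+^n$ is not among the admissible $I_+^l$ defined in \eqref{In}, hence the indicator is identically zero and this boundary term does not contribute; this is the source of the indicator $\mathbf 1_{2\leq n\leq K}$ in the claim.

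For the bound on $\beta_\epsilon(n,t,t^*)$, I would insert the same choice of labeled configurations into \eqref{17.3.3 with tildegammas}, take the supremum over $\underline{x}$, and handle each line separately. The initial condition line uses $\PP_\epsilon(\tau_{i,j}>\tfrac{t-t^*}{2})\lesssim \epsilon(t-t^*)^{-1/2}$ together with $|v_n^\eps(\und x,t^*)|\leq a_\epsilon(n,t^*)$. The two $\widetilde\gamma_0^\eps$-lines are bounded using Lemma \ref{lem:widetilde_gammas}: the indicator $\mathbf 1_{|x_{i'}(\lambda)-x_{j'}(\lambda)|=1}$ inside $\widetilde\gamma_0^\eps$ forces the two $v_{n-1}^\eps$ configurations to differ by a unit shift, so their difference is at most $d_\epsilon(n-1,t-\lambda)$, and the expectation of $\widetilde\gamma_0^\eps$ is $\lesssim \epsilon^{1-3\zeta}\lambda^{-(1-\zeta)}$, producing the last line of \eqref{split_time_integral for beta}. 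The $\widetilde\gamma_2^\eps$-lines are bounded by $a_\epsilon(n-2,t-\lambda)$ times $\epsilon^{3/2-2\zeta}\lambda^{-(1-\zeta)}$, yielding the sixth line of the claim. Finally, for the last (boundary) line of \eqref{17.3.3 with tildegammas}, I would use $\mathbf{E}_\epsilon[\widetilde\gamma_p^\eps(\lambda)]\lesssim \psi_{p+1}^\eps(\lambda)$, bound $|v_{n-p+q}^\eps(\underline{x}^{(J)}(\lambda)\cup\underline{z}',t-\lambda)|$ by $a_\epsilon(n-p+q,t-\lambda)$, and split the resulting sums according to the value of $(p,q)$: the case $p=1,q=1$ feeds the fourth line of \eqref{split_time_integral for beta}, the case $p=1,q\geq 2$ feeds the fifth, the case $p\geq 2,q\geq 1$ feeds the second, and the case $p\geq 2,q=0$ (necessarily with $p\neq n$, since $p=n,q=0$ is excluded by the factor $\mathbf 1_{\underline{x}^{(J)}(\lambda)\neq I_+^n}$) feeds the third. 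The factors $\psi_{p+1}^\eps(\lambda)$ with $p\geq 1$ are all converted to the uniform $\tfrac{\epsilon^{-1}}{[\epsilon^{-2}\lambda]^{1-\zeta}}$ via \eqref{important*m=3}.

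The main obstacle I anticipate is bookkeeping rather than estimation: one must check carefully that the splitting by $(p,q)$ in the boundary line reproduces exactly the indicator $\mathbf 1_{p\neq n}$ in the third line of \eqref{split_time_integral for beta} (rather than over-counting the excluded scenario $\underline{x}^{(J)}(\lambda)= I_+^n$), and that the $\mathbf 1_{\und x(\lambda)=I_+^n}$ contribution that was separated out in \eqref{17.3.3 with tildegammas*} for Part 1 is not accidentally reintroduced here. Once this accounting is done cleanly, the estimates are all direct applications of Lemmas \ref{lem:gammas}--\ref{lem:widetilde_gammas} and Remark \ref{rem:comment_integration}.
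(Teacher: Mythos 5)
Your proposal is correct and follows the same route the paper takes: both start from \eqref{17.3.3 with tildegammas*}--\eqref{17.3.3 with tildegammas}, feed in the $\widetilde\gamma$-estimates of Lemma \ref{lem:widetilde_gammas} (which is precisely what the paper means by ``replacing the $\gamma$-factors by $\widetilde\gamma$''), and use Remark \ref{rem:comment_integration} to convert the $\psi_{p+1}^\eps(\lambda)$ factors to the uniform kernel $\tfrac{\eps^{-1}}{[\eps^{-2}\lambda]^{1-\zeta}}$. The paper's own proof is a one-paragraph reference to the analogy with Lemma \ref{lem: IIfor a}; your write-up supplies the bookkeeping it leaves implicit — in particular the $(p,q)$-split of the boundary line, the treatment of the separated $\mathbf 1_{\und x(\lambda)=I_+^n}$ term via Liggett's inequality and \eqref{important*m=3} to produce $\mathbf 1_{2\leq n\leq K}\eps$, and the observation that $I_+^n$ vanishes for $n>K$ — all of which are the intended steps.
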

\begin{proof}
The integral inequality \eqref{split_time_integral for d}--\eqref{split_time_integral for beta} is obtained by \eqref{17.3.3} and \eqref{17.3.3 with tildegammas}. The derivation is analogous to that of Lemma \ref{lem: IIfor a}, except that the $\gamma$-factors are replaced by modified factors $\widetilde\gamma$ defined in \eqref{19.1**}--\eqref{19.3*}. This is necessary since \eqref{17.3.3} includes the additional indicator  $\mathbf 1_{\{\tau_{i,j}>\frac{\lambda}{2}\}}$ which can be estimated using Theorem \ref{3.01}-- a result originally proved in \cite{de2012truncated}. The estimates of the new factors $\widetilde\gamma$ are given in Lemma \ref{lem:widetilde_gammas} and are bounded by applying the same arguments used to control the expectations in Lemma \ref{lem:gammas} along with Remark \ref{rem:comment_integration}. 
\end{proof}

\subsection{Proof of Theorem \ref{thm:vestimate2}}\label{proof of main theorem}
The core idea of the proof was originally developed in \cite{de1989weakly}. It boils down to showing that  
\begin{align}\label{def: a*}
a^{\star}_{\epsilon}(m,t)&=\tilde\Gamma_a^\eps(m)\sup_{0\leq\sigma\leq t}a_{\epsilon}(m,\sigma)
\end{align} for $t=T$ is bounded.  Above
\begin{eqnarray}
\widetilde \Gamma^\eps_{a}(m)&=\mathbf{1}_{m=1,2}\epsilon^{-1+2\zeta}+\mathbf{1}_{3\leq m\leq K}\epsilon^{-1}+\mathbf{1}_{m\geq K+1}\epsilon^{-1-\zeta}\label{tildegamma function:a}.
\end{eqnarray}
To do so here,  we  divide the time interval $[0,T]$ into $l\in\mathbb{Z}$, to be chosen later, equal sub-intervals $[t_{k-1}, t_k]$  with $t_0=0$ and $t_{l+1}=T$. The length of each interval is given by $|t_k-t_{k-1}|=\Delta$ and $\frac{T}{\Delta}=l$.  For $m\geq 1$ and $k=1,\dots,l+1$, we  denote  
 \begin{align}\label{def: d*}
d^{\star}_{\epsilon}(m,k)&=\tilde\Gamma_d^\eps(m)\sup_{0\leq\sigma\leq t_k}d_{\epsilon}(m,\sigma),
\end{align}
where 
\begin{eqnarray}
 \widetilde\Gamma^\eps_{d}(m)&=\mathbf{1}_{1\leq m\leq K}\epsilon^{-1}+\mathbf{1}_{m\geq K+1}\epsilon^{-1-\zeta}\label{tildegamma function:d}.
\end{eqnarray}
We note that 
\begin{equation}
a^{\star}_{\epsilon}(0,k)=1 \qquad\textrm{and} \qquad d^{\star}_{\epsilon}(0,k)=0.
\end{equation}
Finally, recalling \eqref{def:beta}, for $m\geq 1$ we define 
\begin{equation}
\beta^{\star}_{\epsilon}(m,k)=\tilde{\Gamma}^\eps_\beta(m)\sup_{0\leq\sigma\leq t_k}\beta_{\epsilon}(m,\sigma),
\end{equation}
where $\beta_{\epsilon}(m,\sigma)=\beta_{\epsilon}(m,\sigma;0)$ as defined in \eqref{17.3.3} and  
where \begin{equation}
\tilde{\Gamma}^\eps_\beta(m)=\eps^{-1-\zeta}.
 \end{equation}
We shall consider $m\leq N^*$ where $N^*$ is defined in \eqref{N*}.
 By Corollary \ref{lem:v-estimate for small times}, we have, for any positive integer $m\geq  N^*+1$ and $0<t\leq T$, that
\begin{equation}\label{v for a,d}
a_{\epsilon}(m,t)\lesssim \epsilon^{1+\zeta}\qquad\textrm{and}\qquad d_{\epsilon}(m,t)\lesssim\epsilon^{1+\zeta}.
\end{equation}
For $1\leq m\leq N^*$, we also define 
\begin{eqnarray}
&\Gamma_a^\eps(m)&=\mathbf{1}_{m=1,2}\epsilon^{1-2\zeta}+\mathbf{1}_{3\leq m\leq K}\epsilon+\mathbf{1}_{m\geq K+1}\epsilon^{1+\zeta};\label{gamma function:a}\\
&
  \Gamma^\eps_{d}(m)&=\mathbf{1}_{1\leq m\leq K}\epsilon+\mathbf{1}_{m\geq K+1}\epsilon^{1+\zeta}\label{gamma function:d}.
\end{eqnarray}
Finally,  for any $m\geq 1$
\begin{equation}
 \Gamma^\eps_{\beta}(m)=\eps^{1+\zeta}.
 \end{equation}
 For $t^*=(k-1)\Delta$, $k=1,\dots,l+1$, and $1\leq m\leq N^*$, by Lemma \ref{lem: IIfor a for space_v_estimate}, the integral inequalities for $a^*_{\epsilon}(m,k)$ take the form:
  \begin{eqnarray}\label{split_time_integrala*}
a^*_{\epsilon}(m,k)&&\lesssim a^*_{\epsilon}(m,k-1)\nonumber\\
&&+\Delta^\zeta\sum_{q=1}^{K}\;\;\;
\sum_{p=2}^{\min\{m,K\}}\,
\underbrace{\eps^{1-2\zeta}\Gamma_a(m-p+q)\tilde\Gamma^\eps_a(m)}_{\leq\eps^{1-5\zeta}}a^*_{\eps}(m-p+q,k)\mathbf 1_{\substack{m-p+q\leq N^*,\\m\geq 2}}\nonumber\\
&&+\Delta^\zeta\sum_{q=1}^{K}\;\;\;
\sum_{p=2}^{\min\{m,K\}}\,
\underbrace{\eps^{1-2\zeta}\tilde\Gamma^\eps_a(m)\eps^{1+\zeta}}_{\leq \eps^{1-2\zeta}}\mathbf 1_{\substack{m-p+q\geq N^*+1,\\m\geq 2}}\nonumber\\
&&+{\mathbf 1_{m\geq 2}\Delta^{\zeta}
\sum_{\substack{p=2,\\p\neq m}}^{\min\{m,K\}}\underbrace{\eps^{1-2\zeta}\Gamma^\eps_a(m-p)\tilde\Gamma^\eps_a(m)}_{\leq\eps^{1-5\zeta}}\,
a^*_{\eps}(m-p,k)}\nonumber\\
&&+\Delta^{1/2} a^*_{\eps}(m,k)\nonumber\\
&&+\Delta^{1/2}\sum_{q=2}^{K}
\,
\underbrace{\Gamma^\eps_a(m-1+q)\tilde\Gamma^\eps_a(m)}_{\leq 1}a^*_{\eps}(m-1+q,k)\mathbf 1_{m-1+q\leq N^*}\nonumber\\
&&+\Delta^{1/2}\sum_{q=2}^{K}\,
\underbrace{\tilde\Gamma^\eps_a(m)\eps^{1+\zeta}}_{\leq 1}\mathbf 1_{m-1+q\geq N^*+1}\nonumber\\
&&+\mathbf{1}_{m=2}\underbrace{\epsilon^{1-2\zeta}\tilde\Gamma^\eps_a(m)}_{=1}\Delta^{\zeta}+\mathbf{1}_{3\leq m\leq K} \underbrace{\epsilon\tilde\Gamma^\eps_a(m)}_{=1}\nonumber\\
&&+\mathbf 1_{m= 3}\underbrace{\epsilon\tilde\Gamma^\eps_a(m)}_{=1}\left(\Delta^{1/2}+\Delta^{1/2+\zeta}\right) +{\mathbf 1_{m\geq 4}\underbrace{\epsilon^{2-4\zeta}\tilde\Gamma^\eps_a(m)}_{\eps^{1-5\zeta}}\Delta^{\zeta}}\nonumber\\
&&+{\bf 1}_{m\geq 2}\left(\Delta^{1/2}+\Delta^{1/2-\zeta}\right)\underbrace{\tilde\Gamma^\eps_a(m)\Gamma^\eps_\beta(m-1)}_{\leq 1}\beta^*_\epsilon(m-1,k)\nonumber\\
&&+{\bf 1}_{m=2}\left(\Delta^{1/2}+\Delta^{1/4}\right)\underbrace{\eps\tilde\Gamma^\eps_a(m)}_{=\eps^{2\zeta}}\nonumber\\&&+{{\bf 1}_{m\geq 3}\left(\Delta^{1/2}+\Delta^{1/4}\right)\underbrace{\eps\tilde\Gamma^\eps_a(m)\Gamma^\eps_a(m-2)}_{\leq \eps^{1-3\zeta}}a^*_\epsilon(m-2,k)},
\end{eqnarray}
where we have also included the estimates of the underbraced terms, whose derivation is explained later in the proof.
The above inequality is constructed as follows. We first change variables $t-\lambda\rightarrow\lambda$, take the supremum with respect to time of $a_{\epsilon}(m,\lambda)$ and of  $\beta_{\epsilon}(m-1, k\Delta - \lambda)$, allowing these terms to be pulled out of the time integrals. Next, we multiply and divide by the corresponding powers of $\epsilon$. The factors of $\epsilon$ with negative exponents are absorbed into the terms $a^*_{\epsilon}$ and $\beta^*_{\epsilon}$ on the right-hand side, while the factors with positive exponents are collected into $\Gamma^\eps_{a}$ and $\Gamma^\eps_{\beta}$, respectively. Since $a^*_{\epsilon}$ also appears on the left-hand side, all terms on the right-hand side are multiplied by $\tilde{\Gamma}^\eps_{a}(m)$. E.g. to obtain 
$${\bf 1}_{m\geq 2}\left(\Delta^{1/2}+\Delta^{1/2-\zeta}\right)\tilde\Gamma^\eps_a(m)\Gamma^\eps_\beta(m-1)\beta^*_\epsilon(m-1,k),$$ we worked as follows:
\begin{eqnarray*}
{\bf 1}_{m\geq 2}\tilde\Gamma^\eps_{a}(m)\int_{(k-1)\Delta}^{k\Delta}d\lambda\left(\frac{1}{\lambda^{1/2}}+\frac{1}{\lambda^{1/2+\zeta}}\right)\underbrace{\eps^{1+\zeta}}_{=\Gamma^\eps_{\beta}(m-1)}\underbrace{\eps^{-1-\zeta}\sup_{0\leq\sigma\leq k\Delta}\beta_\epsilon(m-1,\sigma)}_{=\beta^*_\epsilon(m-1,k)}.
\end{eqnarray*}
All underbraced terms in \eqref{split_time_integrala*} that multiply $a^*_{\eps}$, are bounded uniformly in $\eps$ as follows: we use the worst–case bounds, namely 
$\Gamma^\eps_a(\cdot)\leq \epsilon^{\,1-2\zeta}$ and 
$\tilde\Gamma^\eps_a(\cdot)\leq \epsilon^{-1-\zeta}$, whenever the arguments of 
$\Gamma^\eps_a$ and $\tilde\Gamma^\eps_a$ permit such estimates, and by  \eqref{v for a,d}, for $m\geq 2$
\begin{equation}
\eps^{1-2\zeta}\tilde\Gamma^\eps_a(m)\eps^{1+\zeta}\mathbf 1_{m-p+q\geq N^*+1}\leq \eps^{1-2\zeta} \;\;\textrm{ and  }\; \;\tilde\Gamma^\eps_a(m)\eps^{1+\zeta}\mathbf 1_{m-1+q\geq N^*+1}\leq 1
\end{equation}
and thus these terms become smaller than 1 since $\eps<1$. Moreover, 
\begin{align*}
\eps^{1-2\zeta}\Gamma^\eps_a(m-p+q)\tilde\Gamma^\eps_a(m)&\leq \eps^{1-2\zeta}\eps^{1-2\zeta}\eps^{-1-\zeta}=\eps^{1-5\zeta}.
\end{align*}
Since $m-1+q\geq m$ for $q\geq 2$, $\Gamma^\eps_a(m-1+q)\tilde\Gamma^\eps_a(m)<1$  when $\eps<1$. Finally, $\Gamma^\eps_\beta(m-1)\tilde\Gamma^\eps_a(m)\leq\eps^{1+\zeta}\eps^{-1-\zeta}=1$ and $\eps\Gamma^\eps_a(m-2)\tilde\Gamma^\eps_a(m)\leq\eps\eps^{1-2\zeta}\eps^{-1-\zeta}=\eps^{1-3\zeta}$. We omit the estimation of the remaining terms, as it is straightforward to obtain them and is already indicated in the underbraced expressions. Also $\Delta$'s in each term of \eqref{split_time_integrala*} have a positive exponent.
We also observe that the integral inequality for  $d^{\star}_{\epsilon}(m,k)$,  thanks to  Lemma \ref{lem: IIfor d}, can be written as  
\begin{eqnarray}\label{split_time_integrald*}
&&d^*_{\epsilon}(m,k)\lesssim \mathbf 1_{2\leq m\leq K}\eps \tilde\Gamma^\eps_d(m)+\tilde\Gamma^\eps_d(m)\Gamma^\eps_{\beta}(m)\beta^*_{\epsilon}(m,k).
\end{eqnarray}
Finally, from \eqref{split_time_integral for beta}, the integral inequality for $\beta_{\epsilon}(n,t)$ can be estimated as 
\begin{eqnarray}\label{split_time_integral for beta*}
\beta^*_{\epsilon}(m,k)&\lesssim& \Delta^{-1/2}\underbrace{\epsilon \Gamma^\eps_{a}(m)\tilde\Gamma^\eps_{\beta}(m)}_{\leq \eps^{1-3\zeta}}a_{\epsilon}^*(m,k-1)\nonumber\\
&&+\Delta^\zeta\sum_{q=1}^{K}\;\;\;
\sum_{p=2}^{\min\{m,K\}}\,\underbrace{\eps^{1-2\zeta}
\Gamma^\eps_a(m-p+q)\tilde\Gamma^\eps_{\beta}(m)}_{\leq \eps^{1-5\zeta}}a^*_{\eps}(m-p+q,k)\mathbf 1_{\substack{m-p+q\leq N^*,\\m\geq 2}}\nonumber\\
&&+\Delta^\zeta\sum_{q=1}^{K}\;\;\;
\sum_{p=2}^{\min\{m,K\}}\,
\underbrace{\eps^{1-2\zeta}\tilde\Gamma^\eps_{\beta}(m)\eps^{1+\zeta}}_{\leq \eps^{1-2\zeta}}\mathbf 1_{\substack{m-p+q\geq N^*+1,\\m\geq 2}}\nonumber\\
&&+\mathbf 1_{m\geq 2}\Delta^{\zeta}
\sum_{\substack{p=2,\\p\neq m}}^{\min\{m,K\}}\underbrace{\eps^{1-2\zeta}\Gamma^\eps_a(m-p)\tilde\Gamma^\eps_a(m)}_{\leq\eps^{1-5\zeta}}\,
a^{{*}}_{\eps}(m-p,k)\nonumber\\
&&+\Delta^{\zeta}\underbrace{\eps^{1-2\zeta}\Gamma^\eps_{a}(m) \tilde\Gamma^\eps_{\beta}(m)}_{\leq \eps^{1-5\zeta}}a^*_{\eps}(m,k)\nonumber\\
&&+\Delta^{\zeta}\sum_{q=2}^{K}
\underbrace{\epsilon^{1-2\zeta}
\Gamma^\eps_a(m-1+q)\tilde\Gamma^\eps_{\beta}(m)}_{\leq \eps^{1-5\zeta}}a^*_{\eps}(m-1+q,k)\mathbf 1_{m-1+q\leq N^*}\nonumber\\
&&+\Delta^{\zeta}\sum_{q=2}^{K}\,
\underbrace{\epsilon^{1-2\zeta}\tilde\Gamma^\eps_{\beta}(m)\eps^{1+\zeta}}_{\leq \eps^{1-2\zeta}}\mathbf 1_{m-1+q\geq N^*+1}\nonumber\\
 &&+\mathbf{1}_{m\geq2}\Delta^{\zeta}
\underbrace{\eps^{3/2-2\zeta}\tilde\Gamma^\eps_{\beta}(m)\Gamma^\eps_a(m-2)}_{\leq \eps^{3/2-5\zeta}}a^*_{\epsilon}(m-2,k)\nonumber\\
 &&+\mathbf{1}_{m\geq2}\Delta^{\zeta}
\underbrace{\epsilon^{1-3\zeta}\tilde\Gamma^\eps_\beta(m)\Gamma^\eps_d(m-1)}_{\leq \eps^{1-4\zeta}}d^*_{\epsilon}(m-1,k).
\end{eqnarray}
Because the product of $\tilde{\Gamma}^\eps_{\beta}(m)$ with any of $\Gamma^\eps_{a}(\cdot)$, $\Gamma^\eps_{\beta}(\cdot)$, or $\Gamma^\eps_{d}(\cdot)$ may grow as fast as $\epsilon^{-3\zeta}$, while all terms are multiplied by $\epsilon^{1 - x}$ with $x \in \{0, -2\zeta, -\frac{1}{2} +2\zeta, -3\zeta\}$, it follows that each coefficient remains bounded by a positive power of $\epsilon$, and hence is uniformly bounded by 1 when $\epsilon < 1$. By defining the  vectors
\begin{align*}
    \underline{x}(k) &= [
           a^{*}_{\epsilon}(1,k)\,\cdots 
           a^{*}_{\epsilon}(N^*,k)\, \,
           d^{*}_{\epsilon}(1,k)\,
           \cdots \,
           d^{*}_{\epsilon}(N^*,k)\,
           \beta^{*}_{\epsilon}(1,k)\,
           \cdots \,
           \beta^{*}_{\epsilon}(N^*,k)
       ]^T
  \end{align*}
  and  \begin{align*}
   u_{\epsilon,\Delta}(k) = [
          u_{\epsilon, \Delta}(1,k) \,
           \cdots \,
           u_{\epsilon, \Delta}(3N^*,k)]^T
        \end{align*}
for $k=1,\dots,l+1$, the vector version of \eqref{split_time_integrala*},\eqref{split_time_integrald*},\eqref{split_time_integral for beta*} can be written as
\begin{equation}\label{matrix inequality}
\underline{x}(k)\leq A_{\epsilon,\Delta}\,\underline{x}(k-1)+\Lambda_{\epsilon,\Delta}(k)\underline{x}(k)+u_{\epsilon,\Delta}(k),\;\;\;\;k=1,\dots,l+1.
\end{equation}
The elements of $A_{\epsilon,\Delta}$are given by
\[
A_{\varepsilon,\Delta}(i,j)=
\begin{cases}
1, & 1 \le i = j \le N^*, \\[2mm]
\Delta^{-1/2}\,\varepsilon\,\Gamma_a^\varepsilon(m)\,\tilde\Gamma_\beta^\varepsilon(m), & i = 2N^* + j,\; 1 \le j \le N^*, \\[1mm]
0, & \text{otherwise.}
\end{cases}
\]
The elements of the matrix $\Lambda_{\epsilon,\Delta}(k)$, which are all bounded by a constant (because are bounded either by a positive power of $\Delta$, or by a positive power of $\epsilon$ multiplied by a positive power of $\Delta$),  can be figured out in a similar manner, and we leave those details to the reader. 
Finally, the matrix $u_{\epsilon,\Delta}(k)$ consists of all terms in 
\eqref{split_time_integrala*}, \eqref{split_time_integrald*} and \eqref{split_time_integral for beta*}
that do not involve $a^*_\epsilon$, $d^*_\epsilon$, or $\beta^*_\epsilon$, 
and are uniformly bounded in $\eps$ as well.
We choose $\Delta$ such that 
$
\|\Lambda_{\epsilon,\Delta}(k)\|_{\infty} < \frac{1}{2},
$
where $\|\cdot\|_{\infty}$ denotes the matrix norm defined as the maximum absolute row sum. 
More generally, we may also choose $\Delta$ such that 
$\|\Lambda_{\epsilon,\Delta}(k)\|_{\max} \leq C$ for some constant $C < 1$.  For every $k = 1, \dots, l+1$, the matrices $\Lambda_{\epsilon,\Delta}(k)$ and $u_{\epsilon,\Delta}(k)$ 
are uniformly bounded by $1/2$ and some constant $c$, respectively, independently of $\epsilon$. 
Consequently, all components of $\underline{x}(k)$ are uniformly bounded in $\epsilon$ for all $k = 1, \dots, l+1$, 
which proves the result. 
\begin{corollary}\label{cor: thm for space v}
Let  $K\geq 2$ be a fixed  integer number  and $\zeta>0$. For any initial product measure $\mu^{\epsilon}$ satisfying \eqref{Ass1},  and time  $0<t\leq T$
\begin{equation}\label{d}
d_{\epsilon}(n,t)\lesssim
\begin{cases}
\epsilon,&\;1\leq n\leq K\\
 \epsilon^{1+\zeta}, &n\geq K+1,
\end{cases}
\end{equation}
and 
$
\beta_{\epsilon}(n,t)\lesssim \epsilon^{1+\zeta}.
$
 \end{corollary}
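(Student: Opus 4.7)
The plan is to observe that the corollary is essentially already contained in the proof of Theorem~\ref{thm:vestimate2}: the state vector $\underline{x}(k)$ in the matrix inequality \eqref{matrix inequality} was deliberately assembled to carry the three families of quantities together, with coordinates $a^{*}_{\epsilon}(m,k)$, $d^{*}_{\epsilon}(m,k)$, and $\beta^{*}_{\epsilon}(m,k)$ for $1\leq m\leq N^{*}$. Since $\|\Lambda_{\epsilon,\Delta}(k)\|_{\infty}<1/2$ for $\Delta$ small enough and $u_{\epsilon,\Delta}(k)$ is uniformly bounded in $\epsilon$, the standard iteration in $k$ — moving the $\Lambda_{\epsilon,\Delta}(k)\underline{x}(k)$ term to the left-hand side and then solving the resulting recursion — shows that every component of $\underline{x}(k)$ stays bounded by a constant independent of $\epsilon$ and $k$.

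To translate this into bounds on $d_{\epsilon}$ and $\beta_{\epsilon}$ themselves, one simply unwinds the definitions. The identity $d^{*}_{\epsilon}(m,k)=\tilde\Gamma^{\epsilon}_{d}(m)\sup_{0\leq \sigma\leq t_{k}}d_{\epsilon}(m,\sigma)$ together with $\tilde\Gamma^{\epsilon}_{d}(m)=\epsilon^{-1}\mathbf{1}_{1\leq m\leq K}+\epsilon^{-1-\zeta}\mathbf{1}_{m\geq K+1}$ directly gives $d_{\epsilon}(n,t)\lesssim \epsilon$ for $1\leq n\leq K$ and $d_{\epsilon}(n,t)\lesssim \epsilon^{1+\zeta}$ for $K+1\leq n\leq N^{*}$. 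Similarly, $\beta^{*}_{\epsilon}(m,k)\leq C$ combined with $\tilde\Gamma^{\epsilon}_{\beta}(m)=\epsilon^{-1-\zeta}$ yields $\beta_{\epsilon}(n,t)\lesssim \epsilon^{1+\zeta}$ for $1\leq n\leq N^{*}$.

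The remaining task is to cover the regime $n\geq N^{*}+1$, which was not packed into the vector $\underline{x}(k)$. For $d_{\epsilon}$ this is immediate: by Corollary~\ref{cor:N*} we already have $|v_{n}^{\epsilon}(\underline{x},t)|\lesssim \epsilon^{1+\zeta}$ uniformly in $\underline{x}$, and the triangle inequality $d_{\epsilon}(n,t)\leq 2\sup_{\underline{x}}|v_{n}^{\epsilon}(\underline{x},t)|$ closes the case. For $\beta_{\epsilon}(n,t)$ with $n\geq N^{*}+1$ one returns to \eqref{split_time_integral for beta}: the terms containing $a_{\epsilon}(\cdot,t-\lambda)$ with index $\geq N^{*}+1$ are absorbed by $\epsilon^{1+\zeta}$ via Corollary~\ref{cor:N*}, the terms with index $\leq N^{*}$ are absorbed using the uniform bound on $a^{*}_{\epsilon}$ already proved, and the single term carrying $d_{\epsilon}(n-1,t-\lambda)$ is handled with the $d_{\epsilon}$-bound just established; integrating the resulting $\lambda^{-1+\zeta}$ factor against the remaining $\epsilon$-power leaves the desired $\epsilon^{1+\zeta}$ rate.

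The only real obstacle is the bookkeeping in the high-degree regime $n\geq N^{*}+1$: one must check that when \eqref{split_time_integral for beta} is expanded, every contribution produces at least a factor $\epsilon^{1+\zeta}$ after the time integral is performed (using the integrability trick of Remark~\ref{rem:comment_integration} to trade exponents exceeding one in the kernel for a sub-unit exponent at the cost of an extra $\epsilon^{\zeta}$). This verification is of the same flavour as, but simpler than, the underbraced estimates in \eqref{split_time_integral for beta*}, so no new ideas are needed beyond those already deployed in the proof of Theorem~\ref{thm:vestimate2}.
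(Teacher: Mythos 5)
Your proposal is correct and follows essentially the same line the paper intends: the paper itself states that Corollary~\ref{cor: thm for space v} ``follows easily from the proof of the previous result'' and declines to spell it out, and you have simply spelled it out. For $1\le n\le N^*$ you correctly read the bounds off the uniform boundedness of $d^*_\epsilon(m,k)$ and $\beta^*_\epsilon(m,k)$ obtained from the matrix inequality \eqref{matrix inequality}, dividing through by $\tilde\Gamma^\epsilon_d(m)$ and $\tilde\Gamma^\epsilon_\beta(m)$. For $n\ge N^*+1$ you correctly observe that $d_\epsilon$ is already covered by \eqref{v for a,d} (equivalently by Corollary~\ref{cor:N*} plus the triangle inequality), and that $\beta_\epsilon$ can be estimated by returning to \eqref{split_time_integral for beta} with $t^*=0$: the initial term vanishes since $a_\epsilon(n,0)=0$, and since $N^*$ is chosen large via \eqref{N*}, every argument appearing — $n-p+q$, $n-p$, $n$, $n-1+q$, $n-2$, $n-1$ — is at least $K+1$, so each $a_\epsilon$-factor is $\lesssim\epsilon^{1+\zeta}$ and the $d_\epsilon(n-1,\cdot)$ factor is $\lesssim\epsilon^{1+\zeta}$ by the case already established; the time integrals $\int_0^t \epsilon^{-1}[\epsilon^{-2}\lambda]^{-(1-\zeta)}d\lambda\lesssim\epsilon^{1-2\zeta}t^\zeta$ and $\int_0^t\lambda^{-(1-\zeta)}d\lambda\lesssim t^\zeta$ then yield a bound well within $\epsilon^{1+\zeta}$. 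This matches the paper's approach; no issues.
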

This corollary is the same as Corollary \ref{cor: thm for space v_initial} expressed in terms of $d_\eps$ including also the estimate for $\beta_\eps$. We do not present the proof as it follows easily from the proof of the previous result.

 \subsection{Further results on space $v$-functions}
{Before we move on to estimating the space-time $v$-function, we anticipate that the results we have previously obtained for the space $v$-function have to be refined in such a way that the bounds in $\epsilon$ improve when estimating the space time function.} Motivated by the integral inequality for the space-time $v$-function, given in Definition~\ref{def:space time correlations}, namely
\begin{eqnarray*}
v^{\epsilon,m,n}_{s,r}(\underline{y}, \underline{x})
&=&\sum_{\underline{w}}\mathbf{P}_{\epsilon}(\underline{x}\overset{r-s}{\longrightarrow} \underline{w})v^{\epsilon,m,n}_{s,s}(\underline{w}, \underline{y})
+\int_{s}^{r}d\lambda\,\sum_{\underline{w}}\mathbf{P}_{\epsilon}(\underline{x}\overset{r-\lambda}{\longrightarrow} \underline{w})(C_{\epsilon} v^{\epsilon,m,n}_{s,\lambda})(\underline{y}, \underline{w}),
\end{eqnarray*}
which is discussed in detail in Section \ref{sec:space_time_corr}, we aim to improve {the obtained estimates with respect to  $\epsilon$ for the space $v$-function. In particular, a better estimate of the initial condition, which depends on the space $v$-function but also on the transition probability of the stirring process, 
\[
\sum_{\underline{w}}\mathbf{P}_{\epsilon}(\underline{x}\overset{r-s}{\longrightarrow} \underline{w})v^{\epsilon,m,n}_{s,s}(\underline{w}, \underline{y})
\]
 is required. Our strategy is then to split } the above sum into the cases $\underline{w} = \underline{y}$ and $\underline{w} \neq \underline{y}$, the latter includes the case where $|\und w\cap \und y|=0$ (i.e., none of the labeled particles in $\underline{w}$ occupy the same site as any of the particles in $\underline{y}$) and, from Liggett's inequality given in  {\eqref{eq:psi_n}}, it can be  bounded from above by 
\[
\sum_{\pi\in S_n}\sum_{{\substack{\underline{w} = (w_1,\dots,w_n):\\|\underline{w} \cap \underline{y}| = 0}}} \prod_{i=1}^n P^{(\epsilon)}_{\lambda}(x_{\pi(i)},w_i)\, |v_{n+m}^{\epsilon}(\underline{w}\cup  \underline{y}, s)|.
\]
In Lemma~\ref{lem: improved v2}, we show that, for general times, the sum of products of transition probabilities multiplying the $v$'s, i.e., {for any $\pi\in S_n$},
\[
\sum_{{\substack{\underline{w} = (w_1,\dots,w_n):\\|\underline{w} \cap \underline{y}| = 0}}} \prod_{i=1}^n P_{\tau}^{(\epsilon)}(x_{\pi(i)}, w_i)\, |v_{n+m}^{\epsilon}(\underline{w}\cup  \underline{y}, t)|,\quad \tau\neq t
\]
improves the order in $\epsilon$ by a factor of $\epsilon^{2\zeta}$ for $n+m=2$ and by $\epsilon^{\zeta}$ for $4 \leq n+m \leq K$. For the remaining cases of $n+m$, the order in $\epsilon$ remains unchanged, meaning that we obtain a bound of order $\epsilon$ when $n + m = 2, 3$, and of order $\epsilon^{1 + \zeta}$ when $n + m \geq K+1$. 

We begin with Corollary~\ref{cor:final v inequality}, in which we summarize the integral inequalities for $v_n^{\epsilon}$ in a form that preserves those terms estimated by $\epsilon$ to a positive power less than or equal to~1 in their original form, while by using Theorem \ref{thm:vestimate2}, the remaining terms are already bounded by $\epsilon^{1+\zeta}$ and thus require no further treatment.

\begin{corollary}\label{cor:final v inequality}
Let  $2\leq K\leq N$ be a fixed  integer number  and   $\zeta>0$. Let also  
 $\mu^{\epsilon}$  be any initial measure satisfying \eqref{Ass1},   $0<t\leq T$ and  $2\leq n\leq N^*-1$, where $N^*$ is defined in \eqref{N*}. Then, the following bound holds:
\begin{eqnarray}\label{split_time_integral of v}
&&\hspace{-0.5cm}|v_n^{\epsilon}(\underline{x},t)|\lesssim \mathbf 1_{n=2,3}\epsilon T^{1/2}+\mathbf 1_{4\leq n\leq N^*-1}\epsilon^{1+\zeta}T^{1/2}\nonumber\\ 
&&\hspace{1cm}+\int_{0}^t d\lambda \;\eps^{-1}\sum_{j=1}^{n}\sum_{z'\in I_{+}} \mathbf 1_{n\leq N^*-1}\mathbf{E}_\epsilon \left[\mathbf{1}_{x_j(\lambda)\in I_+} |v_{n}^\epsilon\left(\underline{x}^{(j)}(\lambda)\cup z',t-\lambda\right)|\Big]\right]\nonumber\\
&&\hspace{1cm}+\int_{0}^t d\lambda \;\frac{\eps^{-1}}{[\eps^{-2}\lambda]^{1/2}+1}\sum_{j=1}^{n}\sum_{\underline{z}'\subset I_{+},q\geq 2} \mathbf 1_{n+q\leq N^*-1}\mathbf{E}_\epsilon \left[ |v_{n-1+q}^\epsilon\big(\underline{x}^{(j)}(\lambda)\cup\underline{z}',t-\lambda\big)|\Big]\right]\nonumber\\
&&\hspace{1cm}+\int_{0}^t d\lambda \;
\eps^{-1}\mathbf{E}_\epsilon \left[ {\bf 1}_{\und{x}(\lambda)=I^n_+}\right].
\end{eqnarray}
\end{corollary}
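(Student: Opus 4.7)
The plan is to apply Lemma \ref{lem: IIfor a} with $t^* = 0$ to obtain an integral inequality for $|v_n^\epsilon(\underline{x}, t)|$, and then to simplify its right-hand side using Assumption 1 together with Theorem \ref{thm:vestimate2} and Corollary \ref{cor: thm for space v}. Since $\mu^\epsilon$ is a product measure with marginals $u_0(\epsilon \cdot)$, the centred occupation variables $\bar\eta_0(x) = \eta_0(x) - \rho_\epsilon(x, 0)$ are independent at time zero, so $v_n^\epsilon(\underline{x}, 0) = 0$ for every $n \geq 1$ and every $\underline{x} \in \Lambda_N^{n,\neq}$; in particular $a_\epsilon(n, 0) = 0$ and the initial term on the right-hand side of \eqref{split_time_integral} drops out.

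I would then split the remaining terms of \eqref{split_time_integral} into two groups. The three configuration-dependent terms appearing verbatim on the right-hand side of the claim, namely one particle reaching $I_+$ with a single birth, one particle reaching $I_+$ with $q \geq 2$ births, and the indicator $\mathbf 1_{\underline{x}(\lambda) = I_+^n}$ contribution, are kept unchanged; reducing them via scalar $a_\epsilon$-bounds at this stage would lose sharpness, and their detailed stirring-dependence will be exploited later in the space-time analysis of Section \ref{sec:space_time_corr}. For the remaining five terms I would apply Theorem \ref{thm:vestimate2} uniformly in time to bound $a_\epsilon(k, \cdot)$ by $\epsilon^{1-2\zeta}$ for $k \in \{1,2\}$, by $\epsilon$ for $3 \leq k \leq K$, and by $\epsilon^{1+\zeta}$ for $k \geq K+1$, together with $\beta_\epsilon(m, \cdot) \lesssim \epsilon^{1+\zeta}$ from Corollary \ref{cor: thm for space v}. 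The singular factors of the form $[\epsilon^{-2}\lambda]^{-(1-\zeta)}$ are integrated via Remark \ref{rem:comment_integration}, producing an overall $\epsilon^{1-2\zeta}$, while $\int_0^t \phi_0(\lambda)\, d\lambda \lesssim T^{1/2}$ and $\int_0^t \phi_2^\epsilon(\lambda)\, d\lambda \lesssim \epsilon T^{1/2}$ supply the announced $T^{1/2}$-factor. Collecting every contribution, the configuration-independent terms are then bounded by $\epsilon T^{1/2}$ for $n \in \{2, 3\}$ and by $\epsilon^{1+\zeta} T^{1/2}$ for $4 \leq n \leq N^* - 1$.

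The principal obstacle is the case-by-case bookkeeping: for each of the five terms and for each relevant value of $n$ one has to check that the $\epsilon$-power supplied by Theorem \ref{thm:vestimate2}, the $\epsilon$-power produced by integrating the singular-in-$\lambda$ kernel, and the residual power of $T$, combine into an overall bound dominated by $\epsilon T^{1/2}$ or $\epsilon^{1+\zeta} T^{1/2}$. The most delicate instances occur when the index fed into $a_\epsilon$ drops to $1$ or $2$, since then only the weaker $\epsilon^{1-2\zeta}$ estimate is available; in that regime one has to rely on the compensating $\epsilon^{1-2\zeta}$ coming from the integrated kernel to restore an overall exponent of at least $1$, which works provided $\zeta$ is taken sufficiently small (for instance $\zeta < 1/5$). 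Finally, whenever the internal index $n-p+q$ or $n-1+q$ exceeds $N^* - 1$, Corollary \ref{cor:N*} bounds the corresponding $v$-function directly by $\epsilon^{1+\zeta}$, so those contributions are also absorbed into the claimed rate.
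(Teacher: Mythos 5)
Your proposal is correct and follows essentially the same route as the paper's own proof: apply Lemma \ref{lem: IIfor a} with $t^*=0$, kill the initial term $a_\epsilon(n,0)$ via {\bf Assumption 1}, retain the three configuration-dependent terms unreduced, and bound the remaining contributions by Theorem \ref{thm:vestimate2}, Corollary \ref{cor: thm for space v}, and integration of the singular kernels via Remark \ref{rem:comment_integration}. Your added observations about the smallness of $\zeta$ needed to close the exponent bookkeeping and the role of Corollary \ref{cor:N*} in truncating the indices at $N^*$ are in line with what the paper leaves implicit under the phrase ``a simple computation.''
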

 \begin{proof}
 The result follows from \eqref{split_time_integral} {with $t^*=0$}, Corollary \ref{cor: thm for space v} and Theorem \ref{thm:vestimate2}. More precisely, taking  \eqref{split_time_integral} {with $t^*=0$}, and noting that $a_{\epsilon}(n,0)=0$,  we get the bound:
\begin{eqnarray*}
|v_n^{\epsilon}(\underline{x},t)|&\lesssim&\int_{0}^t d\lambda\;\frac{\eps^{-1}}{[\eps^{-2}\lambda]^{1-\zeta}}\eps^{1-2\zeta}\nonumber\\
&&+\int_{t^*}^t d\lambda \;\eps^{-1}\sum_{j=1}^{n}\sum_{z'\in I_{+}}
 \mathbf{E}_\epsilon \left[\mathbf{1}_{x_j(\lambda)\in I_{+}} |v_{n}^\epsilon(\und x^{(j)}{(\lambda)}\cup z',t-\lambda)|\Big]\right]\nonumber\\
 &&+\int_{t^*}^t d\lambda \;\frac{\eps^{-1}}{[\eps^{-2}\lambda]^{1/2}+1}\sum_{j=1}^{n}\sum_{\substack{\underline{z}'\subset I_{+},\\q\geq2}}
 \mathbf{E}_\epsilon \left[ |v_{n-1+q}^\epsilon\big(\underline{x}^{(j)}(\lambda)\cup \underline{z}',t-\lambda\big)|\Big]\right]\nonumber\\
&&+\int_{0}^t d\lambda\;\phi_0(\lambda)\eps^{1+\zeta}+  \int_{0}^t d\lambda\;\phi_2^\epsilon(\lambda)\eps^{1-2\zeta}\,.
\end{eqnarray*} And now a simple computation allows to conclude the proof.
 \end{proof}

 \begin{lemma}\label{lem: improved v2}
We fix  an  integer number $K$ such that $2\leq K\leq N$. Let $\und x\in \Lambda_N^{\neq, n}$ and  $\und y\in \Lambda_N^{\neq, m}$ such that  $2\leq n+m\leq N^*-1$. For any $\,0<\tau, t\leq T$, with $t\neq \tau$, {and for any $\pi\in S_n$,} the following bound holds:
\begin{equation}\label{eq: improved v2!}
\sum_{{\substack{\und w=(w_1,\dots,w_n):\\| \und w\,\cap \
\und y|=0}}}\;\;\prod_{i=1}^n P_{\tau}^{(\epsilon)}(x_{\pi(i)},w_i)|v_{n+m}^{\epsilon}(\underline{w},\und y,t)|\lesssim\begin{cases}
\eps+\frac{1}{[\eps^{-2}\tau]^{1/2}},&\;\;m=1 \textrm{ and } n=1,2\\
\epsilon^{1+\zeta}+\frac{\epsilon}{[\eps^{-2}\tau]^{1/2}},
&\;\; 4\leq n+m\leq N^*-1
\end{cases}
\end{equation}
where the constant in $\lesssim$ depends on $N^*,K,T$ and it  is given by $N^*e ^{(N^*)^3K^4\pi T}N^*K^{N^*} T^{N^*/2}$. 
\end{lemma}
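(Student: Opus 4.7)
The plan is to substitute the integral inequality from Corollary \ref{cor:final v inequality} inside the sum on the left-hand side of \eqref{eq: improved v2!}, and then iterate finitely many times until only easily controlled terms remain. First, I apply Corollary \ref{cor:final v inequality} to $v_{n+m}^{\epsilon}(\und w \cup \und y, t)$, which produces a leading deterministic term of size $\epsilon T^{1/2}$ when $n+m \in \{2,3\}$ (respectively $\epsilon^{1+\zeta} T^{1/2}$ when $4 \le n+m \le N^{*}-1$), plus three time-integral remainders involving expectations of $v$-functions of possibly higher degree at later times. Since $\sum_{\und w}\prod_{i=1}^{n} P_\tau^{(\epsilon)}(x_{\pi(i)}, w_i) \le 1$, the leading term yields exactly the $\epsilon$ (respectively $\epsilon^{1+\zeta}$) summand on the right-hand side of \eqref{eq: improved v2!}. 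It remains to bound the sum of $\prod_i P_\tau^{(\epsilon)}(x_{\pi(i)}, w_i)$ against each remainder by an expression of order $\frac{1}{[\eps^{-2}\tau]^{1/2}}$ (respectively $\frac{\epsilon}{[\eps^{-2}\tau]^{1/2}}$).

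The key mechanism for controlling these remainders is to glue $\prod_i P_\tau^{(\epsilon)}(x_{\pi(i)}, w_i)$ with the transition probabilities hidden inside the expectations $\mathbf{E}_\epsilon^{\und w\cup \und y}[\cdot]$ through Kolmogorov's identity, and to use Liggett's inequality \eqref{eq:psi_n} to reduce joint stirring transitions to products of independent heat kernels. For the boundary-indicator remainder $\eps^{-1}\mathbf{E}_\epsilon^{\und w\cup \und y}[\mathbf 1_{\und z(\lambda)=I_+^{n+m}}]$, this gluing produces $n$ heat kernels $P_{\tau+\lambda}^{(\epsilon)}(x_{\pi(i)},z_i)$ with $z_i$ ranging over a boundary window of size at most $K$; keeping one such kernel yields the factor $\frac{1}{[\eps^{-2}(\tau+\lambda)]^{1/2}+1} \leq \frac{1}{[\eps^{-2}\tau]^{1/2}}$, while the remaining factors integrate in $\lambda$ using Remark \ref{rem:comment_integration}, giving the required structural factor.

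The $v$-function remainders (second and third lines of \eqref{split_time_integral of v}) are treated by iterating this procedure: at each step either the degree of the $v$-function grows strictly (in the $q\ge 2$ branch), so that after at most $N^{*}-(n+m)$ iterations the degree exceeds $N^{*}$ and Corollary \ref{cor:N*} applies to give a direct bound by $\epsilon^{1+\zeta}$; or the degree stays constant (in the single-particle-replacement branch), in which case an extra factor $T^{1/2}$ is produced from the time integration but the factor $\frac{1}{[\eps^{-2}\tau]^{1/2}}$ survives through another Kolmogorov gluing. The combinatorial and temporal losses per iteration are bounded by $N^{*}K^{2}$ and $T^{1/2}$ respectively, which after at most $N^{*}$ iterations accumulate into the constant $N^{*}e^{(N^{*})^{3}K^{4}\pi T}N^{*}K^{N^{*}}T^{N^{*}/2}$ stated in the lemma.

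The main obstacle will be the bookkeeping of the iteration: one must carefully verify that along the entire iterative chain the factor $\frac{1}{[\eps^{-2}\tau]^{1/2}}$ is preserved (since it should only arise from the final Kolmogorov gluing at a boundary-indicator term, intermediate steps must not destroy it), and that the extra $\epsilon$ distinguishing the case $n+m \geq 4$ from the case $n+m \in \{2,3\}$ is generated exactly once. The latter should follow from the observation that when $n+m\ge 4$ every $v$-function of degree $\ge 4$ appearing in the iteration is already bounded by $\epsilon^{1+\zeta}$ via Theorem \ref{thm:vestimate2}, while in the low-degree case the corresponding $v$-functions are only bounded by $\epsilon$; this asymmetry in the inputs propagates through the Kolmogorov-glued heat kernels to explain the absence or presence of the additional factor of $\epsilon$ in the final estimate.
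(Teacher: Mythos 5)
Your high-level plan follows the same route the paper takes: substitute Corollary \ref{cor:final v inequality} inside the sum, pull the probabilities $\prod_i P^{(\epsilon)}_\tau(x_{\pi(i)},w_i)$ through the expectations via Kolmogorov's identity and Liggett's inequality, read off the leading $\epsilon$ (resp.\ $\epsilon^{1+\zeta}$) term, and iterate. You also correctly identify the boundary-indicator term $\epsilon^{-1}\mathbf{E}_\epsilon[\mathbf{1}_{\underline x(\lambda)=I_+^{\,n+m}}]$ as the source of the $[\epsilon^{-2}\tau]^{-1/2}$ factor, and the $q\ge2$ branch as terminating quickly because the degree of the $v$-function rises past $N^*$.

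The gap is in the termination of the degree-preserving branch. The term in the second line of \eqref{split_time_integral of v} corresponds to $p=q=1$: one particle reaches the boundary and dies, exactly one is born, so the number of particles and hence the degree of the $v$-function is unchanged. This branch can be entered again and again; nothing forces it to stop after $N^*-(n+m)$, or $N^*$, or any finite number of steps. If you try to stop after $N^*$ steps you are left with an uncontrolled remainder involving $v_{N^*-1}^\epsilon$ itself, and the prefactors you do accumulate, which per step are of size $N^*K^2 T^{1/2}$ by your own accounting, form a divergent geometric series once $N^*K^2 T^{1/2}>1$. The paper instead iterates this branch \emph{infinitely} and shows that what actually accumulates is not $(N^*K^2 T^{1/2})^k$ but $((N^*-1)K^2)^k\,a_k(t)$, where $a_k(t)$ is the $k$-fold iterated convolution integral \eqref{a10.8}; these $a_k$ decay factorially in $k$, and Lemma 10.4 of \cite{de2012truncated} sums the series to $e^{(N^*-1)^2K^4\pi t}$, which is precisely the exponential factor appearing in the lemma's constant. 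Without that convergent-series argument the iterative scheme does not close. Separately, your explanation of where the extra $\epsilon$ in the case $4\le n+m\le N^*-1$ comes from is also off: it is produced by the $\lambda$-integral of the boundary indicator, which for $n+m\ge 4$ yields $\epsilon^2$ against the $\epsilon^{-1}$ prefactor, rather than from the fact that high-degree $v$-functions are bounded by $\epsilon^{1+\zeta}$; but you flag this part as an obstacle yourself, so the principal issue remains the missing control of the infinite iteration.
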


\begin{remark}
 Note that the bound in \eqref{eq: improved v2!} is independent of $t$. This is because $t$ appears only through a positive exponent, which we may uniformly control by replacing $t$ with its maximal value $T$. Consequently, the resulting estimate depends on $T$ but not on  $t\in[0,T]$. For our convenience in the calculations later, we call 
\begin{equation}\label{def:D}
D^{\eps}_{n+m}(\tau):=
\begin{cases}
\eps+\frac{1}{[\eps^{-2}\tau]^{1/2}},&\;\; m=1 \textrm{ and } n=1,2\\
\epsilon^{1+\zeta}+\frac{\epsilon}{[\eps^{-2}\tau]^{1/2}},
&\;\; 4\leq n+m\leq N^*-1.
\end{cases}
\end{equation}
\end{remark}

\begin{proof}
We note that we first prove the lemma for the identity permutation $\pi(i)=i$ for all $i\in\{1,\dots,n\}$ in \eqref{eq: improved v2!}.  
The proof then extends directly to any $\pi\in S_n$, as the same arguments apply. Let $(n,m)$ such that $n,m\geq 1$ and  $n+m=N^*-1$. We start by working only with $v^\eps_{N^*-1}$, without considering the sum of product of probabilities given in \eqref{eq: improved v2!}. To simplify the next formulas we denote $\und{u}=\und{w}\cup\und{y}$. It is easy to check that $N^*-1>4$ (since $N^*$ should be large to satisfy \eqref{N*}). By Corollary \ref{cor:final v inequality} 
\begin{eqnarray}
              \label{13.1.111 with gammas 2_2}
&&\hspace{-1cm} |v_{N^*-1}^{\epsilon}(\underline{u},t)|\lesssim 
\epsilon^{1+\zeta}T^{1/2}+\int_{0}^{t} d\lambda_1 \;
\eps^{-1}\mathbf{E}_{\epsilon, \und{u}} \left[ {\bf 1}_{\und{x}(\lambda_1)=I^{N^*-1}_+}\right]\nonumber\\
    &&\hspace{1.6cm}+\sum_{j_1=1}^{N^*-1}\sum_{z_1'\in I_+}\int_{0}^{t} d\lambda_1   
\mathbf{E}_{\epsilon, \und{u}} \Big[  \mathbf{1}_{x_{j_1}(\lambda_1)\in I_{+}} \underbrace{|v_{N^*-1}^\epsilon\big(\underline{x}^{(j_1)}(\lambda_1)\cup z_1',t-\lambda_1\big)|}_{\textrm{we bound it using  \eqref{13.1.111 with gammas 2_2} with $t-\lambda_1$} }\Big],
  \end{eqnarray} where the third line in \eqref{split_time_integral of v} is estimated as $\eps^{1+\zeta}T^{\zeta}$ because the degree of all $v$'s included in this term satisfy $n-1+q\geq N^*$ for $q\geq 2$, and thus \eqref{est: v for large n} applies. This argument cannot be applied to $|v_{N^*-1}^\epsilon\big(\underline{x}^{(j)}(\lambda_1)\cup z',t-\lambda_1\big)|$ appearing in the second line of \eqref{split_time_integral of v} since its degree satisfies $N^*-1<N^*$.  We handle this term as follows: we eliminate $|v_{N^*-1}^\epsilon\big(\underline{x}^{(j)}(\lambda_1)\cup z',t-\lambda_1\big)|$ from \eqref{13.1.111 with gammas 2_2} by bounding it using the bound \eqref{13.1.111 with gammas 2_2}.   This introduces  the sum of the following additional terms to  \eqref{13.1.111 with gammas 2_2}:
 \medskip
 
\noindent $\bullet$ Additional term 1:
\begin{eqnarray*}\label{Add_Term1}
K \epsilon^{1+\zeta}T^{1/2}\sum_{j_1=1}^{N^*-1}\int_{0}^{t} d\lambda_1   \eps^{-1}\mathbf{E}_{\epsilon, \und{u}} \left[ \mathbf{1}_{x_{j_1}(\lambda_1)\in I_{+}} \right].
\end{eqnarray*}
Note that the factor  $K$ is coming from  the sum $\sum_{z_1'\in I_+}$ appearing in \eqref{13.1.111 with gammas 2_2}. \\
$\bullet$ Additional term 2:
\begin{eqnarray*}\label{Add_Term2}\epsilon^{-2}\sum_{j_1=1}^{N^*-1}\sum_{z_1'\in I_+}\int_{0}^{t} d\lambda_1 \int_{\lambda_1}^{t} d\lambda_2  
\mathbf{E}_{\epsilon,\und{u}} \left[  \mathbf{1}_{x_{j_1}(\lambda_1)\in I_{+}} \mathbf{E}_{\epsilon, \underline{x}^{(j_1)}(\lambda_1)}\left[{\bf 1}_{\und{x}(\lambda_2)=I^{N^*-1}_+}\right]\right].
\end{eqnarray*}
$\bullet$ Additional term 3:
\begin{eqnarray*}\label{Add_Term3}
\epsilon^{-2}\!\!\sum_{j_1=1}^{N^*-1}\sum_{z_1'\in I_+}\sum_{j_2=1}^{N^*-1}\sum_{z_2'\in I_+}\int_{0}^{t} \!\!\!\!d\lambda_1\!\! \int_{\lambda_1}^{t} \!d\lambda_2\,\mathbf{E}_{\epsilon,\und{u}} [ \mathbf{1}_{x_{j_1}(\lambda_1)\in I_{+}}  \mathbf{E}_{\epsilon, \underline{x}^{(j_1)}(\lambda_1),\,z_1'}[\underbrace{|v_{N^*-1}^\epsilon\big(\underline{x}^{(j_2)}(\lambda_2)\cup z'_2,t-\lambda_2\big)|}_{\textrm{we bound it using \eqref{13.1.111 with gammas 2_2} for $t-\lambda_2$} }]].
\end{eqnarray*}
By iterating this procedure (i.e., every time bounding $|v_{N^*-1}^\epsilon\big(\underline{x}^{(j_k)}(\lambda_k)\cup z'_k,t-\lambda_k\big)|$ by itself in the additional term 3), we obtain an infinite series whose $k$-th term is the sum of the following terms:
\begin{eqnarray}\label{k-th term in the exponential_positive exponent 1}
&&\hspace{-1cm}\epsilon^{1+\zeta}T^{1/2}\sum_{j_1=1}^{N^*-1}\sum_{z_1'\in I_+}\cdots \sum_{j_k=1}^{N^*-1}\sum_{z_k'\in I_+} \int_{0}^{t} d\lambda_1\cdots\int_{\lambda_{k-1}}^{t} d\lambda_k \nonumber\\
&&\times\eps^{-k}\mathbf{E}_{\epsilon, \und{u}} \left[ \mathbf{1}_{x_{j_1}(\lambda_1)\in I_{+}}  \left[\mathbf{E}_{\epsilon, \underline{x}^{(j_1)}(\lambda_1),\,z_1'}\left[\cdots\mathbf{E}_{\epsilon, \underline{x}^{(j_{k-1})}(\lambda_{k-1}),\,z_{k-1}'}\left[\mathbf{1}_{x_{j_{k}}(\lambda_k)\in I_{+}}\right]\cdots\right]\right]\right]
\end{eqnarray}
 and 
\begin{eqnarray}\label{k-th term in the exponential_positive exponent2}
&&\hspace{-1.5cm}\eps^{-k}\sum_{j_1=1}^{N^*-1}\sum_{z_1'\in I_+}\cdots \sum_{j_k=1}^{N^*-1}\sum_{z_k'\in I_+}\int_{0}^{t} d\lambda_1\cdots\int_{\lambda_{k}}^{t} d\lambda_{k+1}  \eps^{-1}\nonumber\\
&&\hspace{-0.5cm}\times
\mathbf{E}_{\epsilon, \und{u}} \left[ \mathbf{1}_{x_{j_1}(\lambda_1)\in I_{+}}  \mathbf{E}_{\epsilon, \underline{x}^{(j_1)}(\lambda_1),\,z_1'}\left[\cdots\left[\mathbf{1}_{x_{j_k}(\lambda_k)\in I_{+}}\mathbf{E}_{\epsilon, \underline{x}^{(j_k)}(\lambda_k),\,z_k'}\left[{\bf 1}_{\und{x}(\lambda_{k+1})=I^{N^*-1}_+}\right]\right]\cdots\right]\right].
\end{eqnarray}
Note that in the configurations appearing in the index in the expectations above, and in those used later in the proof, we omit the symbol ``$\cup$'' and replace by comma, e.g. $\mathbf{E}_{\epsilon, \underline{x}^{(j_k)}(\lambda_k),\,z_k'}[\cdot]=\mathbf{E}_{\epsilon, \underline{x}^{(j_k)}(\lambda_k)\,\cup \,z_k'}[\cdot]$. The first term  \eqref{k-th term in the exponential_positive exponent 1} is estimated as follows: we start with estimating the last expectation
$$\mathbf{E}_{\epsilon, \underline{x}^{(j_{k-1})}(\lambda_{k-1}),\,z_{k-1}'}\left[\mathbf{1}_{x_{j_{k}}(\lambda_k)\in I_{+}}\right]\lesssim \frac{1}{[\eps^{-2}(\lambda_k-\lambda_{k-1})]^{1/2}+1}.$$
Similarly,
$$\mathbf{E}_{\epsilon, \underline{x}^{(j_{k-2})}(\lambda_{k-2}),\,z_{k-2}'}\left[\mathbf{1}_{x_{j_{k-1}}(\lambda_{k-1})\in I_{+}}\right]\lesssim \frac{1}{[\eps^{-2}(\lambda_{k-1}-\lambda_{k-2})]^{1/2}+1}.$$
By keep estimating in the same manner, the first term is eventually bounded by 
\begin{equation}\label{k-th term in the exponential_positive exponent, term 1}
[K(N^*-1)]^k\epsilon^{1+\zeta}T^{1/2}\int_0^{t}
\frac 1{\sqrt{\lambda_1}}\;d\lambda_1\int_{\lambda_1}^{t}\frac 1{\sqrt{\lambda_2-\lambda_1}}d\lambda_2\;\dots \int_{\lambda_{k-1}}^{t}
    \frac{1}{\sqrt{\lambda_k-\lambda_{k-1}}}d\lambda_k.
\end{equation}
The remaining part is devoted to estimating the second term  \eqref{k-th term in the exponential_positive exponent2},  making use of 
\begin{equation}\label{sum_prod_transition_prob}\sum_{{\substack{\und w=(w_1,\dots,w_n):\\| \und w\,\cap \
\und y|=0}}}\prod_{i=1}^n P^{(\epsilon)}_{\tau}(x_i,w_i),
\end{equation}
(which was not used for the estimation of the first term \eqref{k-th term in the exponential_positive exponent 1}, as we simply bounded \eqref{sum_prod_transition_prob} by 1) and showing that it is bounded by 
\begin{equation}\label{k-th term in the exponential_positive exponent, term 2}
\left((N^*-1)K^2\right)^k\frac{\epsilon T}{[\eps^{-2}\tau]^{1/2}+1} \int_0^{t}
\frac 1{\sqrt{\lambda_1}}\;d\lambda_1\int_{\lambda_1}^{t}\frac 1{\sqrt{\lambda_2-\lambda_1}}d\lambda_2\;\dots \int_{\lambda_{k-1}}^{t}
    \frac{1}{\sqrt{\lambda_k-\lambda_{k-1}}}d\lambda_k.
\end{equation}
We call $a_0(t):=1$ and for $k\geq 1$ with $0=\lambda_1<\lambda_2<\dots<\lambda_{k}<t$
\begin{equation}
        \label{a10.8}
        a_k(t):=\int_0^{t}
\frac 1{\sqrt{\lambda_1}}\;d\lambda_1\int_{\lambda_1}^{t}\frac 1{\sqrt{\lambda_2-\lambda_1}}d\lambda_2\;\dots \int_{\lambda_{k-1}}^{t}
    \frac{1}{\sqrt{\lambda_k-\lambda_{k-1}}}d\lambda_k.
\end{equation}
Then,  by \eqref{k-th term in the exponential_positive exponent, term 1} and \eqref{k-th term in the exponential_positive exponent, term 2}, it is immediate that when $n+m=N^*-1$ and $\tau\neq t$
\begin{eqnarray}\label{a10.8*}
&&\sum_{{\substack{\und w=(w_1,\dots,w_n):\\| \und w\,\cap \
\und y|=0}}}\prod_{i=1}^n P^{(\epsilon)}_{\tau}(x_i,w_i)|v_{N^*-1}^{\epsilon}(\und{u},t)|\\&&\hspace{2cm}\leq \left(\epsilon^{1+\zeta}T^{1/2}+\frac{\epsilon }{[\eps^{-2}\tau]^{1/2}}\right)\sum_{k=0}^\infty  \left((N^*-1)K^2\right)^k a_k(t).
\end{eqnarray}
 By  Lemma 10.4 in \cite{de2012truncated}, the series in last display is bounded by $e^{(N^*-1)^2K^4\pi t}$, and thus the right-hand side of \eqref{a10.8*} is bounded by\begin{equation}\label{a10.8**}\underbrace{e ^{(N^*)^2K^4\pi T}T^{1/2}}_{\textrm{constant for $n+m=N^*-1$}}\left(\epsilon^{1+\zeta}+\frac{\epsilon }{[\eps^{-2}\tau]^{1/2}+1}\right),
 \end{equation}
where we used that $e ^{(N^*-1)^2K^4\pi T}\leq e ^{(N^*)^2K^4\pi T}$.
\medskip

Now we prove \eqref{k-th term in the exponential_positive exponent2}. All calculations are performed by keeping track of the history of each particle and the events happened at $\lambda_1,\dots,\lambda_{k+1}$, i.e.
$$\mathbf{E}_{\epsilon, \und{u}} \left[ \mathbf{1}_{x_{j_1}(\lambda_1)\in I_{+}}  \left[\mathbf{E}_{\epsilon, \underline{x}^{(j_1)}(\lambda_1),\,z_1'}\left[\cdots\mathbf{E}_{\epsilon, \underline{x}^{(j_{k-1})}(\lambda_{k-1}),\,z_{k-1}'}\left[\mathbf{1}_{x_{j_{k}}(\lambda_k)\in I_{+}}\right]\cdots\right]\right]\right],$$
where starting from the configuration $\und u$, at time $\lambda_1$, the particle labeled by $j_1$ reaches the boundary, dies, and a new particle is born at the site $z_1'\in I_{+}$. The particles in the resulting configuration $(\underline{x}^{(j_1)}(\lambda_1), z_1')$ then move within the system until, at time $\lambda_2$, the particle labeled by $j_2$ reaches the boundary, dies, and a new particle is born at the site $z_2'\in I_{+}$. The particles continue evolving in this manner until, at time $\lambda_{k+1}$, all particles in the configuration $(\underline{x}^{(j_k)}(\lambda_k), z_k')$ occupy the last $N^* - 1$ sites at the boundary. All said events are then estimated using Gaussian kernels, meaning \eqref{N4.5} and \eqref{N4.5 additional}.  
\medskip

We present the calculations for  $k=2$ (i.e., 2-iterations) and easily can be adopted for general $k$. By developing the expectation,  using Liggett's inequality given in \eqref{Liggett_new} and Kolmogorov`s identity, we obtain: for simplicity, let us call the initial configuration $\und u=(\und w, \und y)$, then we first compute 
\begin{eqnarray}
&&\hspace{-.6cm}
\mathbf{E}_{\epsilon,\und u} \left[ \mathbf{1}_{x_{j_1}(\lambda_1)\in I_{+}}  \mathbf{E}_{\epsilon, \underline{x}^{(j_1)}(\lambda_1),\,z_1'}\left[\mathbf{1}_{x_{j_2}(\lambda_2)\in I_{+}}\mathbf{E}_{\epsilon, \underline{x}^{(j_2)}(\lambda_2),\,z_2'}\left[{\bf 1}_{\und{x}(\lambda_{3})=I^{N^*-1}_+}\right]\right]\right]\nonumber\\
&&\lesssim\sum_{z_1\in I_+}\sum_{z_2\in I_+}\sum_{\pi\in S_{n+m}}\prod_{i\neq j_1, j_2}P^{(\eps)}_{\lambda_3}(u_{\pi(i)},N-N^*+i+1)\nonumber\\
&&\hspace{1cm}\times P^{(\eps)}_{\lambda_3-\lambda_1}(z_1',N-N^*+j_1+1) P^{(\eps)}_{\lambda_1}(u_{\pi(j_1)},z_1)\nonumber\\
&&\hspace{1cm}\times P^{(\eps)}_{\lambda_3-\lambda_2}(z_2',N-N^*+j_2+1) P^{(\eps)}_{\lambda_2}(u_{\pi(j_2)},z_2),
\end{eqnarray}
where $S_{n+m}$ is the set of permutations of $\{1,\dots,n+m\}$, and  $z_1,z_2$ come from the fact that we write
\[
\mathbf{1}_{x_{j_1}(\lambda_1)\in I_+}=\sum_{z_1\in I_+}\mathbf{1}_{x_{j_1}(\lambda_1)=z_1} \quad\textrm{ and } \quad\mathbf{1}_{x_{j_2}(\lambda_2)\in I_+}=\sum_{z_2\in I_+}\mathbf{1}_{x_{j_2}(\lambda_2)=z_2}.
\]
Recalling that  $\und u=\und w\,\cup\,\und y$, the $n$ first particles are $\und w$. For any $\pi\in S_{n+m}$, we focus on $\{u_{\pi(i)}:i\}=\{w_1,\dots,w_n\}$. For all these $u_{\pi(i)}$, we use that 
$$\sum_{{\substack{\und w=(w_1,\dots,w_n):\\| \und w\,\cap \
\und y|=0}}}
\prod_{i=1}^n \left(P_{\tau}^{(\epsilon)}(x_i,w_i)P^{(\eps)}_{s}(w_i,\cdot)\right)\leq\prod_{i=1}^nP^{(\eps)}_{s+\tau}(x_i,\cdot),$$
where $s$ can be any of $\lambda_1,\lambda_2$ and $\lambda_3$ and $"\cdot"=N-N^*+i-1, z_1,z_2$.
Overall, e.g., $u_{\pi(j_1)}\in\und x$ and $u_{\pi(j_2)}\notin\und x$
\begin{eqnarray}\label{2-th term in the exponential_positive exponent}
&&\sum_{{\substack{\und w=(w_1,\dots,w_n):\\| \und w\,\cap \
\und y|=0}}}
\prod_{i=1}^n P_{\tau}^{(\epsilon)}(x_i,w_i)
\mathbf{E}_{\epsilon,\und u} \left[ \mathbf{1}_{x_{j_1}(\lambda_1)\in I_{+}}  \mathbf{E}_{\epsilon, \underline{x}^{(j_1)}(\lambda_1),\,z_1'}\left[\mathbf{1}_{x_{j_2}(\lambda_2)\in I_{+}}\mathbf{E}_{\epsilon, \underline{x}^{(j_2)}(\lambda_2),\,z_2'}\left[{\bf 1}_{\und{x}(\lambda_{3})=I^{N^*-1}_+}\right]\right]\right]\nonumber\\
&&\lesssim \frac{1}{[\eps^{-2}(\tau+\lambda_3)]^{(n-1)/2}+1}\frac{1}{[\eps^{-2}\lambda_3]^{(m-1)/2}+1}\nonumber\\
&&\hspace{1cm}\times\frac{1}{[\eps^{-2}(\lambda_3-\lambda_1)]^{1/2}+1}\frac{1}{[\eps^{-2}(\lambda_3-\lambda_2)]^{1/2}+1}\frac{1}{[\eps^{-2}(
\tau+\lambda_1)]^{1/2}+1}\frac{1}{[\eps^{-2}\lambda_2]^{1/2}+1}.
\end{eqnarray}
Since  $0<\lambda_1<\lambda_2<t$, we use  the following inequalities: 
\begin{align}\label{inequalities_lambda}
\lambda_2>\lambda_2-\lambda_1; \quad 
\tau+\lambda_1>\lambda_1;\quad 
\lambda_3-\lambda_1>\lambda_3-\lambda_2; \quad 
\tau+\lambda_3>\tau; \quad 
\tau+\lambda_3>\lambda_3>\lambda_3-\lambda_2.
\end{align}
By using \eqref{inequalities_lambda}, we bound the first line of the right-hand side in \eqref{2-th term in the exponential_positive exponent} by$$\frac{1}{[\eps^{-2}(\tau+\lambda_3)]^{(n-1)/2}+1}\frac{1}{[\eps^{-2}\lambda_3]^{(m-1)/2}+1}\leq \frac{1}{[\eps^{-2}\tau]^{1/2}+1}\frac{1}{[\eps^{-2}(\lambda_3-\lambda_2)]^{(n+m-3)/2}+1},$$
while the second line of \eqref{2-th term in the exponential_positive exponent} is  bounded by 
$$\frac{1}{[\eps^{-2}(\lambda_3-\lambda_2)]+1}\frac{1}{[\eps^{-2}\lambda_1)]^{1/2}}\frac{1}{[\eps^{-2}(\lambda_2-\lambda_1)]^{1/2}}.$$
Putting them together, we obtain 
the next bound for the right-hand side of \eqref{2-th term in the exponential_positive exponent}:
\[
\left(\frac{1}{[\eps^{-2}(\lambda_2-\lambda_1)]^{1/2}}\frac{1}{[\eps^{-2}\lambda_1]^{1/2}}\right) \frac{1}{[\eps^{-2}\tau]^{1/2}+1}\frac{1}{[\eps^{-2}(\lambda_3-\lambda_2)]^{(n+m-1)/2}+1}.
\]
For any $j_1,j_2\in\{1,\dots, N^*-1\}$, we can always obtain a similar bound with products of transition probabilities as in \eqref{2-th term in the exponential_positive exponent} and then adjust $\lambda$'s similar to \eqref{inequalities_lambda}, leading to the following bound
\begin{eqnarray}\label{use_pat}
&&\sum_{j_1,j_2=1}^{N^*-1}\sum_{z_1,z_2\in I_+}\sum_{z_1',z_2'\in I_+}\underbrace{\int_{0}^{t} d\lambda_1\int_{\lambda_{1}}^{t} d\lambda_{2}\prod_{l=1}^{2}\frac{1}{(\lambda_{l}-\lambda_{l-1})^{1/2}}}_{= a_2(t)} \nonumber\\
&&\hspace{2cm}\times\frac{1}{[\eps^{-2}\tau]^{1/2}+1}\int_{\lambda_{2}}^{t} d\lambda_{3}\frac{\eps^{-1}}{[\eps^{-2}(\lambda_{3}-\lambda_2)]^{(N^*-2)/2}+1}\nonumber\\
&&\lesssim\left((N^*-1)K^2\right)^2 a_{2}(t)\frac{\eps^{-1}}{[\eps^{-2}\tau]^{1/2}+1} \epsilon^2,
\end{eqnarray}
where we use Remark \ref{rem:comment_integration} to estimate
$$\int_{\lambda_{2}}^{t} d\lambda_{3}\frac{1}{[\eps^{-2}(\lambda_{3}-\lambda_2)]^{(N^*-2)/2}+1}\lesssim \eps^2, $$
because $N^*-2\geq 3$. Also, the constant  $\left((N^*-1)K^2\right)^2$ appearing in \eqref{use_pat}, comes from the double sum with respect to $j_1,j_2$ and the two double sums with respect $z_1,z_2$ and $z_1',z_2'$ which gives us $(N^*-1)^2$ an $K^4$  in total, respectively.
For any $j_1,\dots,j_k$ and $k\in \mathbb{N}$, we can always obtain a similar bound with products of transition probabilities as in \eqref{2-th term in the exponential_positive exponent} and then adjust $\lambda$'s similar to \eqref{inequalities_lambda}, leading to the following bound
\begin{eqnarray}\label{k-th term in the exponential_positive exponent_1}
   \left((N^*-1)K^2\right)^k a_{k}(t)\frac{1}{[\eps^{-2}\tau]^{1/2}+1} \epsilon,
\end{eqnarray}
 for the second term  \eqref{k-th term in the exponential_positive exponent2}.
\bigskip

Let $(n,m)$ such that $n,m\geq 1$ and $n+m=N^*-2$. The integral inequality for $v_{N^*-2}^{\eps}$ is given by Corollary \ref{cor:final v inequality}. The term in the third line of \eqref{split_time_integrala*} includes  
\begin{equation}\label{v_N*-1_j=1,z=2}
\eps^{-1}\int_{0}^{t} d\lambda  \frac{1}{[\eps^{-2}\lambda]^{1/2}+1}\sum_{j=1}^{N^*-2}\;\;\sum_{\underline{z}'\subset I_{+}:|\underline{z}'|=2}\mathbf{E}_{\epsilon, \und u }\Big[   |v_{N^*-1}^\epsilon\big(\underline{x}^{(j)}(\lambda)\cup\underline{z}',t-\lambda\big)|\Big]
\end{equation}
i.e., the terms with  $|J|=1$ and $|\und z'|=2$. We  obtain a bound proportional to $\eps^{1+\zeta}$ as follows:  We observe that 
for $j=1,\dots N^*-2$ and by \eqref{Liggett_new},  $\mathbf{E}_{\epsilon, \und u} \Big[|v_{N^*-1}^\epsilon\big(\underline{x}^{(j)}(\lambda)\cup \underline{z}',t-\lambda\big)|\Big]$ is bounded by 
\begin{equation}
\begin{split}
&\sum_{\substack{\und v=(v_1,\dots, v_{N^*-2}),\\|\und v\,\cap\und z'|=0}}\mathbf P_{\eps}(\und u \overset{\lambda}\rightarrow z)v_{N^*-1}^\epsilon\big(\und v^{(j)}\cup\underline{z}',t-\lambda\big)|\nonumber\\
&\hspace{2cm}\lesssim\sum_{\pi\in S_{N^*-2}}\underbrace{\sum_{\substack{\und v^{(j)},\\|\und v^{(j)}\,\cap\und z'|=0}}\prod_{\substack{i=1,\\i\neq j}}^{N^*-3}P^{(\eps)}_\lambda(u_{\pi(i)},v_i)|v_{N^*-1}^\epsilon\big(\und v^{(j)}\cup\underline{z}',t-\lambda\big)|}_{\textrm{estimated by \eqref{a10.8**}}}\nonumber\\
&\hspace{4cm}\lesssim \underbrace{e ^{(N^*)^2K^4\pi T}T^{1/2}}_{\textrm{constant denoted by $C^{(N^*-1)}_{K,T,N^*}$}}\left(\eps^{1+\zeta}+\frac{\epsilon}{[\eps^{-2}\lambda]^{1/2}+1} \right).
\end{split}
\end{equation}
The last inequality follows from  \eqref{a10.8**} with $n=N^*-3$, $m=|\und z'|=2$, $\tau=\lambda$ and $t-\lambda$ in place of $t$. Thus \eqref{v_N*-1_j=1,z=2} is bounded by
$$
\eps^{-1}\int_{0}^{t} d\lambda  \frac{1}{[\eps^{-2}\lambda]^{1/2}+1}\sum_{j=1}^{N^*-2}\;\;\sum_{\underline{z}'\subset I_{+}:|\underline{z}'|=2} e ^{(N^*)^2K^4\pi T}T^{1/2}\left(\eps^{1+\zeta}+\frac{\epsilon}{[\eps^{-2}\lambda]^{1/2}+1} \right).
$$
After integrating, we obtain the bound for the last display as
\begin{equation}\label{est: v N^*-2 }
e ^{(N^*)^2K^4\pi T}(N^*-2){{K}\choose{2}} \left(\eps^{1+\zeta}T+\eps^{2-2\zeta}T^{1/2+\zeta}\right)\leq e ^{(N^*)^2K^4\pi T}N^*K^2 T\, \eps^{1+\zeta}.
\end{equation}
For the terms with  $|J|=1$ and $|\und z'|=1$, i.e.,
    \begin{equation*}
    \eps^{-1}\int_{0}^{t} d\lambda  \sum_{j=1}^{N^*-2}\sum_{z'\in I_+}\mathbf{E}_\epsilon \Big[\mathbf{1}_{x_j(\lambda)\in I_{+}} |v_{N^*-2}^\epsilon\big(\underline{x}^{(j)}(\lambda)\cup z',t-\lambda\big)|\Big],
    \end{equation*}
    we build the infinite series described earlier in the proof. Each $k$-th term of $$\int_{0}^{t}d\lambda \;
\eps^{-1}\mathbf{E}_{\epsilon, \und{u}} \left[ {\bf 1}_{\und{x}(\lambda)=I^{N^*-1}_+}\right]$$  and the remaining terms, including \eqref{v_N*-1_j=1,z=2} (all estimated as $\epsilon^{1+\zeta}$), are treated exactly as in the case $N^*-1$. Reminding that $\und u=\und w\,\cup\,\und y$, overall, the left-hand side of \eqref{eq: improved v2!} is bounded by 
$$  e^{(N^*-2)^2K^4\pi T}\left(2e ^{(N^*)^2K^4\pi T}N^*K^2 T\,\eps^{1+\zeta} +\frac{\epsilon}{[\eps^{-2}\tau]^{1/2}+1} \right)$$ 
and since the above is bounded by $2e ^{2(N^*)^2K^4\pi T}N^*K^2 T\left(\eps^{1+\zeta}+\frac{\epsilon}{[\eps^{-2}\lambda]^{1/2}+1} \right)$, we obtain that 
\begin{eqnarray}\label{est:v(N^*-2)}
&&\hspace{-.5cm}\sum_{{\substack{\und w=(w_1,\dots,w_n):\\| \und w\,\cap \
\und y|=0}}}\prod_{i=1}^n P_{\tau}^{(\epsilon)}(x_i,w_i)|v_{N^*-2}^\epsilon(\und{u},t)|\leq \underbrace{2e ^{(N^*)^2K^4\pi T}N^*K^2 T}_{\textrm{constant denoted by $C^{(N^*-2)}_{K,T,N^*}$ }}\left(\eps^{1+\zeta}+\frac{\epsilon}{[\eps^{-2}\tau]^{1/2}+1} \right).\nonumber
\end{eqnarray}
We proceed in this manner for all pairs $(n,m)$ with $n,m \geq 1$ and  
$
n+m = N^*-3,\, N^*-4,\,\dots \,,2$  
with the corresponding constant being  equal to $$C^{(n+m)}_{K,T,N^*}=(N^*-(n+m))e ^{(N^*-(n+m))(N^*)^2K^4\pi T}N^*K^{(N^*-(n+m))} T^{(N^*-(n+m))/2},$$
for $n+m=2,3,\cdots, N^*-2$. For example, when $n+m = N^*-3$, the method is the same as described above, but we must additionally estimate  
\[
\mathbf{E}_{\epsilon,\und{u}} \Big[\, |v_{N^*-1}^\epsilon(\underline{x}^{(j)}(\lambda)\cup \underline{z}',\, t-\lambda)| \,\Big],
\quad
\mathbf{E}_{\epsilon, \und{u}} \Big[\, |v_{N^*-2}^\epsilon(\underline{x}^{(j)}(\lambda)\cup\underline{z}',\, t-\lambda)| \,\Big],
\]  
where $|\underline{x}^{(j)}(\lambda)| = n+m-1=N^*-4$,  $|\underline{z}'| = 3$ and $|\underline{z}'| = 2$, respectively. Both are estimated by using 
\eqref{a10.8*} and \eqref{est:v(N^*-2)}.

The proof for $n,m$ with $n+m=3$ follows the same steps described above. Due to $\eps T^{1/2}$ in \eqref{split_time_integrala*},  we obtain
\begin{eqnarray*}
&&\sum_{{\substack{\und w=(w_1,\dots,w_n):\\| \und w\,\cap \
\und y|=0}}}\prod_{i=1}^n P_{\tau}^{(\epsilon)}(x_i,w_i)|v_{n+m}^\epsilon(\und{u},t)|\lesssim C^{(n+m)}_{K,T,N^*}\left(\eps+\frac{1}{[\eps^{-2}\tau]^{1/2}}\epsilon\right).
\end{eqnarray*}
The last case for $n=m=1$ is handled similarly. We finish the proof by observing that  
$$C^{(n+m)}_{K,T,N^*}\leq N^*e ^{(N^*)^3K^4\pi T}N^*K^{N^*} T^{N^*/2}.$$
\end{proof}

\section{Estimates on space-time  $v$-functions}\label{sec:space_time_corr}

 Let $\underline{x}=(x_1,\dots,x_n)\in \Lambda_{N}^{\neq,n}$ and $\underline{y}=(y_1,\dots,y_m)\in \Lambda_{N}^{\neq,m}$, with $n,m\geq 1$. For any initial product measure $\mu^{\epsilon}$ and times  $0<s<r$, the space-time $v$-correlation function is given by \eqref{def:space time corr fun}.
At  the earliest time $s$, the positions of the $m$ particles are  $\underline{y}=(y_1,\dots,y_m)$ while at time $r$,  the positions of the $n$ particles are $\underline{x}=(x_1,\dots,x_n)$.  The subscript $s,r$ indicates the times involved, ordered from smallest to largest. The superscript includes the parameter $\epsilon$, as well as the number of particles (i.e., the cardinality of the configurations $\und y$ and $\und x$) at times $s$ and $r$. The input of the space-time $v$-correlation function consists of the actual labeled configurations, where $\und y$ is considered fixed and $\und x$ changes over time through the discrete equation for $v^{\epsilon,m,n}_{s,r}(\underline{y},\underline{x})$ given in \eqref{eq:Discrete_equation}. For simplicity---as in \eqref{formula: equation for v}---we express the equation in terms of $v^{\epsilon,m,n}_{s,r}(Y,X)$, where $X, Y \subset \Lambda_N$. This is justified by the fact that $v^{\epsilon,m,n}_{s,r}(\underline{y}, \underline{x})$ is symmetric under any permutation of the components of $\underline{x}$ and $\underline{y}$. When we later write its Duhamel form, we return to the notation involving labeled configurations.
\begin{equation}\label{eq:Discrete_equation}
	\hspace{-0.1cm}\begin{cases}
	&\hspace{-0.4cm}\partial_{r}v^{\epsilon,m,n}_{s,r}(Y,X)= \epsilon^{-2}(L_0v^{\epsilon,m,n}_{s,r})(Y,X)+(C_{\epsilon}v^{\epsilon,m,n}_{s,r})(Y,X), \quad r>s,\\
	&\hspace{-0.4cm}\displaystyle{v^{\epsilon,n,n}_{s,s}(Y,X)= \mathbb{E}^{\epsilon}_{\mu^\epsilon}\left[\left(\prod_{y\in Y}\bar\eta_s(y)\right)^2\right]},\quad Y=X,\\
	&\hspace{-0.4cm}\displaystyle{v^{\epsilon,m,n}_{s,s}(Y,X)= \sum_{L\subset X\cap Y}\psi(L,s)v^{\epsilon}_{|[X\triangle Y]\cup L|}([X\triangle Y]\cup L,s)},\quad Y\neq X
	\end{cases}
	\end{equation}
where 
\begin{equation}\label{eq:psi}
\psi_s(L,\rho_\eps)=\prod_{z\in L}(1-2\rho_\epsilon(z,s))\prod_{\tilde z\in X\cap Y\setminus L}\chi(\rho_\epsilon(\tilde z,s)).
\end{equation}
The generator $L_0$ is given in \eqref{eq:gen_stir} and $C_{\epsilon}$ is defined in \eqref{12aa}. The notation $\sum_{L\subset X\cap Y}$ means the sum over all possible subsets of  $X\cap Y$ and $v^{\epsilon}_{|[X\triangle Y]\cup L|}([X\triangle Y]\cup L,s)$ is the space correlation $v$-function with $|[X\triangle Y]\cup L|$ number of particles of $[X\triangle Y]\cup L$ at the smallest time $s$. The discrete equation \eqref{eq:Discrete_equation} is derived from \eqref{formula: equation for v} adapted to the function $v^{\epsilon,m,n}_{s,r}$ by fixing the time $s$ and $Y$.  An important property of the space-time $v$-correlation function is that 
\begin{align}\label{property:emptyset}
v^{\epsilon,m,0}_{s,r}(Y,\emptyset)=v^{\epsilon}_{m}(Y,s).
\end{align}
Although \eqref{eq:Discrete_equation} is identical to the discrete equation of the space correlation $v$-estimate \eqref{10}, the initial condition when $r=s$ differs, resulting in a distinct integral solution:
\begin{eqnarray}\label{eq:space-time later time}
v^{\epsilon,m,n}_{s,r}(\und y,\und x)
&=&\sum_{ \und w}\mathbf{P}_{\epsilon}(\und x\overset{r-s}\rightarrow \und w)v^{\epsilon,m,n}_{s,s}( \und w, \und y)\nonumber\\
&& +\int_{s}^{r}d\lambda\,\sum_{ \und w}\mathbf{P}_{\epsilon}( \und x\overset{r-\lambda}\rightarrow \und w)(C_{\epsilon} v^{\epsilon,m,n}_{s,\lambda})(\und y, \und w).
 \end{eqnarray}
We further analyze the initial condition by splitting into $\und w\neq \und y$ and $\und w=\und y$ (when $n=m$) and  the integral solution takes the form:
 \begin{eqnarray}\label{eq:Duhamel formula second,n=m}
v^{\epsilon,n,n}_{s,r}(\und y,\und x)&=&\sum_{\und w:\,| \und w\cap \und y|=0}\mathbf{P}_{\epsilon}(\und x\overset{r-s}\rightarrow \und w)v_{2n}^{\epsilon}( \und w\cup \und y,s)+\mathbf{P}_{\epsilon}(\und x\overset{r-s}\rightarrow \und y)v^{\epsilon,n,n}_{s,s}( \und y, \und y)\nonumber\\
&&+\sum_{l=1}^{n-1}\sum_{ \und w:\,| \und w\cap \und y|=l}\mathbf{P}_{\epsilon}(\und x\overset{r-s}\rightarrow \und w)\sum_{\und l\subset  \und w\cap \und y}\psi_s(l,\rho_\eps)v^{\epsilon}_{| \und w\triangle \und y\cup \und l|}( \und w\triangle \und y\cup \und l,s)\nonumber\\
&&\qquad\qquad+\int_{s}^{r}d\lambda\,\sum_{ \und w}\mathbf{P}_{\epsilon}( \und x\overset{r-\lambda}\rightarrow \und w)(C_{\epsilon} v^{\epsilon,m,n}_{s,\lambda})(\und y, \und w),
\end{eqnarray}
where $v^{\epsilon,n,n}_{s,s}( \und y, \und y)$ is given by the middle equation in \eqref{eq:Discrete_equation}, while when $n\neq m$, we write
\begin{eqnarray}\label{eq:Duhamel formula second,n dif than m}
v^{\epsilon,m,n}_{s,r}(\und x,\und y)&=&\sum_{\und w:\,| \und w\cap \und y|=0}\mathbf{P}_{\epsilon}(\und x\overset{r-s}\rightarrow \und w)v_{n+m}^{\epsilon}( \und w\cup\und y,s)\nonumber\\
&&+\sum_{l=1}^{
\min\{n,m\}}\sum_{ \und w:\,| \und w\cap \und y|=l}\mathbf{P}_{\epsilon}(\und x\overset{r-s}\rightarrow \und w)\sum_{\und l\subset  \und w\cap \und y}\psi_s(l,\rho_\eps)v^{\epsilon}_{| \und w\triangle \und y\cup \und l|}\left( \left(\und w\triangle \und y, \und l\right),s\right)\nonumber\\
&&\qquad\qquad+\int_{s}^{r}d\lambda\,\sum_{ \und w}\mathbf{P}_{\epsilon}( \und x\overset{r-\lambda}\rightarrow \und w)(C_{\epsilon} v^{\epsilon,m,n}_{s,\lambda})(\und y, \und w).
\end{eqnarray}
For $s<\lambda$, the space-time correlation $v$-function, $v^{\epsilon,m,l}_{s,\lambda}$, is bounded by
\begin{eqnarray}\label{important property**}
|v^{\epsilon,m,l}_{s,\lambda}(\und y, \und w)|&=&\left|\mathbb{E}^{\epsilon}_{\mu^\epsilon}\left[\prod_{j=1}^{m}\bar\eta_s(y_j)\prod_{i=1}^{l}\bar\eta_{\lambda}(w_i)\right]\right|\nonumber\\
&\leq&\mathbb{E}^{\epsilon}_{\mu^\epsilon}\left[\left|\prod_{j=1}^{m}\bar\eta_s(y_j)\right|\;\underbrace{\left|\mathbb{E}^{\epsilon}_{\mu^\epsilon}\left[\prod_{i=1}^{l}\bar\eta_{\lambda}(w_i)|\mathcal{F}(\{\eta_{s'}:s'\leq s\})\right]\right|}_{:=|v_l^\eps(\und w,\lambda;s)|}\right]\nonumber\\
&\lesssim& \epsilon^{1-2\zeta}{\bf 1}_{l\leq 2}+\epsilon{\bf 1}_{3\leq l\leq K}+\epsilon^{1+\zeta}{\bf 1}_{l\geq K+1},
\end{eqnarray}
where we apply Theorem \ref{thm:vestimate2} to the last inequality. 
However, in Theorem~\ref{thm:vestimate3}, we show that this bound can be improved by carefully studying and analyzing \eqref{eq:Duhamel formula second,n=m}--\eqref{eq:Duhamel formula second,n dif than m}, as discussed in Sections~\ref{sec: Integral inequalities for space-time correlation}--\ref{sec:proof of space time}.
The construction of the integral inequality for space-time correlation $v$-function requires the estimation of the initial condition: 
\medskip

\noindent $\bullet$ First line in \eqref{eq:Duhamel formula second,n=m} and \eqref{eq:Duhamel formula second,n dif than m}:
$$\sum_{\und w:\,| \und w\cap \und y|=0}\mathbf{P}_{\epsilon}(\und x\overset{r-s}\rightarrow \und w)v_{n+m}^{\epsilon}( \und w\cup \und y,s)+\mathbf{P}_{\epsilon}(\und x\overset{r-s}\rightarrow \und y)v^{\epsilon,m,n}_{s,s}( \und y, \und y)\mathbf 1_{n=m}.$$

In fact, from Corollary \ref{cor:final v inequality} and Lemma \ref{lem: improved v2}, we have that for  $\zeta>0$, there exists a positive integer $N^*$ (large enough)   such that $
 \sup_{0<t\leq T}\sup_{\underline{x}\in\Lambda_N^{n,\neq}}|v^{\epsilon}_{n}(\underline{x},t)|\leq c\epsilon^{1+\zeta}$, $ n\geq N^*$. For $n<N^*$, we  obtain 
 \begin{equation}\label{IC:first term}
 \sum_{\und w:\,| \und w\cap \und y|=0}\mathbf{P}_{\epsilon}(\und x\overset{r-s}\rightarrow W)v_{n+m}^{\epsilon}( \und{w}\cup \und y,s)\lesssim D^{\eps}_{n+m}(r-s),
 \end{equation}
followed by Lemma \ref{lem: improved v2} because
$$\sum_{\und w:\,| \und w\cap \und y|=0}\mathbf{P}_{\epsilon}(\und x\overset{r-s}\rightarrow \und w)v_{n+m}^{\epsilon}( \und w\cup \und y,s)\leq\sum_{\pi\in S_n}\sum_{{\substack{\underline{w} = (w_1,\dots,w_n):\\|\underline{w} \cap \underline{y}| = 0}}} \prod_{i=1}^n P^{(\epsilon)}_{\lambda}(x_{\pi(i)},w_i)\, |v_{n+m}^{\epsilon}(\underline{w}\cup  \underline{y}, s)|,$$
after applying Lemma \ref{lem: bound T1} and Remark \ref{rem:Liggett_ineq}.
For the second term,
\begin{equation}
\mathbf{P}_{\epsilon}(\und x\overset{r-s}\rightarrow \und y)|v^{\epsilon,n,n}_{s,s}( \und y, \und y)|\lesssim
\begin{cases}
\frac{1}{{[\epsilon^{-2}(r-s)]}^{1/2}+1},&n=1\\
\frac{1}{{[\epsilon^{-2}(r-s)]}^{n/2}+1},&n\geq 2
\end{cases}
\end{equation}
where we use  Lemma \ref{lem: bound T1} and the fact that  $|v^{\epsilon,n,n}_{s,s}( \und y, \und y)|\leq 1$. When $n=m=1$ we define:
\begin{equation}\label{C11}
C_{1,1}^\epsilon(r-s):=\frac{\eps}{{(r-s)}^{1/2}}.
\end{equation}
$\bullet$ When $n=m\geq 2$, the second line in \eqref{eq:Duhamel formula second,n=m} can be bounded as 
\begin{equation}\label{Cnn}
 \frac{1}{{[\epsilon^{-2}(r-s)]}^{n/2}+1} + \sum_{l=1}^{n-1} \frac{\epsilon^{1-2\zeta}}{{[\epsilon^{-2}(r-s)]}^{l/2}+1}\lesssim \frac{\epsilon^{2-2\zeta}}{{(r-s)}^{1-\zeta}}+\frac{\epsilon^{2-2\zeta}}{{(r-s)}^{1/2}}=:C_{n,n}^\epsilon(r-s).
\end{equation}
$\bullet$ When $n\neq m$, the second line in \eqref{eq:Duhamel formula second,n dif than m} can be treated as
\begin{eqnarray}\label{Cnm}
&&\sum_{l=1}^{
\min\{n,m\}}\sum_{ \und w:\,| \und w\cap \und w|=l}\mathbf{P}_{\epsilon}(\und x\overset{r-s}\rightarrow \und w)\sum_{\und l\subset  \und w\cap \und y}\psi_{s}(\und l,\rho_\epsilon)v^{\epsilon}_{| \und w\triangle \und y\cup \und l|}( \und w\triangle \und y\cup \und l,s)\nonumber\\
&&\hspace{1cm}\lesssim\sum_{l=1}^{\min\{n,m\}} \frac{\epsilon^{1-2\zeta}}{{[\epsilon^{-2}(r-s)]}^{l/2}+1}\lesssim   \frac{\epsilon^{2-2\zeta}}{{(r-s)}^{1/2}}=:C_{n,m}^\epsilon(r-s).
\end{eqnarray}

$\bullet$ Finally, the method for bounding the integral term in \eqref{eq:Duhamel formula second,n=m}--\eqref{eq:Duhamel formula second,n dif than m}, i.e., 
$$\int_{s}^{r}d\lambda\,\sum_{ \und w}\mathbf{P}_{\epsilon}( \und x\overset{r-\lambda}\rightarrow \und w)(C_{\epsilon} v^{\epsilon,m,n}_{s,\lambda})(\und y, \und w)$$
parallels the proof of Lemma \ref{lem: IIfor a}, as both  $(C_{\epsilon} v^{\epsilon,m,n}_{s,r-s-\lambda})$ and $(C_{\epsilon} v^{\epsilon}_{n})$ involve the same operator.  The key distinction arises in the treatment of scenarios where all particles die and no particles are left in the system. In such cases, $(C_{\epsilon} v^{\epsilon}_{n})$  includes $v_0(\emptyset, t-\lambda)=1$ (see Definition \ref{definitionvestimate}) while $(C_{\epsilon} v^{\epsilon,m,n}_{s,r-s-\lambda})$ includes $v^{\epsilon,m,n}_{s,r-s-\lambda}(\und y,\emptyset)=v^{\epsilon}_{m}(\und y,s)$, see \eqref{property:emptyset}. This allows to improve the estimation of these terms by using Theorem \ref{thm:vestimate2} for $v^{\epsilon}_{m}(\und y,s)$. The precise estimation of the integral term is given in the proof of Lemma \ref{lem: IIfor space-time a}.

\subsection{Integral inequalities for space-time  $v$-functions}\label{sec: Integral inequalities for space-time correlation}
In this section, we present the integral probabilities for the space-time  $v$-function. For $s<r$, we denote $$a_{\eps,s}(n,m,r)=\sup_{\und x\in\Lambda_N^{n,\neq}}\sup_{\und y\in\Lambda_N^{m,\neq}}|v^{\epsilon,m,n}_{s,r}(\und y,\und x)|,\;\;\;\;a_{\eps,s}(0,m,r)=|v_m^{\eps}(\und y,r)|$$  
\begin{equation}\label{def:d_space-time}
d_{\eps,s}(n,m,r)=\sup_{\underline{x}\in\Lambda_N^{n,\neq}}\sup_{\und y\in\Lambda_N^{m,\neq}}|v_{s,r}^{\epsilon,m,n}(\underline{y},\underline{x})-v_{s,r}^{\epsilon,m,n}(\underline{y},\underline{x}+\und e_1)|,\;\;\;\;d_{\eps,s}(0,m,r)=0.\end{equation}
We make use of the bound \eqref{important property**} in the case $n\geq K+1$, i.e., 
\[
|v^{\epsilon,m,n}_{s,\lambda}(\und y, \und w)|\leq C\eps^{1+\zeta},\;\;m\geq 1.
\]
 When applied to the cases $n=1,2$, these properties yield an estimate of order $\eps^{1-2\zeta}$ , which is insufficient for our purposes, particularly in showing that terms in \eqref{eq:v_2_term}--\eqref{degree_four} are vanishing and in proving tightness. To obtain a sharper estimate of order  $\epsilon$ and $\eps^{1+\zeta}$, we adopt a different approach, beginning with the integral inequalities.
\begin{lemma}\label{lem: IIfor space-time a} Let $K\geq2$, $0\leq s<r\leq T$ and $\zeta>0$.
For $n,m\geq 1$, the following integral inequalities for $a_{\eps,s}(n,m,r)$ hold:
\begin{eqnarray}\label{inequality: IIfor space-time a}
a_{\epsilon,s}(n,m,r)
&&\lesssim\left(C^{\eps}_{n,n}(r-s)+D^{\eps}_{n+n}(r-s)\right){\bf 1}_{n=m}\nonumber\\
&&\hspace{.5cm}+\left(C_{n,m}^\epsilon(r-s)+D^{\eps}_{n+m}(r-s)\right){\bf 1}_{n\neq m}+\eps^{1+\zeta}(r-s)^{\zeta}\mathbf 1_{n\geq 2,m\geq 1}\nonumber\\
&&\hspace{.5cm}+{\bf 1}_{n\geq 2}\int_{s}^r d\lambda
\left(\frac{1}{(r-\lambda)^{1/2}}+\frac{1}{(r-\lambda)^{1/2+\zeta}}\right)d_{\epsilon,s}(n-1,m,\lambda)\nonumber\\
&&\hspace{.5cm}+ \mathbf{1}_{n\geq 1}\int_s^{r}d\lambda \frac{\epsilon^{-1}}{[\epsilon^{-2}(r-\lambda)]^{1/2}} \sum_{j=n}^{n-1+K}a_{\epsilon,s}(j,m,\lambda),
\end{eqnarray}
where $C_{n,m}^\epsilon(,r-s),C_{n,m}^\epsilon(,r-s)$ and $D^{\eps}_{n+m}(r-s)$ are given in \eqref{C11}--\eqref{Cnm}, and  \eqref{def:D}, respectively.
\end{lemma}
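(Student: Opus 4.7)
The plan is to adapt the derivation of the space integral inequality of Lemma~\ref{lem: IIfor a} to the Duhamel form \eqref{eq:Duhamel formula second,n=m}--\eqref{eq:Duhamel formula second,n dif than m}. I would start by splitting the sum over $\und w$ in the initial condition at time $r=s$ according to the overlap $|\und w\cap\und y|$. For $|\und w\cap\und y|=0$ the resulting term is controlled directly by Lemma~\ref{lem: improved v2}, which produces the bound $D^\eps_{n+m}(r-s)$ defined in \eqref{def:D}. For overlaps $l\geq 1$, the initial condition is evaluated through the factor $\psi_s(\und l,\rho_\eps)$ against a space $v$-function of degree $|\und w\triangle\und y\cup\und l|$; applying Theorem~\ref{thm:vestimate2} to bound that space $v$ together with the Gaussian transition-kernel bound $\mathbf P_\eps(\und x\overset{r-s}\to\und w)\lesssim \{[\eps^{-2}(r-s)]^{l/2}+1\}^{-1}$ (as in Lemma~\ref{lem: bound T1}) yields $C^\eps_{n,m}(r-s)$ (or $C^\eps_{n,n}(r-s)$, together with the diagonal contribution when $n=m$, $\und w=\und y$). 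This reproduces the first two lines of \eqref{inequality: IIfor space-time a}.

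The second step treats the time integral $\int_s^r d\lambda\sum_{\und w}\mathbf P_\eps(\und x\overset{r-\lambda}\to\und w)(C_\eps v^{\eps,m,n}_{s,\lambda})(\und y,\und w)$. I would decompose $C_\eps=\eps^{-2}(A^1+A^2)+\eps^{-1}(B_++B_-)$, exactly as in \eqref{12aa}--\eqref{12}, and bound each contribution in turn. The $A^1$-operator produces the discrete gradient $v^{\eps,m,n-1}_{s,\lambda}(\und y,\und w^{(i)})-v^{\eps,m,n-1}_{s,\lambda}(\und y,\und w^{(j)})$ at neighboring pairs; combining Lemma~\ref{lem: difference of rhos} with the estimate of $\mathbf E_\eps[\gamma_0^\eps]$ from \eqref{est:gamma_0} in Lemma~\ref{lem:gammas}, and after taking the supremum in the labeled configurations, this gives precisely the fourth line of \eqref{inequality: IIfor space-time a} in terms of $d_{\eps,s}(n-1,m,\cdot)$ with the weight $\phi_0(r-\lambda)=(r-\lambda)^{-1/2}+(r-\lambda)^{-1/2-\zeta}$. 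The $A^2$-operator and the $B_\pm$ scenarios with $p\geq 2$ produce either $v^{\eps,m,k}_{s,\lambda}$ for some $k<n$ or, when all remaining particles die, terms of the form $v^{\eps,m,0}_{s,\lambda}(\und y,\emptyset)=v^\eps_m(\und y,s)$ via \eqref{property:emptyset}. These latter terms are not bounded by $a_{\eps,s}$ but instead by Theorem~\ref{thm:vestimate2}, which immediately gives an $\eps$-factor of order $\eps^{1-2\zeta}$ (or better); combined with the non-integrable $\psi_p^\eps$ weights via the trick in Remark~\ref{rem:comment_integration} they produce the $\eps^{1+\zeta}(r-s)^\zeta$ contribution on the second line of \eqref{inequality: IIfor space-time a}.

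The third step is the remaining $B_\pm$ contribution, which corresponds to exactly one particle reaching the boundary ($p=1$) and $q=1,\dots,K$ particles being born. In this regime the weight $\psi_1^\eps(\lambda)=\eps^{-1}\{[\eps^{-2}\lambda]^{1/2}+1\}^{-1}$ is integrable, and taking suprema in the labeled configurations of $v^{\eps,m,n-1+q}_{s,\lambda}$ yields the sum $\sum_{j=n}^{n-1+K}a_{\eps,s}(j,m,\lambda)$ with the correct weight $\eps^{-1}/\{[\eps^{-2}(r-\lambda)]^{1/2}+1\}$ written on the fifth line of \eqref{inequality: IIfor space-time a}. Throughout, Liggett's inequality \eqref{eq:psi_n} is used to bound the transition probability of the stirring process by products of one-particle Gaussian kernels, which are then combined via Kolmogorov's identity as in the proof of Lemma~\ref{lem:gammas}.

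The main obstacle, as anticipated in Remark~\ref{rem: dangerous scenarios}, will be the delicate scenarios associated with the operator $C_\eps$: when $n\leq K$ and all $n$ particles reach the boundary simultaneously (the indicator $\mathbf 1_{\und x(\lambda)=I^n_\pm}$), and when $A^2$ acts on $n=2$ producing $v^{\eps,m,0}_{s,\lambda}(\und y,\emptyset)$. Here the integrability in $\lambda$ degenerates and one must use \eqref{property:emptyset} to replace the apparently singular term by $v^\eps_m(\und y,s)$, so that Theorem~\ref{thm:vestimate2} can be invoked to pay for the non-integrable $\psi$-factor via the argument of Remark~\ref{rem:comment_integration}. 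Handling these scenarios cleanly is precisely what generates the $\eps^{1+\zeta}(r-s)^\zeta$ term and what distinguishes the space--time integral inequality from its space analogue in Lemma~\ref{lem: IIfor a}.
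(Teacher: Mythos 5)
Your proposal is correct and follows essentially the same strategy as the paper's proof: the paper observes that the integral part of the space--time Duhamel formula involves the same operator $C_\eps$ as in Lemma~\ref{lem: IIfor a}, handles the initial condition through the $C^\eps_{n,m}$ and $D^\eps_{n+m}$ estimates (Lemma~\ref{lem: improved v2} plus the overlap decomposition), bounds the $A^2$ and $p\ge 2$ boundary scenarios by the crude space--time estimate \eqref{important property**}, and uses the key identity $v^{\eps,m,0}_{s,\lambda}(\und y,\emptyset)=v^\eps_m(\und y,s)$ from \eqref{property:emptyset} to tame the all-die term. You correctly identified each of these ingredients, including the subtlety in Remark~\ref{rem: dangerous scenarios} that distinguishes the space--time case from the purely spatial one.
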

\begin{proof}
As previously noted, since the discrete equations satisfied by 
$v_n^{\epsilon}(\underline{x},r)$ and 
$v^{\epsilon,m,n}_{s,r}(\underline{y},\underline{x})$ involve the same operator, 
the integral term in Duhamel’s formula for $v^{\epsilon,m,n}_{s,r}$ coincides with that of 
$v_n^{\epsilon}$. Consequently, the integral terms appearing in both 
\eqref{eq:Duhamel formula second,n=m} and 
\eqref{eq:Duhamel formula second,n dif than m} can be estimated in the same way as in 
Lemma~\ref{lem: IIfor a}, with the only  difference that the 2nd and the last term on the right-hand side of \eqref{split_time_integral} are estimated directly using \eqref{important property**}, 
i.e., 
$$\int_{s}^r d\lambda \;\frac{\eps^{-1}}{[\eps^{-2}\lambda]^{1-\zeta}}\sum_{q=0}^{K}\;\;\;
\sum_{p=2}^{ \min\{n,K\}}
a_{\eps,s}(n-p+q,t-\lambda)\lesssim  \eps^{1-2\zeta}\eps^{1-2\zeta}(r-s)^\zeta\leq \eps^{1+\zeta}(r-s)^\zeta$$
and for $n\geq 2$
$$\int_{s}^r d\lambda
\eps \left(\frac{1}{(r-\lambda)^{1/2}}+\frac{1}{(r-\lambda)^{3/4}}\right)a_{\epsilon,s}(n-2,m,\lambda)\lesssim \eps \eps^{1-2\zeta}\left((r-s)^{1/2}+(r-s)^{1/4}\right)\leq \eps^{1+\zeta}(r-s)^\zeta.
$$
Finally, by using property \eqref{property:emptyset}, 
$$\int_{s}^r d\lambda \;
\eps^{-1}\mathbf{E}_\epsilon \left[ {\bf 1}_{\und{x}((r-\lambda))=I^n_+}a_{\eps,s}(0,m,\lambda)\right]\lesssim\eps^{1+\zeta}(r-s)^{\zeta}.$$
Finally, we write the second term of the right-hand side in \eqref{split_time_integral} (but with $a_{\epsilon,s}(n-1+q,m,\lambda)$ in place of $a_{\epsilon}(n-1+q,m,\lambda)$) as $\sum_{q=1}^K a_{\epsilon,s}(n-1+q,m,\lambda)=\sum_{j=n}^{n-1+K}a_{\epsilon,s}(j,m,\lambda)$.
\end{proof}

Similarly to Lemma \ref{lem: IIfor space-time a}, we obtain the integral inequality for the difference of the space-time $v$-functions as stated in the next Lemma.

\begin{lemma}\label{lem: IIfor space-time d} Let $K\geq2$, $0\leq s<r\leq T$ and $\zeta>0$.
the following integral inequalities for $d_{\eps,s}(n,m,r)$ hold:
\begin{eqnarray}\label{dnm}
d_{\epsilon,s}(n,m,r)&\lesssim&\mathbf{1}_{n=m=1} \epsilon^{2-2\zeta}\left(\frac{1}{(r-s)^{1/2}}+\frac{1}{(r-s)^{1-\zeta}}\right)\nonumber\\
&&\hspace{.2cm}+\mathbf{1}_{n+m\geq 3}\left(\frac{\epsilon^{2-2\zeta}}{(r-s)^{1/2}}
+C^{\eps}_{n,n}(r-s){\bf 1}_{n=m}+C_{n,m}^\epsilon(r-s){\bf 1}_{n\neq m}\right)\nonumber\\
&&\hspace{.2cm}+ \mathbf 1_{n+m\geq 2}\eps^{1+\zeta}(r-s)^{\zeta}
\end{eqnarray}
where $C_{n,m}^\epsilon(,r-s),C_{n,m}^\epsilon(,r-s)$  are given in \eqref{C11}--\eqref{Cnm} respectively.
\end{lemma}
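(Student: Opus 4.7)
The proof should mirror the strategy used for the space $v$-function difference in Lemma \ref{lem: IIfor d}, but starting from the space--time Duhamel identities \eqref{eq:Duhamel formula second,n=m}--\eqref{eq:Duhamel formula second,n dif than m}. Writing the difference $v^{\epsilon,m,n}_{s,r}(\und y,\und x)-v^{\epsilon,m,n}_{s,r}(\und y,\und x+\und e_1)$ by subtracting the two Duhamel formulas produces an initial--condition piece in which the transition--kernel difference $\mathbf{P}_\epsilon(\und x \overset{r-s}{\rightarrow}\und w)-\mathbf{P}_\epsilon(\und x+\und e_1\overset{r-s}{\rightarrow}\und w)$ multiplies the data $v^{\epsilon,m,n}_{s,s}(\und w,\und y)$, plus an integral--in--time piece in which the same kernel difference multiplies $(C_\epsilon v^{\epsilon,m,n}_{s,\lambda})(\und y,\cdot)$. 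The crucial mechanism that will gain a factor $\epsilon$ is the coupling stopping time $\tau_1$ of Definition \ref{def:stoptime}, applied to the two labeled stirring processes started from $\und x$ and $\und x+\und e_1$: on $\{\tau_1 \leq \lambda\}$ the two trajectories coincide so only the event $\{\tau_1>\lambda\}$ contributes, and the meeting--time bound used in Lemma \ref{lem:widetilde_gammas} gives $\mathbf{P}_\epsilon(\tau_1>\lambda)\lesssim \epsilon/\sqrt{\lambda}$.

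For the initial--condition contribution, split according to $|\und w\cap \und y|=0,1,\dots,\min\{n,m\}$ (and, when $n=m$, isolate the diagonal $\und w=\und y$) exactly as done in \eqref{eq:Duhamel formula second,n=m}--\eqref{eq:Duhamel formula second,n dif than m}. Off--diagonal terms reduce to the estimates already established in \eqref{IC:first term}--\eqref{Cnm}, so the coupling simply multiplies those bounds by $\epsilon/(r-s)^{1/2}$, explaining the $\epsilon^{2-2\zeta}/(r-s)^{1/2}$ and the $C^\epsilon_{n,n}/C^\epsilon_{n,m}$ terms on the right--hand side of \eqref{dnm}; for the refined bound in the pair case $n+m=2$ one uses the gradient version of Lemma \ref{lem: improved v2} (the sum of product kernels against $v^\epsilon_{n+m}$ becomes a gradient against this convolution, producing the factor $D^\epsilon_{n+m}(r-s)$ with one extra $\epsilon$ absorbed into the prefactor).

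For the integral term, decompose $C_\epsilon = \epsilon^{-2}(A^1+A^2)+\epsilon^{-1}(B_++B_-)$ and proceed as in the proof of Lemma \ref{lem: IIfor d}: each $\gamma$-factor gets replaced by its $\widetilde\gamma$--counterpart weighted by $\mathbf{1}_{\tau_1>\lambda/2}$, and the estimates of Lemma \ref{lem:widetilde_gammas} supply the extra $\epsilon^{1-3\zeta}/\lambda^{1-\zeta}$ factor. The $A^1$ contribution produces a time convolution involving $d_{\epsilon,s}(n-1,m,\lambda)$, which on taking the supremum collapses into the stated form; the $A^2$ contribution and the boundary pieces are estimated using Corollary \ref{corollary: difference of rhos} together with the already proven Lemma \ref{lem: IIfor space-time a}, and after integrating in $\lambda$ they are all absorbed into the $\epsilon^{1+\zeta}(r-s)^\zeta$ remainder.

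The main technical obstacle is the diagonal initial term $\mathbf{P}_\epsilon(\und x\overset{r-s}{\rightarrow}\und y)\,v^{\epsilon,n,n}_{s,s}(\und y,\und y)$ that appears only when $n=m$, since $v^{\epsilon,n,n}_{s,s}(\und y,\und y)\le 1$ carries no smallness; here the entire gain in $\epsilon$ must come from differentiating the transition kernel, and one is forced to use a two--point gradient bound on the product of $n$ random walks coupled through $\tau_1$, as in the proof of Lemma \ref{lem: improved v2}. This yields exactly the extra $\epsilon/\sqrt{r-s}$ factor which, combined with the worst short--time behaviour $1/(r-s)^{1/2+\zeta}$ arising from the integrated gradient, produces the $\epsilon^{2-2\zeta}/(r-s)^{1-\zeta}$ contribution in the $n=m=1$ case---the single worst term in \eqref{dnm}. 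All remaining cases are strictly better behaved because the diagonal piece either does not appear or because higher powers of the heat kernel render the singularity integrable against $\lambda^{-1/2}$.
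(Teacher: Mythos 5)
Your overall skeleton matches the paper's: start from the Duhamel identities \eqref{eq:Duhamel formula second,n=m}--\eqref{eq:Duhamel formula second,n dif than m}, split the initial condition by $|\und w\cap \und y|$, and treat the time integral via the $\widetilde\gamma$-factors of Lemma \ref{lem:widetilde_gammas} as in Lemma \ref{lem: IIfor d}, finishing with the a priori bounds \eqref{important property**} and \eqref{property:emptyset}. That much is right.

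However, the mechanism you propose for the diagonal initial term when $n=m=1$ is not what the paper does and would not work as stated. You claim the gain must come from "a two-point gradient bound on the product of $n$ random walks coupled through $\tau_1$, as in the proof of Lemma \ref{lem: improved v2}." For a single particle there is no coupling of $n$ walks, and the paper never develops a "gradient version" of Lemma \ref{lem: improved v2}. The actual argument is elementary: one applies the discrete gradient bound \eqref{N4.6 additional} to the kernel difference, obtaining
$$|P^{(\epsilon)}_{r-s}(x,y)-P^{(\epsilon)}_{r-s}(x+1,y)|\lesssim\frac{1}{\sqrt{\epsilon^{-2}(r-s)}+1}\,G_{\epsilon^{-2}(r-s)}(x,y)\lesssim\frac{1}{\epsilon^{-2}(r-s)+1}\lesssim\frac{\epsilon^{2-2\zeta}}{(r-s)^{1-\zeta}},$$
and then uses the trivial bound $|v^{\epsilon,1,1}_{s,s}(y,y)|\le 1$; the $\epsilon^{2-2\zeta}$ already appears from a single pass of the gradient, not from any "integrated gradient" mixing with a short-time $1/(r-s)^{1/2+\zeta}$. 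In the paper the coupling by the enlarged configuration $\und{\tilde x}=(x_1,x_1+1,x_2,\dots,x_n)$ and the stopping time $\tau_{1,2}$ is used only in the regime $n+m\ge 3$, where it generates the factor $\PP_\epsilon(\tau_{1,2}>(r-s)/2)\lesssim \epsilon/(r-s)^{1/2}$ multiplying the space-estimate $a_{\epsilon,s}(n,m,s)\lesssim\epsilon^{1-2\zeta}$ on the event $|\und{\tilde x}(r-s)\cap\und y|=0$, while the $C^\epsilon_{n,n}$ and $C^\epsilon_{n,m}$ contributions arise from the complementary event $|\und{\tilde x}(r-s)\cap\und y|\neq 0$ (not from the coupling itself, as you assert). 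Finally, your remark that "all remaining cases are strictly better behaved" contradicts the stated bound \eqref{dnm}, since for $n=m\ge 2$ the $C^\epsilon_{n,n}(r-s)$ piece already contains the same $\epsilon^{2-2\zeta}(r-s)^{-(1-\zeta)}$ singularity as the $n=m=1$ case.
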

\begin{proof}
The difference of the Duhamel's formulas of space-time $v$- functions, and in turn for  $d_{\epsilon,s}(n,m,r)$, is estimated as in \eqref{lem: IIfor d} along with the initial condition, namely,
\begin{eqnarray}\label{space-time for d}
d_{\epsilon,s}(n,m,r)&\lesssim& \frac{\epsilon^{2-2\zeta}}{(r-s)^{1/2}}+
\mathbf{1}_{n+m\geq 3}\left(C^{\eps}_{n,n}(r-s){\bf 1}_{n=m}+C_{n,m}^\epsilon(,r-s){\bf 1}_{n\neq m}\right)\nonumber\\
 &&+\mathbf{1}_{2\leq n\leq K}\eps^{1+\zeta}(r-s)^{\zeta}\nonumber\\
 &&+\int_s^r  d\lambda \;\frac{\eps^{-1}}{[\eps^{-2}\lambda]^{1-\zeta}}\sum_{q=1}^K
 a_{\epsilon,s}(n-1+q,m,r-\lambda)\nonumber\\
 &&+\mathbf{1}_{n\geq2}\int_s^r d\lambda
\frac{\eps^{3/2-2\zeta}}{\lambda^{1-\zeta}}a_{\epsilon,s}(n-2,m,r-\lambda)\nonumber\\
&&+\mathbf{1}_{n\geq2}\int_s^r d\lambda
\frac{\epsilon^{1-3\zeta}}{\lambda^{1-\zeta}} d_{\epsilon,s}(n-1,m,r-\lambda).
\end{eqnarray}
The first term in \eqref{space-time for d} follows from the estimation of the initial condition of $|v_{s,r}^{\epsilon,n,m}(\underline{y},\underline{x})-v_{s,r}^{\epsilon,n,m}(\underline{y},\underline{x}+\und e_1)|$. For $n=m=1$, we obtain
\begin{eqnarray*}
&&\mathbf{E}_{\epsilon, x} \left[v_{s,r}^{\epsilon,1,1}(y,x(r-s)\right]-\mathbf{E}_{\epsilon, x+1}\left[v_{s,r}^{\epsilon,1,1}(y,x(r-s))\right]\nonumber\\
&&\hspace{2cm}=\sum_{w:\,w\neq y}\left(\mathbf{P}_{\epsilon}(x\overset{r-s}\rightarrow w)-\mathbf{P}_{\epsilon}(x+1\overset{r-s}\rightarrow w)\right)v_{2}^{\epsilon}( w, y,s)\nonumber\\
\nonumber\\
&&\hspace{3cm}+\left(\mathbf{P}_{\epsilon}(x\overset{r-s}\rightarrow y)-\mathbf{P}_{\epsilon}(x+1\overset{r-s}\rightarrow y)\right)v^{\epsilon,1,1}_{s,s}( y, y)\nonumber\\
&&\hspace{2cm}\lesssim \frac{1}{[\eps^{-2}(r-s)]^{1/2}+1}\eps^{1-2\zeta}+\frac{1}{[\eps^{-2}(r-s)]+1}\nonumber\\
&&\hspace{2cm}\lesssim \frac{\eps^{2-2\zeta}}{(r-s)^{1/2}}+\frac{\eps^{2-2\zeta}}{(r-s)^{1-\zeta}},
\end{eqnarray*}
where the first inequality above is obtained as follows
\begin{eqnarray}
&&\sum_{w:\,w\neq y}\left(\mathbf{P}_{\epsilon}(x\overset{r-s}\rightarrow w)-\mathbf{P}_{\epsilon}(x+1\overset{r-s}\rightarrow w)\right)v_{2}^{\epsilon}( w, y,s)\nonumber\\
&&=\sum_{w:\,w\neq y}\left(P^{(\epsilon)}_{r-s}(x, w)-P^{(\epsilon)}_{r-s}(x+1, w)\right)v_{2}^{\epsilon}( w, y,s)\nonumber\\
&&\lesssim \sum_{w:\,w\neq y}\frac{1}{\sqrt{\epsilon^{-2}(r-s)}+1} G_{\epsilon^{-2}(r-s)}(x,w)|v_{2}^{\epsilon}( w, y,s)|\nonumber\\
&&\lesssim \frac{1}{\sqrt{\epsilon^{-2}(r-s)}+1} \eps^{1-2\zeta}\underbrace{\sum_{w:\,w\neq y}G_{\epsilon^{-2}(r-s)}(x,w)}_{\leq 1}\lesssim \frac{1}{\sqrt{\epsilon^{-2}(r-s)}+1} \eps^{1-2\zeta}
\end{eqnarray}
For the other term 
\begin{eqnarray}
\mathbf{P}_{\epsilon}(x\overset{r-s}\rightarrow y)-\mathbf{P}_{\epsilon}(x+1\overset{r-s}\rightarrow y)||v^{\epsilon,1,1}_{s,s}( y, y)|&=&|P^{(\epsilon)}_{r-s}(x, y)-P^{(\epsilon)}_{r-s}(x+1, y)||v^{\epsilon,1,1}_{s,s}( y, y)|\nonumber\\
&\lesssim& \frac{1}{\sqrt{\epsilon^{-2}(r-s)}+1} \underbrace{G_{\epsilon^{-2}(r-s)}(x,w) }_{\lesssim\frac{1}{\sqrt{\epsilon^{-2}(r-s)}+1} }\underbrace{|v^{\epsilon,1,1}_{s,s}( y, y)|}_{\leq 1}\nonumber\\
&&\lesssim\frac{1}{\epsilon^{-2}(r-s)+1}
\end{eqnarray} When  $n+m\geq 3$, we estimate the difference
$$\mathbf{E}_{\epsilon, \underline{x}} \left[|v_{s,r}^{\epsilon,m,n}(\underline{y},\underline{x}(r-s))\right]-\mathbf{E}_{\epsilon, \underline{x}+\und e_{1}} \left[v_{s,r}^{\epsilon,m,n}(\underline{y},\underline{x}(r-s))|\right],$$ by enlarging the space, resulting in  $\und{ \tilde x}=\left(x_1,x_1+1,x_2,x_3,\dots,x_n\right)$ (and thus $|\und{ \tilde x}|=n+1$) and applying the same argument  used to obtain the initial condition in \eqref{17.3.3}:
\begin{eqnarray*} 
&&\mathbf{E}_{\epsilon, \und{ \tilde x}} \left[\left(v_{s,r}^{\epsilon,n,m}(\underline{y},\und{ \tilde x}^{(2)}(r-s))-v_{s,r}^{\epsilon,n,m}(\underline{y},\und{ \tilde x}^{(1)}(r-s))\right)\left(\mathbf 1_{|\und{ \tilde x}(r-s) \cap\und y|=0}+\mathbf 1_{|\und{ \tilde x}(r-s) \cap\und y|\neq0}\right)\right]\nonumber\\
&&\lesssim \PP_\eps\left(\tau_{1,2}>\frac{r-s}{2}\right)a_{\eps,s}(n,m,s)+C^{\eps}_{n,m}(r-s)\nonumber\\
&&\lesssim \frac{\epsilon^{2-2\zeta}}{(r-s)^{1/2}}+ C^{\eps}_{n,m}(r-s),
\end{eqnarray*}
where $C^{\eps}_{n,m}(r-s)$ is given in \eqref{Cnn}--\eqref{Cnm}. The last inequality comes from the fact that $$\left(v_{s,r}^{\epsilon,n,m}(\underline{y},\und{ \tilde x}^{(2)}(r-s))-v_{s,r}^{\epsilon,n,m}(\underline{y},\und{ \tilde x}'^{(1)}(r-s))\right)\mathbf 1_{|\und{ \tilde x}(r-s) \cap\und y|=0}$$ is an antisymmetric function and thus Lemma \ref{lem:DPTV_ejp 4.3} applies. The estimation of this term is similar to the first term in \eqref{17.3.3}.  To estimate $a_{\eps,s}$, we  use Theorem \ref{thm:vestimate2}, leading to $\epsilon^{2-2\zeta}$. By using \eqref{space-time for d}, \eqref{important property**} and  property \eqref{property:emptyset}, we obtain the integral inequality.
\end{proof}

\subsection{Proof of Theorem \ref{thm:vestimate3}}\label{sec:proof of space time}
 The proof follows the same steps as in the proof of  Theorem~\ref{thm:vestimate2}. 
The difference lies in the fact that, in this case, we must also control the initial conditions, 
namely $C_{n,n}^\epsilon(r-s)$, $C_{n,m}^\epsilon(r-s)$, and $D^{\eps}_{n+m}(r-s)$, 
as given in \eqref{C11}--\eqref{Cnm} and in Lemma~\ref{lem: improved v2}, respectively. Specifically, given a time $s>0$, we choose $\tilde\Delta<1$ and show that when $n+m=2,3$
$$\sup_{s\leq \sigma\leq s+\tilde\Delta}
\left\{(\sigma-s)^{1/2}a_{\epsilon,s}(n,m, \sigma)\right\}\lesssim \eps$$
and for $n+m\geq 4$
$$\sup_{s\leq \sigma\leq s+\tilde\Delta}
\left\{(\sigma-s)^{1-\zeta}a_{\epsilon,s}(n,m, \sigma)\right\}\lesssim \eps^{1+\zeta}.$$

This directly implies that for any time $r>0$ such that $r-s\leq \tilde\Delta$ , the following estimate holds: for $n+m=2,3$
$$(r-s)^{1/2}a_{\epsilon,s}(n,m, r)\leq \sup_{s\leq \sigma\leq s+\tilde\Delta}
\left\{(\sigma-s)^{1/2}a_{\epsilon,s}(n,m, \sigma)\right\}\lesssim \eps,$$
which gives us $a_{\epsilon,s}(n,m, r)\leq \frac{\eps}{(r-s)^{1/2}}$
and for $n+m\geq 4$
$$(r-s)^{1-\zeta}a_{\epsilon,s}(n,m,r)\leq \sup_{s\leq \sigma\leq s+\tilde\Delta}
\left\{(\sigma-s)^{1-\zeta}a_{\epsilon,s}(n,m, \sigma)\right\}\lesssim \eps^{1+\zeta},$$
which in turn gives us $a_{\epsilon,s}(n,m, r)\leq \frac{\eps^{1+\zeta}}{(r-s)^{1-\zeta}}$.
The above estimate is desirable in the sense that it depends on the time difference $r - s$, which may be extremely small, on the order of $\epsilon$  or even smaller. Consequently, the factors $(r - s)^{-1/2}$ or $(r - s)^{-(1 - \zeta)}$ can become very large, and substituting their orders directly may lead to an overall excessively large bound. Therefore, when $s$ and $r$ are close, the estimate is expected to depend explicitly on the time difference $r - s$. 
On the other hand, when $r>s+\tilde\Delta$, $s,r$ are not close anymore and the bound can be independent of $r-s$. In fact, we prove that when $r>s+\tilde\Delta$,
$a_{\epsilon,s}(n,m, r)\lesssim \eps$, $n+m=2,3$ and $a_{\epsilon,s}(n,m, r)\lesssim \eps^{1+\zeta}$, $n+m\geq 4$.

We start with the first case: Since $0<\tilde\Delta<1$, the initial conditions can be simplified and bounded in a unified way as follows:
\begin{align}\label{list of C(r-s)}
C_{n,m}(\sigma-s)+D^{\eps}_{n+m}(\sigma-s)&\lesssim\eps\frac{1}{(\sigma-s)^{1/2}},\qquad n+m=2,3\nonumber\\
C_{n,n}(\sigma-s)+D^{\eps}_{n+n}(\sigma-s)&\lesssim\eps^{1+\zeta}\frac{1}{(\sigma-s)^{1-\zeta}},\qquad n+m\geq 4.
\end{align}
The terms $\frac{\eps^{2-2\zeta}}{(\sigma-s)^{1/2}}$ and  $\eps^{2-2\zeta}\left(\frac{1}{(\sigma-s)^{1/2}}+\frac{1}{(\sigma-s)^{1-\zeta}}\right)$ in \eqref{dnm} are bounded by  $\frac{\eps^{1+\zeta}}{(\sigma-s)^{1/2}}$ and  $\frac{\eps^{1+\zeta}}{(\sigma-s)^{1-\zeta}}$.   Next, we define 
\begin{align*}a^*_{\epsilon,s}(n,m)&=\sup_{s\leq \sigma\leq s+\tilde\Delta}\tilde\Gamma_a(n,m,\sigma-s)a_{\epsilon,s}(n,m,\sigma)\\
d^*_{\epsilon,s}(n,m)&=\sup_{s\leq \sigma\leq s+\tilde\Delta}\tilde\Gamma_d(n,m,\sigma-s)d_{\epsilon,s}(n,m,\sigma),
\end{align*}
where similarly to the proof of Theorem \ref{thm:vestimate2}, we denoted
\begin{align*}
\tilde\Gamma_{a}(n,m, \sigma-s)&=\eps^{-1}(\sigma-s)^{1/2}\mathbf 1_{n+m=2,3}+\eps^{-1-\zeta}(\sigma-s)^{1-\zeta}\mathbf 1_{n+m\geq 4}\nonumber\\
\tilde\Gamma_{d}(n,m, \sigma-s)&=\eps^{-1-\zeta}\Big\{(\sigma-s)^{1-\zeta}\Big(\mathbf 1_{n=m=1}+\mathbf 1_{n\geq 2,m\geq 1}\Big)+(\sigma-s)^{1/2}\mathbf 1_{n=1,m\geq 2}\Big\}.\nonumber\end{align*}
We also need to introduce:
\begin{align*}
\Gamma_{a}(n,m, \sigma-s)&=\eps(\sigma-s)^{-1/2}\mathbf 1_{n+m=2,3}+\eps^{1+\zeta}(\sigma-s)^{-1+\zeta}\mathbf 1_{n+m\geq 4}\nonumber\\
\Gamma_{d}(n,m, \sigma-s)&=\eps^{1+\zeta}\Big\{(\sigma-s)^{-1+\zeta}\left(\mathbf 1_{n=m=1}+\mathbf 1_{n\geq 2,m\geq 1}\right)+(\sigma-s)^{-1/2}\mathbf 1_{n=1,m\geq 2}\Big\}.\nonumber
\end{align*}
Starting from the integral inequality in Lemma \ref{lem: IIfor space-time a}, we first build $a^*_{\epsilon,s}(n,m)$ and $d^*_{\epsilon,s}(n-1,m)$ on the right-hand side of \eqref{inequality: IIfor space-time a} by multiplying with $\Gamma_a(j,m,\lambda-s)$
and $\tilde \Gamma_a(j,m,\lambda-s)$ the terms including $a_{\eps,s}$, as well as, $\Gamma_d(n-1,m,\lambda-s)$
and $\tilde \Gamma_d(n-1,m,\lambda-s)$ the terms including $d_{\eps,s}$. Then, we
multiply both sides of \eqref{inequality: IIfor space-time a} by the corresponding $\tilde\Gamma_a(n,m,\sigma-s)$, i.e, 
\begin{eqnarray}\label{inequality: IIfor space-time a*}
&&\tilde\Gamma_{a}(n,m,\sigma-s)a_{\epsilon,s}(n,m,\sigma)
\lesssim\tilde\Gamma_{a}(n,m,\sigma-s)\left(\frac{\eps}{(\sigma-s)^{1/2}}{\bf 1}_{n+m=2,3}+\frac{\eps^{1+\zeta}}{(\sigma-s)^{1-\zeta}}{\bf 1}_{n+m\geq 4}\right)\nonumber\\
&&\hspace{.5cm}+\eps^{1+\zeta}(\sigma-s)^{\zeta}\tilde\Gamma_{a}(n,m,\sigma-s)\mathbf 1_{n\geq 2,m\geq 1}\nonumber\\
&&\hspace{.5cm}+{\bf 1}_{n\geq 2}d^*_{\epsilon,s}(n-1,m)\tilde\Gamma_{a}(n,m,\sigma-s)\nonumber\\
&&\hspace{4cm}\times\int_{s}^\sigma d\lambda
\left(\frac{1}{(\sigma-\lambda)^{1/2}}+\frac{1}{(\sigma-\lambda)^{1/2+\zeta}}\right)\Gamma_{d}(n-1,m,\lambda-s)\nonumber\\
&&\hspace{.5cm}+ \mathbf{1}_{n\geq 1}\sum_{j=n}^{n-1+K}  a^*_{\epsilon,s}(j,m)\tilde\Gamma_{a}(n,m,\sigma-s)\times\int_s^{\sigma}d\lambda \frac{\epsilon^{-1}}{[\epsilon^{-2}(\sigma-\lambda)]^{1/2}} \Gamma_{a}(j,m,\lambda-s),
\end{eqnarray}
where we have used \eqref{list of C(r-s)} in the first term of the right-hand side above. By integrating the above integral terms, and then considering $\,\sup_{s\leq\sigma\leq s+\Delta}$, we obtain the inequality:
\begin{eqnarray}\label{inequality: IIfor space-time a**}
a^*_{\epsilon,s}(n,m)
&&\lesssim ({\bf 1}_{n+m=2,3}+ {\bf 1}_{n+m\geq 4})\nonumber\\
&&\hspace{.5cm}+
\eps^\zeta{\bf 1}_{n+m=2,3}+ \tilde\Delta{\bf 1}_{n+m\geq 4}\nonumber\\
&&\hspace{.5cm}+\left(\eps^{\zeta}\mathbf 1_{n=2,m=1}+\mathbf 1_{n\geq 3,m\geq 1}\right)d^*_{\epsilon,s}(n-1,m)\nonumber\\
&&\hspace{.5cm}+ \mathbf{1}_{n\geq 1}\tilde\Delta^{\zeta} \sum_{q=1}^{K}a^*_{\epsilon,s}(n-1+q,m).\nonumber
\end{eqnarray}
The supremum of the first term on the right-hand side of \eqref{inequality: IIfor space-time a*} gives us the  first term on the right-hand side of \eqref{inequality: IIfor space-time a**}, i.e., 
$$\sup_{s\leq \sigma\leq s+\tilde\Delta}\tilde\Gamma_{a}(n,m,\sigma-s)\left(\frac{\eps}{(\sigma-s)^{1/2}}{\bf 1}_{n+m=2,3}+\frac{\eps^{1+\zeta}}{(\sigma-s)^{1-\zeta}}{\bf 1}_{n+m\geq 4}\right)= ({\bf 1}_{n+m=2,3}+ {\bf 1}_{n+m\geq 4}).$$
The second term on the right-hand side of \eqref{inequality: IIfor space-time a**} follows from the same computation, after performing similar steps as above. The third term  on the right-hand side of \eqref{inequality: IIfor space-time a**}  is obtained  by
\begin{eqnarray*}
&&\sup_{s\leq \sigma\leq s+\tilde\Delta}\left\{\tilde\Gamma_{a}(n,m, \sigma-s)\int_{s}^\sigma d\lambda
\frac{1}{(\sigma-\lambda)^{1/2+\zeta}}\Gamma_{d}(n-1,m,\lambda-s)\right\}\nonumber\\
&&\lesssim \sup_{s\leq \sigma\leq s+\tilde\Delta} \{(\sigma-s)^{1/2-\zeta}(\sigma-s)^{-1/2+\zeta}\mathbf 1_{n=2,m=1}\}+ \mathbf 1_{n\geq 3,m\geq 1}\nonumber\\
&&\lesssim \mathbf 1_{n=2,m=1}+ \mathbf 1_{n\geq 3,m\geq 1},
\end{eqnarray*}
where we integrated using the formula $\int_u^v\frac{1}{(s-u)^a(v-s)^b}ds=c_{a,b}(v-u)^{1-a-b}$ where $c_{a,b}=\int_0^1\frac{1}{s^a(1-s)^b}ds<\infty$. For example, when $n-1\geq 2, m\geq 1$:
$$\tilde\Gamma_{a}(n,m, \sigma-s)\int_{s}^\sigma d\lambda\frac{1}{(\sigma-\lambda)^{1/2+\zeta}}\frac{\eps^{1+\zeta}}{(\lambda-s)^{1-\zeta}}\lesssim \eps^{-1-\zeta} (\sigma-s)^{1/2}\eps^{1+\zeta}(\sigma-s)^{-1/2}=1.$$
The fourth line follows a similar calculation as above. Similarly, starting from \eqref{dnm}, we obtain
\begin{eqnarray}\label{d*nm}
d^*_{\epsilon,s}(n,m)&\lesssim&\sup_{s\leq \sigma\leq s+\Delta}\Bigg\{\tilde\Gamma_{d}(n,m,\sigma-s)\Big(\frac{\epsilon^{2-2\zeta}}{(r-s)^{1/2}}+
C^{\eps}_{n,n}(r-s){\bf 1}_{n=m}+C_{n,m}^\epsilon(,r-s){\bf 1}_{n\neq m}\nonumber\\
&&\hspace{3cm} + \mathbf 1_{n+m\geq 2}\eps^{1+\zeta}(\sigma-s)^{\zeta}\Big)\Bigg\}\nonumber\\
&\lesssim& 1+ \mathbf 1_{n+m\geq 2}\,\eps^{\zeta}\Delta^{1/2+\zeta}.
\end{eqnarray}
We recall the positive integer $N^*$  given in  \eqref{N*}, i.e., 
$$ (2-\beta^*)c^*(N^*+1)>\zeta+1$$
and by \eqref{important property**}, we have, for any positive integer $j> N^*$ that $a^*_{\epsilon,s}(j,m)\lesssim\epsilon^{\frac{n}{2}-\zeta}$, an estimate used in all $j\geq N^*+1$ of $ \sum_{j=n}^{n-1+K}a^*_{\epsilon,s}(j,m)$.
As in the proof of Theorem \ref{thm:vestimate2}
 in Section \ref{proof of main theorem}, here for $m\geq 1$ we define the  vectors 
  \begin{align*}
    \underline{x}^{(m)} &= [
           a^{*}_{\epsilon,s}(1,m)\,\cdots 
           a^{*}_{\epsilon,s}(N^*,m)\, \,
           d^{*}_{\epsilon,s}(1,m)\,
           \cdots \,
           d^{*}_{\epsilon,s}(N^*,k)\,
       ]^T
  \end{align*} and 
  \begin{align*}
   u_{\epsilon,\tilde\Delta}^{(m)} = [
          u_{\epsilon, \tilde\Delta}(1,m) \,
           \cdots \,
           u_{\epsilon, \tilde\Delta}(2N^*,m)]^T,
        \end{align*}
which can be compactly written as 
$
\underline{x}^{(m)}\leq \Lambda^m_{\epsilon,\tilde\Delta}\,\underline{x}^{(m)}+u_{\epsilon,\tilde\Delta}^{(m)}.
$
The elements of $\Lambda^{(m)}_{\epsilon,\tilde\Delta}$ are specified in the same way as of  $A_{\epsilon,\tilde\Delta}$ in Section \ref{proof of main theorem}. 
We choose $\tilde\Delta$ (which is less than 1) such that 
$\|\Lambda^{(m)}_{\epsilon,\tilde\Delta}\|_{\infty}<1/2
$
where $\|\cdot\|_\infty$ is the matrix norm defined as the maximum absolute row sum of the matrix.  The matrices $\Lambda^{(m)}_{\epsilon,\tilde\Delta}$ and $u^{(m)}_{\epsilon,\tilde\Delta}$ are bounded by 1/2 and some $c$  uniformly in $\epsilon$ respectively.   From this, all components of $\underline{x}$ are uniformly bounded in $\epsilon$ which proves the result.
\bigskip

We proceed to the case where given a time  $s>0$,  $s+\tilde\Delta\leq r\leq T $. This means that $T-s\geq \tilde\Delta$, and thus we first split the interval $[s,T]$ into $l\in\mathbb{Z}$, to be chosen later, equal sub-intervals $[s_{k-1}, s_k]$  with $s_0=s$ and $s_{l+1}=T$. The length of the each interval is given by $|s_k-s_{k-1}|=\Delta$ such that $\Delta\leq \tilde\Delta$ and $\frac{T}{\Delta}=l$. Note that we use the same notation for the length of the sub–intervals, namely $\Delta$, as in the proof of Theorem~\ref{thm:vestimate2}.  
Although the two quantities play the same conceptual role, their magnitudes may differ.
  We prove that given  $m\geq 1$
$$\sup_{s\leq\sigma\leq T}a_{\eps,s}(n,m,\sigma)\lesssim\eps\textbf{1}_{n+m=2,3}+\eps^{1+\zeta}\textbf{1}_{n+m\geq 4},$$
which implies that $a_{\eps,s}(n,m,r)\lesssim\eps\textbf{1}_{n+m=2,3}+\eps^{1+\zeta}\textbf{1}_{n+m\geq 4}.$ For that reason, we define 
 $$a_{\eps,s}^*(n,m,k)=\sup_{s\leq \sigma\leq s+k\Delta}a_{\eps,s}(n,m,\sigma)\Big\{\eps^{-1}\textbf{1}_{n+m=2,3}
+\eps^{-1-\zeta}\textbf{1}_{n+m\geq 4}\Big\}$$
as well as
as well as
$$d_{\eps,s}^*(n,m,k)=\eps^{-1-\zeta}\sup_{s\leq \sigma\leq s+k\Delta}d_{\eps,s}(n,m,\sigma),\qquad n+m\geq 2.$$ 

By \eqref{eq:space-time later time} starting from $k\Delta$ (instead of $s$ as is  in \eqref{eq:space-time later time}) along with the integral term in \eqref{inequality: IIfor space-time a} and based on the calculations we presented for the previous case, the integral inequality for $a_{\eps,s}^*(n,m,k)$ and $d_{\eps,s}^*(n,m,k)$ take the form: for $k=2,\dots, l+1$
\begin{eqnarray}\label{inequality: IIfor space-time a**,k}
a^*_{\epsilon,s}(n,m,k)
&&\lesssim a^*_{\epsilon,s}(n,m,k-1)+
\eps^{\zeta}\Delta^{\zeta}\mathbf 1_{n+m=2,3}+\Delta^{\zeta}\mathbf 1_{n+m\geq 4}\nonumber\\
&&\hspace{.5cm}+{\bf 1}_{n\geq 2}\left(\Delta^{1/2}+\Delta^{1/2-\zeta}\right)d^*_{\epsilon,s}(n-1,m,k)\nonumber\\
&&\hspace{.5cm}+ \mathbf{1}_{n\geq 1}\Delta^{1/2}\sum_{q=1}^{K}a^*_{\epsilon,s}(n-1+q,m)\nonumber,
\end{eqnarray}
and  $a^*_{\epsilon,s}(n,m,1)$ is estimated as $\eps\Delta^{-1/2}$, for $n+m=2,3$ and $\eps^{1+\zeta}\Delta^{-1+\zeta}$, for $n+m\geq 4$ followed by the previous case since $\Delta\leq \tilde\Delta$. Note that \eqref{inequality: IIfor space-time a**} and \eqref{inequality: IIfor space-time a**,k} differ only on the initial condition because the first starts at time $s$ while the second at time $k\Delta$.
Similarly, starting from later time than $s$, by \eqref{dnm}, we obtain
\begin{eqnarray*}
d^*_{\epsilon,s}(n,m,k)\lesssim\frac{\epsilon^{2-2\zeta}}{\Delta^{1/2}}
 + \mathbf 1_{n+m\geq 2}\eps^{1+\zeta}\Delta^{\zeta},\qquad k=2,\dots, l+1
\end{eqnarray*}
while $d^*_{\epsilon,s}(n,m,1)$ is bounded by $\eps^{1+\zeta}\Delta^{-1+\zeta}$, $\eps^{1+\zeta}\Delta^{-1/2}$ and $\eps^{1+\zeta}\Delta^{-1+\zeta}$ for $n=m=1$ and $n\geq 2, m\geq 1$, and $n=1,m\geq 2$  respectively. Similarly to the proof of Theorem \ref{thm:vestimate2} in Section \ref{proof of main theorem}, we write \eqref{matrix inequality} with $N^*$ given in \eqref{N*}, i.e, 
\begin{equation}
\underline{x}(k;m)\leq A_{\epsilon,\Delta}\,\underline{x}(k-1;m)+\Lambda_{\epsilon,\Delta}(k;m)\underline{x}(k;m)+u_{\epsilon,\Delta}(k;m),\;\;\;\;k=1,\dots,l+1
\end{equation}
where 
 \begin{align*}
    \underline{x}(k;m) &= [
           a^{*}_{\epsilon}(1,m,k)\,\cdots 
           a^{*}_{\epsilon}(N^*,m,k)\, \,
           d^{*}_{\epsilon}(1,m,k)\,
           \cdots \,
           d^{*}_{\epsilon}(N^*,m,k)
       ]^T 
  \end{align*}
  and  \begin{align*}
   u_{\epsilon,\Delta}(k;m) = [
          u_{\epsilon, \Delta}(1,m,k) \,
           \cdots \,
           u_{\epsilon, \Delta}(2N^*,m,k)]^T
        \end{align*}
for $k=1,\dots,l+1$.
We choose $\Delta<\tilde\Delta$ such that $\|\Lambda_{\epsilon,\Delta}(k;m)\|_{\infty}\leq 1/2$ (or any other constant less than 1). For every $k = 1, \dots, l+1$, the matrices $\Lambda_{\epsilon,\Delta}(k;m)$ and $b_{\epsilon,\Delta}(k;m)$
are uniformly bounded by $1/2$ and some constant $c$, respectively, independently of $\epsilon$. 
Consequently, all components of $\underline{x}(k;m)$ are uniformly bounded in $\epsilon$ for all $k = 1, \dots, l+1$, 
which proves the result.

\section{Tightness}\label{sec:tightness}
In order to prove tightness,
we first observe that the space $\mathbb{S}$ is a nuclear Fr\'echet space when endowed with the seminorms defined in  \eqref{semi-norm}. As a consequence, in order to prove tightness, we can use Mitoma's criterium (that we recall below), so that it is enough to  show tightness of the sequence of real-valued processes $\{Y^\epsilon_t(H)\}_{\epsilon >0}$, for every $H \in \mathbb{S}$.

\begin{theorem}[Mitoma's criterium - Theorem 4.1 of \cite{mitoma1983tightness}]
A sequence of processes $\{X^\epsilon_t; t \in [0,T]\}_{\epsilon>0}$ in $\mathcal{D}([0,T], \mathbb {S}')$ is tight with respect to the Skorohod topology if, and only if, for every $H \in \mathbb{S}$, the sequence of real-valued processes $\{X^\epsilon_t(H); t \in [0,T]\}_{\epsilon}$ is tight with respect to the Skorohod topology of $\mathcal {D}([0,T], \mathbb{R})$.
\end{theorem}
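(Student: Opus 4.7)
The plan is to prove both directions of the equivalence, with the reverse direction being the substantive one.

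For the \emph{necessity} (``only if'') direction, I would use continuity of evaluation maps. For each fixed $H \in \mathbb{S}$, the linear functional $\pi_H : \mathbb{S}' \to \mathbb{R}$ given by $\pi_H(\mu) := \mu(H)$ is continuous with respect to the strong topology of $\mathbb{S}'$; it induces a continuous projection $\Pi_H : \mathcal{D}([0,T], \mathbb{S}') \to \mathcal{D}([0,T], \mathbb{R})$ between the corresponding Skorohod spaces. Since continuous pushforwards preserve tightness, tightness of $\{X^\epsilon\}$ in $\mathcal{D}([0,T], \mathbb{S}')$ immediately yields tightness of $\{\Pi_H(X^\epsilon)\} = \{X^\epsilon(H)\}$ in $\mathcal{D}([0,T], \mathbb{R})$.

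For the \emph{sufficiency} direction, fix $\eta > 0$ and, using separability of the Fréchet space $\mathbb{S}$, select a countable dense family $\{H_k\}_{k \geq 1} \subset \mathbb{S}$. By hypothesis, for each $k$ there exists a compact set $K_k \subset \mathcal{D}([0,T], \mathbb{R})$ with $\mathbb{P}(X^\epsilon(H_k) \in K_k) \geq 1 - \eta\, 2^{-k}$ uniformly in $\epsilon$. Define
\begin{equation*}
A_\eta := \bigcap_{k \geq 1} \Pi_{H_k}^{-1}(K_k) \;\subset\; \mathcal{D}([0,T], \mathbb{S}').
\end{equation*}
A union bound gives $\mathbb{P}(X^\epsilon \in A_\eta) \geq 1 - \eta$ uniformly in $\epsilon$, so by Prokhorov's theorem it suffices to prove that $\overline{A_\eta}$ is compact in $\mathcal{D}([0,T], \mathbb{S}')$.

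The main obstacle is precisely this compactness assertion, and it is the step where the nuclearity of $\mathbb{S}$ is indispensable. My approach would exploit the representation of the nuclear Fréchet space $\mathbb{S}$ as a projective limit of Hilbert spaces $(\mathbb{S}_n, \|\cdot\|_n)$ with Hilbert--Schmidt embeddings $\mathbb{S}_{n+1} \hookrightarrow \mathbb{S}_n$, which dually realizes $\mathbb{S}'$ as an inductive limit of the dual Hilbert spaces $\mathbb{S}'_n$. Compactness of each $K_k$ in $\mathcal{D}([0,T], \mathbb{R})$ supplies two uniform estimates: a supremum bound $\sup_{\omega \in A_\eta} \sup_{t \in [0,T]} |\omega_t(H_k)| \leq M_k$, and a uniform Skorohod modulus-of-continuity control on the path $t \mapsto \omega_t(H_k)$. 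Using density of $\{H_k\}$ in each $\mathbb{S}_n$ together with the Hilbert--Schmidt property of a suitable inclusion $\mathbb{S}_{n_0+1} \hookrightarrow \mathbb{S}_{n_0}$, one can upgrade these pointwise-in-$H$ bounds to a uniform bound on the $\mathbb{S}'_{n_0}$-norm of $\omega_t$, showing that $A_\eta$ is contained in a subset of $\mathcal{D}([0,T], \mathbb{S}'_{n_0})$ that is pointwise bounded in the Hilbert norm and equicontinuous modulo jumps. An Arzelà--Ascoli-type criterion for Polish-space-valued càdlàg paths (e.g.\ the analogue of Theorem 12.3 in Billingsley for $\mathbb{S}'_{n_0}$-valued paths) then delivers relative compactness in $\mathcal{D}([0,T], \mathbb{S}'_{n_0})$, which embeds continuously into $\mathcal{D}([0,T], \mathbb{S}')$. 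The delicate part is verifying that the countably many scalar constraints really do control the full nuclear-dual topology: this is exactly where the Hilbert--Schmidt promotion is essential, converting pointwise-in-$H$ information into uniform-in-$H$ information on balls of $\mathbb{S}_{n_0}$.
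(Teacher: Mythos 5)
The paper does not prove this result; it is stated purely as a citation of Theorem~4.1 of Mitoma (1983), so there is no proof in the paper to set your attempt against. With that caveat: your sketch does follow the broad outline of Mitoma's original argument, and the necessity direction via continuous pushforward under $\Pi_H$ is complete and correct.

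The sufficiency direction, as you have written it, has a genuine gap at the step you flag as ``delicate,'' and it is not just a matter of filling in routine detail. You define $A_\eta=\bigcap_k\Pi_{H_k}^{-1}(K_k)$ with $K_k\subset\mathcal{D}([0,T],\mathbb{R})$ compact sets produced, one at a time, by scalar tightness. Even if you take $\{H_k\}$ to be an orthonormal basis of a Hilbert level $\mathbb{S}_{n_0}$ adapted to a Hilbert--Schmidt embedding $\mathbb{S}_{n_0+1}\hookrightarrow\mathbb{S}_{n_0}$ with singular values $\sigma_k$ (which is what you actually need --- an arbitrary dense family will not do, since pointwise bounds on a dense set do not control a dual norm), relative compactness of $A_\eta$ in $\mathcal{D}([0,T],\mathbb{S}'_{n_0+1})$ requires a summability condition of the type $\sum_k\sigma_k M_k^2<\infty$, where $M_k:=\sup_{f\in K_k}\sup_{t}|f_t|$, together with the analogous summability for the scalar Skorohod moduli. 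Nothing in the hypothesis controls the growth of $M_k$: each $K_k$ is produced by an independent application of tightness at budget $\eta\,2^{-k}$, and $M_k$ may blow up faster than $\sigma_k$ decays. Mitoma's proof does not intersect pullbacks of arbitrary scalar compact sets; it first establishes a uniformity-in-$H$ statement (essentially, that the functional $H\mapsto\inf\{M:\sup_\epsilon\mathbb{P}(\sup_t|X^\epsilon_t(H)|>M)\leq\delta\}$ is bounded by a continuous seminorm of $H$, via a Baire-category/uniform-boundedness argument available because $\mathbb{S}$ is Fréchet), which fixes a single Hilbert level and a single bound controlling all the $M_k$ simultaneously; only then is the Hilbert--Schmidt summation applied, to both the sup-bounds and the moduli. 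That uniformity step is the crux of the sufficiency direction and is absent from your sketch.
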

Recall \eqref{martingaleM}. In order to show that $\{Y^\epsilon_t(H)\}_{\epsilon>0}$ is tight, it is enough to show that  \begin{equation*}
\{Y^\epsilon_0(H)\}_{\epsilon>0}
 	\ , \ \{[M^\epsilon _t(H)]_{t \geq 0}\}_{\epsilon>0} 
		 \textrm{ and } \left\{\int_0^t ( \partial_s+ L_\epsilon) Y^\epsilon_s(H) ds \right\}_{\epsilon>0}
\end{equation*} are tight. 
\subsection{Initial time}
In this subsection  we show that $\{Y^\epsilon_0(H)\}_{\epsilon>0}$ is tight.
To this end, it is enough to show that
\begin{equation*}
    \lim_{A \to \infty} \limsup_{\epsilon\to 0} \mathbb{P}_{\mu^\epsilon} [|Y_0^\epsilon(H)| > A] = 0.
\end{equation*}

From \eqref{density field}  and  from
 Markov's inequality, for every $A > 0$ and for every $\epsilon>0$, 
\begin{align*}
\mathbb{P}_{\mu^\epsilon} [|Y_0^\epsilon(H)| > A] &\leq \frac{1}{A^2} \mathbb{E}^\eps_{\mu^\epsilon}[|Y_0^\epsilon(H)|^2] \\
&= \frac{\epsilon}{A^2}  \Big( \sum_{x\in \Lambda_N} [H_0(\epsilon x) ]^2 \mathbb{E}^\eps_{\mu^\epsilon} [(\eta_{0}(x)-\rho^\epsilon_0(x))^2] + \sum_{\substack{x, y\in \Lambda_N\\ y \neq x}} H_0(\epsilon x)H_0(\epsilon y)C^{2,\epsilon}_{0}(x,y) ,
\end{align*}
where $C_t^{2,\epsilon}(x,y)$ was defined in \eqref{eq:correlation}. Note that $C^{2,\epsilon}_{0}(x,y)\equiv0$ given the choice of our initial measure, see \textbf{Assumption 1}. Therefore, the rightmost term in last display is equal to zero, and  
the leftmost term in last display is of order $O(A^{-2})$ and so the limit is proved.  
In fact, we observe that we can prove the convergence of the fluctuation field at time $t=0$ to a Gaussian process by using characteristic functions, in the following  manner:
\begin{equation}\begin{split}
\log  \mathbb E^\eps_{\mu^\eps}[\exp\{iY_0^\eps(H)\}]&=\log \prod_{x\in\Lambda_N} \mathbb E^\eps_{\mu^\eps}[\exp\{i\sqrt \eps H(\eps x )(\eta_{0}(x)-\rho^\epsilon_0(x) )\}]  \\&=\sum_{x\in\Lambda_N}\log  \mathbb E^\eps_{\mu^\eps}[\exp\{i\sqrt \eps H(\eps x )(\eta_{0}(x)-\rho^\epsilon_0(x) )\}]  
\end{split}\end{equation}
and by Taylor expansion the last display can be written as :
\begin{equation}
\sum_{x\in\Lambda_N}\log \Big( 1- \frac{\eps}{2} H^2(\eps x )E^\eps_{\mu^\eps}[(\eta_{0}(x)-\rho^\epsilon_0(x) )^2]+O(\eps^{3/2})\Big),  
\end{equation}
but again from Taylor expansion, from {\bf{Assumption 1}} and since $u_0$ is a smooth function the last display converges, as $\eps\to0$, to $-\frac{1}{2}\int_{-1}^1H^2(u)u_0(u)du.$ This shows that the characteristic function of the initial density field $Y_0(H)$ coincides with the one of the Gaussian process with the variance given above. 
\subsection{The integral terms}

To prove tightness of the integral terms we use the Kolmogorov-Centsov's criterion, that we state here for convenience.
\begin{proposition}[Kolmogorov-Centsov criterion - Problem 2.4.11 of \cite{karatzas2014brownian}]
\label{KolCen}

A sequence $\{X^\epsilon_t ; t \in [0, T]\}_{ \epsilon>0}$ of
continuous, real-valued, stochastic processes is tight with respect to the uniform topology of $\mathcal C([0, T];\mathbb{R})$ if the sequence of real-valued random variables $\{X^\epsilon_0\}_{\epsilon>0}$ is tight and there are constants $K, \gamma_1,\gamma_2 > 0$ such that, for any $t,s \in [0, T]$ and any $\epsilon>0$, it holds that
\begin{equation*}
    \mathbb{E}^\eps_{\mu^\epsilon}[|X^\epsilon_t - X^\epsilon_s|^{\gamma_1}] \leq K |t-s|^{1+\gamma_2}.
\end{equation*}
\end{proposition}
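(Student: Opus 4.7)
The plan is to reduce the statement to the classical characterization of tightness in $\mathcal{C}([0,T];\mathbb{R})$: the family $\{X^\epsilon\}_{\epsilon>0}$ is tight for the uniform topology if and only if the marginals $\{X_0^\epsilon\}$ are tight on $\mathbb{R}$ and, for every $\eta>0$,
\begin{equation*}
\lim_{\delta\downarrow 0}\limsup_{\epsilon\to 0}\mathbb{P}\bigl[w_\delta(X^\epsilon)>\eta\bigr]=0,
\end{equation*}
where $w_\delta(f):=\sup_{|t-s|\leq\delta}|f(t)-f(s)|$ is the modulus of continuity on $[0,T]$. Since initial tightness is part of the hypotheses, the entire task reduces to upgrading the pointwise moment bound into a uniform-in-$\epsilon$ control of $w_\delta(X^\epsilon)$.

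The first step is to fix any $\alpha\in(0,\gamma_2/\gamma_1)$ and to work on the dyadic grid $\mathcal{D}_n:=\{kT2^{-n}:0\leq k\leq 2^n\}$. Markov's inequality combined with the assumption yields, for two consecutive points $s,t\in\mathcal{D}_n$,
\begin{equation*}
\mathbb{P}\bigl[|X_t^\epsilon-X_s^\epsilon|>2^{-n\alpha}\bigr]\leq K\,2^{n\alpha\gamma_1}(T2^{-n})^{1+\gamma_2}\leq K'\,2^{-n(1+\gamma_2-\alpha\gamma_1)}.
\end{equation*}
A union bound over the $2^n$ such adjacent pairs then gives $\mathbb{P}[M_n^\epsilon>2^{-n\alpha}]\leq K''\,2^{-n(\gamma_2-\alpha\gamma_1)}$, where $M_n^\epsilon$ denotes the maximal increment across adjacent dyadic points of $\mathcal{D}_n$. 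Because $\gamma_2-\alpha\gamma_1>0$, the right-hand side is summable in $n$ with constants independent of $\epsilon$.

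The second step is the chaining argument. On the event $A_N^\epsilon:=\bigcap_{n\geq N}\{M_n^\epsilon\leq 2^{-n\alpha}\}$ a standard telescoping across dyadic scales produces a deterministic H\"older bound $|X_t^\epsilon-X_s^\epsilon|\leq C_\alpha |t-s|^\alpha$ for all dyadic $s,t$ with $|t-s|\leq T2^{-N}$, with $C_\alpha$ depending only on $\alpha$ (through the sum $\sum_{n\geq 0}2^{-n\alpha}$). The continuity of each $X^\epsilon$, which is part of the hypothesis, extends this inequality to every $s,t\in[0,T]$ with $|t-s|\leq T2^{-N}$, so on $A_N^\epsilon$ we obtain $w_\delta(X^\epsilon)\leq C_\alpha \delta^\alpha$ for $\delta\leq T2^{-N}$. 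Since $\mathbb{P}[(A_N^\epsilon)^c]\leq K''\sum_{n\geq N}2^{-n(\gamma_2-\alpha\gamma_1)}$ is independent of $\epsilon$ and tends to zero as $N\to\infty$, the modulus-of-continuity statement follows: given $\eta>0$ choose $N$ so that $C_\alpha(T2^{-N})^\alpha<\eta$, then let $\delta\downarrow 0$.

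The main obstacle is the telescoping step, which must be arranged so that the H\"older constant stays bounded as $N\to\infty$; this is exactly where the strict inequality $\alpha<\gamma_2/\gamma_1$ is used, producing a geometric series with summable tail. Everything else is a direct combination of Markov's inequality, countable unions, and the continuity of the paths, which is part of the hypothesis so that no modification argument is needed. In the present paper this criterion will subsequently be invoked for the integral process $\int_0^t\Lambda_s^\epsilon(\psi_s)\,ds$, whose moment bound of the required form is obtained by Cauchy--Schwarz together with the refined space and space--time $v$-function estimates of Theorems~\ref{thm:vestimate2}--\ref{thm:vestimate3}, exactly as the terms \eqref{eq:diff_eta}--\eqref{degree_four} were handled in Section~\ref{sec: Auxiliary martingale}.
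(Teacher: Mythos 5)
The paper does not prove this statement: it is quoted directly from Problem~2.4.11 of \cite{karatzas2014brownian}, so there is no internal argument to compare against. Your proposal correctly reconstructs the standard Kolmogorov--Chentsov chaining argument (Markov bound on dyadic increments, union bound over adjacent pairs, telescoping across scales, then the classical modulus-of-continuity characterisation of tightness in $\mathcal{C}([0,T];\mathbb{R})$), with the path-continuity hypothesis replacing the usual modification step, and it is sound; the constants produced depend only on $\alpha$, $\gamma_1$, $\gamma_2$, $K$ and $T$, hence are uniform in $\epsilon$. The only point worth tidying is the quantifier order in the final step: since $\mathbb{P}[(A_N^\epsilon)^c]$ depends on $N$ and not on $\delta$, one should take $N=N(\delta)\to\infty$ as $\delta\downarrow 0$ (for instance, the largest $N$ with $T2^{-N}\geq\delta$), so that $C_\alpha\,(T2^{-N})^\alpha$ and $K''\sum_{n\geq N}2^{-n(\gamma_2-\alpha\gamma_1)}$ both vanish simultaneously, uniformly in $\epsilon$; this is what actually delivers $\lim_{\delta\downarrow 0}\sup_{\epsilon}\mathbb{P}[w_\delta(X^\epsilon)>\eta]=0$ for each $\eta>0$.
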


\subsubsection{The bulk term}
We first treat the bulk term, i.e., the term of the form
$$\mathbf{Z}_t^\epsilon(H):=\int_0^t Y_s^\epsilon (\Delta_\epsilon H)ds.$$
 We note that from \eqref{density field} for any $H\in\mathbb S$, we have that 
\begin{equation*}
Y^\epsilon _s(H)\;=\;\sqrt \epsilon\sum_{x\in\Lambda_N}H_s(\epsilon x)
\Big(\eta_{s}(x)-\rho_\epsilon(x,ss)\Big)+\sqrt \epsilon\sum_{x\in\Lambda_N}H_s(\epsilon x)
\Big(\rho_\epsilon(x,s)-\rho^\epsilon_s(x)\Big),
\end{equation*}
where $\rho_\eps(s,x)$ is the solution to \eqref{eq:linearized}. 

From \eqref{eq:important_1}, the rightmost term in last display equals to 
\begin{equation*}
-\sqrt \epsilon\sum_{x\in\Lambda_N}H(s,\epsilon x) v_{{1}}^{\epsilon}(x,s|\mu^{\epsilon})
\end{equation*}
and now we treat separately the two cases. 
First note that if
\begin{equation}
\mathbf{X}_t^\epsilon=\int_{0}^t ds\, \sqrt \epsilon\sum_{x\in\Lambda_N}\Delta_\epsilon H_s(\epsilon x)
\Big(\eta_{s}(x)-\rho_\epsilon(x,s)\Big),
\end{equation}
then by squaring the integral we have that 
\begin{equation}
\mathbb{E}^{\epsilon}_{\mu^\epsilon}\left[|\mathbf{X}_t^\epsilon-\mathbf{X}_s^\epsilon|^2\right]=\epsilon\int_{s}^t\int_{s}^rdr\,d\tau\sum_{x,y\in \Lambda_N}\Delta_{\epsilon}H_r(\epsilon x)\Delta_{\epsilon}H_\tau( \epsilon y){v^{\epsilon,1,1}_{\tau,y}(r, x)}.
\end{equation}
Since $H\in\mathbb S$ and by using Theorem \ref{thm:vestimate3}, the last display is bounded from above by: 
\begin{eqnarray}\label{tightness_est}
&&\epsilon^{-1}\int_{s}^t\int_{s}^rdr\,d\tau \sup_{x,y} |v^{\epsilon,1,1}_{\tau,r}( x,y)|\\ &\leq&\epsilon^{-1}\int_{s}^t\int_{s}^rdr\,d\tau \sup_{x,y} |v^{\epsilon,1,1}_{\tau,r}( x,y)|\left({\bf 1}_{r-\tau\leq \tilde\Delta}+{\bf 1}_{r-\tau> \tilde\Delta}\right)\nonumber\\
& \lesssim&\left((t-s)^{3/2}+(t-s)^2\right).
\end{eqnarray}
Now, for 
\begin{equation*}
\mathbf{Y}^\epsilon_t=\int_0^t ds\sqrt \epsilon\sum_{x\in\Lambda_N}\Delta_\epsilon H_s(\epsilon x) v_{n}^{\epsilon}(x,s|\mu^{\epsilon})
\end{equation*}
we evaluate as
\begin{equation}
\mathbb{E}^{\epsilon}_{\mu^\epsilon}\left[|\mathbf{Y}_t^\epsilon-\mathbf{Y}_s^\epsilon|^2\right]=\epsilon\int_{s}^t\int_{s}^rdr\,d\tau\sum_{x,y\in \Lambda_N}\Delta_{\epsilon}H_r(\epsilon x)\Delta_{\epsilon}H_\tau,(\epsilon y) v_{n}^{\epsilon}(x,r|\mu^{\epsilon})v_{n}^{\epsilon}(y,\tau|\mu^{\epsilon})
\end{equation}
and as above, by using Theorem \ref{thm:vestimate2}, this last display is bounded from above by 
\begin{equation}
\epsilon^{-1}\int_{s}^t\int_{s}^rdr\,d\tau \sup_{x,y} v_1^\epsilon(x,r|\mu^\epsilon) v_1^\epsilon(y,\tau|\mu^\epsilon) \lesssim\epsilon^{1-4\zeta}(t-s)^{2}.
\end{equation}
\subsubsection{The boundary terms}

For the boundary terms we define
\begin{equation*}
\widetilde{\mathbf{X}}_t^\epsilon=\int_{0}^t ds\epsilon^{-1/2}\left(\bar\eta_{s}(x)-\bar\eta_{s}(y)\right)
\quad \textrm{and}\quad 
\widehat{\mathbf{X}}_t^\epsilon=\int_{0}^t ds\epsilon^{-1/2}\prod_{x\in\mathfrak X}\bar\eta_{s}(x)
\end{equation*}
with $\mathfrak X\subset I_\pm$. 
Similar to what we did to treat the bulk term and \eqref{tightness_est}
\begin{eqnarray}
\mathbb{E}^{\epsilon}_{\mu^\epsilon}\left[|\widetilde{\mathbf{X}}_t^\epsilon-\widetilde{\mathbf{X}}_s^\epsilon|^2\right]&=&\epsilon^{-1}\int_{s}^t\int_{s}^rdr\,d\tau\mathbb{E}^{\epsilon}_{\mu^\epsilon}\left[\left(\bar\eta_{r}(x)-\bar\eta_{r}(y)\right)\left(\bar\eta_{\tau}(x)-\bar\eta_{\tau}(y)\right)\right]\nonumber\\
&\lesssim&  \epsilon^{-1}\int_{s}^t\int_{s}^rdr\,d\tau \sup_{w\in\{x,y\}}\left(|v^{\epsilon,1,1}_{\tau,r}(y,w)|+|v^{\epsilon,1,1}_{\tau,r}(x,w)|\right)
\nonumber\\
&\lesssim&\left((t-s)^{3/2}+(t-s)^2\right)
\end{eqnarray}
as well as   $\mathbb{E}^{\epsilon}_{\mu^\epsilon}\left[|\hat{\mathbf{X}}_t^\epsilon-\hat{\mathbf{X}}_s^\epsilon|^2\right]$ is similarly estimated. This concludes the proof of tightness for all the integral terms.
In the next subsection, we establish the tightness of the sequence of martingales. More precisely, we show that the sequence converges to a mean-zero Gaussian process, which in particular implies its tightness.
\subsection{Convergence of the sequence of martingales}
In this section we prove the convergence of the sequence of martingales $\left\{ M_t^\epsilon(H); t\in [0,T]\right\}_{\epsilon}$ using  Theorem VIII.3.12  in \cite{jacod2013limit}. In fact, we use the statement of that theorem as in Proposition 4 of \cite{ahmed2022microscopic}:   

\begin{proposition}[\cite{ahmed2022microscopic}]
Let $t \in [0,T] \mapsto C_t  \in [0, \infty)$ be a deterministic continuous  function of the time $t$. Let $\{M_t^\epsilon ;  t \in [0,T]\}_{\epsilon}$ be a sequence of square-integrable real-valued martingales with c\`adl\`ag trajectories defined on a probability space $(\Omega,\mathcal  F, \mathbb P)$. Let $\{\langle M^\epsilon\rangle_t;  t \in [0,T]\}$ denote the quadratic variation of $\{M_t^\epsilon\; ; \; t \in [0,T]\}$. Assume that
\begin{itemize}
\item[i)] For each $\epsilon>0$, the quadratic variation process $\{\langle M^\epsilon\rangle_t;  t \in [0,T]\}$ has continuous trajectories $\mathbb P$ a.s.;
\item[ii)] the maximal jump satisfies
\begin{equation}\label{eq:maximal_jump}
\lim_{\epsilon\to 0} \mathbb E\Big[ \sup_{0 \le s \le T} \big|M_s^\epsilon-M_{s-}^\epsilon\big| \Big] =0;
\end{equation}
Above $\mathbb E$ denotes the expectation w.r.t. $\mathbb P$.
\item[iii)] For each $t \in [0,T]$, the sequence of random variables $\{\langle M^\epsilon
\rangle_t \}_{\epsilon}$ converges in probability to the deterministic path  $\{C_t \; ;\;  t \in [0,T]\}$.
\end{itemize}
Then the sequence $\{M_t^\epsilon;  t \in [0,T]\}_{\epsilon}$ converges in law in $\mathcal D ([0,T], \mathbb R)$ to a martingale $\{M_t; t \in [0,T]\}$ with quadratic variation $t \mapsto C_t$. Moreover $\{M_t; t \in [0,T]\}$ is a mean zero Gaussian process. 
\end{proposition}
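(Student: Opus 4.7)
The plan is to verify the hypotheses of Theorem VIII.3.12 of \cite{jacod2013limit}, which is the underlying functional martingale CLT, and to conclude the statement directly from it (this is essentially what is done in Proposition 4 of \cite{ahmed2022microscopic}). The argument splits in two parts: (a) tightness of $\{M^\epsilon\}_\epsilon$ in $\mathcal D([0,T],\R)$, and (b) identification of every subsequential limit with the unique continuous mean-zero Gaussian martingale whose deterministic quadratic variation is $t\mapsto C_t$.

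For tightness, I would apply Rebolledo's criterion, which reduces tightness of a sequence of c\`adl\`ag local martingales to two conditions: (1) tightness of the sequence $\{\langle M^\epsilon\rangle_t\}_\epsilon$, and (2) the maximal jump bound \eqref{eq:maximal_jump}. Condition (iii) gives (1) immediately, since convergence in probability to the deterministic continuous path $C_t$ is stronger than tightness, while (2) is exactly hypothesis (ii). For the limit identification, the maximal jump condition forces any subsequential limit $M$ to have almost surely continuous trajectories. Moreover, uniform integrability of the family $\{M_t^\epsilon\}_\epsilon$, which follows from the uniform $L^2$-bound $\mathbb E[(M_t^\epsilon)^2]=\mathbb E[\langle M^\epsilon\rangle_t]\to C_t\le C_T<\infty$ provided by (iii), allows one to pass to the limit both in the martingale relation $\mathbb E[M_t^\epsilon-M_s^\epsilon\mid \mathcal F_s]=0$ and in the optional decomposition $(M_t^\epsilon)^2-\langle M^\epsilon\rangle_t$. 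The limit process $M$ is therefore a martingale with $\langle M\rangle_t=C_t$, and by the Dambis--Dubins--Schwarz theorem every continuous martingale with deterministic quadratic variation $C_t$ is a time-changed Brownian motion $B_{C_t}$, hence a mean-zero Gaussian process. This uniquely determines the law of the limit and upgrades subsequential convergence to full convergence in $\mathcal D([0,T],\R)$.

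The main obstacle is the identification step rather than tightness: ensuring that both the martingale property and the predictable quadratic variation pass cleanly to the limit requires care with the filtrations (in particular, checking that the limiting filtration is large enough for $M$ to remain a martingale) and with uniform integrability of $(M_t^\epsilon)^2$, which again is controlled via $\mathbb E[\langle M^\epsilon\rangle_t^2]$ through (iii) and bounded convergence. Once this is done, the Gaussian and mean-zero properties are automatic from the deterministic continuity of $C_t$, and the stated convergence in law follows.
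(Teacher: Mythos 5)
The paper does not prove this proposition: it is taken verbatim from Proposition~4 of \cite{ahmed2022microscopic}, itself a streamlined restatement of Theorem~VIII.3.12 in \cite{jacod2013limit}, and the paper uses it as a black box to deduce Corollary~\ref{cor:conv_mart}. Your sketch goes further by outlining a proof of the theorem itself, and the route you take --- Rebolledo-type tightness combining (ii) with C-tightness of the quadratic variations from (iii), continuity of any subsequential limit forced by (ii), passage of the martingale property to the limit via uniform integrability, and Gaussianity of the limit from Dambis--Dubins--Schwarz applied to a continuous martingale with deterministic bracket --- is a sound outline of the standard argument for this martingale functional CLT.

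Two places in your sketch need tightening if you were to write it out in full. First, condition (iii) only gives convergence in probability of $\langle M^\epsilon\rangle_t$ to $C_t$ for each fixed $t$, which by itself does not yield C-tightness of the process $t\mapsto\langle M^\epsilon\rangle_t$; you must additionally use that these processes are non-decreasing and that the limit $C_\cdot$ is continuous (a P\'olya/Dini argument) to upgrade pointwise convergence in probability to locally uniform convergence in probability. Second, the uniform integrability of $(M_t^\epsilon)^2$ that you invoke to pass $(M_t^\epsilon)^2 - \langle M^\epsilon\rangle_t$ to the limit does not follow from (iii): convergence in probability of $\langle M^\epsilon\rangle_t$ does not control $\mathbb E\big[\langle M^\epsilon\rangle_t\big]$, let alone $\mathbb E\big[\langle M^\epsilon\rangle_t^2\big]$, so the line ``$\mathbb E[(M_t^\epsilon)^2]=\mathbb E[\langle M^\epsilon\rangle_t]\to C_t$'' is unjustified. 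The proof in \cite{jacod2013limit} closes this gap by stopping-time localization rather than by a direct uniform-integrability argument, and that substitution is what makes the identification step rigorous without additional moment hypotheses.
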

From the previous result we conclude that:
\begin{corollary}\label{cor:conv_mart}
For $H \in\mathbb S$, the sequence of martingales $\{M_t^\epsilon(H);t\in [0,T]\}_{\epsilon}$ converges in the topology of $\mathcal  D([0,T], \mathbb R)$, as $\epsilon\to 0$, towards a mean-zero Gaussian process $ W_t(H)$ with  quadratic variation given by 
\begin{equation*}
\int_0^t \|\ T_{t-s}H\|_{L^2(\rho_s)}^2\,ds,
\end{equation*}
where $\rho(u,s)$ is the solution of the hydrodynamic equation \eqref{eq:Robin_equation} and the $\mathbb L^2$-norm is defined in \eqref{eq:norm}.
\end{corollary}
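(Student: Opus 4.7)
The plan is to verify the three conditions of the proposition stated just before the corollary (taken from \cite{ahmed2022microscopic}) applied, for each $H\in\mathbb{S}$, to the martingale $\{M_t^\epsilon(H)\}_{t\in[0,T]}$ obtained by choosing $\psi_s=T_{t-s}H$ in \eqref{martingaleM}. Condition (i) is immediate because the predictable quadratic variation $\langle M^\epsilon(H)\rangle_t = \int_0^t \Gamma_s^\epsilon(T_{t-s}H)\,ds$ is an absolutely continuous function of $t$. Condition (ii) amounts to controlling the individual jumps of $Y_s^\epsilon(T_{t-s}H)$: a jump produced by an exchange across a bond has size at most $\sqrt{\epsilon}\,\|\nabla_\epsilon^+ T_{t-s}H\|_\infty = O(\epsilon^{3/2})$, and a jump produced by a single boundary flip has size at most $\sqrt{\epsilon}\,\|T_{t-s}H\|_\infty = O(\sqrt{\epsilon})$, so \eqref{eq:maximal_jump} follows at once.

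The main content is condition (iii). Applying the carr\'e du champ identity $\Gamma_s^\epsilon(\psi)=L_\epsilon[Y_s^\epsilon(\psi)]^2-2\,Y_s^\epsilon(\psi)\,L_\epsilon Y_s^\epsilon(\psi)$ and using the explicit form of $L_\epsilon$, one obtains a bulk contribution
\begin{equation*}
\frac{\epsilon}{2}\sum_{x=-N}^{N-1}(\nabla_\epsilon^+\psi_s(\epsilon x))^2\,(\eta_s(x+1)-\eta_s(x))^2
\end{equation*}
from $\epsilon^{-2}L_0$, together with a boundary contribution $\frac{j}{2}\sum_{x\in I_\pm}\psi_s(\epsilon x)^2\, D_\pm\eta_s(x)$ coming from $\epsilon^{-1}L_{b,\pm}$. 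For the bulk contribution we use the pointwise identity $(\eta(x+1)-\eta(x))^2=\eta(x)+\eta(x+1)-2\eta(x)\eta(x+1)$ together with Theorem~\ref{thm:vestimate2} to replace the product $\eta_s(x)\eta_s(x+1)$ by $\rho_\epsilon(x,s)\rho_\epsilon(x+1,s)$ up to errors governed by $v_1^\epsilon$ and $v_2^\epsilon$, both of order $\epsilon^{1-2\zeta}$; after Riemann summation and the uniform hydrodynamic convergence $\rho_\epsilon\to\rho$ from Theorem~\ref{hydrodynamiclimit}, the time-integrated expectation of the bulk contribution converges to $\int_0^t\!\int_{-1}^1 \chi(\rho(u,s))(\partial_u T_{t-s}H(u))^2\,du\,ds$. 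For the boundary contribution the telescoping identity $\sum_{x\in I_+}D_+\eta(x)=1-\prod_{y=N-K+1}^N\eta(y)$ (and its analogue at the left end) reduces everything to a single product of $K$ occupation variables; replacing this product by $\rho(1,s)^K$ via Theorem~\ref{thm:vestimate2} and Taylor-expanding $\psi_s$ at $\pm 1$ produces, in the limit, the boundary terms $\widetilde D_\pm(\rho(\pm 1,s))\,(T_{t-s}H(\pm 1))^2$ of the $\mathbb L^2(\rho_s)$-norm.

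To upgrade convergence of the expectation to convergence in probability one estimates the variance of $\langle M^\epsilon(H)\rangle_t$. Squaring the time integral and expanding yields two-time correlations of products of up to four centred occupation variables, which are controlled by the space--time bounds of Theorem~\ref{thm:vestimate3} combined with Lemma~\ref{lem: difference of rhos}, exactly as in the closure argument carried out in Lemmas~\ref{lem:diff_eta}--\ref{lem:estimation of v2}; the resulting variance vanishes as $\epsilon\to 0$. Assembling these pieces, condition (iii) holds with $C_t=\int_0^t\|T_{t-s}H\|_{\mathbb L^2(\rho_s)}^2\,ds$, which completes the proof. The hardest step is the boundary replacement: the degree-$K$ product $\prod_y\eta_s(y)$ cannot be closed with sharp error via the coarser bound of Theorem~\ref{vestimate1}, and it is precisely the refined estimates of Theorem~\ref{thm:vestimate2} --- namely $v_n^\epsilon=O(\epsilon)$ for $3\leq n\leq K$ and $v_n^\epsilon=O(\epsilon^{1+\zeta})$ for $n\geq K+1$ --- together with their space--time counterparts that allow the identification of the boundary limit uniformly in $s\in[0,t]$.
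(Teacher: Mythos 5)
Your proposal is correct and reaches the paper's conclusion through the same high-level skeleton (verification of conditions (i)--(iii) of the Jacod--Shiryaev type criterion recalled from \cite{ahmed2022microscopic}), but the key identification step (iii) is argued by a genuinely different route. The paper passes to convergence of $\int_0^t\Gamma_s^\epsilon(H_s)\,ds$ in distribution by invoking the Replacement Lemma of \cite[Lemma A.3]{erignoux2020hydrodynamics} together with the hydrodynamic limit \cite[Theorem 2.7]{erignoux2020hydrodynamics}, and then promotes this to convergence in probability simply because the limit is deterministic; no variance estimate is performed. You instead close the quadratic variation entirely inside the $v$-function machinery of the present paper: the identity $(\eta(x+1)-\eta(x))^2=\eta(x)+\eta(x+1)-2\eta(x)\eta(x+1)$ and the boundary telescoping identity $\sum_{x\in I_+}D_+\eta(x)=1-\prod_{y\in I_+}\eta(y)$ reduce matters to expectations of products of occupation variables, whose proximity to $\rho_\epsilon$ (hence to $\rho$) is controlled by Theorem~\ref{thm:vestimate2}; then you upgrade $L^1$ convergence to convergence in probability by a direct variance estimate via the space--time bounds of Theorem~\ref{thm:vestimate3}. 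Both arguments are sound. Yours is more self-contained and yields the stronger $L^2$-mode of convergence, at the cost of redoing some bookkeeping that the Replacement Lemma packages for free; the paper's route is shorter because it outsources the averaging to an entropy-based lemma and exploits the deterministicity of the limit to sidestep the variance estimate entirely. One small refinement worth noting: the maximal-jump bound in condition (ii) in the paper is stated crudely as $2\sqrt{\epsilon}\|H\|_\infty$ coming from the fact that each Poisson ring alters at most two sites; your separate bookkeeping of the $O(\epsilon^{3/2})$ exchange jumps versus the $O(\sqrt{\epsilon})$ boundary-flip jumps is a finer observation but leads to the same conclusion.
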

\begin{proof}
The last lemma is a simple consequence of  \cite[Theorem VIII.3.12]{jacod2013limit}. First,  we observe that ii) is a consequence of the fact that  
\begin{equation*}
\lim_{\epsilon\to 0} \mathbb E^\epsilon_{\mu^\eps}\Big[ \sup_{s\le t} \big| M^\epsilon_s(H) -
M^\epsilon_{s^-}(H)\big| \Big]\;=\; 0,
\end{equation*}
since at each time of a ring of a clock of a Poisson process, the configuration $\eta$ only changes its value in at most two sites, which in turn implies that 
\begin{equation*}
\lim_{\epsilon\to 0}\sup_{s\le t} \big| M_s^\epsilon(H) -
M^\epsilon_{s^-}(H)\big| =\lim_{\epsilon\to 0}\sup_{s\le t} \big| Y_s^\epsilon(H) -
Y^\epsilon_{s^-}(H)\big| \leq \lim_{\epsilon\to 0} \sqrt \epsilon 2\|H\|_\infty=0.
\end{equation*}
For items i) and iii), we note that from \eqref{quadratic}, the quadratic variation of the martingale $M_s^\epsilon(H)$ is given by 
$\int_0^t \Gamma^\epsilon_s(H_s)ds$ and 
a simple computation shows  that 
\begin{equation*}
\begin{split}
\Gamma_s^\epsilon(H_s)=\frac{\epsilon}{2} \sum_{x\in\Lambda_N} (\nabla^+_\epsilon H_s(\epsilon x))^2 (\eta_{s}(x)-\eta_{s}(x+1))^2+\frac{j}{2}\sum_{x\in I_\pm}D_\pm\eta_{s}(x) (H_s(\epsilon x))^2.
\end{split}
\end{equation*}
Item i) follows from the fact that the number of particles is bounded  and from the observation that the integral in time of a bounded function is a continuous function  $\mathbb P$ a.s.
    
To analyse the convergence, we observe that, 
\begin{equation}\label{eq:boundary_terms_new}
 \sum_{x\in I_+ } D_+  \eta (x)=1{-}\prod_{x\in I_+}\eta (x)
\end{equation}
and 
\begin{equation}\label{eq:boundary_terms_new_1}
 \sum_{x\in I_- } D_-  \eta (x)=\sum_{j=1}^K(-1)^{j+1}\sum_{A_j\subset I_-} \prod_{x\in A_j}\eta(x),
\end{equation}
where for $j=1,\dots, K$ the sets $A_j\subset I_-$ with cardinality $j
$. Since the measure $\mu^\epsilon $ satisfies \eqref{eq:mea_associated}, then, from the Replacement Lemma proved in  \cite[Lemma A.3]{erignoux2020hydrodynamics} and by the hydrodynamic limit proved in  \cite[Theorem 2.7]{erignoux2020hydrodynamics}  we conclude that 
$
\int_{0}^t\Gamma_s^\epsilon( H_s)ds,
$
converges in distribution, as $\epsilon\to 0$, to 
\begin{equation*}\label{eq:covariance}
\begin{split}
 & \int_0^t \int_{-1}^1\chi(\rho_s(u)) \big(\partial_u T_{t-s}H(u)\big)^2du \; ds \\
  +&\int_0^t \frac{j}{2}(1-\rho_s(1)^{K})(H_s(1))^2+ \frac{j}{2}\Big[1-(1-\rho_s(-1))^{K}\Big]H_s(-1))^2\,ds,
\end{split}
\end{equation*}
and since  last expression is deterministic, the  convergence holds  in probability. 

\end{proof}

\appendix
\section{Appendix}
In this appendix we collect several computations that have been used throughout the article. We begin by presenting explicit calculations concerning the action of the generator on the occupation variables.
\subsection{The action of the generator}\label{ap:action_gen}
A simple computation shows that for $x=-(N-1),\dots, N-1$$$L_0\eta(x)= \frac 12\Big(\eta(x+1)+\eta(x-1)-2\eta(x)\Big)$$ and 
$L_0\eta(N)= \frac 12\Big(\eta(N-1)-\eta(N)\Big)$ and $L_0\eta(-N)=\frac 12\Big(\eta(-(N-1))-\eta(-N)\Big)$. To be consistent with the notation introduced above, see \eqref{def_laplacian} we write $L_0\eta(x)=\frac 12\Delta \eta(x). $
Now, for  the boundary action, we note that a simple computation shows that
$L_{b,\pm}\eta(x)=\frac{j}{2}D_\pm\eta(x)(1-2\eta(x))$
for $x\in I_\pm$.
\smallskip

{\textbf{The discrete profile:}}
Recall \eqref{eq:exp_eta}.
From the previous computations and Kolmogorov's equation we  can conclude that 
\begin{equation}\label{eq:kolm_disc_prof}
\partial_t \rho_t^\epsilon (x)=\frac 12\Delta_\epsilon  \rho_t^\epsilon (x)+\frac {\epsilon^{-1}j}{2} \textbf{1}_{x\in I_{+}}\mathbb E^\epsilon_{\mu^\eps}[D_+\eta_{s}(x)]- \frac {\epsilon^{-1}j}{2} \textbf{1}_{x\in I_{-}}\mathbb E^\epsilon_{\mu^\eps}[D_-\eta_{s}(x)].
\end{equation}

{\textbf{The action on the density field:}}
Now we compute the action of the generator on the density fluctuation field. A simple computation shows that 
$$L_0Y^\epsilon _s(H) =\sqrt \epsilon \sum_{x\in \Lambda_N} H(\epsilon x) \Delta \eta_{s} (x)=\sqrt \epsilon \sum_{x\in \Lambda_N} \Delta H(\epsilon x) \eta_{s} (x),
$$
$$L_bY^\epsilon _s(H) =\sqrt \epsilon \frac j2 \sum_{x\in I_\pm } H(\epsilon x) D_\pm  \eta_{s} (x).
$$
Recall \eqref{eq:gen_total}, then
$$L_\epsilon Y^\epsilon _s(H) =\sqrt \epsilon \sum_{x\in \Lambda_N} 
\Delta_\epsilon H(\epsilon x) \eta_{s} (x)+\frac {j}{2\sqrt \epsilon} \sum_{x\in I_\pm } H(\epsilon x) D_\pm  \eta_{s} (x).$$
Moreover, from \eqref{eq:kolm_disc_prof} we see that
\begin{equation*}
\begin{split}\partial_s Y^\epsilon _s(H) &=-\sqrt{\epsilon}\sum_{x\in \Lambda_N} H(\epsilon x) \Delta_\epsilon  \rho_t^\epsilon (x)\\&-\frac{j}{2\sqrt \epsilon}\sum_{x\in I_+} H(\epsilon x)\mathbb E^\epsilon_{\mu^\eps}[D_+\eta_{s}(x)]+ \frac{j}{2\sqrt \epsilon}\sum_{x\in I_-} H(\epsilon x)\mathbb E^\epsilon_{\mu^\eps}[D_-\eta_{s}(x)].\end{split}\end{equation*}
By a summation by parts last expression becomes 
\begin{equation*}
\begin{split}\partial_s Y^\epsilon _s(H) &=\sqrt{\epsilon}\sum_{x\in \Lambda_N} \Delta_\epsilon H(\epsilon x)  \rho_s^\epsilon (x)\\&+\frac{j}{2\sqrt \epsilon}\sum_{x\in I_+} H(\epsilon x)\mathbb E^\epsilon_{\mu^\eps}[D_+\eta_{s}(x)]- \frac{j}{2\sqrt \epsilon}\sum_{x\in I_-} H(\epsilon x)\mathbb E^\epsilon_{\mu^\eps}[D_-\eta_{s}(x)]. \end{split}\end{equation*}

\subsection{Treating boundary terms}\label{ap:boundary_terms}
We recall that just below \eqref{eq:boundary_term_final},  in order to give a more transparent presentation of the closure of the martingale, we presented it for the case $K=2$. Below, for completeness, we repeat the computations for a general $K$. To that end,  recall  the form of $D_+$ given in \eqref{eq:boundary_operators}. We have the identity
\begin{equation}\label{eq:boundary_terms}
 \sum_{x\in I_+ } \Big(D_+  \eta (x)-\mathbb E^\epsilon_{\mu^\eps}[D_+\eta(x)]\Big)=-\Big(\prod_{x\in I_+}\eta (x)-\mathbb{E}^\eps_{\mu^\epsilon}\Big[\prod_{x\in I_+}\eta (x)\Big]\Big).
\end{equation}
Denoting for $j=1,\dots, K$ by $A_j$ the subsets of $I_+$ with cardinality equal to $j$, we observe that  
\begin{equation*}\label{eq:prod_eta}\begin{split}
 \prod_{x\in I_+}\eta (x)&=\sum_{j=1}^K\sum_{A_j\subset I_+} \Big\{\prod_{x\in A_j} \bar\eta(x)\prod_{y\in A_j^c}\rho_\epsilon(y)\Big\}+\prod_{x\in I_+}\rho_\epsilon(x)\\&=\sum_{x\in I_+}  \Big\{\bar\eta(x)\prod_{y\in I_+\setminus \{x\}}\rho_\epsilon(y)\Big\}+\sum_{j=2}^K\sum_{A_j\subset I_+} \Big\{\prod_{x\in A_j} \bar\eta(x)\prod_{y\in A_j^c}\rho_\epsilon(y)\Big\}+\prod_{x\in I_+}\rho_\epsilon(x).\end{split}
\end{equation*}
Now we focus on the  leftmost term on the last line of last display and we rewrite it as 
\begin{equation*}\begin{split}
&\sum_{x\in I_+\setminus\{N\}}  \Big\{\bar\eta(x)\prod_{y\in I_+\setminus \{x\}}\rho_\epsilon(y)\Big\}+\bar\eta(N)\prod_{y\in I_+\setminus\{N\}}\rho_\epsilon(y)\\=&\sum_{x\in I_+\setminus\{N\}}  \Big\{(\bar\eta(x)-\bar\eta(N))\prod_{y\in I_+\setminus \{x\}}\rho_\epsilon(y)\Big\}+\bar\eta(N)\sum_{x\in I_+}  \prod_{y\in I_+\setminus \{x\}}\rho_\epsilon(y).\end{split}
\end{equation*}
We write the rightmost term of last display as
\begin{equation*}\begin{split}
\bar\eta(N)\sum_{x\in I_+}  \prod_{y\in I_+\setminus \{x\}}(\rho_\epsilon(y)-\rho_\epsilon(N))+K\bar\eta(N)(\rho_\epsilon(N))^{K-1}.\end{split}
\end{equation*}
Putting together all the computations we get that 
\begin{equation*}\begin{split}
 \prod_{x\in I_+}\eta (x)&=\sum_{j=2}^K\sum_{A_j\subset I_+} \Big\{\prod_{x\in A_j} \bar\eta(x)\prod_{y\in A_j^c}\rho_\epsilon(y)\Big\}+\prod_{x\in I_+}\rho_\epsilon(x)\\&+\sum_{x\in I_+\setminus\{N\}}  \Big\{(\bar\eta(x)-\bar\eta(N))\prod_{y\in I_+\setminus \{x\}}\rho_\epsilon(y)\Big\}\\&+\bar\eta(N)\sum_{x\in I_+}  \prod_{y\in I_+\setminus \{x\}}(\rho_\epsilon(y)-\rho_\epsilon(N))+K\bar\eta(N)(\rho_\epsilon(N))^{K-1}.\end{split}
\end{equation*}
From this we conclude that  
\begin{equation}\label{eq:prod_eta_final}\begin{split}
 \prod_{x\in I_+}\eta (x)&=\sum_{j=2}^K\sum_{A_j\subset I_+} \Big\{\prod_{x\in A_j} \bar\eta(x)\prod_{y\in A_j^c}\rho_\epsilon(y)\Big\}+\prod_{x\in I_+}\rho_\epsilon(x)\\&+\sum_{x\in I_+\setminus\{N\}}  \Big\{(\bar\eta(x)-\bar\eta(N))\prod_{y\in I_+\setminus \{x\}}\rho_\epsilon(y)\Big\}\\&+\bar\eta(N)\sum_{x\in I_+}  \prod_{y\in I_+\setminus \{x\}}(\rho_\epsilon(y)-\rho_\epsilon(N))+K\bar\eta(N)(\rho_\epsilon(N))^{K-1}.\end{split}
\end{equation}
Observe that the expectation of the last term equals to 
\begin{equation}\label{eq:exp_prod_eta_final}\begin{split}
\mathbb E^\epsilon_{\mu^\eps}\Big[ \prod_{x\in I_+}\eta (x)\Big]&=\sum_{j=2}^K\sum_{A_j\subset I_+} \Big\{\mathbb  E^\epsilon_{\mu^\eps}\Big[\prod_{x\in A_j} \bar\eta(x)\Big]\prod_{y\in A_j^c}\rho_\epsilon(y)\Big\}+\prod_{x\in I_+}\rho_\epsilon(x)
\\&+\sum_{x\in I_+\setminus\{N\}}  \Big\{(\mathbb E^\epsilon_{\mu^\eps}[\bar\eta(x)]-\mathbb E^\epsilon_{\mu^\eps}[\bar\eta(N)])\prod_{y\in I_+\setminus \{x\}}\rho_\epsilon(y)\Big\}\\&+\mathbb E^\epsilon_{\mu^\eps} [\bar\eta(N)]\sum_{x\in I_+}  \prod_{y\in I_+\setminus \{x\}}(\rho_\epsilon(y)-\rho_\epsilon(N))+K\mathbb E^\epsilon_{\mu^\eps}[\bar\eta(N)](\rho_\epsilon(N))^{K-1}.
\end{split}
\end{equation}
Putting together \eqref{eq:prod_eta_final} and \eqref{eq:exp_prod_eta_final} we see that 
\begin{equation}\label{eq:boundary_terms_final}\begin{split}
 &\sum_{x\in I_+ } \Big(D_+  \eta (x)-\mathbb E^\epsilon_{\mu^\eps}[D_+\eta(x)]\Big)\\
=-&\sum_{j=2}^K\sum_{A_j\subset I_+} \Big\{\prod_{x\in A_j} \bar\eta(x)\prod_{y\in A_j^c}\rho_\epsilon(y)\Big\}+\sum_{j=2}^K\sum_{A_j\subset I_+} \Big\{\mathbb E^\epsilon_{\mu^\eps}\Big[\prod_{x\in A_j} \bar\eta(x)\Big]\prod_{y\in A_j^c}\rho_\epsilon(y)\Big\}
\\-&\sum_{x\in I_+\setminus\{N\}}  \Big\{(\bar\eta(x)-\bar\eta(N))\prod_{y\in I_+\setminus \{x\}}\rho_\epsilon(y)\Big\}+\sum_{x\in I_+\setminus\{N\}}  \Big\{(\mathbb E^\epsilon_{\mu^\eps}[\bar\eta(x)]-\mathbb E^\epsilon_{\mu^\eps}[\bar\eta(N)])\prod_{y\in I_+\setminus \{x\}}\rho_\epsilon(y)\Big\}
\\-&\bar\eta(N)\sum_{x\in I_+}  \prod_{y\in I_+\setminus \{x\}}(\rho_\epsilon(y)-\rho_\epsilon(N))-K\bar\eta(N)(\rho_\epsilon(N))^{K-1}\\+&\mathbb E^\epsilon_{\mu^\eps} [\bar\eta(N)]\sum_{x\in I_+}  \prod_{y\in I_+\setminus \{x\}}(\rho_\epsilon(y)-\rho_\epsilon(N))+K\mathbb E^\epsilon_{\mu^\eps}[\bar\eta(N)](\rho_\epsilon(N))^{K-1}.
\end{split}
\end{equation}
For the left boundary the computations are completely analogous and for that reason they are omitted.
Now we are back to \eqref{eq:boundary_term_final}. 
From the computations presented above (and now we added the time dependence),  the leftmost term in \eqref{eq:boundary_term_final} can be rewritten as  
\begin{equation*}
\begin{split}
&\underbrace{-\frac{j}{2\sqrt \epsilon} \psi_s(1)\sum_{j=2}^K\sum_{A_j\subset I_+} \Big\{\prod_{x\in A_j} \bar\eta_{s}(x)\prod_{y\in A_j^c}\rho_\epsilon(y,s)\Big\}}_\text{A}\\+&\underbrace{\frac{j}{2\sqrt \epsilon} \psi_s(1)\sum_{j=2}^K\sum_{A_j\subset I_+} \Big\{v_{|A_j|}^{\epsilon}(A_j,s)\prod_{y\in A_j^c}\rho_\epsilon(y,s)\Big\}}_\text{B}\\-&\underbrace{\frac{j}{2\sqrt \epsilon} \psi_s(1)\sum_{x\in I_+\setminus\{N\}}  \Big\{(\bar\eta_{s}(x)-\bar\eta_{s}(N))\prod_{y\in I_+\setminus \{x\}}\rho_\epsilon(y,s)\Big\}}_\text{C}\\
+&\underbrace{{\frac{j}{2\sqrt \epsilon} \psi_s(1)\sum_{x\in I_+\setminus\{N\}}  \Big\{(\mathbb E^\epsilon_{\mu^\eps}[\bar\eta_s(x)]-\mathbb E^\epsilon_{\mu^\eps}[\bar\eta_s(N)])\prod_{y\in I_+\setminus \{x\}}\rho_\epsilon(y,s)\Big\}}}_\text{D}\\
-&\underbrace{\frac{j}{2\sqrt \epsilon} \psi_s(1)\bar\eta_{s}(N)\sum_{x\in I_+}  \prod_{y\in I_+\setminus \{x\}}(\rho_\epsilon(y,s)-\rho_\epsilon(N,s))}_\text{E}\\-&\frac{j}{2\sqrt \epsilon} \psi_s(1)K\bar\eta_{s}(N)(\rho_\epsilon(N,s))^{K-1}.
\end{split}
\end{equation*}
We can obtain analogous identities with respect to the left boundary but we omit them. 
Observe that the $\mathbb L^2$-norm of the time integral of the term A can be bounded from above by a constant 
$C_{j,\psi,K}\epsilon ^\zeta(t^{1+\zeta}\mathbf 1_{t<1}+t^2 \mathbf 1_{t\geq 1})$ for some $\zeta>0$, using Theorem \ref{thm:vestimate3}. Indeed,  since $|\rho_\epsilon(s,y)|\leq 1$ for all $s$ and $y$, we have that
\begin{equation*}
\begin{split}
&\mathbb E^\epsilon_{\mu^\eps} \Big[\Big(\int_0^t\frac{j}{2\sqrt \epsilon} \psi_s(1)\sum_{j=2}^K\sum_{A_j\subset I_+} \Big\{\prod_{x\in A_j} \bar\eta_{s}(x)\prod_{y\in A_j^c}\rho_\epsilon(y,s)\Big\}\Big)^2\Big]\\
\leq &\sum_{j=2}^K\sum_{A_j\subset I_+} \frac{C_{j,\psi}}{\epsilon} \int_0^t\int_0^r \mathbb E^\epsilon_{\mu^\eps} \Big[\prod_{x\in A_j}\bar \eta_{s}(x)\prod_{\tilde x\in A_j}\bar \eta_{r}(\tilde x)\Big] dsdr\leq C_{j,\psi,K}\epsilon^\zeta(t^{1+\zeta}\mathbf 1_{t<1}+t^2 \mathbf 1_{t\geq 1}).
\end{split}
\end{equation*}
Analogously, from Theorem \ref{thm:vestimate3} the $\mathbb L^2$-norm of the time integral of the term B  can be bounded from above by $\epsilon$ to some positive exponent. From Corollary \ref{cor:vestimate3_difference}, the $\mathbb L^2$-norm of the time integral of the term C can be bounded from above by $C_{j,\psi,K}\epsilon^\zeta(t^{1+\zeta}\mathbf 1_{t<1}+t^2 \mathbf 1_{t\geq 1})$ for some $\zeta>0$.  {Since $\mathbb E^\epsilon_{\mu^\eps}[\bar\eta_s(x)]=v^{\eps}_1(x,s)$ and $\mathbb E^\epsilon_{\mu^\eps}[\bar\eta_s(N)]=v^{\eps}_1(N,s)$, by Theorem \ref{thm:vestimate2}, the time integral of the term D is bounded as follows
$$\int_{0}^t\frac{j}{2\sqrt \epsilon} \psi_s(1)\sum_{x\in I_+\setminus\{N\}}  \Big\{(\mathbb E^\epsilon_{\mu^\eps}[\bar\eta_s(x)]-\mathbb E^\epsilon_{\mu^\eps}[\bar\eta_s(N)])\prod_{y\in I_+\setminus \{x\}}\rho_\epsilon(y,s)\Big\}ds\leq C_{j,\psi,K}\frac{\eps^{1-2\zeta}}{\sqrt{\eps}}t,$$
where we use $\prod_{y\in I_+\setminus \{x\}}|\rho_\epsilon(s,y)|\leq 1$.} Finally, 
to compute the $\mathbb L^2$-norm of the time integral of the term E we use  Corollary  \ref{corollary: difference of rhos}.  We leave the details to the reader but it is enough to follow the same steps as presented above for the case $K=2$.  Now, from this, the rightmost term in the second line and the third line of  \eqref{intpartofmart_2} is equal to 
\begin{equation}\label{intpartofmart_3}
\begin{split}
-&\frac{1}{2\sqrt \epsilon}\Big(\partial_u\psi_s(1){+}jK(\rho_s(1))^{K-1} \psi_s(1)\Big)\bar{ \eta}_{s} (N),
\end{split}
\end{equation} from where we see that  \eqref{eq:Robin_equation_line}  implies that the last display is equal to zero.  This finishes the closure of the martingale for any value of $K$. 
 \section{A single random walk with reflections}
\label{secN6}

In this section we state some properties of a single random walk in $\Lambda_N$ with reflections
at $\pm N$, referring to \cite{de2011current} for the proofs.

Let $Q_t^{(\epsilon)}(x,y)$  denote the transition probability of a
simple random walk in $\mathbb{Z}$ with jumps of intensity
$\epsilon^{-2}/2$ between nearest neighbor ({\it n.n.}) sites, and let
$P_t^{(\epsilon)}(x,y)$ denote the transition probability of a
corresponding random walk on $\La_N$, with the jumps outside $\La_N$
being suppressed (and the same jump rates within $\La_N$).
$P_t^{(\epsilon)}$ and $Q_t^{(\epsilon)}$ are thus related via the
``reflection map'' $\psi_N:\mathbb Z \to \La_N$ defined as follows.

\begin{itemize}

 \item\; $|x|\le N$:\;\;$\psi_N(x)=x$.

 \item\; $x<-N$:\;\;$\psi_N(x)=-\psi_N(-x)$;

 \item\; $x>N$:\;\;$\psi_N\left(N+j(2N+1)+k\right)=\psi_N\left(N+j(2N+1)-(k-1)\right)$,\;
  $k=1,\dots,2N+1, j=0,1,\dots$.
\end{itemize}

From the last identification we see that the two transition probabilities are related as follows
     \begin{equation}
    \label{a3.6}
P^{(\epsilon)}_t (x,z)= \sum_{y: \psi_N(y)=z} Q^{(\epsilon)}_t (x,y).
    \end{equation}
   Let
      \begin{equation}
    \label{a3.6???}
G_t (x,y)=   \frac{e^{- (x-y)^2/2t}}{\sqrt{2\pi t}}.
    \end{equation}
By the local central limit theorem, \cite{lawler2010random},
there exist positive constants $c_1,...,c_5$ so that for $|x-y|\le (\epsilon^{-2 }t)^{5/8}$ it holds:
    \begin{equation}
    \label{N4.4}
 |Q^{(\epsilon)}_t(x,y)- G_{\epsilon^{-2}t}(x,y)|\le   \frac{c_1}{ \sqrt{\epsilon^{-2}t}}
 G_{\epsilon^{-2}t}(x,y),\quad 
    \end{equation}
 while for $|x-y|> (\epsilon^{-2 }t)^{5/8} $ it holds:
    \begin{equation}
       \label{N4.4.1}
 Q^{(\epsilon)}_t(x,y) \le \min\Big\{ c_2 e^{-c_3|x-y|^{2}/(\epsilon^{-2}t)}, c_4 e^{-|y-x|(\log |y-x|-c_5)}\Big\}.
    \end  {equation}
As a corollary, for any $T>0$ there exists $c$  so that the following holds:
\begin{itemize}
\item For all $\epsilon$, all $t\in (0,T]$ and all $x$, $y$ in $\La_N$
     \begin{equation}
    \label{N4.5}
 P^{(\epsilon)}_t (x,y)    \le c \;G_{\epsilon^{-2}t}(x,y).
    \end{equation}

    \begin{remark}\label{rem:imp_estimate}
    We observe that in fact, due to non-integrability of the heat kernel close to zero, we used along the paper the estimate
\begin{equation}
    \label{N4.5 additional p}
 P^{(\epsilon)}_t (x,y)    \le \frac{2}{\sqrt{\epsilon^{-2}t}+1}.
    \end{equation}    One way to prove this estimate is to do the following. To simplify notation we denote $\lambda:=\epsilon^{-2}t$.
    First we note that since we are dealing with a probability we have that
    \begin{equation}
 P^{(\epsilon)}_t (x,y)    \le \min\{\frac{1}{\sqrt \lambda},1\}.
    \end{equation} To show the bound we do the following. First we split into two cases:
     \begin{itemize}
    \item $\sqrt \lambda>1$, in this case the minimum is attained at $\frac{1}{\sqrt \lambda}$. Now we note that 
    since $\sqrt \lambda>1$ then $2\sqrt \lambda>\sqrt \lambda +1$ from where we get that $\frac{1}{2\sqrt \lambda}< \frac{1}{\sqrt {\lambda}+1}$. From this we get that  \begin{equation}
 P^{(\epsilon)}_t (x,y)    \le \min\{\frac{1}{\sqrt \lambda},1\}=\frac{1}{\sqrt \lambda}<\frac{2}{\sqrt{\lambda}+1}.
    \end{equation} 
    \item $\sqrt \lambda<1$, in this case the minimum is attained at $1$. Now we note that 
    since $\sqrt \lambda<1$ then $2>\sqrt \lambda +1$ from where we get that $\frac{1}{2}< \frac{1}{\sqrt {\lambda}+1}$. From this we get that  \begin{equation}
 P^{(\epsilon)}_t (x,y)    \le \min\{\frac{1}{\sqrt \lambda},1\}=1<\frac{2}{\sqrt{\lambda}+1}.
    \end{equation}
    \end{itemize} 
    This observation also extends to the heat kernel.
    \end{remark}   
\item For all $\epsilon$, all $t\in (0,T]$ and all
 $-N\le x \le N-1$,
     \begin{equation}
    \label{N4.6}
\Big| P^{(\epsilon)}_t (x,y) - P^{(\epsilon)}_t (x+1,y) \Big| \le \frac{c}{\sqrt{\epsilon^{-2}t}} G_{\epsilon^{-2}t}(x,y).
    \end{equation}
Note that 
\begin{equation}
    \label{N4.5 additional}
G_{\epsilon^{-2}t}(x,y)\leq c\frac{1}{[\epsilon^{-2}t]^{1/2}+1}.
    \end{equation}
   As above we can also replace the last estimate by
   \begin{equation}
    \label{N4.6 additional}
\Big| P^{(\epsilon)}_t (x,y) - P^{(\epsilon)}_t (x+1,y) \Big| \le \frac{c}{\sqrt{\epsilon^{-2}t}+1} G_{\epsilon^{-2}t}(x,y).
    \end{equation}
 \end{itemize}

\section{Background on the  stirring process }\label{app: derivation of II for space v}
Before we proceed with the proofs, we start with the following definitions:
\begin{itemize}
\item Active/Passive marks process
\item  Labeled process 
\item Stopping time
 \end{itemize}

\begin{definition} [   The active/passive marks process]\label{def:active/passive marks process}
\quad

$\bullet$\; The active/passive marks process is realized in a probability space denoted by $(\Omega,\mathbf{P}_\epsilon)$.
It is a product of Poisson processes indexed by $\{x,x+1\}$, $x\in \mathbb Z$: for each pair $\{x,x+1\}$
we have a Poisson point process of intensity $\epsilon^{-2}$, its events are called ``marks'' and each mark
is independently given the attribute ``active'' or ``passive'' with probability $1/2$. The processes
relative to different pairs are mutually independent and their common law is $\mathbf{P}_\epsilon$, its expectation
being also denoted by $\mathbf{E}_\epsilon$.

$\bullet$\;  For any $\omega\in \Omega$ we  define the following particle
evolution in $\La_N$: a particle at $x\in \Lambda_N$ moves as soon as an
active mark appears at a pair $x,y$ with $y\in \Lambda_N$. The particle
then moves to $y$ (note that if another particle was at $y$ then it
would simultaneously move to $x$). Passive marks do not play any
role so far as well as all marks  $\{x,x+1\}$ with either $x$ or
$x+1$ not in $\Lambda_N$; they will be used later to construct
couplings. We denote by $X(t)\subset \Lambda_N$ the set of all sites
occupied by the particles at time $t$, so that $\eta(x,t)=1$ iff
$x\in X(t)$.
\end{definition}

\begin{definition} [The labeled process]\label{Def:labeled process}

Given $\omega\in \Omega$, we can follow unambiguously the motion of each
individual particle so that we can give them   labels at time 0
which then remain attached to the particles during their motion. We
shall denote by $\underline x=(x_{i_1},\dots,x_{i_n})$ a labeled
configuration of $n$ particles, ($i_1,\dots,i_n$ the labels,
$x_{i_j}$ the positions); configurations obtained under permutations
of the labels are now considered  distinct. We write $\underline x(t)$ for
the labeled process induced by the active/passive marks and denote
by $X(t)$ the unlabeled configuration obtained from $\underline x(t)$,
(i.e.\ only the positions in $\underline x(t)$ are recorded by $X(t)$). By
an abuse of notation we shall write $\mathbf{P}_\epsilon$ and $\mathbf{E}_\epsilon$ both
for law and expectation in the active/passive marks process and for
the marginal over   $\underline x(t)$, the labeled process realized in
this space.  By adding a subscript $\underline x$ we mean that  the
initial distribution of particles has support on the single labeled
configuration $\underline x$.
\end{definition}

\begin{definition}[The variables $\mathcal T_{x_1,x_2}$, $\tau_{x_1,x_2}$ and  $N_{x_1,x_2,t}$]

Given the initial position  $x_1$ and $x_2$ of particles $1$ and $2$
define in $\Omega$ the random multi-interval $\mathcal
T_{x_1,x_2}=\{s\geq 0: |x_1(s)-x_2(s)|=1\}$,  calling $\mathcal
T_{x_1,x_2,t}:=\mathcal T_{x_1,x_2}\cap [0,t]$ and $I_{x_1,x_2,t}=
\{ (s,y_1,y_2): s \in \mathcal T_{x_1,x_2,t}, y_i=x_i(s)$ and a mark
appears at $s$ between $y_1$ and $y_2$$\} $, we define the stopping
time $\tau_{x_1,x_2}$ as the smallest $s$ in $I_{x_1,x_2,\infty}$
and    $N_{x_1,x_2,t}$   the total number of elements in
$I_{x_1,x_2,t}$. Notice that the presence of other particles does
not affect the values  of $\mathcal T_{x_1,x_2}$, $\tau_{x_1,x_2}$
and  $N_{x_1,x_2,t}$.
\end{definition}
    \begin{lemma}[\cite{de2012truncated}, Lemma 4.3]\label{lem:DPTV_ejp 4.3}
      \label{x}
Let $\underline x=(x_1,\dots,x_n)$, $t>s>0$ and $f(y_1,\dots,y_n)$ a
function antisymmetric under the exchange of $y_1$ and $y_2$. Then
       \begin{equation}
       \label{18a.1}
 \mathbf{E}_{\epsilon,\underline x}\Big[ \mathbf 1_{\tau_{x_1,x_2} \leq s } f(\underline x(t))\Big] = 0,
                \end{equation}
where the suffix $\underline x$ indicates the initial condition i.e.
$\mathbf{E}_{\epsilon,\underline x}(\cdot)=\mathbf{E}_{\epsilon} [\cdot|\underline x(0)=\underline x]$.
\end{lemma}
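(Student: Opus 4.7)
The identity is the standard antisymmetry/coupling fact for the labeled stirring process: once particles $1$ and $2$ have swapped at the stopping time $\tau:=\tau_{x_1,x_2}$, their labels are interchangeable, so any antisymmetric observable of the labeled configuration at a later time must have vanishing expectation on $\{\tau\le s\}$. I would carry the argument out in three steps.

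First, I would condition on the stopping $\sigma$-algebra $\mathcal F_\tau$ and observe that on $\{\tau\le s\}\subset\{\tau<\infty\}$, by the very definition of $\tau$, particles $1$ and $2$ sit at neighboring sites $y_1,y_2$ immediately before $\tau$ and an active mark between $y_1$ and $y_2$ realizes their exchange, so that $(x_1(\tau),x_2(\tau))=(y_2,y_1)$. Since the stirring generator is symmetric under permutation of particle labels, the conditional law of the post-$\tau$ trajectories $(x_1(\tau+\cdot),x_2(\tau+\cdot))$ given $\mathcal F_\tau$ is invariant under the swap $1\leftrightarrow 2$.

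Second, I would use this to build a measure-preserving involution $\Theta:\Omega\to\Omega$ which is the identity on the marks up to $\tau$ and which, on the post-$\tau$ portion of the active/passive mark process, interchanges the two particles labeled $1$ and $2$. The cleanest way to set this up is via the strong Markov property: write the conditional law past $\tau$ in terms of an exchangeable pair and permute it, then push the permutation back to $\Omega$. By construction $\Theta$ preserves $\mathbf P_\epsilon$, fixes the indicator $\mathbf 1_{\tau\le s}$ (which depends only on the unlabeled pair dynamics up to time $s$), and acts on the labeled configuration at any $t>\tau$ by swapping the positions of labels $1$ and $2$, that is
\[
\underline x(t)\circ\Theta=(x_2(t),x_1(t),x_3(t),\dots,x_n(t)).
\]

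Third, invariance of $\mathbf P_\epsilon$ under $\Theta$ together with the antisymmetry of $f$ in its first two arguments gives
\[
\mathbf E_{\epsilon,\underline x}\!\big[\mathbf 1_{\tau\le s}\,f(\underline x(t))\big]=\mathbf E_{\epsilon,\underline x}\!\big[\mathbf 1_{\tau\le s}\,f(\underline x(t))\circ\Theta\big]=-\mathbf E_{\epsilon,\underline x}\!\big[\mathbf 1_{\tau\le s}\,f(\underline x(t))\big],
\]
which forces the expectation to vanish. The main technical point, and the place where care is needed, is the construction of $\Theta$: a naive relabeling at the level of the active/passive Poisson marks is not automatically measure-preserving. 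I would therefore not attempt to swap marks directly but, following the construction in \cite{de2012truncated}, realize the post-$\tau$ labeled dynamics through exchangeable independent copies for which permuting the two labels is trivially measure-preserving, and then transport the involution back to $\Omega$ through the strong Markov decomposition. Once $\Theta$ is in place, the antisymmetry step is immediate.
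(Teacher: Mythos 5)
The paper does not reprove this lemma; it is imported verbatim from \cite{de2012truncated} (their Lemma 4.3), so there is no in-house argument to compare against. Your proposal should therefore be judged against the standard de Masi--Presutti coupling proof, and there it has a genuine gap that starts at the very first step.

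You assert that on $\{\tau\le s\}$ ``an active mark between $y_1$ and $y_2$ realizes their exchange, so that $(x_1(\tau),x_2(\tau))=(y_2,y_1)$.'' This misreads the definition of $\tau_{x_1,x_2}$. In the paper (and in \cite{de2012truncated}) $\tau_{x_1,x_2}$ is the first time a mark \emph{of either type} --- active \emph{or} passive --- appears on the bond between the neighboring particles $1$ and $2$. The passive marks are included precisely so that the attribute of the mark at $\tau$ is a Bernoulli$(1/2)$ random variable \emph{independent of} $\mathcal F_{\tau^-}$; this is the sole source of symmetry in the argument. The clean involution $\Theta$ is then: flip the attribute (active $\leftrightarrow$ passive) of that single mark at time $\tau$. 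This $\Theta$ is manifestly measure-preserving (a one-bit flip of an independent fair coin), leaves $\tau$ and $\mathbf 1_{\tau\le s}$ unchanged, leaves the unlabeled configuration unchanged for all time, and, because flipping active/passive toggles whether labels $1$ and $2$ exchange at $\tau$, it swaps the post-$\tau$ trajectories of labels $1$ and $2$, i.e.\ $\underline x(t)\circ\Theta=(x_2(t),x_1(t),x_3(t),\dots,x_n(t))$ on $\{\tau\le t\}$. Antisymmetry of $f$ then kills the expectation.

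Your Step~2 cannot recover this. Conditioning on $\mathcal F_\tau$ (which includes the attribute of the $\tau$-mark, and hence the value of $(x_1(\tau),x_2(\tau))$) the post-$\tau$ law is \emph{not} exchangeable under $1\leftrightarrow2$, because the two labels sit at two distinct sites; it is only exchangeable under the joint swap of labels \emph{and} positions. The exchangeability you need appears only after integrating out the attribute of the $\tau$-mark, i.e.\ conditioning on $\mathcal F_{\tau^-}$ --- which is exactly the mark-flip involution in disguise. And if, as in your Step~1, the $\tau$-mark were always active, this residual randomness would vanish altogether and no measure-preserving label-swapping $\Theta$ would exist. Finally, your caution that ``a naive relabeling at the level of the active/passive Poisson marks is not automatically measure-preserving'' is inverted: the direct flip of the single $\tau$-mark \emph{is} the measure-preserving move and is the simplest route; the strong-Markov reformulation you prefer is the one that requires extra care, and as written it does not supply the symmetry the argument rests on.
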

\begin{definition} [{The stopping time $\tau_{i,j,t_0}$}] \label{def:stoptime}Let $\{\underline x(t)\}_{t\geq 0}$ be the labeled process realized in the active/passive marks process , let $i$ and $j$ be the labels of two of its particles and $t_0\geq 0$.
We then define $\tau_{i,j,t_0}$ as the first time $\tau > t_0$
when  {\em(i)} $|x_i(\tau)-x_j(\tau)|=1$  and {\em(ii)} at $\tau$ there is a mark (either active or  passive) between  $x_i(\tau)$ and $x_j(\tau)$; otherwise we set $\tau=\infty$.
When $t_0=0$ we just write $\tau_{i,j}$.
\end{definition}
  \begin{theorem}
   \label{3.01}
There is  $c$ so that for all $\epsilon>0$
                    \begin{equation}
              \label{19.e1}
\sup_{|x_1-x_2|=1} \mathbb{P}_\epsilon\big[\tau_{x_1,x_2} \geq t\big] \leq \frac{c}{(\epsilon^{-2}t)^{1/2}+1}.
             \end{equation}
Moreover, given any $T>0$, for any $\zeta>0$ and $k\ge 1$ there is
$c$ so that for all $t\leq T$ and  for all $\epsilon>0$
                      \begin{equation}
              \label{19.e2}
\sup_{x_1,x_2} \mathbb{P}_\epsilon\big[N_{x_1,x_2,t} \geq (\epsilon^{-2}t)^{1/2+\zeta}\big] \le  c (\epsilon^{-2}t)^{-k}.
             \end{equation}

\end{theorem}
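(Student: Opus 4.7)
The plan is to reduce both bounds to standard estimates for one-dimensional random walks applied to the inter-particle difference. Using the active/passive marks construction of Definition \ref{Def:labeled process}, the pair of labeled trajectories $(x_1(s),x_2(s))$ is a symmetric stirring dynamics on two particles (with reflections at $\pm N$), and the associated difference $D(s):=x_1(s)-x_2(s)$ evolves as a symmetric continuous-time random walk whose jumps come from active marks incident to $x_1(s)$ or $x_2(s)$. The event $\{\tau_{x_1,x_2}\geq t\}$ is precisely the event that no mark (active or passive) has fired on the bond separating the two particles during any visit to the neighbor state $|D|=1$ in $[0,t]$.

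First I would establish \eqref{19.e1}, assuming initially $|x_1-x_2|=1$; a larger initial gap only strengthens the bound by making the particles first reach distance one. When $|D(s)|=1$, the total rate of a mark of either type firing on the bond between the two particles is $\epsilon^{-2}$, while the total rate of an active mark firing on one of the two adjacent bonds (hence pushing $|D|$ to $2$) is also $\epsilon^{-2}$. Thus, at each relevant event, with probability $1/2$ the stopping time $\tau$ occurs and otherwise $|D|$ jumps to $2$. Between consecutive visits of $D$ to the set $\{\pm 1\}$, the process behaves as a symmetric random walk of total rate $\epsilon^{-2}$, so the local central limit theorem estimates of Appendix \ref{secN6} (especially \eqref{N4.5 additional p} and \eqref{N4.5}) give that the probability that $D$ returns to $\{\pm 1\}$ at some point of the time interval $[s,t]$ from a state at distance $\geq 2$ is controlled by the heat-kernel-type quantity $1/(\sqrt{\epsilon^{-2}(t-s)}+1)$. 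Combining the hitting estimate with the geometric-trial structure above yields $\mathbb{P}_\epsilon[\tau_{x_1,x_2}\geq t]\lesssim 1/(\sqrt{\epsilon^{-2}t}+1)$, uniformly in $N$.

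Second, for the upper tail of $N_{x_1,x_2,t}$ in \eqref{19.e2}, observe that $N_{x_1,x_2,t}$ counts the number of marks occurring at the (random) bond between the two particles during their visits to $\{|D|=1\}$. Conditionally on the trajectory of $D$, this is a Poisson random variable with intensity equal to $\epsilon^{-2}$ times the local time of $D$ at $\{\pm 1\}$. Since this local time has mean of order $\sqrt{\epsilon^{-2}t}$ (again by the heat-kernel bounds of Appendix \ref{secN6}) and enjoys exponential tails above its mean by a standard Chernoff bound for symmetric random walks, the Poisson count $N_{x_1,x_2,t}$ also has sub-Gaussian tails above its typical scale $\sqrt{\epsilon^{-2}t}$. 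The extra factor $(\epsilon^{-2}t)^{\zeta}$ then produces a stretched-exponential decay, which dominates any polynomial $(\epsilon^{-2}t)^{-k}$ for $t\leq T$, giving the required bound.

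The hard part will be keeping the estimates uniform in $N$ in the presence of the reflecting boundary at $\pm N$: near the reflection the difference $D(s)$ is no longer translation invariant and one must either invoke the reflection map $\psi_N$ of Appendix \ref{secN6} together with \eqref{N4.5}--\eqref{N4.6} to transfer Gaussian-type estimates from $\mathbb{Z}$ to $\Lambda_N$, or compare the reflected two-particle dynamics with its unconstrained counterpart on $\mathbb{Z}$, where the hitting and local-time estimates are classical. Once this boundary bookkeeping is carried out, both bounds follow with universal constants, uniformly in $\epsilon$, $N$, and the initial positions $x_1,x_2$.
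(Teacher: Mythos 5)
First, note that the paper does \emph{not} contain a proof of Theorem~\ref{3.01}: it is recalled verbatim from \cite{de2012truncated} (the text where it is used even says ``a result originally proved in \cite{de2012truncated}''). So there is no in-paper proof to compare against; I can only assess your sketch on its own merits. Your reduction to the difference walk $D(s)=x_1(s)-x_2(s)$ and the inter-particle Poisson clock is a sensible and very likely the same starting point as the original reference.

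For \eqref{19.e1}, however, the ``geometric-trial'' picture you invoke would, if carried through naively, give the wrong asymptotics. With $\sim\sqrt{\epsilon^{-2}t}$ visits to $\{|D|=1\}$ and a fair coin at each, you would expect $\mathbb{P}_\epsilon[\tau>t]\approx (1/2)^{\sqrt{\epsilon^{-2}t}}$, which is \emph{much smaller} than the stated (and sharp) bound $c/(\sqrt{\epsilon^{-2}t}+1)$. The geometric picture misses where the probability actually concentrates: $\mathbb{P}_\epsilon[\tau>t]=\mathbf{E}_\epsilon[e^{-\epsilon^{-2}L_t}]$, where $L_t$ is the occupation time of $|D|$ at $1$, and the expectation is dominated by the \emph{lower tail} of $L_t$ (the walk escapes early and never returns to distance one, which happens with probability $\sim 1/\sqrt{\epsilon^{-2}t}$, i.e., $\mathbb{P}[L_t\le x]\lesssim x\sqrt{\epsilon^{-2}/t}$ for small $x$). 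Integrating the exponential against this density---or, equivalently, running a renewal argument on the return times of $|D|$ to $1$---is what produces the $1/\sqrt{\epsilon^{-2}t}$ decay. Relatedly, citing \eqref{N4.5 additional p} and \eqref{N4.5} as ``hitting estimates'' is not correct (those are pointwise transition-kernel bounds, not hitting probabilities), and your phrase ``the probability that $D$ returns\dots is controlled by $1/(\sqrt{\epsilon^{-2}(t-s)}+1)$'' conflates the return and non-return probabilities: it is the \emph{non-return} event that carries that scale. The final ``combining\dots yields'' step is therefore a genuine gap: the two ingredients you name do not obviously combine to give the stated polynomial bound.

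For \eqref{19.e2} your outline is closer to a proof, but ``standard Chernoff bound for symmetric random walks'' cannot be applied directly to $L_t$ (which is not a sum of independent increments). The clean route is via the IID return times $R_1,R_2,\dots$ of $|D|$ to $1$, which have a stable$(1/2)$-type Laplace transform $\mathbf{E}[e^{-\theta R}]=1-c\sqrt{\theta\epsilon^2}+o(\sqrt\theta)$. Chernoff on $\sum_{i\le m}R_i$ then gives $\mathbb{P}[\sum_{i\le m}R_i\le t]\le \exp\{-c'm^2/(\epsilon^{-2}t)\}$, so that with $m=(\epsilon^{-2}t)^{1/2+\zeta}$ you obtain $e^{-c'(\epsilon^{-2}t)^{2\zeta}}$, which dominates $(\epsilon^{-2}t)^{-k}$ for $\epsilon^{-2}t\gtrsim 1$; the regime $\epsilon^{-2}t\lesssim 1$ should be treated separately (there the bound is essentially trivial). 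With these corrections and the reflection map $\psi_N$ for uniformity in $N$, your approach can be made to work, but as written the key quantitative step for \eqref{19.e1} is missing and your description of the relevant 1D estimate is inaccurate.
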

\subsection{Useful estimates for transition probabilities}

\begin{lemma}\label{lem: bound T1}
Let $n\geq 2$. For any  $\underline{z}=(z_1,\dots,z_n)$ and $\underline{y}=(y_1,\dots,y_n)$ in $\Lambda_{N}^{n,\neq}$, the transition probability to go from $\underline{z}$ to $\underline{y}$ in a time $\lambda>0$ is bounded as:
\begin{equation}\label{bound T1 display}
\mathbf{P}_{\epsilon}(\underline{y}\overset{\lambda}\rightarrow\underline{z})\leq c_n \frac{1}{{{[\epsilon^{-2}\lambda]}^{n/2}+1}}\leq c_n \frac{1}{{{[\epsilon^{-2}\lambda]}^{1-\zeta}}}
\end{equation}
for some  constant $c_n>0$ which depends on $n$ and for some small $\zeta>0$. For $n=1$
\begin{equation}\label{bound T1 display'}
\mathbf{P}_{\epsilon}(y\overset{\lambda}\rightarrow z)\leq c_1 \frac{1}{{[\epsilon^{-2}\lambda]}^{1/2}+1}.
\end{equation}
\end{lemma}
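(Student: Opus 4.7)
The proof is essentially a direct combination of the Liggett correlation inequality for the stirring process with the single-particle heat kernel estimates collected in Appendix~\ref{secN6}. My plan is as follows.

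First, for $n=1$ the claim is immediate: $\mathbf{P}_\epsilon(y\xrightarrow{\lambda} z) = P^{(\epsilon)}_\lambda(y,z)$ is a single-particle transition probability on $\Lambda_N$ with reflecting boundary, and by \eqref{N4.5} combined with Remark~\ref{rem:imp_estimate} (which yields the improved bound $G_{\epsilon^{-2}\lambda}(y,z)\le 2/(\sqrt{\epsilon^{-2}\lambda}+1)$), one obtains \eqref{bound T1 display'} with $c_1=2c$.

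For $n\ge 2$, I would invoke Liggett's inequality, which is precisely the first inequality in \eqref{eq:psi_n}: the joint transition probability for the $n$-particle stirring process is dominated by the product of ``best-match'' single-particle transition probabilities,
\begin{equation*}
\mathbf{P}_\epsilon(\underline{y}\xrightarrow{\lambda}\underline{z}) \;\le\; \prod_{i=1}^{n}\sum_{j=1}^{n} P^{(\epsilon)}_\lambda(y_i,z_j).
\end{equation*}
Applying \eqref{N4.5} and the uniform kernel bound $G_{\epsilon^{-2}\lambda}(\cdot,\cdot)\le 2/(\sqrt{\epsilon^{-2}\lambda}+1)$ from Remark~\ref{rem:imp_estimate} to every factor, the right-hand side is bounded by
\begin{equation*}
\prod_{i=1}^n \frac{2cn}{\sqrt{\epsilon^{-2}\lambda}+1} \;=\; \frac{(2cn)^n}{(\sqrt{\epsilon^{-2}\lambda}+1)^n}.
\end{equation*}

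The remaining step is the elementary algebraic comparison $(\sqrt{a}+1)^n \ge \tfrac{1}{2}(a^{n/2}+1)$ for every $a\ge 0$ and $n\ge 1$, which one verifies separately for $a\le 1$ (where both sides are bounded by a constant) and $a\ge 1$ (where $(\sqrt{a}+1)^n\ge a^{n/2}\ge \tfrac{1}{2}(a^{n/2}+1)$). Plugging this into the previous display with $a=\epsilon^{-2}\lambda$ yields the first inequality in \eqref{bound T1 display} with $c_n=2(2cn)^n$. Finally, the second inequality in \eqref{bound T1 display} follows by observing that for $n\ge 2$ and $\zeta\in(0,1]$ one has $n/2\ge 1-\zeta$, so $[\epsilon^{-2}\lambda]^{n/2}+1\ge [\epsilon^{-2}\lambda]^{1-\zeta}$ for all $\lambda>0$ (distinguishing $\epsilon^{-2}\lambda \gtrless 1$).

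There is no real obstacle in this argument: every ingredient — Liggett's domination, the Gaussian kernel bound from Appendix~\ref{secN6}, and the elementary rescaling of $(\sqrt{a}+1)^{-n}$ to $(a^{n/2}+1)^{-1}$ — is either standard or already recorded in the paper. The only point worth double-checking is that Liggett's inequality as stated in \eqref{eq:psi_n} is valid for the stirring process on the bounded segment $\Lambda_N$ with reflecting boundary; this is a well-known monotonicity/negative-association property of symmetric exclusion, and it is used throughout \cite{de2011current,de2012truncated} in the same form, so we may invoke it directly.
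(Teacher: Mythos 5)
Your proof is correct and follows essentially the same route as the paper's: Liggett's inequality to reduce to single-particle transition probabilities, then the uniform kernel bound $P^{(\epsilon)}_\lambda(x,y)\le 2/(\sqrt{\epsilon^{-2}\lambda}+1)$ from \eqref{N4.5 additional p}, and finally the elementary case distinction on $\epsilon^{-2}\lambda\gtrless 1$ to pass from $(\sqrt{\epsilon^{-2}\lambda}+1)^{-n}$ to $([\epsilon^{-2}\lambda]^{n/2}+1)^{-1}$ and then to $[\epsilon^{-2}\lambda]^{-(1-\zeta)}$. You merely make the intermediate algebra $(\sqrt{a}+1)^n\ge\tfrac12(a^{n/2}+1)$ explicit and treat $n=1$ separately, both of which the paper leaves implicit.
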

\begin{proof}
 By applying Liggett's inequality \cite{de2012truncated}, we have that
\begin{equation}\label{Liggett}
\mathbf{P}_{\epsilon}(\underline{y}\overset{\lambda}\rightarrow\underline{z})\leq \prod_{i=1}^{n}\sum_{j=1}^{n}P^{(\epsilon)}_{\lambda}(y_i,z_j).
\end{equation}
  Recall \eqref{N4.5 additional p} we then get  \[
 \mathbf{P}_{\epsilon}(\underline{y}\overset{\lambda}\rightarrow\underline{z})\leq c_n\frac{1}{[\epsilon^{-2}\lambda]^{n/2}+1}
 \]
To finish the proof it is enough to note that 
if $\epsilon^{-2}\lambda\geq 1$, since $n\geq 2$ then  $(\epsilon^{-2}\lambda)^{\frac{n}{2}}+1\geq (\epsilon^{-2}\lambda)^{\frac{n}{2}}\geq  (\epsilon^{-2}\lambda)^{1-\zeta}$ for some $\zeta>0$. From this  the estimate follows. Now,  if $\epsilon^{-2}\lambda\leq 1$, then 
$ (\epsilon^{-2}\lambda)^{\frac{n}{2}}\leq  (\epsilon^{-2}\lambda)^{1-\zeta}\leq 1\leq (\epsilon^{-2}\lambda)^{\frac{n}{2}}+1$ and from this the proof ends. 
\end{proof}
\begin{remark}\label{rem:Liggett_ineq}
Let $S_n$ be the set of all permutations of the set $\{1,\dots,n\}$. We denote $\pi\in S_n$ a permutation of $\{1,\dots,n\}$. Then, for   $\underline{z}=(z_1,\dots,z_n)$ and $\underline{y}=(y_1,\dots,y_n)$ in $\Lambda_{N}^{n,\neq}$, the right-hand side of \eqref{Liggett}, can be written as $$\prod_{i=1}^{n}\sum_{j=1}^{n}P^{(\epsilon)}_{\lambda}(y_i,z_j)=\sum_{\pi\in S_n}\prod_{i=1}^n P^{(\epsilon)}_{\lambda}(y_{\pi(i)},z_i)$$ which gives us 
\begin{equation}\label{Liggett_new}
\mathbf{P}_{\epsilon}(\underline{y}\overset{\lambda}\rightarrow\underline{z})\leq \sum_{\pi\in S_n}\prod_{i=1}^n P^{(\epsilon)}_{\lambda}(y_{\pi(i)},z_i).
\end{equation}
\end{remark}

\subsection{Important bounds for space $v$-functions} \label{proof 13.1.111}

For the difference of $v$'s, we use  the next lemma:
   \begin{lemma}[\cite{de2012truncated}, Lemma 5.3]\label{lem:17.3}
We have
                     \begin{eqnarray}
                     \label{17.3}
|v(\underline x^{(i)},t)-v(\underline x^{(j)},t)| &\leq &
\int_0^t ds\; \mathbf{E}_\epsilon \Big[\mathbf 1_{\tau_{i,j}\geq \frac s2}\sum_{\underline y} \Big\{\mathbf{P}_\epsilon\Big(\underline x^{(j)}\Big(\tfrac{s}{2}\Big)\overset{s/2}\rightarrow \underline y\Big)
\nonumber\\ &+& \mathbf{P}_\epsilon\Big(\underline x^{(i)}\Big(\tfrac{s}{2}\Big)\overset{s/2}\rightarrow \underline y\Big)\Big\}  |(C_\epsilon v)(\underline y,t-s)| \Big].
                   \end{eqnarray}
            \end{lemma}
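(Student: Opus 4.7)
My plan for proving this lemma follows the strategy originally developed in \cite{de2012truncated}. The starting point is Duhamel's formula \eqref{14*} applied to both $v(\underline{x}^{(i)}, t)$ and $v(\underline{x}^{(j)}, t)$ with $t^* = 0$. Since $v_n^\epsilon(\cdot, 0) \equiv 0$ by our choice of initial measure (Remark \ref{rem:v_1_null}), the initial term drops out, leaving
\begin{equation*}
v(\underline{x}^{(i)}, t) - v(\underline{x}^{(j)}, t)
= \int_0^t ds\, \Big\{ \mathbf{E}_{\epsilon, \underline{x}^{(i)}} \big[ (C_\epsilon v)(\underline{x}^{(i)}(s), t-s) \big]
- \mathbf{E}_{\epsilon, \underline{x}^{(j)}} \big[ (C_\epsilon v)(\underline{x}^{(j)}(s), t-s) \big] \Big\}.
\end{equation*}
The next step is to enlarge the probability space so that both expectations are taken with respect to the labeled stirring process starting from $\underline{x}$ (the configuration containing both particles $i$ and $j$): using the consistency of the stirring dynamics under removal of a particle, each expectation may be realized as a functional of the full labeled trajectory $\underline{x}(s)$.

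Once the two expectations share a common initial law, the integrand can be written as $\mathbf{E}_{\epsilon, \underline{x}}[F(\underline{x}(s))]$, where the function
\begin{equation*}
F(\underline{y}) := (C_\epsilon v)(\underline{y}^{(i)}, t-s) - (C_\epsilon v)(\underline{y}^{(j)}, t-s)
\end{equation*}
is antisymmetric under the exchange of the positions of particles $i$ and $j$: swapping those two labels transforms the removal of $i$ into the removal of $j$ and vice-versa. I would then invoke Lemma \ref{lem:DPTV_ejp 4.3} (Lemma 4.3 of \cite{de2012truncated}) with the cutoff $s/2$ in place of the time parameter there, which yields
\begin{equation*}
\mathbf{E}_{\epsilon, \underline{x}} \big[ \mathbf{1}_{\tau_{i,j} \leq s/2}\, F(\underline{x}(s)) \big] = 0.
\end{equation*}
Hence only the contribution on the complementary event $\{\tau_{i,j} > s/2\}$ survives.

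On $\{\tau_{i,j} > s/2\}$ the plan is to apply the Markov property at time $s/2$: conditional on $\mathcal{F}(s/2)$, the evolution between $s/2$ and $s$ is governed by independent Poisson marks, so that
\begin{equation*}
\mathbf{E}_{\epsilon, \underline{x}} \big[ (C_\epsilon v)(\underline{x}^{(i)}(s), t-s)\, \mathbf{1}_{\tau_{i,j} > s/2} \big]
= \mathbf{E}_{\epsilon, \underline{x}} \Big[ \mathbf{1}_{\tau_{i,j} > s/2} \sum_{\underline{y}} \mathbf{P}_\epsilon\Big(\underline{x}^{(i)}(s/2) \overset{s/2}{\to} \underline{y}\Big) (C_\epsilon v)(\underline{y}, t-s) \Big],
\end{equation*}
and similarly for the term with $j$. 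Taking absolute values outside the time integral and applying the triangle inequality to separate the two resulting contributions produces exactly the bound \eqref{17.3}.

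The main obstacle is the antisymmetry/coupling step: one must verify that, in the active/passive mark construction of Definition \ref{def:active/passive marks process}, the joint law of $(\underline{x}^{(i)}(s), \underline{x}^{(j)}(s))$ on $\{\tau_{i,j} \leq s/2\}$ is symmetric under the exchange of labels $i \leftrightarrow j$ of the initial configuration, so that Lemma \ref{lem:DPTV_ejp 4.3} applies. This relies on the fact that at the stopping time $\tau_{i,j}$ the two labels have an equal chance of being swapped by the next active mark, which allows one to construct a measure-preserving involution on $\Omega$ that exchanges the roles of $i$ and $j$ beyond $\tau_{i,j}$. The remaining steps are routine consequences of Duhamel's formula, the strong Markov property, and the triangle inequality.
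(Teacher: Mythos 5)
Your argument follows the same route as [21, Lemma 5.3], which the paper itself sketches around \eqref{DFD} and \eqref{17.3.3}: Duhamel's formula with vanishing initial datum, enlargement to a common labeled process started from $\underline{x}$, antisymmetry of $F$ under the exchange $i\leftrightarrow j$ so that Lemma \ref{lem:DPTV_ejp 4.3} applied with cutoff $s/2$ annihilates the contribution of $\{\tau_{i,j}\le s/2\}$, the strong Markov property at time $s/2$ on the complementary event, and the triangle inequality. A minor precision: you appeal to Remark \ref{rem:v_1_null} for $v^{\eps}_{n}(\cdot,0)\equiv 0$, but that remark only concerns degree one; for degree $n\ge 2$ the vanishing is an immediate consequence of the product structure of $\mu^{\eps}$, since the centered occupation variables at distinct sites are independent and have mean zero, so their product has zero expectation, independently of \textbf{Assumption 1}.
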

            
            Now we provide the proof of Corollary \ref{lem:v-estimate for small times}.
\begin{proof}
When $\epsilon^{\beta^*}<t\leq T$, from Theorem \ref{vestimate1} we have $$\sup_{\epsilon^{\beta^*}<t\leq T}\sup_{\underline{x}\in\Lambda_N^{n,\neq}}|v^{\epsilon}_{n}(\underline{x},t|\mu^{\epsilon}|\leq c_n\epsilon^{(2-\beta^*)c^*n}$$ for some $c_n$. For  $\sup_{0<t\leq \epsilon^{\beta^*}}\sup_{\underline{x}\in\Lambda_N^{n,\neq}}|v^{\epsilon}_{n}(\underline{x},t)|$, we begin by rewriting (5.15)-(5.16) in \cite{de2012truncated} so that all $v^{\epsilon}_{\cdot}$ appearing in the integral inequality are replaced by their suprema. Next, we take the supremum over 
$0<t\leq \epsilon^{\beta^*}$ on both sides of (5.15) in \cite{de2012truncated} . On the right-hand side, this supremum transforms the time integral into one over the interval  $(0, \epsilon^{\beta^*}]$ . The resulting expression can then be estimated following the same argument used in the proof of Theorem 6.1 in \cite{de2012truncated} for $t=\epsilon^{\beta^*}$,  ultimately yielding  the desired bound.
\end{proof}

\section{On the solutions to the discrete PDE \eqref{eq:linearized}} 
\label{ap:useful_estimates}
Here we collected some results about the solution to \eqref{eq:linearized}. The first one was derived in \cite{de2012truncated} and the second one is a refinement of the former using a different method.

\begin{proposition}[\cite{de2012truncated}, eq. (3.8)--(3.9)]\label{prop:disc_grad_rho_eps}
For any $T>0$ there exists a finite constant $c$ so that for any solution $\rho_\epsilon$ of \eqref{eq:linearized} with $\rho_\epsilon(\cdot,0)$ the following holds. For any $x\in[-N,\dots,N-1]$, any $t\in(0,T]$ and any $\epsilon >0$
\begin{equation}\label{eq:dir_rho_lin}
|\rho_\epsilon(x,t)-\rho_\epsilon(x+1,t)|\leq \min\Big\{1,c\Big(\epsilon\log_+(\epsilon^{-2}t)+\frac{1}{\sqrt {\epsilon^{-2}t}}\Big)\Big\},
\end{equation}
where $\log_+(u)=\max \{\log(u),1\}$. We shall use a weaker version of \eqref{eq:dir_rho_lin}, namely that
  for any $\zeta>0$ and $\tau>0$ there is $c$ so that
       \begin{equation}
       \label{18a}
 \sup_{x,y\in \La_N: |x-y|\leq 1}|\rho_\epsilon(x,t)-\rho_\epsilon(y,t)| \le \frac {c}{(\epsilon^{-2}t)^{1/2-\zeta}+1}
\quad \text{{\rm for any} $t\le \tau \log \epsilon^{-1}$}.
                \end{equation}
\end{proposition}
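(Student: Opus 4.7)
My plan is to derive the estimate from Duhamel's formula for \eqref{eq:linearized}. Let $P_t^{(\eps)}$ denote the semigroup generated by $\tfrac12\Delta_\eps$ on $\Lambda_N$ with reflecting boundary conditions (Appendix \ref{secN6}). Using \textbf{Assumption 1}, which gives $\rho_\eps(y,0)=u_0(\eps y)$ with $u_0$ smooth, variation of constants yields
\begin{equation*}
\rho_\eps(x,t) = \sum_{y\in\Lambda_N} P_t^{(\eps)}(x,y)\, u_0(\eps y) + \frac{\eps^{-1}j}{2}\int_0^t\!\! d\lambda \!\!\sum_{y\in I_+}\!\! P_{t-\lambda}^{(\eps)}(x,y)\, D_+\rho_\eps(y,\lambda) - \frac{\eps^{-1}j}{2}\int_0^t\!\!d\lambda \!\!\sum_{y\in I_-}\!\! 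P_{t-\lambda}^{(\eps)}(x,y)\, D_-\rho_\eps(y,\lambda).
\end{equation*}
Subtracting the analogous identity at $x+1$ splits the difference into a smooth initial-data propagation term and a boundary source term in which only the kernel depends on $x$.

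For the first term, I plan to exploit the representation \eqref{a3.6} of $P_t^{(\eps)}$ through the translation-invariant walk $Q_t^{(\eps)}$ on $\mathbb Z$ and the reflection map $\psi_N$ of Appendix \ref{secN6}. Setting $f(x,t):=\sum_y P_t^{(\eps)}(x,y) u_0(\eps y) = \sum_{z\in\mathbb Z} Q_t^{(\eps)}(x,z)\, u_0(\eps\psi_N(z))$ and using translation invariance $Q_t^{(\eps)}(x+1,z)=Q_t^{(\eps)}(x,z-1)$, a one-step reindexing gives
\begin{equation*}
f(x+1,t) - f(x,t) = \sum_{z\in\mathbb Z} Q_t^{(\eps)}(x,z)\,\bigl[u_0(\eps\psi_N(z+1)) - u_0(\eps\psi_N(z))\bigr].
\end{equation*}
Since $|\psi_N(z+1)-\psi_N(z)|\le 1$ and $u_0\in C^1([-1,1])$, each bracket is bounded by $\eps\,\|u_0'\|_\infty$, and summing against the probability distribution $Q_t^{(\eps)}(x,\cdot)$ yields the $c_T\eps$ term. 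This is precisely where \textbf{Assumption 1} enters: without a smooth initial profile one cannot extract an $O(\eps)$ bound from this contribution.

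For the boundary source term, I would apply the gradient estimate \eqref{N4.6 additional} to bound
\begin{equation*}
|P_{t-\lambda}^{(\eps)}(x,y) - P_{t-\lambda}^{(\eps)}(x+1,y)| \lesssim \frac{1}{\sqrt{\eps^{-2}(t-\lambda)}+1}\, G_{\eps^{-2}(t-\lambda)}(x,y),
\end{equation*}
together with the trivial bound $|D_\pm\rho_\eps|\le 1$. A change of variables $\lambda\mapsto t-\lambda$ then recasts the resulting time integral into exactly the form appearing on the right-hand side of the lemma. I expect the main subtlety to be the initial-data term, where the reflecting boundary destroys translation invariance on $\Lambda_N$; the lift to $\mathbb Z$ via $\psi_N$ is the key device that restores enough symmetry to produce the clean pointwise $O(\eps)$ bound.
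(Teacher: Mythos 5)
You are proving Lemma \ref{lem: difference of rhos}, not Proposition \ref{prop:disc_grad_rho_eps}. The proposition is cited from \cite{de2012truncated} and never proved in this paper; it holds for an \emph{arbitrary} bounded initial datum $\rho_\eps(\cdot,0)$---no Assumption~1, no $C^1$ profile $u_0$---and the bound it produces is $c\big(\eps\log_+(\eps^{-2}t)+1/\sqrt{\eps^{-2}t}\big)$, together with the weaker version \eqref{18a} valid out to $t\le\tau\log\eps^{-1}$. Your $O(\eps)$ control of the initial-data gradient relies entirely on $u_0\in C^1$, which the proposition does not grant. Without smoothness, the reflection-map reindexing buys you nothing beyond the trivial bound; to obtain the stated estimate you must instead hit $P_t^{(\eps)}(x,\cdot)-P_t^{(\eps)}(x+1,\cdot)$ directly with the kernel-gradient estimate \eqref{N4.6} and sum against $|u_0(\eps\cdot)|\le 1$, yielding only $1/\sqrt{\eps^{-2}t}$. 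The boundary source must then be coarsened---replace $G_{\eps^{-2}(t-\lambda)}(x,z)$ by its $L^\infty$ bound $c\,(\sqrt{\eps^{-2}(t-\lambda)}+1)^{-1}$ before integrating---so that the $\lambda$-integral produces $\eps\log_+(\eps^{-2}t)$. You also never touch the version \eqref{18a}, which is the one the paper actually invokes, and its validity on $[0,\tau\log\eps^{-1}]$ falls outside the $t\le T$ window your Duhamel argument covers.

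As a proof of Lemma \ref{lem: difference of rhos} (where Assumption~1 is in force), your argument is valid and takes a genuinely different route from the paper's. The paper controls $h_\eps=\rho_\eps-\rho_t(\eps\cdot)$ and must carry along a discretization-error source $F_\eps$ in the $h_\eps$-equation; at the endpoints $x=\pm N$ this $F_\eps$ is in fact of order $\eps^{-1}$ (the discrete reflecting Laplacian applied to a profile with nonzero Robin derivative), so the blanket assertion $\|F_\eps\|_\infty\lesssim\eps^2$ holds only in the interior and the boundary part of $F_\eps$ has to be absorbed into the boundary integral. Writing Duhamel directly for $\rho_\eps$, lifting via $\psi_N$, and using translation invariance of $Q_t^{(\eps)}$ bypasses $F_\eps$ entirely, which is cleaner bookkeeping. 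Both routes invoke smoothness of the initial datum in the same essential place: yours explicitly via $\|u_0'\|_\infty$, the paper's through smoothness of $\rho_t$.
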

{Now we note that for our purposes the previous results
are not enough, and we had to obtain more refined estimates as given in Lemma  \ref{lem: difference of rhos} whose proof we present below. }

\begin{proof}[Proof of Lemma  \ref{lem: difference of rhos}]{The proof of this lemma
shares similar  ideas as those of the proof of  Lemma 6.2 in \cite{gonccalves2020non}, but given the nature of the equation, some technical details are different.}
 
Fix $x\in \{-N,\dots, N-1\}$ and $0\leq t\leq T$. We denote $h_{\epsilon}(x,t)=\rho_{\epsilon}(x,t)-\rho_t(\epsilon x)$, where $\rho_t$ is the solution of the hydrodynamic equation \eqref{eq:Robin_equation}. Then by summing and subtracting appropriate terms, we get 
\begin{eqnarray*}
|\rho_{\epsilon}(t,x)-\rho_{\epsilon}(t,x+1)|&\leq& |h_{\epsilon}(x,t)-h_{\epsilon}(x+1,t)|+|\rho_t(\epsilon x)-\rho_t(\epsilon (x+1))|\nonumber\\
&\leq& |h_{\epsilon}(x,t)-h_{\epsilon}(x+1,t)|+\epsilon.
\end{eqnarray*}
Last inequality is obtained from the Mean value theorem and the fact that $\rho_t(u)$ is a smooth function, see Remark \ref{rem:smoothness}.
It remains now to bound the discrete gradient of $h_{\epsilon}(x,t)$. A simple computation shows that 
  for $x\in \{-N,\dots, N-1\}$,   $h_{\epsilon}(x,t)$ satisfies
\begin{equation}\label{eq:linearized_h}
	\begin{cases}
	&\frac{d}{dt}h_\epsilon(x,t)= \frac 12 \Delta_\epsilon h_\epsilon(x,t)+ F_{\epsilon}(x,t)+
\epsilon^{-1} \frac j2\Big(\mathbf 1_{x\in I_+} D_+\rho_\epsilon(x,t)
-\mathbf 1_{x\in I_-}  D_-\rho_\epsilon(x,t)\Big)\\
	&{h_\epsilon(x,0)=0}.
	\end{cases}
	\end{equation}
	Note that the initial condition is zero since the initial conditions match, i.e. $\rho_0\equiv u_0$. 
Above $F_{\epsilon}(x,t)=\frac{1}{2}\left[\Delta_\epsilon\rho_t(\epsilon x)-\Delta\rho_t(\epsilon x)\right]$. {Note that since $\rho_t(x)$ is smooth then $\|F\|_\infty\leq C \epsilon^{2}$.} Now, from  the Duhamel's formula,   we get
\begin{eqnarray*}
h_\epsilon(x,t)&=\int_{0}^t d\lambda \sum_{z}P^{(\epsilon)}_{\lambda}(x,z) F_{\epsilon}(z,t-\lambda) +\epsilon^{-1}{\frac j2}\int_{0}^t d\lambda \sum_{z\in I_+}P^{(\epsilon)}_{\lambda}(x,z) D_+\rho_\epsilon(z,t-\lambda) \nonumber \\&{-\epsilon^{-1}\frac j2\int_{0}^t d\lambda \sum_{z\in I_-}P^{(\epsilon)}_{\lambda}(x,z) D_-\rho_\epsilon(z,t-\lambda)},
\end{eqnarray*}
so that 
\begin{eqnarray*}
h_\epsilon(x,t)-h_\epsilon(x+1,t)&=&\int_{0}^t d\lambda \sum_{z}\left(P^{(\epsilon)}_{\lambda}(x,z)-P^{(\epsilon)}_{\lambda}(x+1,z)\right) F_{\epsilon}(z,t-\lambda) \nonumber\\
&&+\epsilon^{-1}{\frac j2}\int_{0}^t d\lambda \sum_{z\in I_+}\left(P^{(\epsilon)}_{\lambda}(x,z)-P^{(\epsilon)}_{\lambda}(x+1,z)\right) D_+\rho_\epsilon(z,t-\lambda) \nonumber\\
&&-{\epsilon^{-1}{\frac j2}\int_{0}^t d\lambda \sum_{z\in I_-}\left(P^{(\epsilon)}_{\lambda}(x,z)-P^{(\epsilon)}_{\lambda}(x+1,z)\right) D_-\rho_\epsilon(z,t-\lambda)}. \nonumber
\end{eqnarray*}
Since $P^{(\epsilon)}_{\lambda}$ is a probability and since $\|F\|_\infty\leq C \epsilon^{2}$, we can bound the first line on the right hand side of last display by $C\epsilon^2 t$. Then we obtain
\begin{eqnarray*}
|h_\epsilon(x,t)-h_\epsilon(x+1,t)|&\leq C\epsilon^2 t+\epsilon^{-1}{\frac j2}\int_{0}^t d\lambda \sum_{z\in I_+}\left|P^{(\epsilon)}_{\lambda}(x,z)-P^{(\epsilon)}_{\lambda}(x+1,z)\right| D_+\rho_\epsilon(z,t-\lambda) \nonumber\\
&+\epsilon^{-1}{\frac j2}\int_{0}^t d\lambda \sum_{z\in I_-}\left|P^{(\epsilon)}_{\lambda}(x,z)-P^{(\epsilon)}_{\lambda}(x+1,z)\right| D_-\rho_\epsilon(z,t-\lambda). \nonumber
\end{eqnarray*}
From this it follows that
\begin{equation*}\begin{split}
|\rho_\epsilon(x,t)-&\rho_\epsilon(x+1,tP)|\\&\leq C(\epsilon^2 t+\epsilon)+\epsilon^{-1}\frac j2\int_{0}^t d\lambda \sum_{z\in I_\pm}\left|P^{(\epsilon)}_{\lambda}(x,z)-P^{(\epsilon)}_{\lambda}(x+1,z)\right| D_\pm\rho_\epsilon(t-\lambda,z) \nonumber\\
&\leq C(\epsilon^2 t+\epsilon)+\epsilon^{-1}\frac j2 \int_{0}^t d\lambda \frac{1}{\sqrt{\epsilon^{-2}\lambda}+1}\sum_{z\in I_\pm}G_{\epsilon^{-2}\lambda}(x,z)
\end{split}\end{equation*}
where in the last inequality we have used \eqref{N4.6 additional}.  Since $\eps^2t+\eps\leq\eps (\eps T+1)$, we set $c_T=\eps T+1$ and 
this finishes the proof. 
\end{proof}

\begin{corollary}\label{corollary: difference of rhos}
Let $x\in \{-N,\dots, N\}$ and $0\leq t\leq T$. There exists a constant $C_T$ which depends on $T$ such that for $0\leq \eps<1$
\begin{eqnarray*}
\sup_{x,y\in\Lambda_N:|x-y|\leq 1}|\rho_\epsilon(x,t)-\rho_\epsilon(y,t)|\leq C_T \epsilon^{1-2\zeta}
\end{eqnarray*}
\end{corollary}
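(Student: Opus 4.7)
The starting point is Lemma \ref{lem: difference of rhos}, which already gives a pointwise bound
\[
|\rho_\epsilon(t,x)-\rho_\epsilon(t,x+1)|
\;\le\; c_T\epsilon \;+\; \epsilon^{-1}\,\tfrac{j}{2}\int_0^t d\lambda\,\frac{1}{\sqrt{\epsilon^{-2}\lambda}+1}\,\sum_{z\in I_\pm}G_{\epsilon^{-2}\lambda}(x,z).
\]
The first term is already of order $\epsilon$, so the entire task reduces to controlling the time integral. My plan is to crush the heat kernel uniformly in $z$, observe that the boundary windows $I_\pm$ have fixed cardinality $K$, and then integrate in time using the small-$\lambda$ regularization trick from Remark~\ref{rem:comment_integration}.

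First I would use the pointwise Gaussian bound $G_{\epsilon^{-2}\lambda}(x,z)\le c/(\sqrt{\epsilon^{-2}\lambda}+1)$ (Remark~\ref{rem:imp_estimate}), which is uniform in $x,z$. Since $|I_\pm|=K$ is independent of $N$, summing over $z\in I_\pm$ only costs a constant factor $K$. This reduces the remaining integral to
\[
\epsilon^{-1}\int_0^t \frac{d\lambda}{(\sqrt{\epsilon^{-2}\lambda}+1)^2}
\;\le\; \int_0^t \frac{\epsilon^{-1}}{[\epsilon^{-2}\lambda]+1}\,d\lambda,
\]
where I used the elementary inequality $(\sqrt{u}+1)^2\ge u+1$. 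This is precisely an instance of the integral treated in Remark~\ref{rem:comment_integration} with $k=1$.

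Applying the splitting $\int_0^t=\int_0^{\epsilon^2}+\int_{\epsilon^2}^t$ exactly as in Remark~\ref{rem:comment_integration}, one bounds the first piece by $\epsilon$ (the integrand is $\le \epsilon^{-1}$ on an interval of length $\epsilon^2$), and raises the denominator's exponent from $1$ to $1+\zeta$ on the second piece to make the integral convergent at $0$; this yields a bound of the form $\epsilon + \epsilon^{1+2\zeta}\epsilon^{-2\zeta}\le c\epsilon$. Combining with the $c_T\epsilon$ term from Lemma~\ref{lem: difference of rhos}, we obtain $|\rho_\epsilon(t,x)-\rho_\epsilon(t,x+1)|\le C_T\epsilon$, and since $\epsilon<1$ we trivially have $\epsilon\le \epsilon^{1-2\zeta}$, proving the claim.

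There is no genuine obstacle here: the estimate is strictly easier than Lemma~\ref{lem: difference of rhos}, since the supremum over $z\in I_\pm$ of $G_{\epsilon^{-2}\lambda}(x,z)$ decays as $\lambda^{-1/2}$ without requiring any cancellation. The only mild care needed is to avoid a logarithmic blow-up at $\lambda\to 0$, which is why the statement is phrased with the weaker exponent $\epsilon^{1-2\zeta}$ rather than $\epsilon$; the $\zeta$-shift in the denominator's exponent handles this at the cost of absorbing the factor $1/\zeta$ into the constant $C_T$.
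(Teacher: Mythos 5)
Your reduction is correct up to the point where you bound the heat-kernel sum uniformly and arrive at $\epsilon^{-1}\int_0^t (\sqrt{\epsilon^{-2}\lambda}+1)^{-2}\,d\lambda$; this is essentially what the paper does. The gap is in how you evaluate that integral. You bound $(\sqrt{u}+1)^2 \geq u+1$ and then invoke Remark~\ref{rem:comment_integration} ``with $k=1$'', raising the denominator's exponent from $1$ to $1+\zeta$. That step is wrong: for $u = \epsilon^{-2}\lambda \geq 1$ (i.e.\ on all of $[\epsilon^2,t]$), the inequality $\frac{1}{u+1} \leq \frac{1}{u^{1+\zeta}}$ fails as soon as $u$ is large, since $u^{1+\zeta}$ outgrows $u+1$. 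The displayed estimate in Remark~\ref{rem:comment_integration} implicitly needs $k\geq 1+\zeta$ so that $[\epsilon^{-2}\lambda]^k \geq [\epsilon^{-2}\lambda]^{1+\zeta}$ on $[\epsilon^2,t]$; for $k=1$ it simply does not apply, and a direct computation of your integral gives $\epsilon\log(\epsilon^{-2}t+1)$, which is $O(\epsilon\log\epsilon^{-1})$, not $O(\epsilon)$ as you claim.

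The paper sidesteps this by \emph{lowering} the exponent instead: since $\sqrt{\epsilon^{-2}\lambda}+1\geq 1$, one has $(\sqrt{\epsilon^{-2}\lambda}+1)^2 \geq (\sqrt{\epsilon^{-2}\lambda}+1)^{2(1-\zeta)} \geq (\epsilon^{-2}\lambda)^{1-\zeta}$, and then
\[
\epsilon^{-1}\int_0^t \frac{d\lambda}{(\epsilon^{-2}\lambda)^{1-\zeta}}
= \epsilon^{-1}\,\epsilon^{2(1-\zeta)}\,\frac{t^\zeta}{\zeta}
= \frac{1}{\zeta}\,\epsilon^{1-2\zeta}\,t^\zeta,
\]
directly yielding the stated $\epsilon^{1-2\zeta}$ with no splitting and no logarithm. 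This is also why the corollary is stated with exponent $1-2\zeta$ rather than $1$: it is genuinely what this argument produces for $k=1$, not merely a cosmetic weakening. Your diagnosis in the final paragraph (that the $\zeta$-shift absorbs a $1/\zeta$ into $C_T$) is actually correct, but it contradicts your own intermediate claim of getting $c\epsilon$. If you replace the ``raise to $1+\zeta$'' step by ``lower to $1-\zeta$'' as above, the proof closes. Alternatively, your $\epsilon\log(\epsilon^{-2}t+1)$ bound also suffices, since $\log\epsilon^{-1}\lesssim \epsilon^{-2\zeta}$; but then you should say you are getting $\epsilon\log\epsilon^{-1}$, not $c\epsilon$.
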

\begin{proof}
From the proof Lemma \ref{lem: difference of rhos}, we can bound $\sup_{x,y\in\Lambda_N:|x-y|\leq 1}|\rho_\epsilon(x,t)-\rho_\epsilon(y,t)|$ by  
\begin{eqnarray}
{\epsilon c_T}+\epsilon^{-1}\frac j2 \int_{0}^t d\lambda \frac{1}{\sqrt{\epsilon^{-2}\lambda}+1}\sum_{z\in I_\pm}G_{\epsilon^{-2}\lambda}(x,z).\nonumber 
\end{eqnarray}
Now we treat the integral term above. To that end first we use the estimate in \eqref{N4.5 additional p} that as mentioned in Remark \ref{rem:imp_estimate} also holds for the heat kernel, to bound the integral in the last display by a constant times
\begin{eqnarray}
\epsilon^{-1}\frac j2 \int_{0}^t d\lambda \frac{1}{(\sqrt{\epsilon^{-2}\lambda}+1)^2}.\nonumber 
\end{eqnarray}
Last integral is bounded from above by 
\begin{eqnarray}
\epsilon^{-1}\frac j2 \int_{0}^t d\lambda \frac{1}{(\sqrt{\epsilon^{-2}\lambda}+1)^{2(1-\zeta)}}\leq \epsilon^{-1}\frac j2 \int_{0}^t d\lambda \frac{1}{(\sqrt{\epsilon^{-2}\lambda})^{2(1-\zeta)}}.\nonumber 
\end{eqnarray}
Now by computing the integral explicitly we see that it equals to 
$\zeta^{-1}\epsilon ^{1-2\zeta}t^\zeta$.
Putting together all the estimates we see that
$$\sup_{x,y\in\Lambda_N:|x-y|\leq 1}|\rho_\epsilon(x,t)-\rho_\epsilon(y,t)|\leq C(\epsilon^2 t+\epsilon+\epsilon^{1-2\zeta}t^{\zeta}).$$ 
Then the bound follows since $t\leq T$ and $\epsilon<1$.
\end{proof}

\bibliographystyle{plain}  
\bibliography{references}

\end{document}